\numberwithin{equation}{section}  
\newtheorem{theorem}{Theorem}[section]
\newtheorem{lemma}[theorem]{Lemma}
\newtheorem{corollary}[theorem]{Corollary}
\newtheorem{proposition}[theorem]{Proposition}
\newtheorem{theoremletter}{Theorem}
\newtheoremstyle{tttheorem}
{}                
{}                
{\slshape}        
{}                
{\bfseries}       
{'}               
{ }               
{}                
\theoremstyle{tttheorem}
\def\XXint#1#2#3{{\setbox0=\hbox{$#1{#2#3}{\int}$ }
		\vcenter{\hbox{$#2#3$ }}\kern-.6\wd0}}
\title[Multiple Blow-Up Phenomena for $Q$-Curvature in High Dimensions]{Multiple Blow-Up Phenomena for $Q$-Curvature in High Dimensions}
\author[R. Caju]{Rayssa Caju}
\address[R. Caju]{Departamento de Ingenier\'ia Matem\'atica and Centro
de Modelamiento Matem\'atico (CNRS IRL 2807), Universidad de Chile, Beauchef 851, Santiago, Chile}
\email{\href{mailto: rcaju@dim.uchile.cl}{rcaju@dim.uchile.cl}}
\author[A. Silva Santos]{Almir Silva Santos}
\address[A. Silva Santos]{Department of Mathematics, 
	Federal University of Sergipe
	\newline\indent 
	49107-230, S\~ao Cristov\~ao-SE, Brazil}
\email{\href{mailto: almir@mat.ufs.br}{almir@mat.ufs.br}}
\address{Current address: Department of Mathematics, Princeton University, Princeton, NJ, USA, 08544.}
\email{\href{mailto: as1980@princeton.edu}{as1980@princeton.edu}}
\subjclass[2020]{35B09, 53C21, 35J30, 35J60}
\keywords{Multiple blow-up, Q-curvature, Compactness conjecture}
\begin{document}

\begin{abstract}
Let $(M,g_0)$ be a closed Riemannian manifold of dimension $n \geq 25$ with positive Yamabe invariant $Y(M,g_0)>0$ and positive fourth-order invariant $Y_4(M,g_0)>0$. We show that, arbitrarily $C^1$-close to $g_0$, there exists a Riemannian metric such that, within its conformal class, one can find infinitely many smooth metrics with the same constant $Q$-curvature and arbitrarily large energy. Moreover, within this conformal class, there exists a sequence of smooth metrics with constant $Q$-curvature equal to $n(n^2-4)/8$ and unbounded volume. This extends to the $Q$-curvature setting the result previously obtained for the scalar curvature in \cite{Marques2015} (see also \cite{MR4937973}). The proof is based on constructing small perturbations of multiple standard bubbles that are glued together.
\end{abstract}

\maketitle

 \tableofcontents
     
\section{Introduction}

On a Riemannian manifold of dimension greater than $2$, a central notion in conformal geometry is the $Q$-curvature, introduced by Branson \cite{MR832360} in the late 1970s and subsequently developed by many authors over the past decades. Together with the conformally covariant fourth-order operator introduced by Paneitz \cite{MR2393291}, now known as the Paneitz operator, the $Q$-curvature plays a role somewhat analogous to that of the Gauss curvature in dimension two and to that of the scalar curvature, as well as the conformal Laplacian, in higher dimensions. It should be noted that in dimension $4$, the $Q$-curvature appears explicitly in the Chern--Gauss--Bonnet formula; see, for instance, \cite{MR2407525,MR3618119}. For a comprehensive overview of the importance of this quantity in differential geometry and its applications, we refer the reader to
\cite{MR2407525,MR1710786}.

Over the past decades, many authors have devoted considerable effort to understanding the geometric and analytic influence of the $Q$-curvature on the underlying manifold. The works \cite{MR3509928,MR3420504,MR3518237,MR3899029,MR1611691,MR2232210,MR3016505,MR2456884,MR1991145,MR4028770,MR4251294,MR4475679,li2025existencecompleteconformalmetrics,MR4923458,li2025conformalmetricsfinitetotal,MR2078747,MR3465087,MR3669775,MazumdarPremoselli2025,gong2025compactness,MR3939772}, among others, provide valuable references on these developments.

In this paper, our goal is to investigate the set of solutions to the constant $Q$-curvature problem in higher dimensions. Let $(M^{n},g_{0})$ be a closed Riemannian manifold of dimension $n \geq 3$. The $Q$-curvature of
$(M^{n},g_{0})$ is defined by
$$
    Q_{g_0}
    = -\frac{1}{2(n-1)}\, \Delta_{g_0} R_{g_0}
      + \frac{n^3 - 4n^2 + 16n - 16}{8(n-1)^2 (n-2)^2}\, R_{g_0}^2
      - \frac{2}{(n-2)^2}\, |{\rm Ric}_{g_0}|^2,
$$
and the Paneitz operator is given by
$$
    P_{g_0} u
    = \Delta_{g_0}^2 u
      + \operatorname{div}_{g_0}\!\left(
          a(n)\, {\rm Ric}_{g_0}(\nabla u, \cdot)
          - b(n)\, R_{g_0}\, du
      \right)
      + c(n)\, Q_{g_0}\, u.
$$
Here, $R_{g_0}$ denotes the scalar curvature, ${\rm Ric}_{g_0}$ the Ricci curvature, and
$\Delta_{g_0}$ the Laplace--Beltrami operator associated with the metric $g_0$. Throughout this
work, we shall use the following dimension-dependent constants:
\[
a(n) = \frac{4}{n - 2},\qquad
b(n) = \frac{(n - 2)^2 + 4}{2 (n - 1)(n - 2)},\qquad
c(n) = \frac{n - 4}{2},\qquad
d(n) = \frac{n (n - 4)(n^{2} - 4)}{16}.
\]

One of the main advantages of the $Q$-curvature is its natural behavior under conformal changes of the metric. More precisely, if $n\neq 4$ and $g$ is a smooth metric conformal to $g_{0}$, written as $g = u^{\frac{4}{n-4}} g_{0}$ with $u \in C^\infty(M)$ and $u>0$, then the $Q$-curvature and the Paneitz operator of $g$ satisfy the following transformation laws
\begin{equation*}\label{eq002} 
   Q_{g}
   = \frac{2}{n-4}\, u^{-\frac{n+4}{n-4}}\, P_{g_0} u\qquad\mbox{ and }\qquad P_{u^{\frac{4}{n-4}}g_0}(v)=u^{-\frac{n+4}{n-4}}P_{g_0}(uv).
\end{equation*}

In the particular case where $n=4$, writing $g = e^{2w} g_0$, the conformal transformation laws for $Q_g$ and $P_g$ are given by
\[
Q_{g}
    = e^{-4w}\left(\tfrac{1}{2} P_{g_0} w + Q_{g_0} \right)\qquad\mbox{ and }\qquad P_{e^{2w}{g_0}}(v)=e^{-4w}P_{g_0}(v).
\]

Inspired by the classical Yamabe problem, the constant $Q$-curvature problem asks whether it is possible to find a metric in the conformal class of $g_0$ with constant $Q$-curvature. By the transformation law for the $Q$-curvature stated above, solving this problem is equivalent to finding a function satisfying a fourth-order partial differential equation.  In dimensions $n \neq 4$, this is equivalent to finding a positive smooth solution $u$ of
\begin{equation}\label{eq105}
P_{g_0} u = \lambda\, u^{\frac{n+4}{n-4}},    
\end{equation}
where $\lambda$ is a constant. In dimension $n = 4$, the problem is equivalent to finding a smooth function $w$ satisfying
\begin{equation}\label{eq106}
    P_{g_0} w + 2 Q_{g_0} = \lambda\, e^{4w}.
\end{equation}

Although one can easily verify that the constant $Q$-curvature problem is variational, and that there has been intense development of the techniques required to treat such problems, establishing existence results has proven to be highly challenging due to its fourth-order nature. In particular, the problem remains open in full generality.

One of the reasons why the equation \eqref{eq105} is not fully understood is the lack of a maximum principle. To the best of our knowledge, the first result in this direction was obtained by Qing and Raske~\cite{MR2232210}. On locally conformally flat manifolds with positive scalar curvature, and under suitable assumptions, they showed that any nontrivial nonnegative solution to \eqref{eq105} must be strictly positive. As a byproduct, in the case $\lambda>0$, they also obtained the existence of positive solutions, as well as a compactness result for the corresponding solution set. See also \cite{MR2119034,MR2558328}. A crucial result was later achieved by Gursky and Malchiodi~\cite{MR3420504}, who proved a maximum
principle for the Paneitz operator under the assumptions that the scalar curvature is nonnegative,
$R_g \geq 0$, and the $Q$-curvature is semipositive, that is, $Q_g \geq 0$ with $Q_g > 0$ somewhere. As a consequence, using a non-local flow, they obtained the existence of positive solutions to \eqref{eq105} for a positive constant $\lambda$. These hypotheses were subsequently improved by Hang and Yang~\cite{MR3518237}; see also \cite{MR3509928}. Their approach relies on the positivity of the Yamabe invariant $Y(M,g)$ (see Section \ref{sec007} for the definition) and on the semi-positivity of the $Q$-curvature. When the constant $\lambda$ is negative, the study of equation \eqref{eq105} becomes delicate. Bettiol, Piccione, and Sire~\cite{MR4251294} observed that even nonisometric conformal metrics with the same constant (negative) $Q$-curvature may exist. This contrasts with the Yamabe problem, where metrics with constant negative scalar curvature are unique within their conformal class.

Due to the different nature of equation \eqref{eq106} in the case $n = 4$, the existence theory for the constant $Q$-curvature problem in this dimension is treated separately. For details in this setting, we refer the reader to~\cite{MR2964636,MR1338677,MR2456884,MR2248155}. We also note that the analysis in dimension $3$ differs substantially from that in higher dimensions; see, for instance, \cite{MR3618119} and the references therein.

As mentioned above, the constant $Q$-curvature problem has a variational structure. For dimensions $n \geq 5$, if we denote by $\mathcal M$ the space of all Riemannian metrics on $M$, the problem is associated with the normalized total $Q$-curvature functional $\mathcal E : \mathcal M \to \mathbb R$, defined by
\[
\mathcal E(g)
    = \operatorname{Vol}(M,g)^{-\frac{n-4}{n}}
      \int_{M} Q_{g}\, dv_{g}.
\]
Restricting the functional $\mathcal E$ to the conformal class $[g] := \{u^{\frac{4}{n-4}} g : u \in C^\infty(M),\ u>0\}$, we obtain the energy functional $\mathcal E_g : [g] \to \mathbb R$, associated with the PDE \eqref{eq105}, given by
\begin{equation}\label{eq101}
    \mathcal E_g(u)
    = \mathcal E\!\left(u^{\frac{4}{n-4}}g\right)
    = \frac{2}{n-4}\,
      \|u\|_{L^{\frac{2n}{n-4}}(M,g)}^{-2}
      \langle P_g u, u \rangle_{L^2},
\end{equation}
where 
\begin{equation}\label{eq023}
    \langle P_g u, u \rangle_{L^2}
    := \int_M \left( (\Delta_g u)^2
        - a(n)\, \operatorname{Ric}_g(\nabla_g u, \nabla_g u)
        + b(n)\, R_g\, |\nabla_g u|^2
        + c(n)\, Q_g\, u^2
    \right) dv_g.
\end{equation}

Gursky, Hang, and Lin \cite{MR3509928} introduced the following conformal invariants in the fourth-order context
$$Y_4^+(M,g)=\inf\{\mathcal E_g(u):u\in C^\infty(M),\; u>0\}$$
and 
\begin{equation}\label{eq075}
    Y_4(M,g)=\inf\{\mathcal E_g(u):u\in H^2(M)\backslash\{0\}\}.
\end{equation}

As observed in \cite{MR3509928}, a strict inequality may occur in general, due to the fourth-order nature of the Paneitz operator. By standard elliptic theory, we have $Y_4(M,g) > 0$ if and only if the first eigenvalue $\lambda_1(P_g)$ is positive, that is, if and only if $P_g$ is positive definite. When the Yamabe invariant $Y(M,g)$ is positive, the authors of \cite{MR3509928} introduced another conformal invariant, denoted by $Y_4^*(M,g)$, and defined by
$$Y_4^*(M,g)=\inf\{\mathcal E(\widetilde g):\widetilde g\in[g]\mbox{ and }R_{\widetilde g}>0\}.$$
Clearly, $Y_4(M,g)\leq Y_4^+(M,g)\leq Y_4^*(M,g)$.

The main result in \cite{MR3509928} states that on any closed Riemannian manifold $(M,g)$ of dimension at least $6$, if $Y(M,g) > 0$ and $Y_4^*(M,g) > 0$, then there exists a metric within the conformal class of $g$ whose scalar curvature and $Q$-curvature are both positive. In particular, they obtained a positive solution to \eqref{eq105} with constant $\lambda > 0$, and showed that in this case $Y_4(M,g) = Y_4^{+}(M,g) = Y_4^{*}(M,g)$. The method applied in \cite{MR3509928} is the method of continuity, and the restriction on the dimension appears in both the open and closed parts of the argument. It is expected that this result should hold in dimension $5$, however, to the best of our knowledge, this remains an open question.

At this point, it is important to highlight that the results obtained in \cite{MR3509928,MR3420504} play a crucial role in our argument, particularly due to the fourth-order nature of the problem. We use the former to conformally deform the background metric to one with positive scalar curvature and positive $Q$-curvature, while the maximum principle established by Gursky and Malchiodi~\cite{MR3420504} is employed to guarantee the positivity of the resulting solution.


\subsection{Compactness for \texorpdfstring{$Q$}{Lg}-curvature and the main result}

In light of the significant advances in the existence theory for the $Q$-curvature equation \eqref{eq105}, in parallel with the Yamabe problem, a natural question is to describe the full set of positive solutions to this problem. The first result in this direction was obtained by Hebey and Robert~\cite{MR2119034} in the locally conformally flat setting, assuming the Paneitz operator is of {\it strong positive type}. In the same setting, Qing and Raske~\cite{MR2232210} established compactness under the assumptions that $(M,g)$ is not conformal to the round sphere, that $Y(M,g)>0$ and $Y_4^+(M,g)>0$, and that the Poincaré exponent is below $(n-4)/2$.

 Later, inspired by the ideas developed for the scalar curvature counterpart in \cite{MR2425176,MR2472174}, Wei and Zhao~\cite{MR3016505} showed that compactness for the constant $Q$-curvature problem fails in dimensions $n \geq 25$, the same threshold as in the Yamabe problem. They constructed a metric on $\mathbb{S}^n$ that admits an $L^\infty$-unbounded family of solutions to \eqref{eq105} with $\lambda > 0$. The general idea is to look for positive solutions that are small perturbations of the standard bubble. To overcome the lack of a maximum principle, they introduced a weighted $L^\infty$ norm to ensure that, if the error term is sufficiently small, then the perturbation remains positive everywhere.

Afterwards, Li and Xiong~\cite{MR3899029} investigated the compactness problem for equation \eqref{eq105}. For $\lambda < 0$, they proved that the set of solutions is compact in the $C^{4}$ topology in all dimensions $n \geq 5$, without any extra assumption. For $\lambda > 0$, assuming that the Riemannian manifold is not conformally equivalent to the round sphere, that the kernel of the Paneitz operator is trivial, and that its Green's function is positive, they proved compactness in the $C^{4}$ topology under any of the following additional assumptions:
\begin{itemize}
    \item the first eigenvalue of the conformal Laplacian is positive and $(M,g)$ is locally conformally flat or $n=5,6,7$;
    \item $5 \leq n \leq 9$ and the positive mass theorem holds for the Paneitz operator;
    \item $n \geq 8$ and the Weyl tensor does not vanish anywhere.
\end{itemize}

Following Schoen's outline of the proof of compactness to the Yamabe problem, Li \cite{MR4028770} established $C^{4,\alpha}$-compactness in dimensions $5 \leq n \leq 7$ under the assumptions that $R_g \geq 0$, $Q_g \geq 0$ with $Q_g > 0$ at some point, and that $(M,g)$ is not conformally equivalent to the round sphere. Very recently, Gong, Kim, and Wei~\cite{gong2025compactness} proved compactness for Riemannian manifold not conformally equivalent to the round sphere in dimensions $5 \leq n \leq 24$, under the assumption that $Q_g\geq 0$ and $Q_g>0$ somewhere, and $Y(M,g)>0$. Their work also addresses a sixth-order conformally invariant equation, and they show that the behavior of the dimension threshold is quite different in this setting. In fact, they proved that in the sixth-order case compactness holds in dimensions $7 \leq n \leq 26$, whereas a blow-up example exists for all $n \geq 27$. See also the compactness result in \cite{MR2248155} for equation \eqref{eq106}, and \cite{MazumdarPremoselli2025} for a compactness result for higher-order $Q$-curvature.

It is noteworthy that in \cite{MR3518237} the authors proved a $C^\infty$ compactness result, in all dimensions $n \geq 5$, for the set of minimizers $u$ of \eqref{eq101}, under the assumptions that $Y(M,g) > 0$, $Y_4(M,g) > 0$, $Q_g$ is semipositive, and that $(M,g)$ is not conformally diffeomorphic to the round sphere.


Motivated by these observations, we now turn to our main result. We will normalize the constant $Q$-curvature to be that of the round sphere $\mathbb S^{n}$, which is equal to $n(n^{2}-4)/8$. In this case the constant in \eqref{eq105} is $\lambda=d(n)$. Denote the set $\mathfrak{M}_g=\left\{\widetilde g\in[g]: Q_{\widetilde g}=n(n^2-4)/8\right\}$.

Our main result in this paper extends the results obtained in \cite{Marques2015}, providing valuable additional information about the full set of solutions to the $Q$-curvature problem. It reads as follows:

\begin{theoremletter}\label{teo008}
    Let $(M^n,g_0)$ be a closed Riemannian manifold of dimension $n\geq 25$ satisfying $Y(M,g_0)>0$ and $Y_4(M,g_0)>0$. Then for any $\varepsilon>0$ there exists a smooth metric $g$ with $\|g-g_0\|_{C^1(M,g_0)}<\varepsilon$, such that the set $\{\overline g\in\mathfrak{M}_{g}:\mathcal E(\overline g)\geq \ell\}$ is infinite for all $\ell\in\mathbb N$. Moreover, there exists a sequence of smooth metrics $(g_k)$ conformal to $g$ such that $Q_{g_k}=n(n^2-4)/8$ and $\operatorname{Vol}(M,g_k)\to \infty$ as $k\to\infty$.
\end{theoremletter}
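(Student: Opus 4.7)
The plan is to extend Marques's scalar-curvature construction \cite{Marques2015} to the fourth-order $Q$-curvature setting by gluing multiple bubbles onto a carefully perturbed background metric. First, invoking \cite{MR3509928}, I would conformally deform $g_0$ inside its conformal class, staying $C^1$-close, to a metric with $R_g > 0$ and $Q_g > 0$, so that by \cite{MR3420504} the Paneitz operator $P_g$ satisfies a maximum principle used later to guarantee positivity of the constructed solutions. Then, at a chosen point $p \in M$, I would perturb this metric by a compactly supported symmetric $2$-tensor $h$, arranged in $g$-conformal normal coordinates so that $g = g_0 + h$ is locally flat to a sufficiently high order at $p$ (vanishing of Weyl-type tensors), while retaining $\|h\|_{C^1}$ arbitrarily small. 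The dimension hypothesis $n \geq 25$ enters here, in matching the order of flatness with the accuracy needed to absorb the bubble-interaction error, mirroring the mechanism in \cite{Marques2015}.

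For each integer $k \geq 1$, the ansatz is a sum of $k$ standard $Q$-curvature bubbles,
$$ W_{\boldsymbol\varepsilon, \boldsymbol\xi}(x) = \sum_{i=1}^{k} \chi(x)\, U_{\varepsilon_i, \xi_i}(x), \qquad U_{\varepsilon, \xi}(x) = \left( \frac{\varepsilon}{\varepsilon^{2} + |x-\xi|^{2}} \right)^{(n-4)/2}, $$
where the $\xi_i$ lie in a small ball around $p$ and are mutually well separated, the $\varepsilon_i$ are small compared with the mutual distances, $\chi$ is a localizing cutoff, and the whole expression is written in the conformal normal chart at $p$. Searching for a solution $u = W_{\boldsymbol\varepsilon, \boldsymbol\xi} + \phi$ of $P_g u = d(n)\, u^{(n+4)/(n-4)}$, I would then run a Lyapunov--Schmidt reduction: modulo the approximate $(n+1)k$-dimensional kernel of the linearized Paneitz operator at $W_{\boldsymbol\varepsilon, \boldsymbol\xi}$ (spanned by the translation and dilation Jacobi fields of each bubble), the projected linearization is invertible on a weighted $L^\infty$ space tailored to the bubble decay, in the spirit of \cite{MR3016505}. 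This yields a unique small $\phi = \phi(\boldsymbol\varepsilon, \boldsymbol\xi)$ for which $u$ solves the PDE up to a finite-dimensional Lagrange-multiplier term.

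Substituting the ansatz into the energy $\mathcal{E}_g$ produces a reduced functional on $(\boldsymbol\varepsilon, \boldsymbol\xi)$ whose expansion consists of $k$ copies of the bubble self-energy, a negative pairwise bubble-interaction term (large when two bubbles collide), and a positive contribution coming from the perturbation $h$. The core of the argument is to design $h$ and choose a symmetric configuration of parameters so that these three contributions balance at a critical configuration of the reduced functional for every prescribed $k$. This produces a sequence $(u_k)$ of positive smooth solutions; since $u_k$ carries $k$ bubbles of essentially unit $L^{2n/(n-4)}$-mass, the conformal metrics $g_k = u_k^{4/(n-4)} g$ satisfy $Q_{g_k} = n(n^2-4)/8$ with arbitrarily large energy and unbounded volume, which yields both assertions of the theorem.

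The main obstacle I anticipate is twofold. First, the absence of a pointwise maximum principle for $P_g$ at the level of the linearized equation makes both the invertibility of the projected linearization and the preservation of positivity of $W + \phi$ considerably more delicate than in the second-order Yamabe case; this must be handled through multi-bubble weighted $L^\infty$ estimates with careful control of the cross-terms between distinct bubbles. Second, and more substantively, implementing Marques's mechanism in the fourth-order setting requires a precise curvature expansion of $P_g$ around a bubble in order to pin down the sign and exact size of the $h$-contribution to the reduced energy; it is in this algebraic step that the threshold $n \geq 25$ appears, exactly paralleling its appearance in \cite{Marques2015}.
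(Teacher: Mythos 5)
There is a genuine gap in your treatment of the perturbation $h$, and it sits at the heart of the mechanism. You want to choose $h$ so that $g = g_0 + h$ is ``locally flat to a sufficiently high order at $p$ (vanishing of Weyl-type tensors),'' and simultaneously you want $h$ to supply the term in the reduced energy that balances the bubble self-energy and interactions. These two demands are incompatible: a perturbation that flattens the metric contributes nothing to the reduced functional at the order where the balancing must happen. The actual construction separates this into two steps with the \emph{opposite} sign on the Weyl part. First one replaces $g_0$ by a $C^1$-close metric $g_s$ that is exactly Euclidean in a geodesic ball $B_s(p)$, by Schoen's cutoff of the angular factor of the metric in polar normal coordinates; this is $C^1$-small but $C^2$-large, which is precisely why the theorem cannot be strengthened to $C^2$-closeness. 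Only afterwards is a trace-free tensor $h$ introduced, supported in the flattened ball, whose quadratic coefficients $H_{ik}$ obey the algebraic Weyl symmetries and are chosen \emph{nonvanishing}. It is exactly this injected, non-trivial Weyl-type tensor that makes the Wei--Zhao auxiliary reduced energy $F$ attain a strict negative local minimum at $(0,1)$, which in turn forces a critical point of $\mathcal{F}_g$; flattening the metric at the bubbling points would destroy the mechanism. The threshold $n \ge 25$ arises from the sign and coercivity analysis of the quadratic form attached to $H$ in Wei--Zhao's Section 11, not from matching an order of flatness to an expansion order.

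There is also a substantive route difference worth flagging: you propose to invert the projected linearization in a bubble-weighted $L^\infty$ space following Wei--Zhao, while also invoking the Gursky--Malchiodi maximum principle for positivity. The paper carries out the entire Lyapunov--Schmidt reduction in $W^{2,2}(M,g)$, proving coercivity of the quadratic form $\mathcal{H}_g$ on the orthogonal complement of the approximate kernel and then applying Lax--Milgram and a contraction mapping; the maximum principle (together with the conformal normalization $R_{g_0}>0$, $Q_{g_0}>0$ from Gursky--Hang--Lin) is then what supplies positivity of the constructed solution. The Sobolev framework is explicitly presented as a simplification: Wei--Zhao's weighted-$L^\infty$ norm existed solely to force the perturbative error small enough pointwise to preserve positivity, so once the maximum principle is available that machinery is redundant, and using both is internally inconsistent with the stated strategy.
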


We remark that, as in the scalar curvature setting \cite{Marques2015}, it is not possible to improve Theorem \ref{teo008} to achieve $C^2$-closeness. In fact, if we consider a Riemannian manifold $(M,g_0)$ whose Weyl tensor is nonvanishing everywhere, then any metric $g$ sufficiently close to $g_0$ in the $C^2$ topology also has a Weyl tensor that is nonvanishing everywhere. Using the result in \cite{MR3509928}, we obtain that the metric satisfies the hypotheses of \cite[Theorem 1.1]{MR3899029}, which would imply that $\mathfrak{M}_{g}$ is compact.

\subsection{Background on the Yamabe problem}\label{sec007}

The constant $Q$-curvature problem is a fourth-order analogue of the well-known Yamabe problem. Given a closed Riemannian manifold $(M,g)$ of dimension $n \geq 3$, the Yamabe problem asks whether it is possible to find a conformal metric $\widetilde g$ with constant scalar curvature. If one writes $\widetilde g = u^{\frac{4}{n-2}} g$, the existence of such a metric reduces to finding a positive solution $u$ of the equation
\begin{equation}\label{eq107}
    L_g(u) = R_{\widetilde g}\, u^{\frac{n+2}{n-2}},
\end{equation}
where $L_g = -\frac{4(n-1)}{n-2}\,\Delta_g + R_g$ is the so-called conformal Laplacian. The affirmative answer to the Yamabe problem was established through the combined works of Yamabe \cite{MR125546}, Trudinger \cite{MR240748}, Aubin \cite{MR431287}, and Schoen \cite{MR788292}. For a comprehensive discussion of the problem, we refer the reader to \cite{MR888880}.

This is a variational problem, and the basic idea of the proof is to show that a minimizer for the corresponding functional exists. This is equivalent to showing that the Yamabe invariant, defined by
\[
Y(M,g)
    = \inf_{\widetilde g \in [g]}
      \mathcal{Y}(\widetilde g),
\]
is achieved,
where $\mathcal Y(\widetilde g)$ is the Yamabe energy of $\widetilde g$ given by $\mathcal Y(\widetilde g)=\operatorname{Vol}(M,\widetilde g)^{-\frac{n-2}{n}}      \int_M R_{\widetilde g}\, dv_{\widetilde g}$. Clearly, by definition, $Y(M,g)$ is a conformal invariant. 

For a conformal class with a nonpositive Yamabe invariant, it is well known that the Yamabe problem admits a unique solution among metrics of unit volume. A natural question, then, is how the set of solutions to \eqref{eq107} behaves when $Y(M,g) > 0$. In a topics course at Stanford in 1988, Schoen formulated the {\it Compactness Conjecture}, which asserts that the set of solutions to the Yamabe problem is compact, provided the manifold is not conformally equivalent to the standard sphere; see \cite{MR1144528,MR1173050}. The round sphere is special because its group of conformal transformations is non-compact. In \cite{MR1173050}, Schoen proved the compactness conjecture for every locally conformally flat manifold that is not conformally diffeomorphic to the round sphere. He also suggested a strategy to establish compactness in the non-locally conformally flat setting.

Over the years, several partial but important results were obtained, providing affirmative answers to the compactness conjecture in various settings, either under low-dimensional assumptions or under additional hypotheses in higher dimensions; see \cite{MR2041549,MR2197144,MR2164927,MR2309836}. The compactness conjecture was affirmatively resolved in the general case by Khuri, Marques, and Schoen \cite{MR2477893}, provided the dimension satisfies $n\leq 24$. Their approach was based on the {\it Weyl Vanishing Conjecture}, which they proved to hold in these dimensions. 

Surprisingly, Brendle \cite{MR2425176} has constructed examples of Riemannian metrics on spheres of dimension at least 52 for which the compactness statement fails. In a subsequent paper, Brendle and Marques \cite{MR2472174} extended these examples to the dimensions $25\leq n\leq 51$. In \cite{MR2551136}, Marques extended the method from \cite{MR2425176} to show that the Weyl Vanishing Conjecture fails in all dimensions greater than $24$. 

The metrics constructed in \cite{MR2425176,MR2551136,MR2472174} have constant scalar curvature, and their Yamabe energies are smaller than the Yamabe invariant of the round sphere, $Y(\mathbb{S}^n, g_{\rm sph})$. In 1987, Kobayashi \cite{MR919505} proved the existence of metrics within any conformal class with positive Yamabe invariant, whose Yamabe energies can be arbitrarily large and whose scalar curvatures can be made arbitrarily close to a constant.  Pollack in \cite{MR1266473} constructed metrics with constant scalar curvature and arbitrarily large Yamabe energies. It is important to note that both results hold for any dimension $n \geq 3$, and the metrics constructed by Pollack are not within the conformal class of the background metric.

Later, Berti and Malchiodi \cite{MR1814428} extended the method developed in \cite{MR1719213} to prove the existence of $C^k$ metrics on $\mathbb{S}^n$, arbitrarily $C^k$-close to the round metric, with $n \geq 4k+1$, for which the compactness conjecture fails.

Finally, Marques \cite{Marques2015} extended the method developed in \cite{MR2425176,MR2472174} to construct Riemannian metrics with arbitrarily finitely many blow-up points, provided the Riemannian manifold has positive Yamabe invariant and dimension at least $25$. In particular, there exist metrics with constant scalar curvature and both arbitrarily large Yamabe energy and volume. Very recently, Gong and Li \cite{MR4937973} used a different method to construct a metric on $\mathbb{S}^n$ that contains a sequence of metrics within its conformal class with constant scalar curvature and unbounded volume, provided $n \geq 25$.

\subsection{Strategy of the proof}

As in the previously mentioned works dealing with noncompactness, our strategy follows the outline introduced by \cite{MR2425176,MR2472174}, where counterexamples were constructed in the sphere by perturbing one single standard bubble. 

 Following the ideas developed in \cite{Marques2015}, our construction looks for positive solutions that are small perturbations of \emph{multiple standard bubbles}, meaning that we cut and glue finitely many standard bubbles along disjoint balls. The overall idea is to apply a Lyapunov-Schmidt type argument, under the assumption that the conformal invariants $Y(M,g_0)$ and $Y_{4}(M, g_{0})$ are positive. 
 
The first step of the proof is to find a $C^1$-close metric $g_s$ that preserves the sign of the conformal invariants and is conformally flat in some geodesic ball, where the bubbles are localized. The next step then is to reduce the search of solutions to finding critical points of a certain energy functional $\mathcal{F}_g: \mathbb{R}^n \times (0,\infty)\to\mathbb{R}$, and then locate the critical points of $\mathcal F_g$ by studying the critical points of an auxiliary functional $F$, where $g$ is a certain perturbation of $g_s$ in the geodesic ball. For a suitable choice of parameters, the functional $F$ is sufficiently close to $\mathcal F_g$, allowing the transfer of information between them. At this stage, we managed to guarantee that the multiple bubbles remain non-interacting while we estimate both the energy and the reduced energy functional, thereby obtaining the required bounds in our analysis.

Since we are dealing with a fourth-order problem, several additional computational challenges arise in the setting of the $Q$-curvature. For instance, the associated energy functional $\mathcal{F}_g(\xi, \varepsilon)$ is substantially more difficult to analyze. In this part, we rely on the machinery already developed by Wei and Zhao in \cite{MR3016505}, who constructed a counterexample using a single standard bubble.

Another crucial part of the construction is to prove that the critical point obtained is positive. A novelty of our contribution, compared to Wei and Zhao, is that, thanks to the maximum principle established by Gursky and Malchiodi \cite{MR3420504} and the results of \cite{MR3509928,MR3016505}, we can guarantee the positivity of the solution. This contrasts with \cite{MR3016505}, which employs a weighted $L^{\infty}$-norm to control the perturbative term, ensure its smallness, and therefore ensure positivity of the solution they construct.

\subsection{Organization of the paper}
In Section \ref{sec002}, we provide some preliminaries by constructing a $C^1$-close metric that is conformally flat in a sufficiently small geodesic ball. We also define our approximate solution by gluing together multiple bubbles. In Section \ref{sec003}, we perform a Lyapunov-Schmidt reduction to construct a solution to \eqref{eq105} as a small perturbation of the approximate solution. In Section \ref{sec004}, we establish some results in $\mathbb{R}^n$ that will be necessary in the subsequent section. In Section \ref{sec006}, we introduce the perturbed metric and define the set of parameters, then derive an expansion for the energy functional and define the auxiliary energy functional. Finally, in Section \ref{sec008}, we prove the main theorem of this work.
\\

\acknowledgement{RC is supported by the Centro de Modelamiento Matemático (CMM), BASAL project FB210005 of ANID–Chile, and by Fondecyt grant \# 11230872. ASS is supported by CNPq grants 408834/2023-4, 312027/2023-0, 444531/2024-6, 403770/2024-6, 400078/2025-2 and FAPITEC/SE: 019203.01303/2024-1. This work was completed while ASS was a Visiting Fellow at Princeton University, and he thanks Fernando Codá Marques for his warm hospitality.}

\section{Preliminaries}\label{sec002}

In this preliminary section, we first describe the perturbation introduced in \cite{MR788292} (see also \cite{Marques2015,MR1266473}). The goal is to construct a metric, $C^1$-close to the background metric, which is conformally flat in a neighborhood of a fixed point. We then introduce the approximate solution and establish an $L^{\frac{2n}{n+4}}$ estimate. 

By combining the results from \cite{MR3509928} with the assumptions $Y(M,g_0)>0$ and $Y_4(M,g_0)>0$, we may henceforth assume throughout this work that the background metric $g_0$ has positive scalar curvature and positive $Q$-curvature. Moreover, throughout this work, we will assume that $n \geq 25$, unless explicitly stated otherwise. Different constants will be denoted by the letters $c$ or $C$, possibly even within the same line.

\subsection{Change of the metric in a geodesic ball}\label{sec005}

Fix a point $p \in M$ and consider polar normal coordinates $(r,\theta)$ centered at $p$ on the geodesic ball $B_{2s}(p)$, for some $s>0$.  In these coordinates, the background metric $g_0$ takes the form $g_0 = dr^2 + r^2 h(r,\theta),$ where, for each $r \ge 0$, $h(r,\theta)$ is a Riemannian metric on $\mathbb{S}^{n-1}$.  It is well known that if $h_0$ denotes the standard round metric on $\mathbb{S}^{n-1}$, then $h(0,\theta) = h_0(\theta)$ and $\partial_r h(0,\theta) = 0$. Moreover, the metric $dr^2 + r^2 h_0$ corresponds to the Euclidean metric $g_{\operatorname{euc}}$. 

Let $\eta:\mathbb R\to[0,1]$ be a non-increasing smooth function such that
\begin{equation}\label{eq096}
    \eta(t)=\begin{cases}
    1, & t\leq 1\\
    0, & t\geq 2
\end{cases}
\end{equation}
and \(|\eta^{(i)}(t)| \leq ct^{-i}\) for all \(i \in \{1,\ldots,4\}\), for some constant $c>0$. Given $s>0$ and a point $q \in M$, define the smooth cut-off function $\eta_{(s,q)}(x):=\eta\!\left( d_g(q,x)/s \right)$. 

We now define a perturbed smooth metric $g_s$ on $M$ as follows.  
Set $g_s = g_0$ on $M \setminus B_{2s}(p)$, and inside $B_{2s}(p)$ write $g_s = dr^2 + r^2 \big( \eta_{(s,p)}\, h_0 + (1 - \eta_{(s,p)})\, h \big)$. It follows immediately that
\begin{equation}\label{eq035}
    g_s=\begin{cases}
    g_{\operatorname{euc}} & \mbox{in } B_s(p),\\
    g_0 & \mbox{in }M\backslash B_{2s}(p),
\end{cases}
\end{equation}
and additionally
\begin{equation}\label{eq033}
    \|g_0-g_s\|_{C^i(M,g_0)}\leq Cs^{2-i},\quad i=0,1,2,
\end{equation}
for some positive constant $C$ depending only on $g_0$. Since the Ricci curvature and the scalar curvature depend only up to the second derivatives of the metric, it follows that $|\operatorname{Ric}_{g_s}|$ and $|R_{g_s}|$ remain uniformly bounded. On the other hand, the $Q$-curvature involves derivatives of the metric up to fourth order. By construction, we have $Q_{g_s} \equiv Q_{g_0}$ on $M \setminus B_{2s}(p)$, while $Q_{g_s} \equiv 0$ in $B_s(p)$. Moreover, on the annulus $B_{2s}(p)\setminus B_s(p)$ the only contribution to the variation of $Q_{g_s}$ comes from derivatives of the cut-off function $\eta_{(s,p)}$, which satisfy estimates of the form $|\partial^i \eta_{(s,p)}| \le C s^{-i}$ for $i \in \{1,\ldots,4\}$.  Hence, $|Q_{g_0}(q) - Q_{g_s}(q)| \leq C s^{-2}$ for all $q \in B_{2s}(p)\setminus B_s(p)$. This implies that
\begin{equation}\label{eq034}
    \lim_{s\to 0}\|Q_{g_0}-Q_{g_s}\|_{L^{\frac{n}{4}}(M,g_0)}=0.
\end{equation}

\begin{proposition}\label{propo001}
    As $s \to 0$, the perturbed metrics $g_s$ satisfy the following convergence properties:
\begin{enumerate}
    \item[(a)] $Y(M,g_s) \longrightarrow Y(M,g_0)$;
    \item[(b)] $Y_4(M,g_s) \longrightarrow Y_4(M,g_0)$;
    \item[(c)] $Y_4^{+}(M,g_s) \longrightarrow Y_4^{+}(M,g_0)$.
\end{enumerate}
\end{proposition}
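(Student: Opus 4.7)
The plan is to deduce each convergence by matching upper and lower bounds via approximate minimizers, using a uniform comparison estimate between the functional evaluated at $g_s$ and at $g_0$. I first record the ingredients furnished by Section \ref{sec005}. Because of \eqref{eq033}, the volume form $dv_{g_s}$ and the inverse metric $g_s^{ij}$ converge uniformly to their $g_0$ counterparts. Since $R_{g_s}$ and $\operatorname{Ric}_{g_s}$ are uniformly bounded and coincide with $R_{g_0}$ and $\operatorname{Ric}_{g_0}$ outside the shrinking ball $B_{2s}(p)$, dominated convergence gives $\|R_{g_s}-R_{g_0}\|_{L^q(M,g_0)}+\|\operatorname{Ric}_{g_s}-\operatorname{Ric}_{g_0}\|_{L^q(M,g_0)}\to 0$ for every finite $q$. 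Finally, \eqref{eq034} yields $\|Q_{g_s}-Q_{g_0}\|_{L^{n/4}(M,g_0)}\to 0$.

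For part (a), the Yamabe functional depends on the metric only through $dv_g$, $|\nabla u|_g^2$, and $R_g u^2$. Pairing $R_{g_s}-R_{g_0}$ with $u^2$ via H\"older with exponents $n/2$ and $n/(n-2)$ and combining with the Sobolev embedding $H^1\hookrightarrow L^{2n/(n-2)}$, one obtains $\mathcal{Y}_{g_s}(u)=\mathcal{Y}_{g_0}(u)+o(1)$ uniformly on sets of $u$ with $\|u\|_{L^{2n/(n-2)}(g_0)}=1$ and $\mathcal{Y}_{g_0}(u)$ bounded. Applying this to a near-minimizer of $\mathcal{Y}_{g_0}$ gives $\limsup_s Y(M,g_s)\leq Y(M,g_0)$; applying it to a suitably normalized near-minimizer $u_s$ of $\mathcal{Y}_{g_s}$, whose $H^1$-norm is uniformly controlled by the boundedness of $\mathcal{Y}_{g_s}(u_s)$ and of $R_{g_s}$, yields the matching lower bound.

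For parts (b) and (c) I would run the analogous comparison for the Paneitz quadratic form \eqref{eq023} and the functional $\mathcal{E}_g$. The delicate term is $\int(\Delta_g u)^2\,dv_g$, since $g_s-g_0$ is only small in $C^1$ and not in $C^2$. The key observation is that $g_s\equiv g_0$ outside $B_{2s}(p)$, so $\Delta_{g_s}u-\Delta_{g_0}u$ is supported in the shrinking ball; writing $\Delta_g u = g^{ij}(\partial_i\partial_j u - \Gamma^k_{ij}\partial_k u)$ and using \eqref{eq033} one obtains the pointwise bound $|\Delta_{g_s}u-\Delta_{g_0}u|\leq Cs^2|D^2u|+Cs|Du|$ on this support, hence $\|\Delta_{g_s}u-\Delta_{g_0}u\|_{L^2}\leq Cs\,\|u\|_{H^2(M,g_0)}$. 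The remaining Ricci, scalar, and $Q$-curvature terms in \eqref{eq023} are handled by H\"older with exponents $n/2$, $n/2$, and $n/4$ respectively, paired with the Sobolev embeddings $H^2\hookrightarrow W^{1,2n/(n-2)}\hookrightarrow L^{2n/(n-4)}$, together with the $L^q$ convergences above. This yields $\mathcal{E}_{g_s}(u)=\mathcal{E}_{g_0}(u)+o(1)$ uniformly on admissible families of bounded $\mathcal{E}_{g_0}$-energy, from which (b) follows by matching approximate minimizers in $H^2\setminus\{0\}$; part (c) is identical except that the admissible class consists of strictly positive smooth functions, which is preserved by the construction of near-minimizers on either side.

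The main obstacle is exactly the $(\Delta_g u)^2$ term discussed above: one cannot use $C^2$-smallness of $g_s-g_0$, and instead one must trade it for the shrinking support together with the pointwise bounds \eqref{eq033}. For the lower bounds one also needs that minimizing sequences $u_s$ for $Y_4(M,g_s)$ (resp.\ $Y_4^+(M,g_s)$) remain in a bounded $H^2$-subset, which uses a G\aa rding-type coercivity for $P_{g_s}$ that follows from $Y_4(M,g_0)>0$ together with the upper bound established in the first step of each argument.
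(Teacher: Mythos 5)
Your proposal is correct, and the core ingredients coincide with the paper's: the $C^1$-closeness \eqref{eq033} plus the shrinking support of $g_s-g_0$ to control the second-order and curvature terms of \eqref{eq023}, and H\"older with \eqref{eq034} against $\|u\|_{L^{2n/(n-4)}}^2$ for the $Q$-term. Where you genuinely diverge is in how the comparison is transferred to the infima. The paper establishes, for \emph{every} $u$ at once, an inequality of the form $\langle P_{g_s}u,u\rangle \ge (1+d_s)\langle P_{g_0}u,u\rangle - o(1)\,\|u\|_{L^{2n/(n-4)}(M,g_0)}^{2}$ (treating the combined $\Delta$/Ricci/scalar block as multiplicatively comparable between $g_s$ and $g_0$), so after dividing by the $L^{2n/(n-4)}$-norms the inequality between the invariants follows directly, with no selection of minimizing sequences and no coercivity discussion; item (a) is simply quoted from \cite{Marques2015}. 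You instead bound the Laplacian discrepancy additively by $Cs\,\|u\|_{H^2(M,g_0)}^2$, which is the estimate that the $C^1$-closeness actually gives pointwise, and consequently you must restrict to near-minimizers with uniformly bounded $H^2$-norm and supply a G\aa rding-type bound for $P_{g_s}$ (uniform in $s$) to get that bound in the lower-bound direction. Both routes work; the paper's buys a cleaner passage to the infima at the cost of a rather coarse "$(1+d_s)$" comparison of the derivative block, while yours is more conservative about that term but needs the extra coercivity step. One small correction to your last remark: the uniform $H^2$-bound on near-minimizers of $\mathcal E_{g_s}$ does not really use $Y_4(M,g_0)>0$; the G\aa rding inequality for $P_{g_s}$ (uniformly bounded $\operatorname{Ric}_{g_s}$, $R_{g_s}$, and $\|Q_{g_s}\|_{L^{n/4}}$), interpolation of $\|\nabla u\|_{L^2}$, and the normalization $\|u\|_{L^{2n/(n-4)}}=1$ already give $\|\Delta u_s\|_{L^2}\le C$ once $\langle P_{g_s}u_s,u_s\rangle$ is bounded, which follows from your first-step upper bound alone.
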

\begin{proof}
The convergence of the Yamabe invariant stated in item (a) is established in \cite[Proposition~2.1]{Marques2015}. We now proceed to prove item~(b).

By \eqref{eq035} and \eqref{eq033}, there exists  $d_s \to 0$ as $s \to 0$ such that, for every smooth function $u$, we have
\begin{align*}
    &\int_M \Big((\Delta_{g_s} u)^2- a(n)\, \operatorname{Ric}_{g_s}(\nabla_{g_s} u,\nabla_{g_s} u)+ b(n)\, R_{g_s}\, |\nabla_{g_s} u|_{g_s}^2\Big) dv_{g_s}
\\
    &= (1 + d_s)\int_M \Big((\Delta_{g_0} u)^2- a(n)\, \operatorname{Ric}_{g_0}(\nabla_{g_0} u,\nabla_{g_0} u)+ b(n)\, R_{g_0}\, |\nabla_{g_0} u|_{g_0}^2\Big) dv_{g_0}.
\end{align*}

Also, there exists a function $h_s$ such that $dv_{g_s} = (1+h_s)\, dv_{g_0}$, where $\overline{h}_s := \sup_M |h_s| \to 0$ as $s \to 0$.  Thus, using H\"older's inequality and the fact that  $Q_{g_s}$ is uniformly bounded in $L^{\frac{n}{4}}(M,g_0)$, we obtain
\begin{align*}
    \langle P_{g_s} u , u \rangle_{L^2} & = (1+d_s)\, \langle P_{g_0} u , u \rangle_{L^2}+ c(n)\!\int_M \! Q_{g_s} u^2 (1+h_s)\, dv_{g_0} - c(n)(1+d_s)\!\int_M \! Q_{g_0} u^2\, dv_{g_0}\\
    &\ge (1+d_s)\, \langle P_{g_0} u , u \rangle_{L^2} + c(n)\!\int_M (Q_{g_s}-Q_{g_0})\,u^2\, dv_{g_0} - c(n)\overline{h}_s\!\int_M Q_{g_s}u^2\, dv_{g_0}\\
    &\qquad - c(n)d_s \int_M Q_{g_0}u^2\, dv_{g_0}\\
    &\ge (1+d_s)\, \langle P_{g_0}u , u\rangle_{L^2} - C(n,g_0)\!\left(\|Q_{g_s}-Q_{g_0}\|_{L^{\frac{n}{4}}(M,g_0)} + \overline{h}_s + d_s\right) \|u\|_{L^{\frac{2n}{n-4}}(M,g_0)}^{2}.
\end{align*}

This implies that
\begin{align*}
    \frac{\|u\|_{L^{\frac{2n}{n-4}}(M,g_s)}^{2}}
         {\|u\|_{L^{\frac{2n}{n-4}}(M,g_0)}^{2}}
    \, \mathcal{E}_{g_s}(u)
    \;\ge\;
    (1+d_s)\, \mathcal{E}_{g_0}(u)
    - C(n,g_0)\!\left(
        \|Q_{g_s}-Q_{g_0}\|_{L^{\frac{n}{4}}(M,g_0)}
        + \overline{h}_s + d_s
      \right).
\end{align*}

On the other hand, since
\[
    \frac{\|u\|_{L^{\frac{2n}{n-4}}(M,g_s)}^{2}}
         {\|u\|_{L^{\frac{2n}{n-4}}(M,g_0)}^{2}}
    \le (1+\overline{h}_s)^{\frac{n-4}{n}},
\]
we obtain
\begin{equation*}\label{eq025}
    (1+\overline{h}_s)^{\frac{n-4}{n}}\, Y_4^+(M,g_s)
    \;\ge\;
    (1+d_s)\, Y_4^+(M,g_0)
    - C\!\left(
        \|Q_{g_s}-Q_{g_0}\|_{L^{\frac{n}{4}}(M,g_0)}
        + \overline{h}_s + d_s
      \right).
\end{equation*}

Therefore,
\[
    \liminf_{s\to 0} Y_4^+(M,g_s) \;\ge\; Y_4^+(M,g_0).
\]

Similarly, one shows that
\[
    a_s\, Y_4^+(M,g_0)
    \;\ge\;
    b_s\, Y_4^+(M,g_s) + c_s,
\]
with $a_s \to 1$, $b_s \to 1$, and $c_s \to 0$ as $s \to 0$.  
Hence,
\[
    \limsup_{s\to 0} Y_4^+(M,g_s)
    \;\le\;
    Y_4^+(M,g_0),
\]
which concludes the proof of item (b). The proof of item (c) is analogous.
\end{proof}

For $s>0$ sufficiently small, Proposition~\ref{propo001} ensures that the metric $g_s$ satisfies $Y(M,g_s)>0$  and $Y_4(M,g_s)>0,$ provided that $Y(M,g_0)>0$ and $Y_4(M,g_0)>0$.

\subsection{Approximate Solution: The \texorpdfstring{$\ell$}{Lg}-bubbles}\label{sec001}

Suppose that $n\geq 5$. By the classical work of Lin \cite{MR1611691}, for every pair $(\xi,\lambda)\in\mathbb{R}^n\times(0,\infty)$, the family of functions
\begin{equation}\label{eq043}
w_{(\xi,\lambda)}(x) := \left(\frac{2\lambda}{\lambda^{2}+|x-\xi|^{2}}\right)^{\frac{n-4}{2}} = \lambda^{\frac{4-n}{2}}\,w_{0}\!\left(\frac{x-\xi}{\lambda}\right),
\end{equation}
where
\begin{equation}\label{eq047}
w_{0}(x):=\left(\frac{2}{1+|x|^{2}}\right)^{\frac{n-4}{2}},
\end{equation}
are solutions of the fourth-order equation
\begin{equation}\label{eq012}
\Delta^{2} w=d(n)\,w^{\frac{n+4}{n-4}}
\quad\text{in }\mathbb{R}^{n}.
\end{equation}

Moreover, it was shown in \cite{MR1611691} that every positive solution \( w \in H^{2}(\mathbb{R}^{n}) \) of \eqref{eq012} is of this form, for some pair \((\xi, \lambda)\). In other words, the family \eqref{eq043} characterizes all positive entire solutions, up to the natural translations and dilations. The function $w_{(\xi,\lambda)}$ is commonly referred to as a \emph{bubble}, and it satisfies  
\begin{equation}\label{eq005}
\int_{\mathbb R^n} w_{(\xi,\lambda)}^{\frac{2n}{n-4}}(x)\, dx  
= \left( \frac{8\, Y_4^+(\mathbb S^n, g_{\mathrm{can}})}{n(n^2-4)} \right)^{\frac{n}{4}}.    
\end{equation}

For each \(t>0\), we define 
\begin{equation}\label{eq009} 
\overline w_{( \xi,\lambda, t)}(x)=\eta_{(t,\xi)}(x) w_{( \xi,\lambda)}(x),
\end{equation}
where $\eta_{(t,\xi)}(x)=\eta\!\left(|x-\xi|/t\right)$ and $\eta$ is the cut-off function defined in \eqref{eq096}.

Let $(M,g_0)$ be a compact Riemannian manifold with positive scalar curvature  and positive $Q$-curvature.  Consider the perturbed metric $g_s$ defined by \eqref{eq035}, where $s>0$ is chosen smaller than the injectivity radius of $(M,g_0)$.

Let $R \in (0, s/4)$ and fix a positive integer $\ell$.  
Consider $\xi = (\xi_1,\ldots,\xi_\ell)$, $\varepsilon = (\varepsilon_1,\ldots,\varepsilon_\ell)$ and $r = (r_1,\ldots,r_\ell)$ where each $\xi_i \in \mathbb{R}^n$, and the parameters
$\varepsilon_i > 0$ and $r_i > 0$ are sufficiently small for 
$i=1,\ldots,\ell$, subject to the conditions $|\xi_i| < R - 2 r_i$ and $|\xi_i - \xi_j| > 2 (r_i + r_j)$ whenever $i\neq j$. This choice ensures that the open balls 
$B_{2 r_i}(\xi_i)$ and $B_{2 r_j}(\xi_j)$ are pairwise disjoint whenever 
$i \neq j$.  
Moreover, since the support of the function 
$\overline{w}_{(\xi_i,\varepsilon_i,r_i)}$ is contained in 
$B_{2 r_i}(\xi_i)$, we also have $\operatorname{supp}\big( \overline{w}_{(\xi_i,\varepsilon_i,r_i)} \big)    \subset B_{2 r_i}(\xi_i)    \subset B_{R}(0)$, for all $i = 1,\ldots,\ell$. Define the $\ell$-bubble $ W_{(\xi,\varepsilon,r)}\in C^\infty(M)$ by
\begin{equation}\label{eq050}
    W_{(\xi,\varepsilon,r)}(x)    =
    \begin{cases}
        \displaystyle \sum_{i=1}^{\ell} 
        \overline{w}_{(\xi_i,\varepsilon_i,r_i)}(x),
        & \text{if } x\in B_{2R}(p),\\[0.4em]
        0, 
        & \text{if } x \in M \setminus B_{2R}(p).
    \end{cases}
\end{equation}
Here, we consider normal coordinates in $B_{2R}(p)$ with respect to the metric $g_0$.

For $\alpha>0$ and $r=(r_1,\ldots,r_\ell)$, it will be convenient to consider 
the open set of parameters $\mathcal{D}_{(\alpha,r)} \subset 
(\mathbb{R}^{n})^\ell\times (0,\infty)^{\ell}$, defined by
\begin{align}\label{eq011}
    \mathcal{D}_{(\alpha,r)}
    :=
    \Big\{
        (\xi,\varepsilon):
        \ & \frac{\varepsilon_i}{r_i} < \alpha,
        \quad 
        |\xi_i - \xi_j| > 2(r_i + r_j)
        \ \text{for } i\neq j,
\\[-0.2em]
        & |\xi_i| < R - 2 r_i,
        \qquad
        \frac{1}{2} < \frac{\varepsilon_i}{\varepsilon_j} < 2
        \ \text{for all } i,j
    \Big\}.\nonumber
\end{align}

Consider now a smooth, trace-free, symmetric two-tensor $h$ on $\mathbb{R}^n$ 
satisfying
\begin{equation}\label{eq014}
    |h(x)| + |\partial h(x)| + |\partial^2 h(x)|
    + |\partial^3 h(x)| + |\partial^4 h(x)|
    \;\leq\; \alpha < 1,
\end{equation}
for all $x \in \mathbb{R}^n$, and such that $h(x)=0$ whenever $|x|\ge R$. We now define a metric $g$ on $M$ by
\begin{equation}\label{eq013}
    g(x)=\begin{cases}
        \exp(h(x)), & x \in B_s(p), \\[0.3em]
        g_s(x),     & x \in M \setminus B_s(p).
    \end{cases}
\end{equation}
It is straightforward to verify that, under the assumptions of Theorem~\ref{teo008}, the conformal invariants $Y(M,g)$, $Y_4^+(M,g)$, and $Y_4^*(M,g)$ remain positive for all sufficiently small $\alpha>0$.

Since \(R \in(0, s/4)\) and the metric \(g_s\) coincides with the Euclidean metric on \(B_s(p)\), it follows that \(g\) is a smooth metric on \(M\).  Moreover, because \(h\) is trace-free, we have \(dv_g = dv_{g_s}\) on \(M\).  In addition, by construction and by \eqref{eq014}, we obtain the estimate $|Q_g - Q_{g_s}|\;\le\;c(n,g_0)\, \alpha$. For this particular choice of metric, the following estimate concerning the \(\ell\)-bubble configuration holds.
\begin{proposition}\label{propo002}
Fix $\ell \in \mathbb{N}$ and $r=(r_1,\ldots,r_\ell)\in \mathbb{R}^\ell$ with $0 < r_i < \min\{1, R/2\}$ for all $i \in \{1,\ldots,\ell\}$. Then there exists a positive constant $c(n,\ell)$ such that, if $(\xi,\varepsilon)\in \mathcal{D}_{(\alpha,r)}$, one has
\[
\left\| P_gW_{(\xi,\varepsilon,r)} \;-\; 
d(n)\, W_{(\xi,\varepsilon,r)}^{\tfrac{n+4}{\,n-4}} \right\|_{L^{\tfrac{2n}{\,n+4}}(M,g)}\leq c(n,\ell)\,\alpha.
\]
\end{proposition}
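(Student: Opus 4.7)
The plan is to exploit the disjoint-support structure encoded in the definition of $\mathcal D_{(\alpha,r)}$. Since $|\xi_i-\xi_j|>2(r_i+r_j)$, the balls $B_{2r_i}(\xi_i)$ are pairwise disjoint and each $\overline w_i := \overline w_{(\xi_i,\varepsilon_i,r_i)}$ is supported in its ball, so $W_{(\xi,\varepsilon,r)}^{(n+4)/(n-4)} = \sum_i \overline w_i^{(n+4)/(n-4)}$ and, by linearity of the differential operator $P_g$, $P_g W_{(\xi,\varepsilon,r)} = \sum_i P_g \overline w_i$. Because the residuals
\begin{equation*}
\mathcal R_i := P_g \overline w_i - d(n)\,\overline w_i^{(n+4)/(n-4)}
\end{equation*}
also have pairwise disjoint supports, it suffices to establish an $O(\alpha)$ bound uniformly in $i$ and sum; moreover, since $h$ is trace-free, $dv_g = dx$ on $B_R(0)\subset B_s(p)$, so all $L^p$-norms may be computed with Euclidean measure. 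I would split $\mathcal R_i = \mathrm{(I)} + \mathrm{(II)}$ with $\mathrm{(I)}:=(P_g - \Delta^2_{\mathrm{euc}})\overline w_i$ and $\mathrm{(II)}:=\Delta^2_{\mathrm{euc}}\overline w_i - d(n)\overline w_i^{(n+4)/(n-4)}$, isolating the contributions of the metric perturbation and of the cut-off.

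For the cut-off term $\mathrm{(II)}$, I observe that on $B_{r_i}(\xi_i)$ we have $\eta_{(r_i,\xi_i)}\equiv 1$, so $\overline w_i = w_{(\xi_i,\varepsilon_i)}$, and Lin's classification gives $\Delta^2 w_{(\xi_i,\varepsilon_i)} = d(n)w_{(\xi_i,\varepsilon_i)}^{(n+4)/(n-4)}$; hence $\mathrm{(II)}$ is supported in the annulus $A_i := B_{2r_i}(\xi_i)\setminus B_{r_i}(\xi_i)$. On $A_i$ we have $|x-\xi_i|\sim r_i\gg\varepsilon_i$ (since $\varepsilon_i/r_i<\alpha<1$), which combined with \eqref{eq043} gives $|\partial^k w_{(\xi_i,\varepsilon_i)}|\lesssim\varepsilon_i^{(n-4)/2}r_i^{-(n-4)-k}$ for $k=0,\dots,4$. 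Using Leibniz together with $|\partial^j \eta_{(r_i,\xi_i)}|\lesssim r_i^{-j}$, every term appearing in $\mathrm{(II)}$ is bounded pointwise by $C(n)\varepsilon_i^{(n-4)/2}r_i^{-n}$, and since $\mathrm{Vol}(A_i)\sim r_i^n$ a direct integration gives
\begin{equation*}
\|\mathrm{(II)}\|_{L^{2n/(n+4)}} \le C(n)\,\bigl(\varepsilon_i/r_i\bigr)^{(n-4)/2} \le C(n)\,\alpha^{(n-4)/2} \le C(n)\,\alpha,
\end{equation*}
using $\alpha<1$ and $(n-4)/2\ge 1$ for $n\ge 25$.

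For the perturbation term $\mathrm{(I)}$, I would expand $P_g$ in the small trace-free tensor $h$ via $g = \exp h$. Since $\mathrm{Ric}_g$ and $R_g$ are schematically $O(\partial^2 h+(\partial h)^2)$, their first derivatives are $O(\partial^3 h)$, and $Q_g$ involves $\Delta R_g$ so is $O(\partial^4 h)$, the hypothesis \eqref{eq014} yields a schematic identity
\begin{equation*}
(P_g - \Delta^2_{\mathrm{euc}})u = \sum_{|\beta|\le 4} a_\beta(x)\,\partial^\beta u,\qquad \|a_\beta\|_{L^\infty(B_R(0))}\le C(n)\,\alpha.
\end{equation*}
Combining this with the scaling identity $\|\partial^k w_{(\xi,\varepsilon)}\|_{L^{2n/(n+4)}(\mathbb R^n)} = C(n,k)\,\varepsilon^{4-k}$ (finite for $n\ge 25$) together with the obvious bounds for the truncation produces $\|\mathrm{(I)}\|_{L^{2n/(n+4)}} \le C(n)\,\alpha$. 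Summing over $i=1,\dots,\ell$ then yields the claimed estimate with $c(n,\ell) = C(n)\,\ell$.

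The main technical obstacle is the algebraic bookkeeping behind $\mathrm{(I)}$: one has to unfold $P_g - \Delta^2_{\mathrm{euc}}$ term by term and verify that each coefficient really is controlled by $\alpha$. The most delicate contributions come from the divergence-form term $\operatorname{div}_g(a(n)\mathrm{Ric}_g(\nabla\cdot,\cdot))$, whose expansion introduces third-order derivatives of $h$ paired with first derivatives of $u$, and from the scalar term $c(n)Q_g u$, the sole source of $\partial^4 h$. Once this expansion is in hand everything reduces to the explicit scaling properties of the standard bubble, and combining the bounds for $\mathrm{(I)}$ and $\mathrm{(II)}$ completes the proof.
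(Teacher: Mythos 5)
Your proposal is correct and follows essentially the same route as the paper: split off the region where $W_{(\xi,\varepsilon,r)}$ vanishes, use the pairwise disjoint supports to reduce to a single bubble, bound the metric-perturbation part of the error pointwise by $h$ and its derivatives up to order four (hence by $\alpha$) paired with bubble derivatives whose $L^{2n/(n+4)}$ norms are uniformly controlled by scaling, and estimate the cut-off error on the annulus $B_{2r_i}(\xi_i)\setminus B_{r_i}(\xi_i)$ via the decay bounds, yielding a contribution of order $(\varepsilon_i/r_i)^{(n-4)/2}\le\varepsilon_i/r_i<\alpha$. The only cosmetic difference is your explicit $(\mathrm{I})/(\mathrm{II})$ splitting and the schematic $\sum_{|\beta|\le 4}a_\beta\partial^\beta$ formulation of $P_g-\Delta^2$, where the paper instead writes $P_g$ in the form \eqref{eq028} and cites \cite{MR4778469,MR3016505} for the same expansion, so the level of rigor is comparable.
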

\begin{proof}
    Fix $(\xi,\varepsilon)\in\mathcal D_{(\alpha,r)}$. Observe first that, since $W_{(\xi,\varepsilon,r)}\equiv 0$ on $M\backslash\bigcup_{i=1}^\ell B_{2r_i}(\xi_i)$, the estimate holds trivially in this region. To estimate in each of the balls \( B_{2r_i}(\xi_i) \), first note that the Paneitz operator can be written as
\begin{equation}\label{eq028}
        P_g u  =  \Delta_g^2 u+a(n)\left\langle\operatorname{Ric}_g, \nabla^2 u\right\rangle-b(n) R_g \Delta_g u  +\frac{6-n}{2(n-1)}\left\langle\nabla R_g, \nabla u\right\rangle+c(n) Q_g u.
    \end{equation}

Observe that in $B_{2r_i}(\xi_i)$ we have $g = \exp(h)$ and $W_{(\xi,\varepsilon,r)} = \overline{w}_{(\xi_i,\varepsilon_i,r_i)}$. Using the expression of the metric, as in \cite[Section~5]{MR4778469} and \cite[Section~4]{MR3016505}, we obtain pointwise estimates. Noting that $W_{(\xi,\varepsilon,r)} = w_{(\xi_i,\varepsilon_i)}$ in $B_{r_i}(\xi_i)$ and that $w_{(\xi_i,\varepsilon_i)}$ is a solution of \eqref{eq012}, we can derive a pointwise estimate of the form
 $$\left| P_gW_{(\xi,\varepsilon,r)}\;-\; 
d(n)\, W_{(\xi,\varepsilon,r)}^{\tfrac{n+4}{\,n-4}} \right|\leq \sum_{j=0}^4f_j|\partial^j w_{(\xi_i,\varepsilon_i,r_i)}|,$$
where $f_j$ are linear combinations of $h$ and its derivatives up to order four. From this we get the estimates in $B_{r_i}(\xi_i)$.

In the remaining annular region $A_i:=B_{2r_i}(\xi_i) \setminus B_{r_i}(\xi_i)$, for all $x\in A_i$ we have $$|\partial^j\overline w_{(\xi_i,\varepsilon_i)}(x)|\leq c(n)\left(\frac{\varepsilon_i}{r_i^2}\right)^{\frac{n-4}{2}}r_i^{-j},\qquad j=0,1,2,3,4.$$ 
This shows that the $L^{\frac{2n}{n+4}}(A_i)$-norm of the terms in 
\eqref{eq028} is bounded, up to a constant, by $\varepsilon_i / r_i$, 
which completes the desired estimate.
\end{proof}

\section{Lyapunov-Schmidt Reduction}\label{sec003}

Fix $\varepsilon>0$ and $\xi\in\mathbb{R}^n$.  
We begin this section by introducing the special family of functions
\begin{equation}\label{eq098}
\varphi_{(\xi,\varepsilon,k)}(x)
    :=
    \frac{2}{n-4}\,\varepsilon\, \partial_k w_{(\xi,\varepsilon)}(x)\,
    w_{(\xi,\varepsilon)}(x)^{\frac{8}{\,n-4\,}},    
\end{equation}
where $\partial_0=\partial_\varepsilon$ and 
$\partial_k=\partial_{\xi_k}$ for $k=1,\ldots,n$.  
Explicitly,
\begin{equation}\label{eq051}
    \varphi_{(\xi,\varepsilon,0)}(x)
    =
    \left(
        \frac{2\varepsilon}{\varepsilon^2 + |x-\xi|^2}
    \right)^{\!\frac{n+4}{2}}
    \frac{|x-\xi|^2 - \varepsilon^2}{|x-\xi|^2 + \varepsilon^2},
\end{equation}
and, for $k=1,\ldots,n$,
\begin{equation}\label{eq052}
    \varphi_{(\xi,\varepsilon,k)}(x)
    =
    \left(
        \frac{2\varepsilon}{\varepsilon^2 + |x-\xi|^2}
    \right)^{\!\frac{n+4}{2}}
    \frac{2\varepsilon\, (x_k - \xi_k)}{|x-\xi|^2 + \varepsilon^2}.
\end{equation}

By property \eqref{eq005} of the standard bubble, the functions $\varphi_{(\xi,\varepsilon,k)}$ are $L^{2}(\mathbb R^n)$-orthogonal to $w_{(\xi,\varepsilon)}$. Moreover, it is straightforward to verify that $\|\varphi_{(\xi,\varepsilon,k)}\|_{L^{\frac{2n}{n+4}}(\mathbb{R}^n)}$ is independent of both $\xi$ and $\varepsilon$.

Given $(\xi,\varepsilon)\in\mathcal D_{(\alpha,r)}$, define smooth functions 
$\overline{\varphi}_{(\xi_j,\varepsilon_j,r_j,k)} \in C^\infty(M)$ by
\begin{equation}\label{eq010}
    \overline{\varphi}_{(\xi_j,\varepsilon_j,r_j,k)}(x)=\begin{cases}
        \eta_{(r_j,\xi_j)}\varphi_{(\xi_j,\varepsilon_j,k)},& x \in B_{R}(p),\\[0.6em]
        0, & x \in M\setminus B_{R}(p),
    \end{cases}
\end{equation}
where $\eta_{(r_j,\xi_j)}$ is defined in Section \ref{sec005}. Note that $\operatorname{supp}\overline{\varphi}_{(\xi_j,\varepsilon_j,r_j,k)}    \subset B_{2r_j}(\xi_j)\subset B_{R}(p)$. For $(\xi,\varepsilon)\in\mathcal D_{(\alpha,r)}$, consider the finite set $\mathcal{F}_{(\xi,\varepsilon,\alpha,r)}:=\left\{        \overline{\varphi}_{(\xi_j,\varepsilon_j,r_j,k)}:j=1,\ldots,\ell,\;k=0,\ldots,n\right\}$, and its orthogonal complement
\begin{equation}\label{eq059}
    \mathcal F^\perp_{(\xi,\varepsilon,\alpha,r)}(M,g_s)
    :=
    \left\{
        \omega\in W^{2,2}(M,g_s)
        :
        \int_M \omega\,\varphi\, dv_{g_s}=0
        \ \text{for all }\varphi\in 
        \mathcal F_{(\xi,\varepsilon,\alpha,r)}
    \right\}.
\end{equation}

Since $dv_g = dv_{g_s}$, it immediately follows that $\mathcal F^\perp_{(\xi,\varepsilon,\alpha,r)}(M,g)
    =
    \mathcal F^\perp_{(\xi,\varepsilon,\alpha,r)}(M,g_s)$.
\begin{lemma}\label{lem002}
    If $\alpha\in(0,1)$ is sufficiently small, the set of functions $\mathcal F_{(\xi,\varepsilon,\alpha,r)}$ is linearly independent.
\end{lemma}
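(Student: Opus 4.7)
The plan is to reduce the question to one concerning a single bubble and then to invoke the classical linear independence of the bubble's parameter derivatives on $\mathbb R^n$, together with real-analyticity. The constraint that $\alpha$ be small will only be used via the definition of $\mathcal D_{(\alpha,r)}$ — the condition $|\xi_i-\xi_j|>2(r_i+r_j)$ for $i\neq j$ — which guarantees disjoint supports of the $\overline\varphi$'s indexed by different $j$.

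Concretely, suppose $\sum_{j=1}^{\ell}\sum_{k=0}^{n} c_{j,k}\,\overline{\varphi}_{(\xi_j,\varepsilon_j,r_j,k)} \equiv 0$ on $M$. Since $\operatorname{supp}\overline{\varphi}_{(\xi_j,\varepsilon_j,r_j,k)}\subset B_{2r_j}(\xi_j)$ and $|\xi_i-\xi_j|>2(r_i+r_j)$ for $i\neq j$, these supports are pairwise disjoint. Evaluating the sum on $B_{2r_j}(\xi_j)$ for each fixed $j$ thus yields $\sum_{k=0}^n c_{j,k}\,\overline{\varphi}_{(\xi_j,\varepsilon_j,r_j,k)} \equiv 0$. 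On the inner ball $B_{r_j}(\xi_j)$ the cut-off $\eta_{(r_j,\xi_j)}$ equals $1$, so this reduces further to
\[
\sum_{k=0}^{n} c_{j,k}\,\varphi_{(\xi_j,\varepsilon_j,k)} \equiv 0 \qquad\text{on } B_{r_j}(\xi_j).
\]
The explicit formulas \eqref{eq051}-\eqref{eq052} show that each $\varphi_{(\xi_j,\varepsilon_j,k)}$ is real-analytic (in fact rational) on $\mathbb R^n$, so by the identity principle the identity above extends to all of $\mathbb R^n$.

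It then suffices to observe that the $(n+1)$ functions $\{\varphi_{(\xi_j,\varepsilon_j,k)}\}_{k=0}^n$ are linearly independent on $\mathbb R^n$. This is classical: up to the multiplicative factor $w_{(\xi_j,\varepsilon_j)}^{8/(n-4)}$ they are the generators of the scaling and translation kernel of the linearized equation \eqref{eq012} at the bubble (cf.\ \cite{MR1611691}), and the non-degeneracy of this kernel is well established. Alternatively, linear independence can be read off directly from symmetry: $\varphi_{(\xi_j,\varepsilon_j,0)}$ is invariant under every reflection about a hyperplane through $\xi_j$, whereas $\varphi_{(\xi_j,\varepsilon_j,k)}$ for $k\ge 1$ is odd under the reflection $x_k\mapsto 2\xi_{j,k}-x_k$ and even under the others, so the $(n+1)$ parities are all distinct and no non-trivial linear combination can vanish. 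Either argument forces $c_{j,k}=0$ for all $j,k$, completing the proof.

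The only potential obstacle is ensuring that the shrinking region $B_{r_j}(\xi_j)$ on which the cut-off is trivial is truly non-empty; this is immediate since $r_j>0$ is fixed (the parameter $\alpha$ controls only the ratio $\varepsilon_j/r_j$, not $r_j$ itself), so no quantitative smallness is actually needed beyond what is built into $\mathcal D_{(\alpha,r)}$.
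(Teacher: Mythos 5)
Your proof is correct, but it takes a genuinely different route from the paper's. The paper does not argue pointwise: it pairs the putative vanishing combination against the test functions $\varepsilon_j\,\partial_m \overline{w}_{(\xi_j,\varepsilon_j,r_j)}$, forming the matrix $\beta_{ikjm}$ of \eqref{eq085}, and shows it is diagonal (disjoint supports for $i\neq j$, odd/even symmetry for $i=j$, $k\neq m$) with nonvanishing diagonal entries; the latter requires bounding the annular error $I_2$ coming from $\eta'_{(r_i,\xi_i)}$ by $c\,\alpha^{n+2}r_i$, and this is exactly where the smallness of $\alpha$ enters. You instead exploit that the $\overline{\varphi}$'s with distinct centers have disjoint supports and that on the inner ball $B_{r_j}(\xi_j)$ the cut-off is identically one, so the problem reduces to linear independence of the $(n+1)$ functions $\varphi_{(\xi_j,\varepsilon_j,k)}$, which your reflection-parity argument settles (indeed it settles it already on the symmetric ball $B_{r_j}(\xi_j)$, so the appeal to real-analytic continuation, while valid, is not even needed; also, ``rational'' is only accurate for $n$ even, but real-analyticity holds in all dimensions and is all you use). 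Your route is more elementary, needs no quantitative smallness of $\alpha$ (so it proves a slightly stronger statement), and avoids the cut-off error estimate entirely. What it does not deliver is the quantitative diagonal structure \eqref{eq072} for the matrix $(\beta_{ikjm})$, which the paper extracts as a by-product of its proof and reuses later (in the proof of Theorem~\ref{teo007}); so the paper's longer computation is doing double duty rather than being necessary for the lemma itself.
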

\begin{proof}
    Consider a linear combination such that  $\displaystyle\sum_{i=1}^{\ell} \sum_{k=0}^{n} a_{ik}\,
    \overline{\varphi}_{(\xi_i,\varepsilon_i,r_i,k)} = 0$. Then
\begin{equation}\label{eq008}
    \sum_{i=1}^{\ell} \sum_{k=0}^{n} a_{ik}\, \beta_{ikjm} = 0,
\end{equation}
where
\begin{equation}\label{eq085}
    \beta_{ikjm}
    = 
    \varepsilon_j \int_M 
    \overline{\varphi}_{(\xi_i,\varepsilon_i,r_i,k)}\,
    \partial_m \overline{w}_{(\xi_j,\varepsilon_j,r_j)}\, dv_{g_s},
\end{equation}
and $\overline{w}_{(\xi_j,\varepsilon_j,r_j)}$ is defined in \eqref{eq009}.  
We claim that 
\begin{equation}\label{eq072}
\begin{cases}
    \beta_{ikik} \neq 0, & \text{for all } i \text{ and } k, \\[0.2em]
    \beta_{ikjm} = 0, & \text{whenever } (i,k) \neq (j,m).
\end{cases}
\end{equation}

\medskip

Indeed, if $i \neq j$, then by the definition of $\mathcal{D}_{(\alpha,r)}$ (see \eqref{eq011}) we have $|\xi_i - \xi_j| > 2(r_i+r_j)$. Together with \eqref{eq010}, this implies that the supports of $\overline{\varphi}_{(\xi_i,\varepsilon_i,r_i,k)}$ and $\overline{w}_{(\xi_j,\varepsilon_j,r_j)}$ are disjoint.  Hence $\beta_{ikjm} = 0$ whenever $i\neq j$.

Now note that both 
$\overline{\varphi}_{(\xi_i,\varepsilon_i,r_i,k)}$ and $\overline{w}_{(\xi_i,\varepsilon_i,r_i)}$ have support in the ball $B_{2r_i}(\xi_i)$, where the metric $g_s$ is Euclidean in normal coordinates.  After a change of variables and using the symmetry of the domain, we obtain
\[
\beta_{i0im}
= \varepsilon_i \int_M 
    \overline{\varphi}_{(\xi_i,\varepsilon_i,r_i,0)}\,
    \partial_m \overline{w}_{(\xi_i,\varepsilon_i,r_i)}\, dv_{g_s}
= \int_{B_r(0)} f_1(|z|)\, z_m\, dz = 0, \qquad m\neq 0,
\]
\[
\beta_{iki0}
= \varepsilon_i \int_M 
    \overline{\varphi}_{(\xi_i,\varepsilon_i,r_i,k)}\,
    \partial_{\varepsilon_i} \overline{w}_{(\xi_i,\varepsilon_i,r_i)}\, dv_{g_s}
= \int_{B_r(0)} f_2(|z|)\, z_k\, dz = 0, \qquad k\neq 0,
\]
and
\[
\beta_{ikim}
= \varepsilon_i \int_M 
    \overline{\varphi}_{(\xi_i,\varepsilon_i,r_i,k)}\,
    \partial_m \overline{w}_{(\xi_i,\varepsilon_i,r_i)}\, dv_{g_s}
= \int_{B_r(0)} f_3(|z|)\, z_m z_k \, dz = 0,
\]
for any $k\neq m$ with $km\neq 0$, where $f_j$ are radial functions for $j=1,2,3$.

To conclude, we prove the remaining case, namely that $\beta_{ikik} \neq 0$ for all $i$ and $k$. By definition,
\[
\beta_{i0i0}
= \varepsilon_i \int_M
    \overline{\varphi}_{(\xi_i,\varepsilon_i,r_i,0)}\,
    \partial_{\varepsilon_i} \overline{w}_{(\xi_i,\varepsilon_i,r_i)}\, dv_{g_s}
= c(n,\varepsilon)
  \int_{B_{2r_i}(\xi_i)}
      \eta_{(r_i,\xi_i)}^2      (\partial_{\varepsilon_i} w_{(\xi_i,\varepsilon_i)})^2\,      w_{(\xi_i,\varepsilon_i)}^{\frac{8}{n-4}}\, dx\neq 0.
\]
Now, for $k\in\{1,\ldots,n\}$,
\begin{align*}
\beta_{ikik}
&= \frac{2}{n-4}\varepsilon_i^2
   \int_{B_{2r_i}(\xi_i)}
       \eta_{(r_i,\xi_i)}
       w_{(\xi_i,\varepsilon_i)}^{\frac{8}{n-4}}       \partial_k w_{(\xi_i,\varepsilon_i)}       \left[           \eta_{(r_i,\xi_i)}\partial_k w_{(\xi_i,\varepsilon_i)}           + w_{(\xi_i,\varepsilon_i)}\eta_{(r_i,\xi_i)}'\frac{x_k-\xi_k}{r_i|x-\xi_k|}\right] dx \\
&=: \frac{2}{n-4}(I_1 + I_2).
\end{align*}

To estimate $I_1$, we note that the integrand is nonnegative, which yields
\begin{align*}
I_1
&\ge \varepsilon_i^2
    \int_{B_{r_i}(\xi_i)}
        w_{(\xi_i,\varepsilon_i)}^{\frac{8}{n-4}}
        (\partial_k w_{(\xi_i,\varepsilon_i)})^2\, dx 
= \int_{B_{2r_i}(0)}
    \left(\frac{2\varepsilon_i}{\varepsilon_i^2+|x|^2}\right)^{n+2}
    x_k^2\, dx \\
&\ge \int_{B_{r_i/\varepsilon_i}(0)}
    \left(\frac{2}{1+|y|^2}\right)^{n+2}
    y_k^2\, dy 
\ge \int_{B_1(0)}
    \left(\frac{2}{1+|y|^2}\right)^{n+2}
    y_k^2\, dy 
=: c(n)>0.
\end{align*}

Finally to estimate $I_2$, note that the support of $\eta'_{(r_i,\xi_i)}$ is contained in the annulus 
$A_i := B_{2r_i}(\xi_i)\setminus B_{r_i}(\xi_i)$. Hence,
\[
|I_2|\le c\,\varepsilon_i^2r_i^{-1}
    \int_{A_i}
        w_{(\xi_i,\varepsilon_i)}^{\frac{n+4}{n-4}}
        |\partial_k w_{(\xi_i,\varepsilon_i)}|
        \, dx
\le c\,\varepsilon_i^{n+2} r_i^{-n-1}
\le c\,\alpha^{n+2} r_i.
\]
Thus, for $\alpha>0$ sufficiently small, we conclude that 
$\beta_{ikik} \neq 0$. 

Since the claim holds, then combined with \eqref{eq008}, it follows that 
$a_{ik}=0$ for all $i\in\{1,\ldots,\ell\}$ and $k\in\{0,\ldots,n\}$, which implies that the set of functions is linearly independent.
\end{proof}

\begin{lemma}\label{lem003}
    For all \((\xi, \varepsilon) \in \mathcal{D}_{(\alpha, r)}\), \(k \in \{0, \ldots, n\}\), and \(i, j \in \{1, \ldots, \ell\}\), there exists a constant \(c > 0\) such that, for the functions defined in \eqref{eq009} and \eqref{eq010}, we have
\[
\int_M \overline{\varphi}_{(\xi_j,\varepsilon_j,r_j,k)} \, \overline{w}_{(\xi_i,\varepsilon_i,r_i)} \, dv_{g_s} 
\leq c \left( \frac{\varepsilon_i}{r_i} \right)^n.
\]
\end{lemma}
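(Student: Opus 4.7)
\medskip

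\textbf{Proof plan for Lemma \ref{lem003}.} The plan is to split into cases based on whether $i=j$ or $i\neq j$, and in the diagonal case exploit the $L^2$-orthogonality of $\varphi_{(\xi_i,\varepsilon_i,k)}$ and $w_{(\xi_i,\varepsilon_i)}$ on the full Euclidean space $\mathbb R^n$ to reduce the bound to a simple tail integral.

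First I would handle the off-diagonal case $i\neq j$. Since $(\xi,\varepsilon)\in\mathcal D_{(\alpha,r)}$, the condition $|\xi_i-\xi_j|>2(r_i+r_j)$ together with the support inclusions $\operatorname{supp}\overline\varphi_{(\xi_j,\varepsilon_j,r_j,k)}\subset B_{2r_j}(\xi_j)$ and $\operatorname{supp}\overline w_{(\xi_i,\varepsilon_i,r_i)}\subset B_{2r_i}(\xi_i)$ makes the supports disjoint, so the integral vanishes.

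For the diagonal case $i=j$, recall that $B_{2r_i}(\xi_i)\subset B_s(p)$ and $g_s=g_{\mathrm{euc}}$ there by \eqref{eq035}. Thus in Euclidean normal coordinates
\[
\int_M \overline\varphi_{(\xi_i,\varepsilon_i,r_i,k)}\,\overline w_{(\xi_i,\varepsilon_i,r_i)}\,dv_{g_s}
=\int_{\mathbb R^n}\eta_{(r_i,\xi_i)}^{\,2}\,\varphi_{(\xi_i,\varepsilon_i,k)}\,w_{(\xi_i,\varepsilon_i)}\,dx.
\]
Next I would use the fact already noted in the paper (right after \eqref{eq052}) that $\varphi_{(\xi_i,\varepsilon_i,k)}$ is $L^2(\mathbb R^n)$-orthogonal to $w_{(\xi_i,\varepsilon_i)}$ — which follows from $\varphi_{(\xi_i,\varepsilon_i,k)}=\tfrac{2}{n-4}\varepsilon_i\,\partial_k w_{(\xi_i,\varepsilon_i)}\cdot w_{(\xi_i,\varepsilon_i)}^{8/(n-4)}$ together with the conformal invariance of $\int_{\mathbb R^n}w_{(\xi,\varepsilon)}^{2n/(n-4)}dx$. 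Inserting $1=\eta_{(r_i,\xi_i)}^{\,2}+(1-\eta_{(r_i,\xi_i)}^{\,2})$ therefore gives
\[
\int_{\mathbb R^n}\eta_{(r_i,\xi_i)}^{\,2}\,\varphi_{(\xi_i,\varepsilon_i,k)}\,w_{(\xi_i,\varepsilon_i)}\,dx
= -\int_{\{|x-\xi_i|>r_i\}}\bigl(1-\eta_{(r_i,\xi_i)}^{\,2}\bigr)\,\varphi_{(\xi_i,\varepsilon_i,k)}\,w_{(\xi_i,\varepsilon_i)}\,dx,
\]
where I used $\eta_{(r_i,\xi_i)}\equiv 1$ on $B_{r_i}(\xi_i)$.

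To conclude, I would use the explicit expressions \eqref{eq051} and \eqref{eq052} to bound
\[
\bigl|\varphi_{(\xi_i,\varepsilon_i,k)}(x)\bigr|
\le \left(\frac{2\varepsilon_i}{\varepsilon_i^{\,2}+|x-\xi_i|^{2}}\right)^{\!\frac{n+4}{2}},
\]
(the ratios $|x-\xi_i|^2-\varepsilon_i^{\,2}$ over $|x-\xi_i|^2+\varepsilon_i^{\,2}$ and $2\varepsilon_i(x_k-\xi_{i,k})$ over $|x-\xi_i|^2+\varepsilon_i^{\,2}$ are both bounded by $1$), and multiply by the bubble itself to get
\[
\bigl|\varphi_{(\xi_i,\varepsilon_i,k)}\,w_{(\xi_i,\varepsilon_i)}\bigr|
\le \left(\frac{2\varepsilon_i}{\varepsilon_i^{\,2}+|x-\xi_i|^{2}}\right)^{\!n}.
\]
Since $\varepsilon_i<\alpha r_i<r_i$, on the region $\{|x-\xi_i|>r_i\}$ we have $\varepsilon_i^{\,2}+|x-\xi_i|^{2}\ge|x-\xi_i|^{2}$, so switching to spherical coordinates centered at $\xi_i$ yields
\[
\int_{\{|x-\xi_i|>r_i\}}\bigl|\varphi_{(\xi_i,\varepsilon_i,k)}\,w_{(\xi_i,\varepsilon_i)}\bigr|\,dx
\le (2\varepsilon_i)^{n}\,|\mathbb S^{n-1}|\int_{r_i}^{\infty}\rho^{\,-n-1}\,d\rho
= c(n)\left(\frac{\varepsilon_i}{r_i}\right)^{\!n},
\]
which is the claimed bound. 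The main step (and only place where anything beyond bookkeeping is needed) is recognizing that the $L^2$-orthogonality on $\mathbb R^n$ converts the interior integral into a tail integral outside $B_{r_i}(\xi_i)$; once that is in hand, the rest is a direct pointwise majorization and radial integration.
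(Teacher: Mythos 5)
Your proposal is correct and follows essentially the same route as the paper: disjoint supports for $i\neq j$, and for $i=j$ the $L^2(\mathbb R^n)$-orthogonality of $\varphi_{(\xi_i,\varepsilon_i,k)}$ and $w_{(\xi_i,\varepsilon_i)}$ converts the integral into a tail integral outside $B_{r_i}(\xi_i)$ bounded by $c(\varepsilon_i/r_i)^n$. The only (harmless) difference is that the paper disposes of $k\in\{1,\ldots,n\}$ exactly, via odd symmetry of the cut-off integrand as in Lemma \ref{lem002}, while you run the orthogonality-plus-tail estimate uniformly for all $k$, which still yields the claimed bound.
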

\begin{proof}
    Since $\overline{\varphi}_{(\xi_j,\varepsilon_j,r_j,k)}$ and 
$\overline{w}_{(\xi_i,\varepsilon_i,r_i)}$ have disjoint supports whenever 
$i \neq j$, we may restrict our attention to the case $i = j$.  

If $k \in \{1,\ldots,n\}$, the result follows directly, since, as in the proof of Lemma~\ref{lem002}, we have $\overline{\varphi}_{(\xi_i,\varepsilon_i,r_i,k)}\perp_{L^2(M,g_s)} \overline{w}_{(\xi_i,\varepsilon_i,r_i)}$. Finally, if $k=0$, we use the fact that $\varphi_{(\xi,\varepsilon,0)} \perp_{L^2(\mathbb{R}^n)} w_{(\xi,\varepsilon)}$. A direct computation gives
\begin{align*}
    \int_M \overline{\varphi}_{(\xi_i,\varepsilon_i,r_i,0)}\,             \overline{w}_{(\xi_i,\varepsilon_i,r_i)} \, dv_{g_s} & = \int_{B_{2r_i}(\xi_i)\setminus B_{r_i}(\xi_i)}\overline{\varphi}_{(\xi_i,\varepsilon_i,r_i,0)}\,\overline{w}_{(\xi_i,\varepsilon_i,r_i)}\, dx+ \int_{B_{r_i}(\xi_i)}\varphi_{(\xi_i,\varepsilon_i,0)}\, w_{(\xi_i,\varepsilon_i)} \, dx\\
    &= \int_{\mathbb{R}^n \setminus B_{r_i}(\xi_i)}\eta_{(\xi_i,r_i)}\varphi_{(\xi_i,\varepsilon_i,0)}\, w_{(\xi_i,\varepsilon_i)} \, dx- \int_{\mathbb{R}^n \setminus B_{r_i}(\xi_i)}\varphi_{(\xi_i,\varepsilon_i,0)}\, w_{(\xi_i,\varepsilon_i)} \, dx\\
    &= \int_{\mathbb{R}^n \setminus B_{\frac{r_i}{\varepsilon_i}}(0)}\left(\eta^2\!\left(\frac{\varepsilon_i |y|}{r_i}\right)-1\right)\left( \frac{2}{1+|y|^2} \right)^n\frac{|y|^2 - 1}{\,|y|^2 + 1\,}\, dy,
\end{align*}
and it follows that
\[
    \left|\int_M \overline{\varphi}_{(\xi_i,\varepsilon_i,r_i,0)}\,\overline{w}_{(\xi_i,\varepsilon_i,r_i)} \, dv_{g_s}    \right|\le\int_{\mathbb{R}^n \setminus B_{\,\frac{r_i}{\varepsilon_i}}(0)}\left( \frac{2}{1+|y|^2} \right)^n dy\le C \left( \frac{\varepsilon_i}{r_i} \right)^{n}.
\]
\end{proof}

Consider the bilinear form  $\mathcal{H}_g : W^{2,2}(M,g) \times W^{2,2}(M,g) \to \mathbb{R}$ defined by
\begin{align}\label{eq024}
    \mathcal{H}_g(u,v)
    = \langle P_g u , v \rangle_{L^2}
      - \frac{n+4}{n-4}\, d(n)
        \int_M W_{(\xi,\varepsilon,r)}^{\frac{8}{n-4}}\, u\, v \, dv_g,
\end{align}
where $\langle P_g u , v \rangle_{L^2}$ is given by \eqref{eq023} and 
$W_{(\xi,\varepsilon,r)}$ is the $\ell$--bubble introduced in \eqref{eq050}.  
Since the functions $\overline{w}_{(\xi_i,\varepsilon_i,r_i)}$ have disjoint 
supports (see Section~\ref{sec001}), it follows that for any $q>0$ one has
\begin{equation}\label{eq066}
    W_{(\xi,\varepsilon,r)}^{\,q}
    = \sum_{i=1}^{\ell} 
        \overline{w}_{(\xi_i,\varepsilon_i,r_i)}^{\,q}.
\end{equation}

\begin{lemma}\label{lem006}
Let $g$ be the Riemannian metric defined in \eqref{eq013}.  There exist constants $C_1 = C_1(n,g_0) > 0$ and $C_2 = C_2(n,\ell,g_0) > 0$ such that, for all $u, v \in W^{2,2}(M,g)$, the following estimates hold:
\begin{equation}\label{eq058}
    \big| \mathcal{H}_g(u,v) - \mathcal{H}_{g_s}(u,v) \big|    \;\le\;    C_1 \alpha \,    \|u\|_{W^{2,2}(M,g_s)} \,    \|v\|_{W^{2,2}(M,g_s)},
\end{equation}
\begin{equation}\label{eq063}
    (1 - C_1 \alpha)\, \|u\|_{W^{2,2}(M,g_s)}^{2}    \;\le\;    \|u\|_{W^{2,2}(M,g)}^{2}    \;\le\;    (1 + C_1 \alpha)\, \|u\|_{W^{2,2}(M,g_s)}^{2},
\end{equation}
\begin{equation}\label{eq006}
    |\mathcal{H}_{g_s}(u,v)|    \;\le\;    C \, \|u\|_{W^{2,2}(M,g_s)} \,         \|v\|_{W^{2,2}(M,g_s)},
\end{equation}
and
\begin{equation}\label{eq016}
    |\mathcal{H}_g(u,v)|    \;\le\;    C_2 \,    \|u\|_{W^{2,2}(M,g)} \,    \|v\|_{W^{2,2}(M,g)} .
\end{equation}
\end{lemma}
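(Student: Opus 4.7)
The plan is to establish the four estimates in an order that lets each feed into the next: first prove the norm comparison (2), then the continuity bound (3), then the difference estimate (1), and finally deduce (4) by combining them.

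For estimate (2): Since $g\equiv g_s$ outside $B_s(p)$ and $g=\exp(h)$ inside, and since $h$ is trace-free, $dv_g=dv_{g_s}$ holds \emph{exactly}. The coefficients that enter $\Delta_g u$ and $|\nabla u|_g^2$ differ from the corresponding $g_s$-coefficients by tensors built from $h$, $\partial h$, $\partial^2 h$, all of which are bounded by $\alpha$ by \eqref{eq014}. A term-by-term comparison in coordinates on $B_s(p)$ (and equality on the complement) then gives the two-sided inequality in (2), with the constant depending only on $n$ and $g_0$.

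For estimate (3): Expand $\mathcal{H}_{g_s}(u,v)$ into its five constituent pieces from \eqref{eq023} and \eqref{eq024}. The biharmonic piece $\int(\Delta_{g_s}u)(\Delta_{g_s}v)\,dv_{g_s}$ is controlled by Cauchy--Schwarz. The Ricci and scalar-curvature pieces are bounded because $|\mathrm{Ric}_{g_s}|$ and $|R_{g_s}|$ are uniformly bounded (these involve only two derivatives of $g_s$, cf.\ \eqref{eq033}). The $Q_{g_s}$-term is handled by H\"older and the Sobolev embedding $W^{2,2}(M,g_s)\hookrightarrow L^{\frac{2n}{n-4}}(M,g_s)$, using that $Q_{g_s}$ is uniformly bounded in $L^{n/4}$ by \eqref{eq034}. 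For the bubble term, identity \eqref{eq066} together with the scale-invariance \eqref{eq005} gives $\int_M W_{(\xi,\varepsilon,r)}^{\frac{2n}{n-4}}\,dv_{g_s}\le c(n)\ell$, so $\|W_{(\xi,\varepsilon,r)}^{\frac{8}{n-4}}\|_{L^{n/4}(M,g_s)}\le c(n,\ell)$, and another application of H\"older with Sobolev embedding completes the bound.

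For estimate (1): Because $dv_g=dv_{g_s}$, the nonlinear bubble term $\int W^{\frac{8}{n-4}}uv\,dv$ is \emph{identical} in $\mathcal H_g$ and $\mathcal H_{g_s}$, so it cancels in the difference and we are reduced to estimating $\langle(P_g-P_{g_s})u,v\rangle_{L^2}$. Writing out $P_g$ as in \eqref{eq028} and subtracting, the difference of the biharmonic pieces becomes $(\Delta_gu-\Delta_{g_s}u)\Delta_gv+\Delta_{g_s}u(\Delta_gv-\Delta_{g_s}v)$; the operator $\Delta_g-\Delta_{g_s}$ is a second-order operator whose coefficients are polynomial in $h$, $\partial h$, $\partial^2h$, hence pointwise $O(\alpha)$. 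The remaining lower-order pieces are controlled by the pointwise bounds $|\mathrm{Ric}_g-\mathrm{Ric}_{g_s}|,\ |R_g-R_{g_s}|,\ |\nabla R_g-\nabla R_{g_s}|=O(\alpha)$ (derivatives of the metric up to order three) and by the pointwise bound $|Q_g-Q_{g_s}|\le c(n,g_0)\alpha$ recorded just above the lemma. H\"older and Cauchy--Schwarz on $M$, using $dv_g=dv_{g_s}$, then yield the bound $C_1\alpha\|u\|_{W^{2,2}(g_s)}\|v\|_{W^{2,2}(g_s)}$.

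For estimate (4): Write $\mathcal H_g=\mathcal H_{g_s}+(\mathcal H_g-\mathcal H_{g_s})$, apply (3) and (1) to bound each piece by $(C+C_1\alpha)\|u\|_{W^{2,2}(g_s)}\|v\|_{W^{2,2}(g_s)}$, then apply (2) to upgrade the $g_s$-norms to $g$-norms at the cost of an absolute constant. The $\ell$-dependence in $C_2$ is inherited from the bubble term in (3). The main technical obstacle is (1): unlike the second-order Yamabe setting, the fourth-order Paneitz operator forces us to track metric-dependence through the biharmonic, Ricci contraction, scalar, and $Q$-curvature pieces simultaneously, and in particular to exploit the pointwise $\alpha$-bounds on $h$ up to fourth order to control $Q_g-Q_{g_s}$; the cancellation of the bubble term via $dv_g=dv_{g_s}$ is what keeps the constant $C_1$ independent of $\ell$.
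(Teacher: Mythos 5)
Your proposal is correct and follows essentially the same route as the paper: the exact cancellation of the bubble term in the difference via $dv_g=dv_{g_s}$, pointwise $O(\alpha)$ bounds on the curvature differences on $B_s(p)$ (with the $Q$-curvature term handled by H\"older and Sobolev), uniform bounds on $\operatorname{Ric}_{g_s}$, $R_{g_s}$ together with the $L^{n/4}$ bound on $Q_{g_s}$ and the bubble estimate for \eqref{eq006}, and finally \eqref{eq016} by combining \eqref{eq058}, \eqref{eq063} and \eqref{eq006}. The only cosmetic deviation is that you invoke the operator form \eqref{eq028} (bringing in $\nabla R_g$) where the paper works directly with the bilinear expression \eqref{eq023}; this changes nothing, since your actual estimates are the bilinear-form telescoping and all the extra curvature quantities are likewise $O(\alpha)$ where $g\neq g_s$.
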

\begin{proof}
Since $dv_g = dv_{g_s}$, using the definitions \eqref{eq023} and \eqref{eq024}, we obtain
\begin{align*}
    |\mathcal H_g(u,v) - \mathcal H_{g_s}(u,v)|
    &\le 
      \int_M 
      \big| (\Delta_g u)(\Delta_g v) - (\Delta_{g_s} u)(\Delta_{g_s} v) \big|
      \, dv_{g_s}  
      + C \int_M |(Q_g - Q_{g_s})uv| \, dv_{g_s} \\
    &\quad
      + C \int_M 
          \big| 
              \operatorname{Ric}_g(\nabla_g u, \nabla_g v)
              - \operatorname{Ric}_{g_s}(\nabla_{g_s} u, \nabla_{g_s} v)
          \big|
          \, dv_{g_s} \\
    &\quad 
      + C \int_M 
          \big| 
              R_g \langle \nabla_g u, \nabla_g v \rangle
              - R_{g_s} \langle \nabla_{g_s} u, \nabla_{g_s} v \rangle
          \big|
          \, dv_{g_s}.
\end{align*}

Since the metrics $g$ and $g_s$ coincide on $M \setminus B_s(p)$, and using \eqref{eq033}, \eqref{eq013}, together with the fact that $|R_{g_s}|$ and $|\operatorname{Ric}_{g_s}|$ are uniformly bounded, we may find a constant $c>0$, independent of $s$, such that the terms involving the Laplacian, the Ricci tensor, and the scalar curvature are bounded by $\alpha \,\|u\|_{W^{2,2}(M,g_s)} \,  \|v\|_{W^{2,2}(M,g_s)}$, up to a constant which depends only on $n$.

Finally, since $Q_g$ depends on fourth derivatives of the metric, 
we use \eqref{eq033} and \eqref{eq014} to obtain
\begin{align*}
    \int_M |(Q_g - Q_{g_s})\, u v| dv_{g_s}    &\leq    c\, \alpha    \|u\|_{L^{\frac{2n}{\,n-4\,}}(M,g_s)}    \|v\|_{L^{\frac{2n}{\,n-4\,}}(M,g_s)} .
\end{align*}
Together with the Sobolev inequality, this yields our first estimate \eqref{eq058}.

To prove \eqref{eq063}, we use again that $dv_g = dv_{g_s}$, together with 
\eqref{eq033} and \eqref{eq013}, to show that $\|\nabla_g^2 u - \nabla_{g_s}^2 u\|_{L^2(M,g_s)}$ and $\|\nabla_g u - \nabla_{g_s} u\|_{L^2(M,g_s)}$ are bounded by $\|u\|_{W^{2,2}(M,g_s)}$, up to a constant. The desired inequality \eqref{eq063} follows from the triangle inequality.

We now prove \eqref{eq006}.  
Recall the expression of 
$\langle P_{g_s}u, v \rangle_{L^2}$ in~\eqref{eq023}.  
Using~\eqref{eq033} together with H\"older's inequality, 
we find that there exists a constant $C>0$, independent of $s$, such that 
\[
    |\langle P_{g_s} u, v \rangle_{L^2}|    \;\leq\;    C \, \|u\|_{W^{2,2}(M,g_s)} \, \|v\|_{W^{2,2}(M,g_s)}.
\]

    By \eqref{eq033} and \eqref{eq034}, we obtain that $Q_{g_s}$ is uniformly 
bounded in $L^{\frac{n}{4}}(M,g_s)$.  
Thus, using H\"older's inequality together with the Sobolev inequality, we find
\begin{align*}
    \left|\int_M Q_{g_s}\, u v \, dv_{g_s}\right|    & \leq c \, \|u\|_{L^{\frac{2n}{n-4}}(M,g_s)}              \|v\|_{L^{\frac{2n}{n-4}}(M,g_s)}\leq c \, \|u\|_{W^{2,2}(M,g_s)}\|v\|_{W^{2,2}(M,g_s)} .
\end{align*}

Finally, let us estimate the last term in \eqref{eq024}.  
Using \eqref{eq005} and arguing as above, we have
\begin{align*}
\left|\int_M 
    \overline w_{(\xi_i,\varepsilon_i,r_i)}^{\frac{8}{n-4}} \,
    u v \, dv_{g_s}
\right|
&\le 
    \int_{B_{2r_i}(\xi_i)}
    \left|
        w_{(\xi_i,\varepsilon_i)}^{\frac{8}{n-4}}
        u v
    \right| dv_{g_s}  \le
    \|w_{(\xi_i,\varepsilon_i)}\|_{L^{\frac{2n}{n-4}}(\mathbb{R}^n)}^{\frac{8}{n-4}}
    \, \|uv\|_{L^{\frac{n}{n-4}}(M,g_s)} \\
&\le
    C \, \|u\|_{L^{\frac{2n}{n-4}}(M,g_s)}
         \|v\|_{L^{\frac{2n}{n-4}}(M,g_s)} \le
    c \, \|u\|_{W^{2,2}(M,g_s)}
         \|v\|_{W^{2,2}(M,g_s)} .
\end{align*}

This implies \eqref{eq006}.  
The inequality \eqref{eq016} follows directly from 
\eqref{eq058}, \eqref{eq063}, and \eqref{eq006}.
\end{proof}

\begin{lemma}\label{lem004}
Given $(\xi,\varepsilon)\in\mathcal D_{(\alpha,r)}$, for each 
$i \in \{1,\ldots,\ell\}$ there exist a function 
$v_{(\xi_i,\varepsilon_i,r_i)} \in W^{2,2}(M,g_s)$ and a constant 
$C = C(n,\ell) > 0$ such that
\begin{equation}\label{eq017}
    \widetilde w_{(\xi_i,\varepsilon_i,r_i)}
    :=
    \overline w_{(\xi_i,\varepsilon_i,r_i)}
    - v_{(\xi_i,\varepsilon_i,r_i)}
    \in 
    \mathcal F^\perp_{(\xi,\varepsilon,\alpha,r)}(M,g_s),
\end{equation}
\begin{equation}\label{eq019}
    \big\| v_{(\xi_i,\varepsilon_i,r_i)} \big\|_{L^{\frac{2n}{n-4}}(M,g_s)}
    \le 
    C\left( \frac{\varepsilon_i}{r_i} \right)^{n},
\end{equation}
and
\begin{equation}\label{eq053}
    \big\| v_{(\xi_i,\varepsilon_i,r_i)} \big\|_{W^{2,2}(M,g_s)}
    \le 
    C\left( \frac{\varepsilon_i}{r_i} \right)^{n}.
\end{equation}
\end{lemma}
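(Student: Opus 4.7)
The plan is to define $v_{(\xi_i,\varepsilon_i,r_i)}$ as the $L^2(M,g_s)$-orthogonal projection of $\overline w_{(\xi_i,\varepsilon_i,r_i)}$ onto the finite-dimensional subspace $\vspan\mathcal F_{(\xi,\varepsilon,\alpha,r)}$; existence and uniqueness are immediate from the linear independence established in Lemma \ref{lem002}, and \eqref{eq017} then holds automatically. The bounds \eqref{eq019} and \eqref{eq053} reduce to quantitative estimates on the coefficients of this projection.

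Write $v_{(\xi_i,\varepsilon_i,r_i)} = \sum_{j,k} a_{jk}^{(i)}\,e_{jk}$ with $e_{jk} := \overline\varphi_{(\xi_j,\varepsilon_j,r_j,k)}$. The orthogonality condition becomes the linear system
\[
\sum_{j,k} a_{jk}^{(i)}\,\langle e_{jk},e_{j'k'}\rangle_{L^2(M,g_s)} = \langle \overline w_{(\xi_i,\varepsilon_i,r_i)},e_{j'k'}\rangle_{L^2(M,g_s)}.
\]
The pairwise disjoint-support condition in \eqref{eq011} forces both sides to vanish whenever $j\neq j'$ or $j' \neq i$, so the system decouples into $\ell$ independent blocks and $a_{jk}^{(i)} = 0$ for all $j\ne i$. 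Within the $i$-th block, on $B_{2r_i}(\xi_i)$ the metric $g_s$ is Euclidean, and the change of variables $y = (x-\xi_i)/\varepsilon_i$ shows that the principal part of $\langle e_{ik},e_{ik'}\rangle_{L^2(M,g_s)}$ equals $\varepsilon_i^{-4}\int_{\mathbb R^n}\varphi_{(0,1,k)}\varphi_{(0,1,k')}\,dy$, with an error arising from the annulus $B_{2r_i}(\xi_i)\setminus B_{r_i}(\xi_i)$ that decays polynomially in $\varepsilon_i/r_i$. By the same radial/odd-parity considerations carried out in Lemma \ref{lem002} (the function $\varphi_{(0,1,0)}$ is radial while $\varphi_{(0,1,k)}$ for $k\ge 1$ is odd in the $k$-th coordinate), the off-diagonal integrals vanish, so the $i$-block Gram matrix equals $\varepsilon_i^{-4}$ times a diagonal matrix with bounded positive entries plus a vanishing perturbation. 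Combined with Lemma \ref{lem003}, which bounds the right-hand side by $C(\varepsilon_i/r_i)^n$, the rescaled system is invertible with a uniformly bounded inverse, yielding $|a_{ik}^{(i)}| \le C\,\varepsilon_i^{4}\,(\varepsilon_i/r_i)^n$.

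To convert the coefficient bound into the target norm estimates, I would use the scalings $\|e_{ik}\|_{L^{2n/(n-4)}(M,g_s)} \sim \varepsilon_i^{-4}$ and $\|e_{ik}\|_{W^{2,2}(M,g_s)}\sim\varepsilon_i^{-4}$, both obtained from the same change of variables on the core ball $B_{r_i}(\xi_i)$ with the annular contribution treated as a lower-order error (and using \eqref{eq014} to compare $g_s$-norms with Euclidean ones, which is trivial here since $g_s$ is Euclidean on this region). The triangle inequality $\|v_{(\xi_i,\varepsilon_i,r_i)}\|_X \le \sum_k |a_{ik}^{(i)}|\,\|e_{ik}\|_X$ then produces the claimed bound $C(\varepsilon_i/r_i)^n$ for $X=L^{2n/(n-4)}(M,g_s)$ and for $X=W^{2,2}(M,g_s)$ simultaneously. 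The main obstacle I foresee is the quantitative invertibility of the $i$-block Gram matrix, uniformly over $(\xi,\varepsilon)\in\mathcal D_{(\alpha,r)}$: Lemma \ref{lem002} provides linear independence at each fixed parameter, but here one needs sharp control of the off-diagonal entries relative to the diagonal ones so that, after rescaling by $\varepsilon_i^{-4}$, the operator norm of the inverse stays bounded by a constant depending only on $n$ and $\ell$. This is a mild but essential quantitative strengthening of Lemma \ref{lem002} and constitutes the analytic heart of the argument.
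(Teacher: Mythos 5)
Your construction is correct, but it takes a genuinely different route from the paper. You take $v_{(\xi_i,\varepsilon_i,r_i)}$ to be the $L^2(M,g_s)$-orthogonal projection of $\overline w_{(\xi_i,\varepsilon_i,r_i)}$ onto $\operatorname{span}\mathcal F_{(\xi,\varepsilon,\alpha,r)}$, so the correction lies in the span of the $\overline\varphi_{(\xi_j,\varepsilon_j,r_j,k)}$ themselves; the bound then comes from a cancellation of scales, since $\|\overline\varphi_{(\xi_i,\varepsilon_i,r_i,k)}\|_{L^{2n/(n-4)}}$ and $\|\overline\varphi_{(\xi_i,\varepsilon_i,r_i,k)}\|_{W^{2,2}}$ blow up like $\varepsilon_i^{-4}$ while the $L^2$ Gram diagonal blows up like $\varepsilon_i^{-4}$ as well, so the coefficients are of size $\varepsilon_i^{4}(\varepsilon_i/r_i)^n$ and the product is $(\varepsilon_i/r_i)^n$. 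The paper instead chooses the ansatz \eqref{eq022}, i.e.\ corrections of the form $\eta_{(\xi_j,r_j)}w_{(\xi_j,\varepsilon_j)}^{-8/(n-4)}\overline\varphi_{(\xi_j,\varepsilon_j,r_j,k)}$ (essentially truncated $\varepsilon\,\partial_k w$), whose $L^{2n/(n-4)}$ and $W^{2,2}$ norms are bounded \emph{independently of} $\varepsilon_i$ and $r_i$; then the coefficient bound $|c_{ijk}|\le C(\varepsilon_i/r_i)^n$ from Lemma \ref{lem003} transfers directly to \eqref{eq019} and \eqref{eq053} without any scale bookkeeping, and the same family of corrector functions is reused later (e.g.\ in the construction of $\mathcal Z_{(\xi,\varepsilon)}$ in Section \ref{sec006}), which is why the paper prefers it. One remark on the step you flag as the "analytic heart": it is not actually an obstacle. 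Within the $i$-th block the Gram matrix is \emph{exactly} diagonal, not merely diagonal up to a vanishing perturbation, because the cutoff $\eta_{(r_i,\xi_i)}$ is radial about $\xi_i$ and the off-diagonal integrands are odd in some coordinate; and the diagonal entries satisfy $c(n)\varepsilon_i^{-4}\le\|\overline\varphi_{(\xi_i,\varepsilon_i,r_i,k)}\|_{L^2}^2\le C(n)\varepsilon_i^{-4}$ by the same change of variables used in Lemma \ref{lem002} (the lower bound coming from restricting to $B_{\varepsilon_i}(\xi_i)$, using $\varepsilon_i/r_i<\alpha<1$). So the rescaled block inverse is bounded by a constant depending only on $n$, uniformly over $\mathcal D_{(\alpha,r)}$, and your argument closes.
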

\begin{proof}
We aim to show the existence of constants $c_{ijk}\in\mathbb{R}$ such that we may take
\begin{equation}\label{eq022}
    v_{(\xi_i,\varepsilon_i,r_i)}
    :=
    \sum_{j=1}^{\ell}\sum_{k=0}^{n}
        c_{ijk}\,
        \eta_{(\xi_j,r_j)}
        w_{(\xi_j,\varepsilon_j)}^{-\frac{8}{n-4}}
        \,\overline{\varphi}_{(\xi_j,\varepsilon_j,r_j,k)}.
\end{equation}
With this choice, condition \eqref{eq017} is equivalent to finding constants 
$c_{ijk}$ such that
\begin{equation}\label{eq018}
    \int_M 
        \overline{w}_{(\xi_i,\varepsilon_i,r_i)}\,
        \overline{\varphi}_{(\xi_t,\varepsilon_t,r_t,l)} 
        \, dv_{g_s}
    =
    \sum_{j=1}^{\ell}\sum_{k=0}^{n} 
        c_{ijk}
        \int_M 
            \eta_{(\xi_j,r_j)}
            w_{(\xi_j,\varepsilon_j)}^{-\frac{8}{n-4}}
            \overline{\varphi}_{(\xi_j,\varepsilon_j,r_j,k)}\,
            \overline{\varphi}_{(\xi_t,\varepsilon_t,r_t,l)}
        \, dv_{g_s},
\end{equation}
for all 
\(    \overline{\varphi}_{(\xi_t,\varepsilon_t,r_t,l)}    \in \mathcal{F}_{(\xi,\varepsilon,\alpha,r)}.
\)
To this end, we will prove the following two claims.

\noindent\textbf{Claim 1.}  
    $\displaystyle\int_M\eta_{(\xi_j,r_j)} 
    w_{(\xi_j,\varepsilon_j)}^{-\frac{8}{n-4}}
    \overline{\varphi}_{(\xi_j,\varepsilon_j,r_j,k)}
    \overline{\varphi}_{(\xi_t,\varepsilon_t,r_t,l)}
    \, dv_{g_s}
    = 0$ whenever  $(j,k)\neq (t,l)$.

If \( j\neq t \), then the functions 
\(\overline{\varphi}_{(\xi_j,\varepsilon_j,r_j,k)}\) and 
\(\overline{\varphi}_{(\xi_t,\varepsilon_t,r_t,l)}\)  
have disjoint supports, so the integral vanishes trivially.  
Thus, assume \( j=t \) and \( k\neq l \).  
In this case, the argument is analogous to the proof that 
\(\beta_{i0im}=0\) and \(\beta_{ikim}=0\) in Lemma~\ref{lem002}.  
The orthogonality follows from symmetry considerations after shifting to 
normal coordinates centered at \(\xi_j\) and observing that the integral of an 
odd function on a symmetric ball vanishes.

\noindent\textbf{Claim 2.}  $\displaystyle\int_M \eta_{(\xi_j,r_j)}\,
w_{(\xi_j,\varepsilon_j)}^{-\frac{8}{n-4}}
\overline{\varphi}_{(\xi_j,\varepsilon_j,r_j,k)}^{\,2}\, dv_{g_s}
\;\geq\; c(n) > 0.$

We first consider the case $k=0$. Using that $\eta_{(\xi_j,r_j)}\equiv1$ on $B_{r_j}(\xi_j)$ and applying the change of variables $x \mapsto \varepsilon_j x$, we obtain
\begin{align*}
    \int_M \eta_{(\xi_j,r_j)}
    & w_{(\xi_j,\varepsilon_j)}^{-\frac{8}{n-4}} 
    \overline{\varphi}_{(\xi_j,\varepsilon_j,r_j,0)}^{\,2} \, dv_{g_s}
    \ge 
    \int_{B_{r_j}(\xi_j)}
        w_{(\xi_j,\varepsilon_j)}^{-\frac{8}{n-4}}
        \varphi_{(\xi_j,\varepsilon_j,0)}^{\,2}\, dx \\
    &= 
    \int_{B_{r_j}(0)}
        \left( \frac{2\varepsilon_j}{\varepsilon_j^2 + |x|^2} \right)^{\!n}
        \left( \frac{\varepsilon_j^2 - |x|^2}{\varepsilon_j^2 + |x|^2} \right)^{\!2}
    dx = 
    \int_{B_{\frac{r_j}{\varepsilon_j}}(0)}
        \left( \frac{2}{1+|x|^2} \right)^{\!n}
        \left( \frac{1 - |x|^2}{1+|x|^2} \right)^{\!2}\, dx \\
    &\ge 
    \int_{B_1(0)}
        \left( \frac{2}{1+|x|^2} \right)^{\!n}
        \left( \frac{1 - |x|^2}{1+|x|^2} \right)^{\!2}\, dx
    =: c(n).
\end{align*}

\medskip

The argument for $k\in\{1,\ldots,n\}$ is analogous. We have
\begin{align}
    \int_M \eta_{(\xi_j,r_j)}w_{(\xi_j,\varepsilon_j)}^{-\frac{8}{n-4}}    \overline{\varphi}_{(\xi_j,\varepsilon_j,r_j,k)}^{\,2}\, dv_{g_s}    \ge     \int_{B_1(0)}        \left( \frac{2}{1+|x|^2} \right)^{\!n}        \left( \frac{2x_k}{1+|x|^2} \right)^{\!2} dx
    =: c(n).
    \label{eq020}
\end{align}

From Claims 1 and 2, it follows that we can choose constants 
\(c_{ijk}\) satisfying \eqref{eq018}. Indeed, set
\[
c_{ijk}=\left(    \int_M         \eta_{(\xi_j,r_j)}        w_{(\xi_j,\varepsilon_j)}^{-\frac{8}{n-4}}        \overline{\varphi}_{(\xi_j,\varepsilon_j,r_j,k)}^{\,2}    \, dv_{g_s}\right)^{-1}\int_M     \overline{w}_{(\xi_i,\varepsilon_i,r_i)}\,    \overline{\varphi}_{(\xi_j,\varepsilon_j,r_j,k)}\, dv_{g_0}.
\]
By Lemma~\ref{lem003} and estimate~\eqref{eq020}, we obtain
\begin{equation}\label{eq021}
    |c_{ijk}|
    \,\le\, C \left( \frac{\varepsilon_i}{r_i} \right)^{\!n}.
\end{equation}

Since 
\(
\eta_{(\xi_j,r_j)}
w_{(\xi_j,\varepsilon_j)}^{-\frac{8}{n-4}}
\overline{\varphi}_{(\xi_j,\varepsilon_j,r_j,k)}
\)
has compact support contained in \(B_{2r_j}(\xi_j)\),  
where the metric \(g_s\) agrees with the Euclidean metric,
we may apply~\eqref{eq051} and~\eqref{eq052} to obtain
\[
\left\|\eta_{(\xi_j,r_j)}
    w_{(\xi_j,\varepsilon_j)}^{-\frac{8}{n-4}}
    \overline{\varphi}_{(\xi_j,\varepsilon_j,r_j,k)}
\right\|_{L^{\frac{2n}{n-4}}(M,g_s)}
\le
C\,\big\| w_{(\xi_j,\varepsilon_j)} \big\|_{L^{\frac{2n}{n-4}}(\mathbb{R}^n)}
= c(n) < \infty.
\]

Combining this bound with~\eqref{eq022} and~\eqref{eq021},  
we obtain~\eqref{eq019}.  
Using again~\eqref{eq021} and an entirely analogous calculation,  
we also deduce~\eqref{eq053}.
\end{proof}

Using the metric \(g\) defined in~\eqref{eq013} and the functions 
\(\widetilde{w}_{(\xi_i,\varepsilon_i,r_i)}\) given in~\eqref{eq017}, 
we define
\begin{equation}\label{eq036}
    \mathcal{V}_{(\xi,\varepsilon)}(g)
    :=
    \left\{
        v \in 
        \mathcal{F}^{\perp}_{(\xi,\varepsilon,\alpha,r)}(M,g)
        :
        \mathcal{H}_g\!\left(
            v, \widetilde{w}_{(\xi_i,\varepsilon_i,r_i)}
        \right) = 0
        \quad\text{for all } i = 1,\ldots,\ell
    \right\}.
\end{equation}

\begin{lemma}
    For every \(w \in \mathcal{V}_{(\xi,\varepsilon)}(g)\),  
every \(j = 1,\ldots,\ell\), and every \(k = 0,\ldots,n\),  
there exists a constant \(c = c(n,\ell) > 0\) such that
\begin{equation}\label{eq029}
    \left|
        \int_M 
            \overline{w}_{(\xi_j,\varepsilon_j,r_j)}^{\frac{n+4}{\,n-4\,}} 
            \, w \, dv_{g_s}
    \right|
    \le 
    c\, \alpha \, \|w\|_{W^{2,2}(M,g_s)},
\end{equation}
\begin{equation}\label{eq030}
    \left|
        \int_{B_{r_j}(\xi_j)} 
            \overline{w}_{(\xi_j,\varepsilon_j,r_j)}^{\frac{n+4}{\,n-4\,}} 
            \, w \, dv_{g_s}
    \right|
    \le 
    c\, \alpha \, \|w\|_{W^{2,2}(M,g_s)},
\end{equation}
and
\begin{equation}\label{eq054}
    \left|
        \int_{B_{r_j}(\xi_j)} 
            \overline{\varphi}_{(\xi_j,\varepsilon_j,r_j,k)} 
            \, w \, dv_{g_s}
    \right|
    \le 
    c\, \alpha \, \|w\|_{W^{2,2}(M,g_s)}.
\end{equation}
\end{lemma}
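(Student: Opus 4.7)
Estimate \eqref{eq054} follows from the $L^{2}$-orthogonality already encoded in $\mathcal F^\perp_{(\xi,\varepsilon,\alpha,r)}(M,g_s)$: by \eqref{eq059}, $\int_M\overline\varphi_{(\xi_j,\varepsilon_j,r_j,k)}\,w\,dv_{g_s}=0$. Since $\eta_{(r_j,\xi_j)}\equiv 1$ on $B_{r_j}(\xi_j)$ and $\operatorname{supp}\overline\varphi_{(\xi_j,\varepsilon_j,r_j,k)}\subset B_{2r_j}(\xi_j)$, this orthogonality reduces \eqref{eq054} to bounding the same integral over the annulus $A_j:=B_{2r_j}(\xi_j)\setminus B_{r_j}(\xi_j)$. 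On $A_j$, the explicit formulas \eqref{eq051}--\eqref{eq052} give $|\overline\varphi_{(\xi_j,\varepsilon_j,r_j,k)}|\le C(\varepsilon_j/r_j^{2})^{(n+4)/2}$, hence $\|\overline\varphi_{(\xi_j,\varepsilon_j,r_j,k)}\|_{L^{2n/(n+4)}(A_j)}\le C(\varepsilon_j/r_j)^{(n+4)/2}\le C\alpha$, using $\varepsilon_j/r_j<\alpha<1$ and $(n+4)/2\ge 1$. H\"older's inequality combined with the Sobolev embedding $W^{2,2}\hookrightarrow L^{2n/(n-4)}$ then delivers \eqref{eq054}.

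For estimate \eqref{eq029}, the main ingredient is the second orthogonality $\mathcal H_g(w,\widetilde w_{(\xi_i,\varepsilon_i,r_i)})=0$ from the definition \eqref{eq036} of $\mathcal V_{(\xi,\varepsilon)}(g)$. Writing $\widetilde w_i=\overline w_i-v_i$ it becomes $\mathcal H_g(w,\overline w_i)=\mathcal H_g(w,v_i)$. I would expand $\mathcal H_g(w,\overline w_i)$ via \eqref{eq024}: self-adjointness yields $\langle P_g w,\overline w_i\rangle_{L^2}=d(n)\int_M\overline w_i^{(n+4)/(n-4)}w\,dv_g+\int_M(P_g\overline w_i-d(n)\overline w_i^{(n+4)/(n-4)})w\,dv_g$, and the disjoint-support identity \eqref{eq066} gives $\int_M W_{(\xi,\varepsilon,r)}^{8/(n-4)}\overline w_i\,w\,dv_g=\int_M\overline w_i^{(n+4)/(n-4)}w\,dv_g$. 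Combining these with the coefficient computation $d(n)-\frac{n+4}{n-4}d(n)=-\frac{8d(n)}{n-4}$ produces the identity
\[
\tfrac{8d(n)}{n-4}\int_M\overline w_i^{\frac{n+4}{n-4}}w\,dv_{g_s}=\int_M\bigl(P_g\overline w_i-d(n)\overline w_i^{\frac{n+4}{n-4}}\bigr)w\,dv_g-\mathcal H_g(w,v_i).
\]
Since $P_g$ is a local operator, $P_g\overline w_i-d(n)\overline w_i^{(n+4)/(n-4)}$ is supported in $B_{2r_i}(\xi_i)$ and agrees there with $P_g W_{(\xi,\varepsilon,r)}-d(n)W_{(\xi,\varepsilon,r)}^{(n+4)/(n-4)}$ (as $W=\overline w_i$ on that ball by disjoint supports); Proposition \ref{propo002} then bounds its $L^{2n/(n+4)}$-norm by $c\alpha$, and H\"older plus Sobolev control the first term on the right by $c\alpha\|w\|_{W^{2,2}(M,g_s)}$. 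For the second term, \eqref{eq016} in Lemma \ref{lem006}, the norm equivalence \eqref{eq063}, and the bound \eqref{eq053} from Lemma \ref{lem004} give $|\mathcal H_g(w,v_i)|\le C(\varepsilon_i/r_i)^n\|w\|_{W^{2,2}(M,g_s)}\le C\alpha\|w\|_{W^{2,2}(M,g_s)}$, since $\alpha<1$.

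Estimate \eqref{eq030} then follows from \eqref{eq029} by subtracting the annular contribution: on $A_j$, $|\overline w_j|\le C(\varepsilon_j/r_j^{2})^{(n-4)/2}$, so $\|\overline w_j^{(n+4)/(n-4)}\|_{L^{2n/(n+4)}(A_j)}\le C(\varepsilon_j/r_j)^{(n+4)/2}\le C\alpha$, and H\"older plus Sobolev control the integral over $A_j$ by $c\alpha\|w\|_{W^{2,2}(M,g_s)}$. Writing $\int_M=\int_{B_{r_j}(\xi_j)}+\int_{A_j}$ and invoking \eqref{eq029} yields \eqref{eq030}.

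The main technical difficulty I anticipate lies in setting up the algebraic identity displayed above: one must correctly apply \eqref{eq066} to recognise the pointwise equality $W_{(\xi,\varepsilon,r)}^{8/(n-4)}\overline w_i=\overline w_i^{(n+4)/(n-4)}$, decompose $\langle P_g w,v_i\rangle_{L^2}=\mathcal H_g(w,v_i)+\frac{n+4}{n-4}d(n)\int_M W^{8/(n-4)}wv_i\,dv_g$, and check that the two cross terms of the form $\frac{n+4}{n-4}d(n)\int_M W^{8/(n-4)}wv_i\,dv_g$ appearing on opposite sides of $\mathcal H_g(w,\overline w_i)=\mathcal H_g(w,v_i)$ cancel exactly. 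Once this identity is in place, the remaining bounds reduce to routine applications of Proposition \ref{propo002}, Lemmas \ref{lem004} and \ref{lem006}, H\"older's inequality, and the Sobolev embedding.
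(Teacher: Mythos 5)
Your proposal is correct; for \eqref{eq029} and \eqref{eq030} it follows essentially the same route as the paper (use $\mathcal{H}_g(w,\widetilde w_i)=0$, write $\overline w_i=\widetilde w_i+v_i$, exploit \eqref{eq066} to turn $W^{8/(n-4)}\overline w_i$ into $\overline w_i^{(n+4)/(n-4)}$, then bound the two remaining terms via Proposition~\ref{propo002} and \eqref{eq053}--\eqref{eq016}--\eqref{eq063}; finally split off the annulus for \eqref{eq030}). Your closing worry about cancellation of cross terms is a non-issue: the identity $\mathcal{H}_g(w,\overline w_i)=\mathcal{H}_g(w,v_i)$ is immediate from bilinearity once $\mathcal{H}_g(w,\widetilde w_i)=0$, and the displayed algebraic identity follows directly as you state, so that paragraph can be removed.

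The genuine divergence is in \eqref{eq054}. The paper obtains it from \eqref{eq030} by the pointwise domination $|\overline\varphi_{(\xi_j,\varepsilon_j,r_j,k)}|\le\overline w_{(\xi_j,\varepsilon_j,r_j)}^{(n+4)/(n-4)}$ on $B_{r_j}(\xi_j)$ (which is easily checked from \eqref{eq051}--\eqref{eq052} using $2\varepsilon_j|x_k-\xi_{jk}|\le\varepsilon_j^2+|x-\xi_j|^2$), so no new computation is needed. You instead invoke the $L^2(dv_{g_s})$-orthogonality built into $\mathcal{F}^\perp_{(\xi,\varepsilon,\alpha,r)}$ — namely $\int_M\overline\varphi_{jk}\,w\,dv_{g_s}=0$ — to reduce \eqref{eq054} to an annular integral, which you then bound by $C(\varepsilon_j/r_j)^{(n+4)/2}\|w\|_{W^{2,2}}$. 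Both are valid. Your route is slightly more economical on hypotheses (it only uses $w\in\mathcal{F}^\perp$, not the additional $\mathcal{H}_g$-orthogonality defining $\mathcal{V}$) and produces a quantitatively sharper bound $(\varepsilon_j/r_j)^{(n+4)/2}$ in place of $c\alpha$, at the cost of one extra direct annular estimate; the paper's route recycles \eqref{eq030} and avoids any new computation but inherits the $c\alpha$ rate. Either suffices for the paper's downstream applications, which only need the $c\alpha$ bound.
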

\begin{proof}
Since the supports of the functions 
\(\overline{w}_{(\xi_i,\varepsilon_i,r_i)}\) and 
\(\overline{w}_{(\xi_j,\varepsilon_j,r_j)}\) 
are disjoint for \(i \neq j\),  
and using that \(w \in \mathcal{V}_{(\xi,\varepsilon)}(g)\) and  
\(\overline{w}_{(\xi_i,\varepsilon_i,r_i)}
= \widetilde{w}_{(\xi_i,\varepsilon_i,r_i)}
+ v_{(\xi_i,\varepsilon_i,r_i)}\),  
together with \(dv_{g_s} = dv_g\),  
and denoting \(c_1(n) = \tfrac{n(n^2-4)}{2}\), we have
\begin{align*}
    c_1(n)
    \left|
        \int_M 
            \overline{w}_{(\xi_j,\varepsilon_j,r_j)}^{\frac{n+4}{n-4}} 
            w \, dv_{g_s}
    \right|
    &\le
    \left| \mathcal{H}_g\big( \overline{w}_{(\xi_j,\varepsilon_j,r_j)}, w \big) \right|
    +
    \left|
        \langle P_g \overline{w}_{(\xi_j,\varepsilon_j,r_j)}, w \rangle_{L^2}
        -
        d(n)
        \int_M 
            \overline{w}_{(\xi_j,\varepsilon_j,r_j)}^{\frac{n+4}{n-4}}
            w \, dv_g
    \right| \\
    &\le
    \left|
        \mathcal{H}_g\big( v_{(\xi_j,\varepsilon_j,r_j)}, w \big)
    \right|
    +
    \left|
        \int_M 
            \big(
                P_g \overline{w}_{(\xi_j,\varepsilon_j,r_j)}
                -
                d(n)\,\overline{w}_{(\xi_j,\varepsilon_j,r_j)}^{\frac{n+4}{n-4}}
            \big)
            w \, dv_g
    \right|.
\end{align*}

Using Lemma~\ref{lem006},  
estimate~\eqref{eq053}, and Hölder’s inequality, we obtain
\begin{align*}
    c_1(n)
    \left|
        \int_M 
            \overline{w}_{(\xi_j,\varepsilon_j,r_j)}^{\frac{n+4}{n-4}} 
            w \, dv_{g_s}
    \right|
    &\le
    C \left( \frac{\varepsilon_j}{r_j} \right)^{\!n}
    \|w\|_{W^{2,2}(M,g_s)}  \\
    &\quad+
    \left\|
        P_g \overline{w}_{(\xi_j,\varepsilon_j,r_j)}
        -
        d(n)\,\overline{w}_{(\xi_j,\varepsilon_j,r_j)}^{\frac{n+4}{n-4}}
    \right\|_{L^{\frac{2n}{n+4}}(M,g)}
    \,
    \|w\|_{L^{\frac{2n}{n-4}}(M,g)}.
\end{align*}

Finally, using the Sobolev inequality,  
together with~\eqref{eq063} and Proposition~\ref{propo002},  
we conclude the estimate~\eqref{eq029}.

To prove~\eqref{eq030}, we write
\begin{equation}\label{eq031}
    \left|
        \int_{B_{r_j}(\xi_j)} 
            \overline{w}_{(\xi_j,\varepsilon_j,r_j)}^{\frac{n+4}{n-4}} 
            w \, dv_{g_s}
    \right|
    \le
    \left|
        \int_M 
            \overline{w}_{(\xi_j,\varepsilon_j,r_j)}^{\frac{n+4}{n-4}} 
            w \, dv_{g_s}
    \right|
    +
    \left|
        \int_{M \setminus B_{r_j}(\xi_j)} 
            \overline{w}_{(\xi_j,\varepsilon_j,r_j)}^{\frac{n+4}{n-4}} 
            w \, dv_{g_s}
    \right|.
\end{equation}

Since the support of  
\(\overline{w}_{(\xi_j,\varepsilon_j,r_j)}\)  
is contained in \(B_{2r_j}(\xi_j)\),  
the second term on the right-hand side of~\eqref{eq031}  
can be estimated directly.  
Combining this with~\eqref{eq029}, we obtain~\eqref{eq030}.

To prove~\eqref{eq054},  
we apply \eqref{eq043}, \eqref{eq051}, and \eqref{eq052} to observe that $\left|
    \overline{\varphi}_{(\xi_j,\varepsilon_j,r_j,k)}
\right|
\le
\overline{w}_{(\xi_j,\varepsilon_j,r_j)}^{\frac{n+4}{n-4}}$ in $B_{r_j}(\xi_j)$. Thus, the inequality~\eqref{eq054} follows directly from~\eqref{eq030}.
\end{proof}

\begin{theorem}\label{teo003}
Consider the Riemannian metric \(g\) defined in~\eqref{eq013}.  
There exist constants \(\beta = \beta(n,\ell) > 0\),  
\(\alpha_0 \in (0,1)\), and \(s_0 > 0\) such that
\[
    \mathcal{H}_{g}(u,u)
    \;\ge\;
    \beta\,\|u\|_{W^{2,2}(M,g)}^{2},
\]
for all 
\(\alpha \in (0,\alpha_0)\),  
\(s \in (0,s_0)\),  
\(r = (r_1,\ldots,r_\ell)\) with  
\(r_i \in \big(0, \min\{1, R/2\}\big)\),  
\(0 < R < s\),  
\((\xi,\varepsilon) \in \mathcal{D}_{(\alpha,r)}\),  
and every  
\(u \in \mathcal{V}_{(\xi,\varepsilon)}(g)\);  
see~\eqref{eq024} and~\eqref{eq036}.
\end{theorem}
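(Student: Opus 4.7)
The plan is to argue by contradiction. Suppose the claim fails. Then there exist sequences $\alpha_k \downarrow 0$, $s_k \downarrow 0$, parameters $r^k = (r_1^k, \ldots, r_\ell^k)$ and $(\xi^k, \varepsilon^k) \in \mathcal{D}_{(\alpha_k, r^k)}$, together with functions $u_k \in \mathcal{V}_{(\xi^k,\varepsilon^k)}(g_k)$ satisfying $\|u_k\|_{W^{2,2}(M,g_k)} = 1$ and $\mathcal{H}_{g_k}(u_k,u_k) \to 0$, where $g_k$ is the associated metric from \eqref{eq013}. Lemma~\ref{lem006} reduces the problem to an analogous coercivity for $\mathcal{H}_{g_{s_k}}$ on $W^{2,2}(M, g_{s_k})$ up to a harmless $O(\alpha_k)$ error, while Proposition~\ref{propo001} (combined with $Y_4(M, g_0) > 0$) ensures the uniform positivity $\langle P_{g_{s_k}} v, v\rangle \geq \mu \|v\|^2_{W^{2,2}(M, g_{s_k})}$.

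Next I carry out a blow-up analysis at each bubble. For each $i$, define the rescaled function $U_k^i(y) := (\varepsilon_i^k)^{(n-4)/2} u_k(\xi_i^k + \varepsilon_i^k y)$, defined on $B_{r_i^k/\varepsilon_i^k}(0)$, a domain that exhausts $\mathbb{R}^n$ since $\varepsilon_i^k/r_i^k < \alpha_k \to 0$. Because the $W^{2,2}$-energy is scale-invariant and $g_{s_k}$ is Euclidean near $\xi_i^k$, the family $\{U_k^i\}$ is bounded in $H^2(\mathbb{R}^n)$; extract weak limits $U^i$. Likewise, extract a weak limit $u_\infty$ of $u_k$ in $W^{2,2}(M, g_0)$. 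Testing $\mathcal{H}_{g_{s_k}}(u_k, \cdot) \to 0$ against functions supported away from the concentration points (where the bubble potential $W_{(\xi^k,\varepsilon^k,r^k)}^{8/(n-4)}$ is negligible) yields $P_{g_0} u_\infty = 0$, hence $u_\infty \equiv 0$. Each $U^i$ satisfies the Euclidean linearized equation $\Delta^2 U^i = \tfrac{n+4}{n-4}\,d(n)\, w_0^{8/(n-4)} U^i$ on $\mathbb{R}^n$, and by Lin's nondegeneracy theorem \cite{MR1611691} lies in the kernel spanned by $\partial_\varepsilon w_0$ and $\partial_{\xi_j} w_0$, $j=1,\ldots,n$. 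The orthogonality $\int_M u_k \overline{\varphi}_{(\xi_i^k,\varepsilon_i^k,r_i^k,l)}\, dv_{g_{s_k}} = 0$ rescales to $\int_{\mathbb{R}^n} U^i \varphi_{(0,1,l)}\, dy = 0$; since $\varphi_{(0,1,l)} \propto \partial_l w_0 \cdot w_0^{8/(n-4)}$ and the Gram matrix $(\int \partial_l w_0 \cdot \partial_m w_0 \cdot w_0^{8/(n-4)} dy)_{l,m}$ is diagonal and invertible (as in Lemma~\ref{lem002}), we conclude $U^i = 0$ for every $i$.

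To close the contradiction I must upgrade the weak vanishing to strong convergence $u_k \to 0$ in $W^{2,2}$. Using a partition of unity $\phi_\infty + \sum_i \phi_i \equiv 1$ adapted to the bubble scales, I decompose $u_k$ and bound $\mathcal{H}_{g_{s_k}}$ on each piece. Away from the bubbles the potential term is negligible, so $\mathcal{H}_{g_{s_k}}(\phi_\infty u_k, \phi_\infty u_k) \geq \mu \|\phi_\infty u_k\|^2_{W^{2,2}} + o(1)$. Near each bubble, the rescaled version of $\mathcal{H}_{g_{s_k}}(\phi_i u_k, \phi_i u_k)$ converges to the Euclidean linearized energy $\int_{\mathbb{R}^n}\bigl[(\Delta V)^2 - \tfrac{n+4}{n-4}d(n)\, w_0^{8/(n-4)} V^2\bigr] dy$, for which a sharp fourth-order Bianchi--Egnell-type coercivity holds on the orthogonal complement of $\operatorname{span}\{w_0, \partial_\varepsilon w_0, \partial_{\xi_j} w_0\}$. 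The $\mathcal{F}^\perp$-constraints kill the zero-eigenvalue directions (the kernel), while the extra constraint $\mathcal{H}_g(u_k, \widetilde w_{(\xi_i^k,\varepsilon_i^k,r_i^k)}) = 0$ built into $\mathcal{V}_{(\xi,\varepsilon)}(g)$ handles the single negative eigendirection $w_0$ (on which the bilinear form is negative since $\Delta^2 w_0 = d(n) w_0^{(n+4)/(n-4)}$). Summing contributions yields $\mathcal{H}_{g_{s_k}}(u_k, u_k) \geq \beta' + o(1)$, contradicting our assumption. The main obstacle is the sharp fourth-order coercivity for the linearized operator on $\mathbb{R}^n$; this is the fourth-order analog of the classical Bianchi--Egnell inequality and follows from the single-bubble machinery developed by Wei and Zhao in \cite{MR3016505}, extending directly to the multi-bubble setting thanks to the disjoint supports of the profiles $\overline{w}_{(\xi_i^k,\varepsilon_i^k,r_i^k)}$, which decouple the interactions across bubbles.
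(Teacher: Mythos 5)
Your overall strategy (contradiction, rescale at the bubbles, use constraints to kill the limit) is in the same spirit as the paper's, but there is a genuine gap that invalidates the central step.

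You write that ``testing $\mathcal{H}_{g_{s_k}}(u_k,\cdot)\to 0$'' yields $P_{g_0}u_\infty=0$ and that each blow-up limit $U^i$ solves the linearized equation $\Delta^2 U^i=\tfrac{n+4}{n-4}d(n)\,w_0^{8/(n-4)}U^i$. This is not available from the hypothesis. The contradiction assumption only gives the \emph{scalar} condition $\mathcal{H}_{g_k}(u_k,u_k)\to 0$ (with $\|u_k\|_{W^{2,2}}=1$); it does not say that the linear functional $v\mapsto\mathcal{H}_{g_k}(u_k,v)$ is small, so $u_k$ is not an approximate solution of any equation. One would normally upgrade the scalar smallness to functional smallness via a Cauchy--Schwarz argument of the form $|\mathcal{H}(u_k,v)|\le\mathcal{H}(u_k,u_k)^{1/2}\mathcal{H}(v,v)^{1/2}$, but that presupposes $\mathcal{H}\ge 0$ on the subspace in question, which is precisely what the theorem asserts. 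Alternatively one could take $u_k$ to minimize the Rayleigh quotient, but it has not been shown that the infimum is finite, so the minimizer may not exist; and even then the Euler--Lagrange equation would carry Lagrange multipliers from the (many) orthogonality constraints defining $\mathcal{V}_{(\xi,\varepsilon)}(g)$ that need to be estimated. Without the PDE, invoking Lin's nondegeneracy theorem to force $U^i$ into the kernel, and then concluding $U^i=0$ from the orthogonality, is unjustified. As a secondary issue, even if $U^i$ did solve the linearized equation, it is only a weak $H^2_{\mathrm{loc}}$ limit; nondegeneracy classifies solutions with suitable decay or integrability, and some control at infinity would still have to be established.

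The paper's proof is designed precisely to avoid deriving a PDE for the limit. After normalizing and using the Gursky--Malchiodi representation of $P_g$ together with $R_{g_0}>0$, $Q_{g_0}>0$, one shows that $\langle P_{g_m}u_m,u_m\rangle\to c_0>0$, hence the potential term $\int W^{8/(n-4)}u_m^2$ concentrates; a pigeonhole argument (Claim~1) selects a bubble $j$ at which the localized biharmonic energy is controlled by the localized potential energy, yielding, after rescaling, the \emph{quadratic-form} inequality
$\tfrac{n-6}{n-2}\int(\Delta\overline u)^2\le\tfrac{n+4}{n-4}d(n)\int\!\bigl(\tfrac{2}{1+|y|^2}\bigr)^4\overline u^2$,
together with the rescaled orthogonality constraints. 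Then it transports $\overline u$ to the sphere via stereographic projection and, using the constraints to push $\overline v$ past the first two eigenspaces of $\Delta_{\mathbb{S}^n}$, obtains the reverse bound with the spectral-gap constant $\tfrac{n-2}{n+6}$. The contradiction is purely numerical: $\tfrac{n-6}{n-2}>\tfrac{n-2}{n+6}$ for $n>10$. No equation for $\overline u$, no nondegeneracy, and no multi-bubble partition-of-unity decomposition is needed: one bubble suffices by concentration. To repair your argument along your own lines, you would need to set it up so that $u_k$ is an approximate eigenfunction (e.g., via a compactness step showing the infimum is attained, after first establishing a weaker lower bound), and separately control the Lagrange multipliers from the constraints; it is cleaner to follow the quadratic-form route of the paper.
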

\begin{proof}
We first claim that the result holds for the metric \(g_s\). Suppose this is not the case. Then, for every \(m \in \mathbb{N}\) there would exist parameters \(\alpha_m, s_m \in (0,1/m)\),  a pair \((\xi_m,\varepsilon_m) \in \mathcal{D}_{(\alpha_m,r_m)}\),  and a function  \(u_m \in \mathcal{V}_{(\xi_m,\varepsilon_m)}(g)\), such that
\begin{equation}\label{eq037}
    \mathcal{H}_{g_m}(u_m,u_m)
    < 
    \frac{1}{m}\, \|u_m\|_{W^{2,2}(M,g_m)}^{2}.
\end{equation}
where we set \(g_m := g_{s_m}\). In particular, each \(u_m\) is nontrivial, and we may normalize and assume that $\|u_m\|_{W^{2,2}(M,g_m)}^{2} = 1$.

Note that by \cite[p.~2145]{MR3420504}, it is possible to write the Paneitz operator as
\begin{equation*}\label{eq039}
    \langle P_{g} u, u \rangle_{L^{2}}
    =
    \int_M
    \left\{
        \frac{n-6}{n-2} \, (\Delta_g u)^{2}
        \;+\;
        a(n)\, |\nabla_g^{2} u|^{2}
        \;+\;
        b(n)\, R_g \, |\nabla_g u|^{2}
        \;+\;
        c(n)\, Q_g \, u^{2}
    \right\}
    dv_g,
\end{equation*}
and, using this identity, we obtain
    \begin{align*}
        \langle P_{g_m}u_m,u_m\rangle_{L^2} & =\int_M\left\{\frac{n-6}{n-2}(\Delta_{g_m} u_m)^2+a(n)\left|\nabla_{g_m}^2 u_m\right|^2+b(n)R_{g_0}|\nabla_{g_m} u_m|^2+c(n) Q_{g_0} u_m^2\right\} d v_{g_m}\\
        & +\int_M\left(b(n)(R_{g_m}-R_{g_0})|\nabla_{g_m}u_m|^2+c(n)(Q_{g_m}-Q_{g_0})u_m^2\right)dv_{g_m}.
    \end{align*}
    
The construction of \(g_m\) (see Section~\ref{sec005}) implies that  
\(R_{g_m} - R_{g_0}\) is uniformly bounded and supported in \(B_{2 s_m}(p)\).  
Hence, $\int_M 
        (R_{g_m} - R_{g_0})\, |\nabla_{g_m} u_m|^{2} 
    \, dv_{g_m}
    \;\to\; 0$.
    
By the Sobolev embedding  
\(W^{2,2}(M,g_m) \hookrightarrow L^{\frac{2n}{n-4}}(M,g_m)\)  
and estimate~\eqref{eq033},  
the sequence \(\{u_m\}\) is uniformly bounded in  
\(L^{\frac{2n}{n-4}}(M,g_m)\).  
Thus, using~\eqref{eq034}, we obtain
\[
    \left|
        \int_M 
            (Q_{g_m} - Q_{g_0})\, u_m^{2} 
        \, dv_{g_m}
    \right|
    \le 
    C \,
    \|Q_{g_m} - Q_{g_0}\|_{L^{\frac{n}{4}}(M,g_m)}
    \,
    \|u_m\|_{L^{\frac{2n}{n-4}}(M,g_m)}^{2}
    \;\longrightarrow\; 0 .
\]

Therefore, using that \(R_{g_0} > 0\), \(Q_{g_0} > 0\),  
and that \(\|u_m\|^{2}_{W^{2,2}(M,g_m)} = 1\),  
we may extract a subsequence (still denoted \(u_m\)) such that $\langle P_{g_m} u_m, u_m \rangle_{L^{2}}
    \;\to\;
    c_0 > 0 $.
    
Since the support of $\overline{w}_{(\xi_{jm},\varepsilon_{jm},r_{jm})}$ belongs to $B_{2r_{jm}}(\xi_{jm})$, where the metric $g_m$ coincides with the euclidean metric, by \eqref{eq005}, the H\"older inequality, we get
\begin{equation*}\label{eq042}
    \left|\int_M\overline{w}_{(\xi_{jm},\varepsilon_{jm},r_{jm})}^{\frac{8}{n-4}}u_m^2dv_{g_m}\right|\leq C\|u_m\|^2_{L^{\frac{2n}{n-4}}(M,g_m)}\leq C,
\end{equation*}
for some constant $C>0$ independently of $m$. Therefore, up to a subsequence, for each $j\in\{1,\ldots,\ell\}$ the limit $\displaystyle\lim_{m\to\infty}\int_M\overline{w}_{(\xi_{jm},\varepsilon_{jm},r_{jm})}^{\frac{8}{n-4}}u_m^2dv_{g_m}$ exists. 

By \eqref{eq024} and \eqref{eq037} we obtain
\begin{equation}\label{eq038}
\displaystyle\lim_{m\to\infty}\frac{n+4}{n-4}d(n)\sum_{j=1}^\ell\int_M\overline{w}_{(\xi_{jm},\varepsilon_{jm},r_{jm})}^{\frac{8}{n-4}}u_m^2dv_{g_m}\geq c_0>0,
\end{equation}
and thus, for some $j\in\{1,\ldots,\ell\}$ it holds
    \begin{equation}\label{eq040}
        \lim_{m\to\infty}\int_M\overline w_{(\xi_{jm},\varepsilon_{jm},r_{jm})}^{\frac{8}{n-4}}u_m^2dv_{g_m}> 0.
    \end{equation}

\noindent{\bf Claim 1:} There exists $j\in\{1,\ldots,\ell\}$ satisfying \eqref{eq038} such that, up to a subsequence, it holds
\begin{equation}\label{eq041}
    \frac{n-6}{n-2}\lim_{m\to\infty}\int_{\Omega_{jm}}(\Delta_{g_{m}}u_m)^2dv_{g_m}\leq \lim_{m\to\infty}\frac{n+4}{n-4}d(n)\int_M\overline w_{(\xi_{jm},\varepsilon_{jm},r_{jm})}^{\frac{8}{n-4}}u_m^2dv_{g_m}.
\end{equation}

Consider the nonempty set  $A = \left\{
    j \in \{1, \ldots, \ell\} :
    \displaystyle\lim_{m \to \infty}
    \int_M 
        \overline{w}_{(\xi_{j m}, \varepsilon_{j m}, r_{j m})}^{\frac{8}{n-4}}
        u_m^2 \, dv_{g_m}
\right\}$ and define $\Omega_{jm}=B_{\sqrt{m}\varepsilon_{jm}}(\xi_{jm})$. If the claim were not true, then for all \( j \in A \) we would have
$$\frac{n-6}{n-2}\lim_{m\to\infty}\int_{\Omega_{jm}}(\Delta_{g_{m}}u_m)^2dv_{g_m}> \lim_{m\to\infty}\frac{n+4}{n-4}d(n)\int_M\overline w_{(\xi_{jm},\varepsilon_{jm},r_{jm})}^{\frac{8}{n-4}}u_m^2dv_{g_m}.$$
Since $\Omega_{im}\cap\Omega_{jm}=\emptyset$ if $i\not=j$, as before, we get
\begin{align*}
    c_0 & = \lim_{m\to\infty}\langle P_{g_m}u_m,u_m\rangle_{L^2}\\
    & =\lim_{m\to\infty}\int_M\left\{\frac{n-6}{n-2}(\Delta_{g_m} u_m)^2+a(n)\left|\nabla_{g_m}^2 u_m\right|^2+b(n)R_{g_0}|\nabla_{g_m} u_m|^2+c(n) Q_{g_0} u_m^2\right\} d v_{g_m}\\
        & +\lim_{m\to\infty}\int_M\left(b(n)(R_{g_m}-R_{g_0})|\nabla_{g_m}u_m|^2+c(n)(Q_{g_m}-Q_{g_0})u_m^2)\right)dv_{g_m}\\
        & \geq \frac{n-6}{n-2}\lim_{m\to\infty}\int_M(\Delta_{g_{m}}u_m)^2dv_{g_m} \geq \frac{n-6}{n-2}\lim_{m\to\infty}\sum_{j=1}^\ell\int_{\Omega_{jm}}(\Delta_{g_{m}}u_m)^2dv_{g_m}\\
        & > \lim_{m\to\infty}\frac{n+4}{n-4}d(n)\sum_{j=1}^\ell\int_M\overline w_{(\xi_{jm},\varepsilon_{jm},r_j)}^{\frac{8}{n-4}}u_m^2dv_{g_m},
\end{align*}
which contradicts \eqref{eq038}. This proves Claim 1.

Recall that \(\|u_m\|_{W^{2,2}(M,g_m)} = 1\).  
In particular, the sequence \(\{u_m\}\) is uniformly bounded in  
\(W^{2,2}(\Omega_{jm}, g_m)\).  
Moreover, by the Sobolev embedding theorem and~\eqref{eq033},  
we also have that \(u_m\) is uniformly bounded in  
\(L^{\frac{2n}{n-4}}(\Omega_{jm}, g_m)\). Fix \(j \in \{1,\ldots,\ell\}\) satisfying \eqref{eq040} and~\eqref{eq041}.  
Define the rescaled functions  
\(\overline{u}_m : B_{\frac{r_{jm}}{\varepsilon_{jm}}}(0) \subset \mathbb{R}^n \to \mathbb{R}\) by $ \overline{u}_m(y)
    :=
    \varepsilon_{jm}^{\frac{n-4}{2}}
    u_m(\xi_{jm} + \varepsilon_{jm} y)$. Because \(B_{\sqrt{m}}(0) \subset B_{\frac{r_{jm}}{\varepsilon_{jm}}}(0)\)  
and \(\xi_{jm} + \varepsilon_{jm} y \in B_{r_{jm}}(\xi_{jm})\),  
where the metric \(g_m\) coincides with the Euclidean metric, we obtain
\[
    \lim_{m\to\infty}
    \int_{B_{\sqrt{m}}(0)}
        \overline{u}_m(y)^{\frac{2n}{n-4}}\, dy
    =
    \lim_{m\to\infty}
    \int_{\Omega_{jm}}
        u_m(x)^{\frac{2n}{n-4}}\, dx
    \le c(n),
\]
and
\[
    \lim_{m\to\infty}
    \int_{B_{\sqrt{m}}(0)}
        |\nabla^{2}\overline{u}_m(y)|^{2}\, dy
    =
    \lim_{m\to\infty}
    \int_{\Omega_{jm}}
        |\nabla^{2} u_m(x)|^{2}\, dx
    \le c(n).
\]

\medskip

Therefore, using~\eqref{eq043} together with~\eqref{eq040} and \eqref{eq041},  
we obtain a function  
\(\overline{u} : \mathbb{R}^{n} \to \mathbb{R}\) such that,  
up to a subsequence, $0
    <
   \displaystyle \int_{\mathbb{R}^{n}}
        \left( \frac{2}{1 + |y|^{2}} \right)^{4}
        \overline{u}(y)^{2}\, dy
    <
    \infty,$ and
\begin{equation}\label{eq049}
    \frac{n-6}{n-2}
    \int_{\mathbb{R}^{n}}
        (\Delta \overline{u})^{2} \, dy
    \;\le\;
    \frac{n+4}{n-4}d(n)
    \int_{\mathbb{R}^{n}}
        \left( \frac{2}{1 + |y|^{2}} \right)^{4}
        \overline{u}(y)^{2}\, dy.
\end{equation}

\medskip

Since \(u_m \in \mathcal{V}_{(\xi_m,\varepsilon_m)}(g)\)  
(see~\eqref{eq036}),  
using~\eqref{eq051}, \eqref{eq052}, \eqref{eq029}, and~\eqref{eq054},  
we obtain the orthogonality conditions
\begin{equation}\label{eq046}
    \int_{\mathbb{R}^{n}}
        \left( \frac{2}{1 + |y|^{2}} \right)^{\frac{n+4}{2}}
        \overline{u}(y)\, dy
    = 0,
\end{equation}
\begin{equation*}\label{eq044}
    \int_{\mathbb{R}^{n}}
        \left( \frac{2}{1 + |y|^{2}} \right)^{\frac{n+4}{2}}
        \frac{|y|^{2}-1}{|y|^{2}+1}
        \overline{u}(y)\, dy
    = 0,
\end{equation*}
and, for each \(k = 1,\ldots,n\),
\begin{equation}\label{eq045}
    \int_{\mathbb{R}^{n}}
        \left( \frac{2}{1 + |y|^{2}} \right)^{\frac{n+4}{2}}
        \frac{y_{k}}{|y|^{2}+1}
        \overline{u}(y)\, dy
    = 0.
\end{equation}

\noindent\textbf{Claim 2.}  
The function \(\overline{u}\) satisfies the inequality
\begin{equation}\label{eq048}
    \frac{n+4}{n-4}\, d(n)
    \int_{\mathbb{R}^{n}}
        \left( \frac{2}{1 + |y|^{2}} \right)^{4}
        \overline{u}^{2}\, dy
    \;\le\;\frac{n-2}{n+6}
    \int_{\mathbb{R}^{n}} (\Delta \overline{u})^{2}\, dy .
\end{equation}

Consider the stereographic projection  
\(\sigma : \mathbb{S}^{n} \setminus \{N\} \to \mathbb{R}^{n}\),  
given by \(\sigma(x,t) = \frac{x}{1-t}\),  
where \(N\) is the north pole.  
Let \(\rho = \sigma^{-1}\).  
If \(g_{\mathrm{sph}}\) denotes the round metric on \(\mathbb{S}^{n}\),  then $\rho^{*} g_{\mathrm{sph}}    = w_{0}^{\frac{4}{n-4}} g_{\rm{euc}},$ where \(w_{0}\) is given by~\eqref{eq047} and \(g_{\rm{euc}}\) is the Euclidean metric.

Define a function \(\overline{v}\) on the sphere by $\overline{v}    := (\overline{u}\, w_{0}^{-1}) \circ \sigma $. A direct computation shows that
\[
    \int_{\mathbb{S}^{n}} \overline{v}^{2}\, dv_{g_{\mathrm{sph}}}
    =
    \int_{\mathbb{R}^{n}}
        \left( \frac{2}{1 + |y|^{2}} \right)^{4}
        \overline{u}^{2}\, dy
    < \infty .
\]

Moreover, one readily verifies that conditions  
\eqref{eq046} and \eqref{eq045}  
are equivalent to \(\overline{v}\) being \(L^{2}\)-orthogonal  
to the constant function and to the coordinate functions on the round sphere.

Using either a computation analogous to that in  \cite[Appendix D]{MR1040954}  or the conformal invariance of the Paneitz operator,  we obtain
\begin{equation}\label{eq076}
    \int_{\mathbb{R}^{n}} (\Delta \overline{u})^{2}\, dy
    =
    \int_{\mathbb{S}^{n}}
        \left(
            (\Delta_{g_{\mathrm{sph}}} \overline{v})^{2}
            + \frac{n^{2} - 2n - 4}{2}\, |\nabla_{g_{\mathrm{sph}}} \overline{v}|^{2}
            + \frac{n(n-4)(n^{2}-4)}{16}\, \overline{v}^{2}
        \right)
        dv_{g_{\mathrm{sph}}}.
\end{equation}

Since \(\overline{v}\) is orthogonal to the first two eigenspaces  
of the Laplacian on \(\mathbb{S}^{n}\),  
the variational characterization of the eigenvalues yields
\[
    \int_{\mathbb{R}^{n}} (\Delta \overline{u})^{2}\, dy
    \;\ge\;
    \left(
        \lambda_{2}^{2}
        + \frac{n^{2} - 2n - 4}{2}\, \lambda_{2}
        + \frac{n(n-4)(n^{2}-4)}{16}
    \right)
    \int_{\mathbb{S}^{n}} \overline{v}^{2}\, dv_{g_{\mathrm{sph}}},
\]
where \(\lambda_{2} = 2(n+1)\).  
This is precisely inequality~\eqref{eq048}. Inspired by a similar argument in \cite[Appendix D]{MR1040954}, Claim 2 leads to a contradiction with \eqref{eq049}, thereby completing the proof of the theorem for $g_s$.

To extend the result to the metric \(g\), observe that for \(\alpha > 0\) and \(s_{0} > 0\) sufficiently small,  there exists a constant \(\theta = \theta(n,\ell) > 0\) such that $\mathcal{H}_{g_s}(u,u)\ge    \theta\, \|u\|_{W^{2,2}(M,g_s)}^{2}$ for all $u \in \mathcal{V}_{(\xi,\varepsilon)}(g)$. Using estimates~\eqref{eq058} and~\eqref{eq063}, we obtain
\begin{align*}
    \mathcal{H}_g(u,u)
    &\ge 
    \mathcal{H}_{g_s}(u,u)
    \;-\;
    C_1 \alpha\, \|u\|_{W^{2,2}(M,g_s)}^{2} \ge 
    (\theta - C_1 \alpha)\, \|u\|_{W^{2,2}(M,g_s)}^{2} \ge 
    \frac{\theta - C_1 \alpha}{1 + C_1 \alpha}\,
    \|u\|_{W^{2,2}(M,g)}^{2}.
\end{align*}
which concludes the proof of the theorem.
\end{proof}

Using \eqref{eq016}, Theorem~\ref{teo003}, and the Lax-Milgram theorem, 
we obtain the following result.

\begin{theorem}\label{teo004}
Consider the Riemannian metric \(g\) defined in~\eqref{eq013}.  
Let \(\alpha_{0} \in (0,1)\) and \(s_{0} > 0\) be the constants provided by Theorem~\ref{teo003}.  
Fix \(\ell \in \mathbb{N}\), \(\alpha \in (0,\alpha_{0})\), \(s \in (0,s_{0})\), and  
\(r = (r_{1},\ldots,r_{\ell})\) satisfying  
\(0 < r_{i} < \min\{1, R/2\}\) and \(0 < R < s/4\).  
Let \((\xi,\varepsilon) \in \mathcal{D}_{(\alpha,r)}\) and  
\(f \in L^{\frac{2n}{n+4}}(M,g)\).  
Then there exists a unique function \(w_{f} \in \mathcal{V}_{(\xi,\varepsilon)}(g)\) such that $\mathcal{H}_{g}(w_{f}, \varphi)    =     \langle f, \varphi \rangle_{L^{2}(M,g)}$ for all $\varphi \in \mathcal{V}_{(\xi,\varepsilon)}(g)$. Moreover, there exists a constant \(c > 0\), independent of \(f\), such that
\[
    \|w_{f}\|_{W^{2,2}(M,g)}
    \;\le\;
    c \,\|f\|_{L^{\frac{2n}{n+4}}(M,g)}.
\]
\end{theorem}

Now, we aim to extend Theorem~\ref{teo004} 
to the space \(\mathcal{F}^\perp_{(\xi, \varepsilon, \alpha, r)}(M, g)\). Recall the definition of \(\mathcal{V}_{(\xi, \varepsilon)}(g)\) given in \eqref{eq036}. We first prove the following lemma.

\begin{lemma}\label{lem007}
    For $(\xi, \varepsilon) \in \mathcal{D}_{(\alpha, r)}$, with $\alpha>0$ and $r$ as in Theorem~\ref{teo004}, and for $\widetilde{w}_{(\xi_i, \varepsilon_i, r_i)}$ as given in~\eqref{eq017}, define $\widetilde{H}_{ij}
= \mathcal{H}_g
\big(
\widetilde{w}_{(\xi_i, \varepsilon_i, r_i)}, 
\widetilde{w}_{(\xi_j, \varepsilon_j, r_j)}
\big)$. Then there exists \(\alpha_1 \in (0, \alpha_0)\) such that, 
if \((\xi, \varepsilon) \in \mathcal{D}_{(\alpha_1, r)} \), the matrix \((\widetilde{H}_{ij})\) is invertible, with its norm satisfying $0 < c(n, \ell) < \big|(\widetilde{H}_{ij})\big| \leq C(n, \ell)$.
\end{lemma}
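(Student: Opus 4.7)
The idea is to show that the matrix $(\widetilde{H}_{ij})$ is, up to an error controlled by $\alpha$, a diagonal matrix whose diagonal entries are nonzero and depend only on $n$. Invertibility and the two-sided norm bound then follow by a standard diagonal-dominance (Gershgorin) argument, using that $\mathcal{H}_g$ is symmetric.

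By bilinearity of $\mathcal{H}_g$, writing $\overline w_k := \overline w_{(\xi_k,\varepsilon_k,r_k)}$ and $v_k := v_{(\xi_k,\varepsilon_k,r_k)}$, I would first expand
\begin{equation*}
\widetilde H_{ij} \;=\; \mathcal{H}_g(\overline w_i,\overline w_j) \;-\; \mathcal{H}_g(\overline w_i, v_j) \;-\; \mathcal{H}_g(v_i,\overline w_j) \;+\; \mathcal{H}_g(v_i, v_j).
\end{equation*}
The continuity estimate \eqref{eq016}, combined with the bound $\|v_k\|_{W^{2,2}(M,g_s)} \le C(\varepsilon_k/r_k)^n \le C\alpha^n$ from Lemma \ref{lem004} and the inequality $\varepsilon_k/r_k<\alpha$ defining $\mathcal D_{(\alpha,r)}$, controls each of the three cross-terms by $C(n,\ell)\alpha^n$; in particular these are negligible compared to the $O(\alpha)$ contributions that will appear below.

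If $i\neq j$, the supports of $\overline w_i$ and $\overline w_j$ are disjoint (by \eqref{eq011}); since $P_g$ is a local fourth-order differential operator, $P_g\overline w_i$ is supported in $\operatorname{supp}(\overline w_i)$, so $\langle P_g\overline w_i,\overline w_j\rangle_{L^2}=0$, and likewise $W^{8/(n-4)}_{(\xi,\varepsilon,r)}\overline w_i\overline w_j\equiv 0$ pointwise. Thus $\mathcal{H}_g(\overline w_i,\overline w_j)=0$. If $i=j$, using \eqref{eq066} together with the disjointness of the supports of the $\overline w_k$ gives $W^{8/(n-4)}_{(\xi,\varepsilon,r)}\overline w_i^2 = \overline w_i^{2n/(n-4)}$ pointwise, so
\begin{equation*}
\mathcal{H}_g(\overline w_i,\overline w_i) \;=\; \int_M\bigl(P_g\overline w_i - d(n)\,\overline w_i^{(n+4)/(n-4)}\bigr)\overline w_i\,dv_g \;-\; \tfrac{8}{n-4}\,d(n)\int_M\overline w_i^{2n/(n-4)}\,dv_g.
\end{equation*}
Hölder's inequality and Proposition \ref{propo002} bound the first integral by $C(n,\ell)\alpha$. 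After the change of variables $y=(x-\xi_i)/\varepsilon_i$, the second reduces to $\int_{\mathbb R^n}\eta(\varepsilon_i|y|/r_i)^{2n/(n-4)}\,w_0^{2n/(n-4)}(y)\,dy$, which, since $\varepsilon_i/r_i<\alpha<1$, is bounded above by $\|w_0\|_{L^{2n/(n-4)}(\mathbb R^n)}^{2n/(n-4)}$ (finite by \eqref{eq005}) and below by $\int_{B_1(0)}w_0^{2n/(n-4)}\,dy$, both positive constants depending only on $n$.

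Collecting everything, there exist $c_0(n)>0$ and $C_0(n,\ell)>0$ such that, for $\alpha_1\in(0,\alpha_0)$ sufficiently small, $|\widetilde H_{ii}|\ge c_0(n)$ and $|\widetilde H_{ij}|\le C_0(n,\ell)\alpha_1$ for $i\ne j$, uniformly on $\mathcal D_{(\alpha_1,r)}$. Since $(\widetilde H_{ij})$ is symmetric, a Gershgorin-circle argument then yields a uniform positive lower bound on the absolute values of its eigenvalues, giving invertibility and the lower bound $c(n,\ell)$ on the norm; the upper bound $C(n,\ell)$ is immediate from \eqref{eq016}. The main point requiring care is the uniform positivity of $\int\overline w_i^{2n/(n-4)}\,dv_g$ across the entire parameter set $\mathcal D_{(\alpha,r)}$, which is exactly what the constraint $\varepsilon_i/r_i<\alpha$ ensures via the rescaling argument above.
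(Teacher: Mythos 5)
Your proof is correct and follows essentially the same route as the paper's: bilinearity of $\mathcal{H}_g$ is used to compare $\widetilde H_{ij}$ with $H_{ij}:=\mathcal{H}_g(\overline w_i,\overline w_j)$ via the smallness of the $v$-corrections from Lemma~\ref{lem004}, then one shows $H_{ij}=0$ for $i\neq j$ by disjoint supports and $|H_{ii}|$ bounded away from $0$ by Proposition~\ref{propo002} plus a lower bound on $\int_M\overline w_i^{2n/(n-4)}$, and finishes by a diagonal-dominance/perturbation argument. (The paper's proof packages the same steps as a "first prove for $(H_{ij})$, then a claim $|H_{ij}-\widetilde H_{ij}|\le C\alpha$, then a standard perturbation argument"; your Gershgorin endgame and your rescaling lower bound for $\int\overline w_i^{2n/(n-4)}$ are cosmetic variants of what the paper does with \eqref{eq108}.)
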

\begin{proof} 
First, we prove that $H_{ij}:=\mathcal{H}_g\!\left(\overline{w}_{(\xi_i,\varepsilon_i,r_i)},\,\overline{w}_{(\xi_j,\varepsilon_j,r_j)}\right)$ satisfies the conclusion of the lemma.

Since the functions  
\(\overline{w}_{(\xi_i,\varepsilon_i,r_i)}\) and  
\(\overline{w}_{(\xi_j,\varepsilon_j,r_j)}\)  
have disjoint supports whenever \(i \neq j\),  
we immediately obtain \(H_{ij} = 0\) for \(i \neq j\).  
Thus, it remains to show that \(H_{ii} \neq 0\).

Using that \(dv_g = dv_{g_s}\) and that the metric \(g_s\) coincides with the Euclidean metric on \(B_{r_i}(\xi_i)\),  
where \(\overline{w}_{(\xi_i,\varepsilon_i,r_i)} = w_{(\xi_i,\varepsilon_i)}\),  
we deduce from~\eqref{eq005} that
\begin{align}
    \left|\int_M\overline{w}_{(\xi_i,\varepsilon_i,r_i)}^{\frac{2n}{n-4}}\, dv_g\right| & \ge\int_{B_{r_i}(\xi_i)}w_{(\xi_i,\varepsilon_i)}^{\frac{2n}{n-4}}\, dx=
    \int_{\mathbb{R}^n}
        w_{(\xi_i,\varepsilon_i)}^{\frac{2n}{n-4}}\, dx
    -
    \int_{\mathbb{R}^n \setminus B_{r_i}(\xi_i)}
        w_{(\xi_i,\varepsilon_i)}^{\frac{2n}{n-4}}\, dx \nonumber\\
    &\ge
    \left(
        \frac{8\, Y_4^{+}(\mathbb{S}^n, g_{\mathrm{can}})}{n(n^2 - 4)}
    \right)^{\!\frac{n}{4}}
    -
    c(n) \left( \frac{\varepsilon_i}{r_i} \right)^{\!n}.\label{eq108}
\end{align}

Furthermore, using~\eqref{eq005}, \eqref{eq024}, and 
Proposition~\ref{propo002}, we obtain
\begin{align*}
    \left|
        H_{ii}
        + \frac{8}{n-4}\, d(n)
        \int_M
            \overline{w}_{(\xi_i,\varepsilon_i,r_i)}^{\frac{2n}{\,n-4\,}}
        \, dv_g
    \right|
    \le
    \left|
        \int_M
            \big(
                P_g \overline{w}_{(\xi_i,\varepsilon_i,r_i)}
                -
                d(n)\,
                \overline{w}_{(\xi_i,\varepsilon_i,r_i)}^{\frac{n+4}{\,n-4\,}}
            \big)
            \overline{w}_{(\xi_i,\varepsilon_i,r_i)}
        \, dv_g
    \right|  \\
    \le
    \left\|
        P_g \overline{w}_{(\xi_i,\varepsilon_i,r_i)}
        -
        d(n)\,
        \overline{w}_{(\xi_i,\varepsilon_i,r_i)}^{\frac{n+4}{\,n-4\,}}
    \right\|_{L^{\frac{2n}{n+4}}(M,g)}\,
    \left\|
        \overline{w}_{(\xi_i,\varepsilon_i,r_i)}
    \right\|_{L^{\frac{2n}{n-4}}(M,g)} 
    \le
    c(n,\ell)\, \alpha .
\end{align*}

Hence,
\[
    |H_{ii}|
    \;\ge\;
    \left|
        \frac{8}{n-4}\, d(n)
        \int_M
            \overline{w}_{(\xi_i,\varepsilon_i,r_i)}^{\frac{2n}{\,n-4\,}}
        \, dv_g
    \right|
    - c(n,\ell)\,\alpha .
\]

Using \eqref{eq108} and choosing \(\alpha > 0\) sufficiently small yields \(|H_{ii}| \geq c> 0\). Combined with the fact that \(H_{ij} = 0\) for \(i \ne j\), this proves the desired property for the matrix \((H_{ij})\).

    \noindent\textbf{Claim.}  
There exists a constant \(C = C(n,\ell) > 0\) such that
\[
    \bigl|\mathcal{H}_{g}\bigl(\overline{w}_{(\xi_i,\varepsilon_i,r_i)},\overline{w}_{(\xi_j,\varepsilon_j,r_j)}\bigr)-\mathcal{H}_{g}\bigl(\widetilde{w}_{(\xi_i,\varepsilon_i,r_i)},\widetilde{w}_{(\xi_j,\varepsilon_j,r_j)}\bigr)\bigr|\le C\,\alpha .
\]

By a direct computation, one verifies that  
\(w_{(\xi_i,\varepsilon_i)} \in W^{2,2}(\mathbb{R}^{n})\),  
with norm bounded independently of the parameters \(\xi_i\) and \(\varepsilon_i\).  
Consequently,
\(\overline{w}_{(\xi_i,\varepsilon_i,r_i)} \in W^{2,2}(M,g)\),  
with norm uniformly bounded in \(\xi_i\), \(\varepsilon_i\), and \(r_i\).   Using~\eqref{eq017}, we may write
\begin{align*}
    &
    \mathcal{H}_{g}\bigl(
        \overline{w}_{(\xi_i,\varepsilon_i,r_i)},
        \overline{w}_{(\xi_j,\varepsilon_j,r_j)}
    \bigr)
    -
    \mathcal{H}_{g}\bigl(
        \widetilde{w}_{(\xi_i,\varepsilon_i,r_i)},
        \widetilde{w}_{(\xi_j,\varepsilon_j,r_j)}
    \bigr)
    \\
    &\qquad=
    \mathcal{H}_{g}\bigl(
        \overline{w}_{(\xi_i,\varepsilon_i,r_i)},
        v_{(\xi_j,\varepsilon_j,r_j)}
    \bigr)
    +
    \mathcal{H}_{g}\bigl(
        v_{(\xi_i,\varepsilon_i,r_i)},
        \overline{w}_{(\xi_j,\varepsilon_j,r_j)}
    \bigr)
    -
    \mathcal{H}_{g}\bigl(
        v_{(\xi_i,\varepsilon_i,r_i)},
        v_{(\xi_j,\varepsilon_j,r_j)}
    \bigr).
\end{align*}

The estimate now follows from~\eqref{eq063},  
\eqref{eq016}, and~\eqref{eq053}, which together imply that each term on the right-hand side is bounded by \(\alpha\), up to a constant. This proves the claim.

\medskip

The lemma then follows by a standard perturbation argument.
\end{proof}

\begin{theorem}\label{teo005}
Fix \(\alpha > 0\) and \(s > 0\) sufficiently small, and consider the Riemannian metric \(g\) defined in~\eqref{eq013}.  Let \(\ell \in \mathbb{N}\) and  \(r = (r_{1},\ldots,r_{\ell})\) satisfy  \(0 < r_{i} < \min\{1, R/2\}\) and \(0 < R < s/4\).  Let \((\xi,\varepsilon) \in \mathcal{D}_{(\alpha,r)}\) and  \(f \in L^{\frac{2n}{n+4}}(M,g)\).  Then there exists a unique function \(w_{f} \in \mathcal{F}^{\perp}_{(\xi,\varepsilon,\alpha,r)}(M,g)\) such that
\begin{equation}\label{eq056}
    \langle P_{g} w_{f}, \varphi \rangle_{L^{2}}    \;-\;\frac{n+4}{n-4}d(n)\int_{M}        W_{(\xi,\varepsilon,r)}^{\frac{8}{n-4}}        w_{f}\, \varphi    \, dv_{g}    \;=\;    \int_{M} f\, \varphi\, dv_{g},    \qquad    \forall\, \varphi    \in \mathcal{F}^{\perp}_{(\xi,\varepsilon,\alpha,r)}(M,g).
\end{equation}
Moreover, there exists a constant \(c > 0\), independent of \(f\), such that
\begin{equation}\label{eq061}
    \|w_{f}\|_{W^{2,2}(M,g)}    \;\le\;    c\, \|f\|_{L^{\frac{2n}{n+4}}(M,g)} .
\end{equation}
\end{theorem}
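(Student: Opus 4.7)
The plan is to reduce \eqref{eq056} to the coercive problem already solved in Theorem~\ref{teo004} by exploiting a natural finite-dimensional splitting of $\mathcal{F}^\perp_{(\xi,\varepsilon,\alpha,r)}(M,g)$ induced by the bilinear form $\mathcal{H}_g$. Since $\mathcal{H}_g$ is symmetric (the Paneitz operator $P_g$ is formally self-adjoint and the remaining term is symmetric by inspection) and the matrix $(\widetilde H_{ij})$ is invertible by Lemma~\ref{lem007}, I claim the decomposition
$$
\mathcal{F}^\perp_{(\xi,\varepsilon,\alpha,r)}(M,g)
\;=\;
\mathcal{V}_{(\xi,\varepsilon)}(g)
\;\oplus\;
\operatorname{span}\bigl\{\widetilde w_{(\xi_i,\varepsilon_i,r_i)}:i=1,\ldots,\ell\bigr\}
$$
is a genuine direct sum. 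Indeed, for any $u\in\mathcal{F}^\perp_{(\xi,\varepsilon,\alpha,r)}(M,g)$, the linear system $\sum_i\mu_i\,\widetilde H_{ij}=\mathcal{H}_g\bigl(u,\widetilde w_{(\xi_j,\varepsilon_j,r_j)}\bigr)$ is uniquely solvable by Lemma~\ref{lem007}, and setting $v:=u-\sum_i\mu_i\widetilde w_{(\xi_i,\varepsilon_i,r_i)}$ produces an element of $\mathcal{V}_{(\xi,\varepsilon)}(g)$; the triviality of the intersection is immediate from the nondegeneracy of $(\widetilde H_{ij})$.

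Accordingly, I look for a solution of the form $w_f=v_f+\sum_{i=1}^\ell\lambda_i\widetilde w_{(\xi_i,\varepsilon_i,r_i)}$ with $v_f\in\mathcal{V}_{(\xi,\varepsilon)}(g)$. Decomposing an arbitrary test function $\varphi\in\mathcal{F}^\perp_{(\xi,\varepsilon,\alpha,r)}(M,g)$ as $\varphi=v_\varphi+\sum_j\mu_j\widetilde w_{(\xi_j,\varepsilon_j,r_j)}$ and using the symmetry of $\mathcal{H}_g$ together with the defining property $\mathcal{H}_g(v,\widetilde w_{(\xi_i,\varepsilon_i,r_i)})=0$ for $v\in\mathcal{V}_{(\xi,\varepsilon)}(g)$, all cross terms vanish and \eqref{eq056} splits into two independent problems:
$$
\mathcal{H}_g(v_f,v_\varphi)=\langle f,v_\varphi\rangle_{L^2(M,g)}\quad\forall\,v_\varphi\in\mathcal{V}_{(\xi,\varepsilon)}(g),
\qquad
\sum_{i=1}^\ell \lambda_i\,\widetilde H_{ij}=\langle f,\widetilde w_{(\xi_j,\varepsilon_j,r_j)}\rangle_{L^2(M,g)},\quad j=1,\ldots,\ell.
$$
The first equation has a unique solution $v_f\in\mathcal{V}_{(\xi,\varepsilon)}(g)$ with $\|v_f\|_{W^{2,2}(M,g)}\le c\|f\|_{L^{\frac{2n}{n+4}}(M,g)}$ by Theorem~\ref{teo004}, and the second is a finite linear system uniquely solvable by Lemma~\ref{lem007}.

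To obtain the bound \eqref{eq061}, I estimate each right-hand side in the linear system by H\"older as $|\langle f,\widetilde w_{(\xi_j,\varepsilon_j,r_j)}\rangle_{L^2(M,g)}|\le\|f\|_{L^{\frac{2n}{n+4}}(M,g)}\,\|\widetilde w_{(\xi_j,\varepsilon_j,r_j)}\|_{L^{\frac{2n}{n-4}}(M,g)}$; the second factor is uniformly bounded because $\|w_{(\xi_j,\varepsilon_j)}\|_{L^{\frac{2n}{n-4}}(\mathbb{R}^n)}$ is a dimensional constant by \eqref{eq005} and the correction $v_{(\xi_j,\varepsilon_j,r_j)}$ is controlled by \eqref{eq019}. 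Combined with the uniform upper bound on $|(\widetilde H_{ij})^{-1}|$ from Lemma~\ref{lem007}, this gives $|\lambda_i|\le C\|f\|_{L^{\frac{2n}{n+4}}(M,g)}$, and the triangle inequality together with the analogous uniform $W^{2,2}$-bound on each $\widetilde w_{(\xi_i,\varepsilon_i,r_i)}$ (via \eqref{eq053}) yields \eqref{eq061}. Uniqueness of $w_f$ is inherited from the uniqueness of $v_f$ in Theorem~\ref{teo004} and the invertibility of $(\widetilde H_{ij})$. There is no substantial obstacle here beyond bookkeeping: the entire argument is a linear-algebraic enlargement of the test-function class from $\mathcal{V}_{(\xi,\varepsilon)}(g)$ to $\mathcal{F}^\perp_{(\xi,\varepsilon,\alpha,r)}(M,g)$, with the only nontrivial ingredient being the quantitative invertibility of $(\widetilde H_{ij})$ already supplied by Lemma~\ref{lem007}.
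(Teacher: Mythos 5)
Your proposal is correct and follows essentially the same route as the paper: decompose $\mathcal{F}^\perp_{(\xi,\varepsilon,\alpha,r)}(M,g)$ as $\mathcal{V}_{(\xi,\varepsilon)}(g)+\operatorname{span}\{\widetilde w_{(\xi_i,\varepsilon_i,r_i)}\}$ via the invertibility of $(\widetilde H_{ij})$ from Lemma~\ref{lem007}, solve the $\mathcal{V}$-part by Theorem~\ref{teo004} and the finite linear system for the coefficients $\lambda_i$, and estimate via H\"older, \eqref{eq019}, \eqref{eq053}. The only cosmetic difference is that you make the direct-sum structure and the uniqueness argument explicit, which the paper leaves implicit.
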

\begin{proof}
    By Theorem \ref{teo004}, given $f\in L^{\frac{2n}{n+4}}(M,g)$, there exists $\overline w_f\in\mathcal V_{(\xi,\varepsilon)}(g)$ such that
    \[
\mathcal{H}_g(\overline w_f,\varphi) = \langle f,\varphi \rangle_{L^2(M,g)} 
\quad \text{for all } \varphi \in \mathcal{V}_{(\xi,\varepsilon)}(g).
\]
Recall the definition of $\mathcal F^\perp_{(\xi,\varepsilon,\alpha,r)}(M,g)$ in \eqref{eq059} and  $\mathcal V_{(\xi,\varepsilon)}(g)$ in \eqref{eq036}.

Given $v\in \mathcal F^\perp_{(\xi,\varepsilon,\alpha,r)}(M,g)$, there exist constants $a_i$, $i=1,\ldots,\ell$, such that $v-\displaystyle\sum_{i=1}^\ell a_i\widetilde w_{(\xi_i,\varepsilon_i,r_i)}\in \mathcal V_{(\xi,\varepsilon)}(g)$. In fact, since the matrix $(\widetilde H_{ij})$ is invertible, see Lemma \ref{lem007}, $a_i$ is exactly the solution of the system 
$$\mathcal H_g(v,\widetilde w_{(\xi_j,\varepsilon_j,r_j)})=\sum_{i=1}^\ell a_i\widetilde H_{ij},\quad j=1,\ldots,\ell.$$
Thus, $\mathcal F^\perp_{(\xi,\varepsilon,\alpha,r)}(M,g)=\mathcal V_{(\xi,\varepsilon)}(g)+\operatorname{span}\{\widetilde w_{(\xi_i,\varepsilon_i,r_i)}:i=1,\ldots,\ell\}$. In the same way, we can find $\lambda_i\in\mathbb R$, $i=1,\ldots,\ell$, such that
\begin{equation}\label{eq062}
    \sum_{i=1}^\ell\lambda_i\widetilde H_{ij}=\int_Mf\widetilde w_{(\xi_j,\varepsilon_j,r_j)}dv_g,\quad \mbox{ for all }j=1,\ldots,\ell.
\end{equation}
Define $w_f=\overline w_f+\displaystyle\sum_{i=1}^\ell\lambda_i\widetilde w_{(\xi_i,\varepsilon_i,r_i)}\in\mathcal F^\perp_{(\xi,\varepsilon,\alpha,r)}(M,g)$. Given $\varphi\in \mathcal F^\perp_{(\xi,\varepsilon,\alpha,r)}(M,g)$, we can write $\varphi=\overline\varphi+\widetilde\varphi$, with $\overline\varphi\in\mathcal V_{(\xi,\varepsilon)}(g)$ and $\widetilde\varphi\in \operatorname{span}\{\widetilde w_{(\xi_i,\varepsilon_i,r_i)}:i=1,\ldots,\ell\}$. Since $\overline w_f\in\mathcal V_{(\xi,\varepsilon)}(g)$, then
$$\mathcal H_g(w_f,\varphi)=\mathcal H_g(\overline w_f,\overline \varphi)+\sum_{i=1}^\ell\lambda_i\mathcal H_g(\widetilde w_{(\xi_i,\varepsilon_i,r_i)},\widetilde\varphi)=\int_Mf\overline\varphi dv_g+\int_Mf\widetilde\varphi dv_g=\int_Mf\varphi dv_g,$$
that is, $w_f$ satisfies \eqref{eq056}. 

Now we aim to establish the estimate of the norm in \eqref{eq061}. 
From the previous construction of $\overline w_f$, it is therefore sufficient to prove that
$$\|\lambda_i\widetilde w_{(\xi_i,\varepsilon_i,r_i)}\|_{W^{2,2}(M,g)}\leq c(n,\ell)\|f\|_{L^{\frac{2n}{n+4}}(M,g)},\quad\mbox{ for all }i=1,\ldots,\ell.$$

By Lemma \ref{lem007} and \eqref{eq062} we obtain that
\begin{equation}\label{eq064}
    |\lambda_i|\leq c(n)\sum_{i=1}^\ell \int_M\left|f\widetilde w_{(\xi_j,\varepsilon_j,r_j)}\right|dv_g\leq c(n)\|f\|_{L^{\frac{2n}{n+4}}(M,g)}\|\widetilde w_{(\xi_j,\varepsilon_j,r_j)}\|_{L^{\frac{2n}{n-4}}(M,g)}.
\end{equation}
Using~\eqref{eq063} and \eqref{eq017}, we obtain
\begin{align*}
    \|\widetilde w_{(\xi_j,\varepsilon_j,r_j)}\|^2_{W^{2,2}(M,g)} \leq C\left( \|\overline w_{(\xi_j,\varepsilon_j,r_j)}\|^2_{W^{2,2}(M,g)}+\|v_{(\xi_j,\varepsilon_j,r_j)}\|^2_{W^{2,2}(M,g)}\right)
\end{align*}
Using \eqref{eq053}, \eqref{eq064}, the Sobolev inequality and the fact that the $W^{2,2}(M,g)$-norm of $\overline{w}_{(\xi_i, \varepsilon_i, r_i)}$ is uniformly bounded, we obtain the result.
\end{proof}

\begin{theorem}\label{teo006}
Under the assumptions of Theorem~\ref{teo005},  
for all \(\alpha > 0\) sufficiently small and every  
\((\xi,\varepsilon) \in \mathcal{D}_{(\alpha,r)}\),  
there exists a unique function  \(U_{(\xi,\varepsilon,r)} \in W^{2,2}(M,g)\) such that \(U_{(\xi,\varepsilon,r)} -W_{(\xi,\varepsilon,r)}    \in\mathcal{F}^{\perp}_{(\xi,\varepsilon,\alpha,r)}(M,g)\) and 
\begin{equation}\label{eq065}
    \langle P_{g} U_{(\xi,\varepsilon,r)}, \varphi \rangle_{L^{2}}
    \;-\;
    d(n)
    \int_{M}
        |U_{(\xi,\varepsilon,r)}|^{\frac{8}{n-4}}
        U_{(\xi,\varepsilon,r)}\, \varphi
    \, dv_{g}
    = 0,
    \qquad
    \forall\, \varphi \in 
    \mathcal{F}^{\perp}_{(\xi,\varepsilon,\alpha,r)}(M,g).
\end{equation}

Moreover, for some positive constant $c=c(n,\ell)>0$, the following estimate holds:
\begin{equation}\label{eq073}
    \|W_{(\xi,\varepsilon,r)} - U_{(\xi,\varepsilon,r)}\|_{W^{2,2}(M,g)}
    \;\le\;
    c\,
    \left\|
        P_{g} W_{(\xi,\varepsilon,r)}
        -
        d(n)\, W_{(\xi,\varepsilon,r)}^{\frac{n+4}{n-4}}
    \right\|_{L^{\frac{2n}{n+4}}(M,g)}.
\end{equation}
\end{theorem}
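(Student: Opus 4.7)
The plan is to perform a standard Lyapunov--Schmidt fixed-point argument, using Theorem~\ref{teo005} to invert the linearized operator on $\mathcal{F}^{\perp}_{(\xi,\varepsilon,\alpha,r)}(M,g)$. Writing $U_{(\xi,\varepsilon,r)} = W_{(\xi,\varepsilon,r)} + \phi$ with $\phi \in \mathcal{F}^{\perp}_{(\xi,\varepsilon,\alpha,r)}(M,g)$, equation~\eqref{eq065} is equivalent to
\[
\mathcal{H}_g(\phi,\varphi) \;=\; \int_M f_0\,\varphi\, dv_g \;+\; d(n)\int_M N(\phi)\,\varphi\, dv_g, \qquad \forall\,\varphi\in \mathcal{F}^{\perp}_{(\xi,\varepsilon,\alpha,r)}(M,g),
\]
where, setting $W=W_{(\xi,\varepsilon,r)}$,
\[
f_0 \;:=\; d(n)\,W^{\frac{n+4}{n-4}} - P_g W, \qquad N(\phi) \;:=\; |W+\phi|^{\frac{8}{n-4}}(W+\phi) - W^{\frac{n+4}{n-4}} - \tfrac{n+4}{n-4}\,W^{\frac{8}{n-4}}\phi.
\]
By Theorem~\ref{teo005}, there is a bounded linear inverse $T:L^{\frac{2n}{n+4}}(M,g)\to\mathcal{F}^{\perp}_{(\xi,\varepsilon,\alpha,r)}(M,g)$, so the equation becomes the fixed-point problem $\phi = \Psi(\phi) := T\bigl(f_0 + d(n)N(\phi)\bigr)$.

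First I would collect the key estimates. Proposition~\ref{propo002} gives $\|f_0\|_{L^{2n/(n+4)}(M,g)}\le c(n,\ell)\,\alpha$. For the nonlinearity, since $n\ge 25$ forces the exponent $(n+4)/(n-4)$ to lie in $(1,2)$, the map $u\mapsto |u|^{8/(n-4)}u$ is only $C^{1,\gamma}$ with $\gamma = 8/(n-4)$, yielding the pointwise bounds
\[
|N(\phi)|\;\le\; C\,|\phi|^{\frac{n+4}{n-4}}, \qquad |N(\phi_1)-N(\phi_2)|\;\le\; C\bigl(|\phi_1|^{\frac{8}{n-4}}+|\phi_2|^{\frac{8}{n-4}}\bigr)\,|\phi_1-\phi_2|.
\]
Combining these with H\"older's inequality and the Sobolev embedding $W^{2,2}(M,g)\hookrightarrow L^{2n/(n-4)}(M,g)$,
\[
\|N(\phi)\|_{L^{\frac{2n}{n+4}}(M,g)}\;\le\; C\,\|\phi\|_{W^{2,2}(M,g)}^{\frac{n+4}{n-4}},
\]
\[
\|N(\phi_1)-N(\phi_2)\|_{L^{\frac{2n}{n+4}}(M,g)}\;\le\; C\bigl(\|\phi_1\|_{W^{2,2}}^{\frac{8}{n-4}}+\|\phi_2\|_{W^{2,2}}^{\frac{8}{n-4}}\bigr)\|\phi_1-\phi_2\|_{W^{2,2}}.
\]

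Next I would set up the contraction on the ball $B_{\rho}:=\{\phi\in\mathcal F^{\perp}_{(\xi,\varepsilon,\alpha,r)}(M,g):\|\phi\|_{W^{2,2}(M,g)}\le\rho\}$ with $\rho := K\alpha$ and $K=K(n,\ell)$ large. From the bound on $T$,
\[
\|\Psi(\phi)\|_{W^{2,2}}\;\le\; c\|f_0\|_{L^{\frac{2n}{n+4}}} + c\|N(\phi)\|_{L^{\frac{2n}{n+4}}}\;\le\; c(n,\ell)\alpha + C(K\alpha)^{\frac{n+4}{n-4}}\;\le\; K\alpha
\]
for $\alpha$ small, since the second term is of order $\alpha^{1+8/(n-4)}$. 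Similarly,
\[
\|\Psi(\phi_1)-\Psi(\phi_2)\|_{W^{2,2}}\;\le\; C\rho^{\frac{8}{n-4}}\|\phi_1-\phi_2\|_{W^{2,2}}\;\le\;\tfrac{1}{2}\|\phi_1-\phi_2\|_{W^{2,2}}
\]
for $\alpha$ sufficiently small. The Banach fixed-point theorem thus yields a unique $\phi\in B_{\rho}$, and $U_{(\xi,\varepsilon,r)}:=W_{(\xi,\varepsilon,r)}+\phi$ solves~\eqref{eq065}. Uniqueness of $U_{(\xi,\varepsilon,r)}-W_{(\xi,\varepsilon,r)}$ in $\mathcal{F}^{\perp}_{(\xi,\varepsilon,\alpha,r)}(M,g)$ comes from the uniqueness in the contraction mapping theorem together with the a priori smallness~\eqref{eq073} proved below.

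Finally, estimate~\eqref{eq073} follows directly by applying $T$ to both sides of the fixed-point equation: $\phi = T(f_0)+d(n)\,T(N(\phi))$ gives
\[
\|\phi\|_{W^{2,2}(M,g)}\;\le\; c\,\|f_0\|_{L^{\frac{2n}{n+4}}(M,g)} + C\,\|\phi\|_{W^{2,2}(M,g)}^{\frac{n+4}{n-4}},
\]
and since $\|\phi\|_{W^{2,2}}=O(\alpha)$ is already small, the superlinear term is absorbed into the left-hand side, leaving $\|\phi\|_{W^{2,2}(M,g)}\le c\|f_0\|_{L^{2n/(n+4)}(M,g)}$, which is~\eqref{eq073}. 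The main technical point is the pointwise control of $N(\phi)$: because $u\mapsto|u|^{8/(n-4)}u$ fails to be $C^{2}$ for $n>12$ one cannot Taylor-expand, and this forces the use of the fractional-power inequality above rather than a naive quadratic estimate; this is, however, the only obstacle genuinely dependent on the high-dimensional regime and is handled by the standard elementary inequality for powers $p\in(1,2)$.
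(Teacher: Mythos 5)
Your proposal is correct and follows essentially the same route as the paper: write $U=W+\phi$, use the linear solvability of Theorem~\ref{teo005} to define the fixed-point map, bound the error term by Proposition~\ref{propo002}, control the nonlinearity via the same fractional-power pointwise inequality, and conclude by the contraction mapping principle, with \eqref{eq073} obtained by absorbing the superlinear term. The only difference is presentational (you spell out the absorption step and the radius $K\alpha$ explicitly, which the paper leaves as "follows immediately").
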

\begin{proof}
    By Theorem~\ref{teo005}, we have an operator $G_{(\xi,\varepsilon)} : L^{\frac{2n}{n+4}}(M,g) \to 
\mathcal{F}^\perp_{(\xi,\varepsilon,\alpha,r)}(M,g)$, which assigns to each \( f \in L^{\frac{2n}{n+4}}(M,g) \) the unique function \( w_f \in \mathcal{F}^\perp_{(\xi,\varepsilon,\alpha,r)}(M,g) \) satisfying~\eqref{eq056}. Define the map $\Phi_{(\xi,\varepsilon)} :
\mathcal{F}^\perp_{(\xi,\varepsilon,\alpha,r)}(M,g)
\to
\mathcal{F}^\perp_{(\xi,\varepsilon,\alpha,r)}(M,g)$ as
\begin{align*}
    \Phi_{(\xi,\varepsilon)}(w) & =-G_{(\xi,\varepsilon)}\left(P_gW_{(\xi,\varepsilon,r)}-d(n)W_{(\xi,\varepsilon,r)}^{\frac{n+4}{n-4}}\right)\\
    & + d(n)G_{(\xi,\varepsilon)}\left(|W_{(\xi,\varepsilon,r)}+w|^{\frac{8}{n-4}}(W_{(\xi,\varepsilon,r)}+w)-W_{(\xi,\varepsilon,r)}^{\frac{n+4}{n-4}}-\frac{n+4}{n-4}W_{(\xi,\varepsilon,r)}^{\frac{8}{n-4}}w\right).
\end{align*}
It is a simple computation to see that $V_{(\xi,\varepsilon)}$ is a fixed point of $\Phi_{(\xi,\varepsilon)}$ if and only if $U_{(\xi,\varepsilon)}:=W_{(\xi,\varepsilon,r)}+V_{(\xi,\varepsilon)}$ satisfies \eqref{eq065}. Let us show that $\Phi_{(\xi,\varepsilon)}$ has a unique fixed point by showing that it is a contraction.

First, not that using Proposition \ref{propo002} and \eqref{eq061} we find that $\|\Phi_{(\xi,\varepsilon)}(0)\|_{W^{2,2}(M,g)}\leq c(n,\ell)\alpha$. Now, using \eqref{eq066} and the pointwise inequality
\begin{align*}
    \left||W_{(\xi,\varepsilon,r)}+w_0|^{\frac{8}{n-4}}\left(W_{(\xi,\varepsilon,r)}+w_0\right)-|W_{(\xi,\varepsilon,r)}+w_1|^{\frac{8}{n-4}}\left(W_{(\xi,\varepsilon,r)}+w_1\right)-\frac{n+4}{n-4}W_{(\xi,\varepsilon,r)}^{\frac{8}{n-4}}(w_0-w_1)\right|\\
    \leq C\left(|w_0|^{\frac{8}{n-4}}+|w_1|^{\frac{8}{n-4}}\right)|w_0-w_1|,
\end{align*}
for all $w_0,w_1\in L^{\frac{2n}{n-4}}(M,g)$, we obtain that
\begin{align*}
    \|\Phi_{(\xi,\varepsilon)}(w_0)& -\Phi_{(\xi,\varepsilon)}(w_1)\|_{W^{2,2}(M,g)}\\
    \leq &~ C\left\||W_{(\xi,\varepsilon,r)}+w_0|^{\frac{8}{n-4}}\left(W_{(\xi,\varepsilon,r)}+w_0\right)-|W_{(\xi,\varepsilon,r)}+w_1|^{\frac{8}{n-4}}\left(W_{(\xi,\varepsilon,r)}+w_1\right)\right.\\
    & -\left.\frac{n+4}{n-4}W_{(\xi,\varepsilon,r)}^{\frac{8}{n-4}}(w_0-w_1)\right\|_{L^{\frac{2n}{n+4}}(M,g)}\\
     \leq & ~ C\left(\left\|w_0\right\|^{\frac{8}{n-4}}_{L^{\frac{2n}{n-4}}(M,g)}+\left\|w_1\right\|^{\frac{8}{n-4}}_{L^{\frac{2n}{n-4}}(M,g)}\right)\left\|w_0-w_1\right\|_{L^{\frac{2n}{n-4}}(M,g)}.
\end{align*}

Therefore, for $\alpha>0$ small enough, the contraction mapping principle implies that the mapping 
$\Phi_{(\xi, \varepsilon)}$ has a unique fixed point. 
The inequality~\eqref{eq073} follows immediately.
\end{proof}

For $\alpha>0$ and $U_{(\xi,\varepsilon,r)}\in W^{2,2}(M,g)$ as in Theorem \ref{teo006}, define the functional 
$\mathcal F_g:\mathcal D_{(\alpha,r)}\to\mathbb R$ by
\begin{align}
\mathcal{F}_g(\xi,\varepsilon)
= \langle P_g U_{(\xi,\varepsilon,r)}, U_{(\xi,\varepsilon,r)} \rangle_{L^2}
- \frac{n-4}{n}d(n) \int_M |U_{(\xi,\varepsilon,r)}|^{\frac{2n}{n-4}} dv_g
-\frac{4}{n}d(n)\,\ell \left( \frac{8 Y_4^+(\mathbb{S}^n, g_{\mathrm{can}})}{n(n^2 - 4)} \right)^{\frac{n}{4}} .\label{eq082}
\end{align}

\begin{theorem}\label{teo007}
    The function \(\mathcal{F}_g\) is continuously differentiable.  
Moreover, for \(\alpha > 0\) sufficiently small,  
if \((\xi,\varepsilon) \in \mathcal{D}_{(\alpha,r)}\) is a critical point of  
\(\mathcal{F}_g\), then the corresponding function  
\(U_{(\xi,\varepsilon,r)}\) is a smooth nonnegative solution of the equation
\begin{equation}\label{eq074}
    P_g U_{(\xi,\varepsilon,r)}
    \;=\;
    d(n)\, U_{(\xi,\varepsilon,r)}^{\frac{n+4}{n-4}} .
\end{equation}
\end{theorem}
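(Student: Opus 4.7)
The plan has three stages: (i) establish $C^1$-regularity of $\mathcal{F}_g$ via the implicit function theorem applied to the fixed-point construction of Theorem \ref{teo006}; (ii) differentiate $\mathcal{F}_g$ and show that, at a critical point, the Lagrange multipliers associated with the orthogonality constraint \eqref{eq065} all vanish, producing a full weak solution of $P_g U = d(n)|U|^{\frac{8}{n-4}} U$; and (iii) invoke the Gursky--Malchiodi maximum principle to deduce nonnegativity, thereby reducing the resulting equation to \eqref{eq074}.

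For (i), recall that $U_{(\xi,\varepsilon,r)} = W_{(\xi,\varepsilon,r)} + V_{(\xi,\varepsilon,r)}$, where $V_{(\xi,\varepsilon,r)}$ is the unique fixed point of the contraction $\Phi_{(\xi,\varepsilon)}$ from Theorem \ref{teo006}. Since $W_{(\xi,\varepsilon,r)}$ and the solution operator $G_{(\xi,\varepsilon)}$ of Theorem \ref{teo005} depend smoothly on $(\xi,\varepsilon)$, so does $\Phi_{(\xi,\varepsilon)}$; because it is a contraction, $I - D_V \Phi_{(\xi,\varepsilon)}$ is invertible via Neumann series, and the implicit function theorem yields $V \in C^1(\mathcal{D}_{(\alpha,r)}; W^{2,2}(M,g))$. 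Composing with the smooth functional in \eqref{eq082} gives $\mathcal{F}_g \in C^1$. For (ii), set
\[
f := P_g U_{(\xi,\varepsilon,r)} - d(n)|U_{(\xi,\varepsilon,r)}|^{\frac{8}{n-4}}U_{(\xi,\varepsilon,r)} \in L^{\frac{2n}{n+4}}(M,g).
\]
By \eqref{eq065} and self-adjointness of the bilinear form \eqref{eq023}, $f$ is $L^2$-orthogonal to $\mathcal{F}^{\perp}_{(\xi,\varepsilon,\alpha,r)}(M,g)$; since $\mathcal{F}_{(\xi,\varepsilon,\alpha,r)}$ is finite-dimensional and linearly independent by Lemma \ref{lem002}, duality yields
\[
f = \sum_{j=1}^{\ell}\sum_{k=0}^{n} c_{jk}\,\overline{\varphi}_{(\xi_j,\varepsilon_j,r_j,k)}
\]
for some $c_{jk} \in \mathbb{R}$. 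A chain-rule computation, using $\partial_p |U|^{\frac{2n}{n-4}} = \frac{2n}{n-4}|U|^{\frac{8}{n-4}} U\,\partial_p U$, gives
\[
\partial_p \mathcal{F}_g(\xi,\varepsilon) = 2\int_{M} f\,\partial_p U_{(\xi,\varepsilon,r)}\,dv_g = 2\sum_{j,k} c_{jk}\int_{M}\overline{\varphi}_{(\xi_j,\varepsilon_j,r_j,k)}\,\partial_p U_{(\xi,\varepsilon,r)}\,dv_g
\]
for every parameter $p \in \{\varepsilon_i,\xi_i^l\}$. At a critical point the left-hand side vanishes for all $p$, yielding an $\ell(n+1)\times\ell(n+1)$ linear system for $(c_{jk})$. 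Writing $\partial_p U = \partial_p W + \partial_p V$, the leading matrix $\int \overline{\varphi}\,\partial_p W\,dv_g$ is (after rescaling by $\varepsilon_i$) precisely the matrix $(\beta_{ikjm})$ analyzed in Lemma \ref{lem002}, which is diagonal with strictly positive diagonal entries. The correction coming from $\partial_p V$ is small in $W^{2,2}$-norm, as one checks by differentiating the fixed-point identity $V = \Phi_{(\xi,\varepsilon)}(V)$ and applying Theorems \ref{teo005}--\ref{teo006} together with Proposition \ref{propo002}. Hence for $\alpha$ small the system is invertible, forcing each $c_{jk} = 0$, so that $P_g U_{(\xi,\varepsilon,r)} = d(n)|U_{(\xi,\varepsilon,r)}|^{\frac{8}{n-4}}U_{(\xi,\varepsilon,r)}$ weakly against all of $W^{2,2}(M,g)$.

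For (iii), standard fourth-order elliptic bootstrap gives $U_{(\xi,\varepsilon,r)} \in C^\infty(M)$. Since $Y(M,g)>0$ and $Q_g>0$ (preserved from $g_s$ by taking $\alpha$ small, cf.\ Proposition \ref{propo001} and \eqref{eq013}), the maximum principle of \cite{MR3420504} applies, so the Green's function of $P_g$ is strictly positive. Combined with the framework in \cite{MR3509928, MR3016505}, testing the equation against a smooth approximation of $U_- := \max(-U_{(\xi,\varepsilon,r)}, 0)$ and using coercivity of $P_g$ yields $U_- \equiv 0$; then $|U|^{\frac{8}{n-4}} U = U^{\frac{n+4}{n-4}}$ and $U_{(\xi,\varepsilon,r)}$ is the desired nonnegative weak solution of \eqref{eq074}. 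The principal obstacle lies in stage (ii): one must establish invertibility of the coefficient matrix for all $(\xi,\varepsilon) \in \mathcal{D}_{(\alpha,r)}$ with $\alpha$ small enough, and this requires tight quantitative control over the cumulative off-diagonal corrections from $\partial_p V$ across $\ell(n+1)$ indices via the contraction structure of $\Phi_{(\xi,\varepsilon)}$. A secondary technical issue is the nonnegativity step, where $U_- \notin W^{2,2}(M,g)$ in general, necessitating a mollification argument to justify the test-function computation rigorously.
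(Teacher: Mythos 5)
Your stages (i) and (ii) follow the same skeleton as the paper's proof: represent the residual $f=P_gU_{(\xi,\varepsilon,r)}-d(n)|U_{(\xi,\varepsilon,r)}|^{\frac{8}{n-4}}U_{(\xi,\varepsilon,r)}$ as $\sum_{j,k}a_{jk}\overline{\varphi}_{(\xi_j,\varepsilon_j,r_j,k)}$, compute $\partial_p\mathcal F_g=2\int_M f\,\partial_p U\,dv_g$, and use the diagonal, nondegenerate matrix $(\beta_{ikjm})$ of Lemma \ref{lem002} plus an $O(\alpha)$ correction to force $a_{jk}=0$ at a critical point. The one technical difference is how the correction is controlled: you propose to estimate $\partial_p V$ (where $V=U-W$) by differentiating the fixed-point identity, which requires parameter-differentiability of the solution operator $G_{(\xi,\varepsilon)}$ (whose constraint space moves with $(\xi,\varepsilon)$) and careful bookkeeping of the $\varepsilon_j^{-1}$ scaling; the paper avoids estimating $\partial_p V$ altogether by differentiating the orthogonality relation $\int_M\overline{\varphi}_{ik}(U-W)\,dv_g=0$, which converts $\int\overline{\varphi}_{ik}\,\partial_p U$ into $\int\overline{\varphi}_{ik}\,\partial_p\overline{w}_j-\int\partial_p\overline{\varphi}_{ik}(U-W)$ and then only uses the pointwise bound $|\partial_p\overline{\varphi}_{ik}|\le C\varepsilon_j^{-1}w_{(\xi_j,\varepsilon_j)}^{\frac{n+4}{n-4}}$ together with $\|U-W\|_{L^{2n/(n-4)}}\le c\alpha$ from \eqref{eq073} and Proposition \ref{propo002}. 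Your route can probably be made to work, but as written ("$\partial_p V$ is small in $W^{2,2}$") it is imprecise about the $\varepsilon^{-1}$ weights and adds avoidable machinery.

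The genuine gap is in stage (iii), the nonnegativity. Your mechanism --- Gursky--Malchiodi maximum principle, positivity of the Green's function, and "testing against a smooth approximation of $U_-$ and using coercivity of $P_g$" --- does not yield $U\ge0$ here. For a fourth-order operator the truncation argument fails: $\langle P_gU,U_-\rangle$ contains the cross term $\Delta U\,\Delta U_-$, which has no sign, so coercivity ($Y_4(M,g)>0$) gives nothing when paired against the negative part; likewise Green's function positivity does not help, since the right-hand side $d(n)|U|^{\frac{8}{n-4}}U$ is itself negative exactly where $U<0$, so the representation formula carries no sign information. Moreover, the Gursky--Malchiodi theorem needs $R_g\ge0$ and $Q_g$ semipositive, hypotheses you have not verified for the perturbed metric at this stage, and the paper deliberately does \emph{not} use it here (it is invoked only in Proposition \ref{propo013} to upgrade nonnegativity to strict positivity). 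The paper's actual argument is quantitative and uses only $Y_4(M,g)>0$: if $\varphi:=\min\{0,U_{(\overline\xi,\overline\varepsilon,r)}\}\not\equiv0$, then testing the equation with $\varphi$ and the variational characterization \eqref{eq075} give the lower bound \eqref{eq097}, i.e. $\int_{\{U<0\}}|U|^{\frac{2n}{n-4}}\,dv_g$ is bounded below by a fixed positive constant; on the other hand, since $W_{(\xi,\varepsilon,r)}\ge0$ one has $|U|\le|W-U|$ on $\{U<0\}$, and \eqref{eq073} together with Proposition \ref{propo002} and Sobolev give $\bigl(\int_{\{U<0\}}|U|^{\frac{2n}{n-4}}\bigr)^{\frac{n-4}{2n}}\le c\alpha$, a contradiction for $\alpha$ small. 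This smallness-of-the-negative-set argument is the missing idea in your proposal; without it (or an equivalent substitute) the nonnegativity claim is unsupported.
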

\begin{proof}
    Since \eqref{eq065} holds for all  
\(\varphi \in \mathcal{F}^{\perp}_{(\xi,\varepsilon,\alpha,r)}(M,g)\),  
we may find constants  
\(a_{ik}(\xi,\varepsilon)\),  
\(i = 1,\ldots,\ell\), \(k = 0,\ldots,n\),  
such that
\begin{equation}\label{eq084}
    \langle P_{g} U_{(\xi,\varepsilon,r)}, \varphi \rangle_{L^{2}}
    -
    d(n)
    \int_{M}
        |U_{(\xi,\varepsilon,r)}|^{\frac{8}{n-4}}
        U_{(\xi,\varepsilon,r)} \varphi \, dv_{g}
    =
    \sum_{i=1}^{\ell} \sum_{k=0}^{n}
        a_{ik}(\xi,\varepsilon)
        \int_{M}
            \overline{\varphi}_{(\xi_i,\varepsilon_i,r_i,k)}
            \varphi \, dv_{g},
\end{equation}
for all \(\varphi \in W^{2,2}(M,g)\).   Since  \(U_{(\xi,\varepsilon,r)} - W_{(\xi,\varepsilon,r)}    \in     \mathcal{F}^{\perp}_{(\xi,\varepsilon,\alpha,r)}(M,g)\), for all \(i = 1,\ldots,\ell\), \(k = 0,\ldots,n\), we have
\[
    \int_{M}
        \overline{\varphi}_{(\xi_i,\varepsilon_i,r_i,k)}
        \bigl(
            U_{(\xi,\varepsilon,r)}
            - W_{(\xi,\varepsilon,r)}
        \bigr)
        \, dv_{g}
    = 0.
\]

Differentiating with respect to \(\varepsilon_{j}\)  
and using~\eqref{eq050}, denoting  \(a_{ik} := a_{ik}(\xi,\varepsilon)\),  \(\overline{\varphi}_{ik} := \overline{\varphi}_{(\xi_i,\varepsilon_i,r_i,k)}\) and \(\overline{w}_{i} := \overline{w}_{(\xi_i,\varepsilon_i,r_i)}\), we obtain
\begin{align}
    &
    \int_{M}
        \frac{\partial}{\partial \varepsilon_{j}}
            \overline{\varphi}_{ik}
        \bigl(
            U_{(\xi,\varepsilon,r)}
            - W_{(\xi,\varepsilon,r)}
        \bigr)
        \, dv_{g}
    +
    \int_{M}
        \overline{\varphi}_{ik}
        \left(
            \frac{\partial}{\partial\varepsilon_{j}}
                U_{(\xi,\varepsilon,r)}
            -
            \frac{\partial}{\partial\varepsilon_{j}}
                \overline{w}_{j}
        \right)
        dv_{g}
    = 0 ,
    \label{eq067}
\end{align}
and differentiating with respect to \(\xi_{jt}\), \(t = 1,\ldots,n\), yields
\begin{align}
    &
    \int_{M}
        \frac{\partial}{\partial \xi_{jt}}
            \overline{\varphi}_{ik}
        \bigl(
            U_{(\xi,\varepsilon,r)}
            - W_{(\xi,\varepsilon,r)}
        \bigr)
        \, dv_{g}
    +
    \int_{M}
        \overline{\varphi}_{ik}
        \left(
            \frac{\partial}{\partial\xi_{jt}}
                U_{(\xi,\varepsilon,r)}
            -
            \frac{\partial}{\partial\xi_{jt}}
                \overline{w}_{j}
        \right)
        dv_{g}
    = 0 .
    \label{eq068}
\end{align}

In particular, for \(j \neq i\) we obtain
\[
    \int_{M}
        \overline{\varphi}_{ik}
        \frac{\partial}{\partial\varepsilon_j}
            U_{(\xi,\varepsilon,r)}
        \, dv_{g}
    =
    \int_{M}
        \overline{\varphi}_{ik}
        \frac{\partial}{\partial\xi_{jt}}
            U_{(\xi,\varepsilon,r)}
        \, dv_{g}
    = 0,
\]
since in this case  
\(\overline{\varphi}_{(\xi_i,\varepsilon_i,r_i,k)}\)  
does not depend on \(\varepsilon_j\) or \(\xi_{jt}\),  
and \(\overline{\varphi}_{ik}\)  
and \(\overline{w}_{j}\) have disjoint supports.

Using~\eqref{eq084}, \eqref{eq067}, and~\eqref{eq068}, we obtain
\begin{align}
    \frac{1}{2}
    \frac{\partial}{\partial \varepsilon_{j}}
        \mathcal{F}_{g}(\xi,\varepsilon)
    &=
    \sum_{i=1}^{\ell}
    \sum_{k=0}^{n}
        a_{ik}
        \int_{M}
            \overline{\varphi}_{ik}\,
            \frac{\partial}{\partial \varepsilon_{j}}
                U_{(\xi,\varepsilon,r)}
            \, dv_{g}
    =
    \sum_{k=0}^{n}
        a_{jk}
        \int_{M}
            \overline{\varphi}_{jk}\,
            \frac{\partial}{\partial \varepsilon_{j}}
                U_{(\xi,\varepsilon,r)}
            \, dv_{g}
            \nonumber\\
    &=
    \sum_{k=0}^{n}
        \varepsilon_{j}^{-1}\, a_{jk}\, \beta_{jkj0}
    \;-\;
    \sum_{k=0}^{n}
        a_{jk}
        \int_{M}
            \frac{\partial}{\partial\varepsilon_{j}}
                \overline{\varphi}_{jk}\,
            \bigl(
                U_{(\xi,\varepsilon,r)}
                - W_{(\xi,\varepsilon,r)}
            \bigr)
            \, dv_{g},
    \label{eq070}
\end{align}
and, for each \(t = 1,\ldots,n\),
\begin{align}
    \frac{1}{2}
    \frac{\partial}{\partial \xi_{jt}}
        \mathcal{F}_{g}(\xi,\varepsilon)
    &=
    \sum_{k=0}^{n}
        a_{jk}
        \int_{M}
            \overline{\varphi}_{jk}\,
            \frac{\partial}{\partial \xi_{jt}}
                U_{(\xi,\varepsilon,r)}
            \, dv_{g}
            \nonumber\\
    &=
    \sum_{k=0}^{n}
        \varepsilon_{j}^{-1}\, a_{jk}\, \beta_{jkjt}
    \;-\;
    \sum_{k=0}^{n}
        a_{jk}
        \int_{M}
            \frac{\partial}{\partial \xi_{jt}}
                \overline{\varphi}_{jk}\,
            \bigl(
                U_{(\xi,\varepsilon,r)}
                - W_{(\xi,\varepsilon,r)}
            \bigr)
            \, dv_{g},
    \label{eq071}
\end{align}
where \(\beta_{jkjt}\) is defined in~\eqref{eq085}. It is not difficult to see that
$$|\beta_{j0j0}| \geq \varepsilon_j\left|\int_{B_{r_j}(\xi_j)} \overline{\varphi}_{j0}\,\frac{\partial}{\partial \varepsilon_j} \overline{w}_j\, dx\right|\geq \frac{n-4}{2}\int_{B_1(0)}\left(\frac{2}{1+|x|}\right)^n\left(\frac{1-|x|^2}{1+|x|^2}\right)^2dx$$
and, for all $k=1,\ldots,n$,
$$|\beta_{jkjk}| \geq \varepsilon_j\left|\int_{B_{r_j}(\xi_j)} \overline{\varphi}_{jk}\,
\frac{\partial}{\partial \xi_{jk}} \overline{w}_j\, dx\right|\geq \frac{n-4}{2}\int_{B_1(0)}\left(\frac{2}{1+|x|}\right)^n\left(\frac{2x_k}{1+|x|^2}\right)^2dx.$$
Thus, $T_j:=\min\big\{|\beta_{j0j0}|,|\beta_{jkjk}|: k=1,\ldots, n\big\} \ge c(n)>0$, for all $j\in\{1,\ldots,\ell\}$. Therefore, if $(\overline\xi,\overline\varepsilon)$ is a critical point of $\mathcal F_g$, by \eqref{eq072}, \eqref{eq070} and \eqref{eq071}, we have
$$|a_{j0}(\overline\xi,\overline\varepsilon)|\leq c\overline\varepsilon_j\left|\sum_{k=0}^na_{jk}\int_M\frac{\partial}{\partial\varepsilon_{j}}\overline\varphi_{jk}(U_{(\overline\xi,\overline\varepsilon,r)} -W_{(\overline\xi,\overline\varepsilon,r)})\right|$$
and
$$|a_{jt}(\overline\xi,\overline\varepsilon)|\leq c\overline\varepsilon_j\left|\sum_{k=0}^na_{jk}\int_M\frac{\partial}{\partial\xi_{jt}}\overline\varphi_{jk}(U_{(\overline\xi,\overline\varepsilon,r)} -W_{(\overline\xi,\overline\varepsilon,r)})\right|,$$
for all $t=1,\ldots,n$.
By a direct computation, one checks that  
\(\partial_{\varepsilon_j}\overline{\varphi}_{jk}\) and  
\(\partial_{\xi_{jt}}\overline{\varphi}_{jk}\)  
are bounded, up to a dimensional constant, by  
\(\overline{\varepsilon}_{j}^{-1}w_{(\overline{\xi}_j,\overline{\varepsilon}_j)}^{\frac{n+4}{n-4}}\).  
Using~\eqref{eq005}, \eqref{eq073}, Hölder's and Sobolev's inequalities,  
together with Proposition~\ref{propo002}, we obtain, for each  
\(j \in \{1,\ldots,\ell\}\),  
\[
    \sum_{t=0}^{n}
        |a_{jt}(\overline{\xi},\overline{\varepsilon})|
    \;\le\;
    c(n)
    \sum_{k=0}^{n}
        |a_{jk}(\overline{\xi},\overline{\varepsilon})|\,
        \bigl\|
            w_{(\overline{\xi}_j,\overline{\varepsilon}_j)}^{\frac{n+4}{n-4}}
        \bigr\|_{L^{\frac{2n}{n+4}}(\mathbb{R}^{n})}\,
        \| U_{(\overline{\xi},\overline{\varepsilon},r)}
           - W_{(\overline{\xi},\overline{\varepsilon},r)}
        \|_{L^{\frac{2n}{n-4}}}
    \;\le\;
    c\,\alpha
    \sum_{k=0}^{n}
        |a_{jk}(\overline{\xi},\overline{\varepsilon})|.
\]
For \(\alpha > 0\) sufficiently small, this yields  
\(a_{jk}(\overline{\xi},\overline{\varepsilon}) = 0\)  
for all \(j = 1,\ldots,\ell\) and \(k = 0,\ldots,n\).  
Therefore \(U_{(\overline{\xi},\overline{\varepsilon},r)}\)  
is a weak solution of the equation
$P_gv=d(n)|v|^{\frac{8}{n-4}}v$. By \cite[Proposition 2.3]{robert}, $U_{(\overline{\xi},\overline{\varepsilon},r)}\in C^4(M)$ and is a solution of this equation in the usual sense.

\medskip

Next, we show that  
\(U_{(\overline{\xi},\overline{\varepsilon},r)} \ge 0\).  
Let 
\(U^- = \max\{0, -U_{(\overline{\xi},\overline{\varepsilon},r)}\}\geq 0\)  denote the negative part of $U_{(\overline{\xi},\overline{\varepsilon},r)}$. Since \(W_{(\xi,\varepsilon,r)} \ge 0\),  on the set  
\(\{ U_{(\overline{\xi},\overline{\varepsilon},r)} < 0 \}\) it holds $|U_{(\overline{\xi},\overline{\varepsilon},r)}|\le | W_{(\xi,\varepsilon,r)}-U_{(\overline{\xi},\overline{\varepsilon},r)} |$. Using~\eqref{eq073}, Proposition~\ref{propo002} and Sobolev's inequality, yields
\begin{equation}\label{eq112}
\|U^-\|_{L^{\frac{2n}{n-4}}(M,g)}
\leq C\alpha .    
\end{equation}

Under our assumptions, the operator \(P_g\) is positive and its Green's function is positive. Consequently, $P_g:W^{2,2}(M,g)\to L^{\frac{2n}{n+4}}(M,g)$ admits a bounded inverse, with the norm bounded independently of $\alpha>0$ being sufficiently small; see for instance \cite{MR3518237}. Define $V:=P_g^{-1}\!\left(d(n)(U^-)^{\frac{n+4}{n-4}}\right)\geq 0$. By the Sobolev's inequality,
\begin{equation}\label{eq110}
\|V\|_{L^{\frac{2n}{n-4}}(M,g)}
    \leq C\|V\|_{W^{2,2}(M,g)}
    \leq C\|(U^-)^{\frac{n+4}{n-4}}\|_{L^{\frac{2n}{n+4}}(M,g)} 
    = C\|U^-\|_{L^{\frac{2n}{n-4}}(M,g)}^{\frac{n+4}{n-4}}.
\end{equation}

Now, since $P_gU_{(\overline{\xi},\overline{\varepsilon},r)}=d(n)|U_{(\overline{\xi},\overline{\varepsilon},r)}|^{\frac{8}{n-4}}U_{(\overline{\xi},\overline{\varepsilon},r)}$ we get
\begin{align*}
    \int_M |U_{(\overline \xi,\overline \varepsilon,r)}|^{\frac{8}{n-4}} & U_{(\overline \xi,\overline \varepsilon,r)}V\,dv_g
    =
    d(n)^{-1}\int_M VP_gU_{(\overline \xi,\overline \varepsilon,r)}\,dv_g=    d(n)^{-1}\int_M U_{(\overline \xi,\overline \varepsilon,r)}P_gV\,dv_g \\
    &=
    \int_M (U^-)^{\frac{n+4}{n-4}}U_{(\overline \xi,\overline \varepsilon,r)}\,dv_g =    -\int_{\{U_{(\overline \xi,\overline \varepsilon,r)}<0\}} (U^-)^{\frac{2n}{n-4}}\,dv_g =
    -\|U^-\|_{L^{\frac{2n}{n-4}}(M,g)}^{\frac{2n}{n-4}} .
\end{align*}
Since \(V\geq 0\) and \(U^-\geq -U_{(\overline \xi,\overline \varepsilon,r)}\), it follows that
\begin{align}
    \|U^-\|_{L^{\frac{2n}{n-4}}(M,g)}^{\frac{2n}{n-4}}
    &=
    -\int_M |U_{(\overline \xi,\overline \varepsilon,r)}|^{\frac{8}{n-4}}U_{(\overline \xi,\overline \varepsilon,r)}V\,dv_g \nonumber\leq
    \int_M (U^-)^{\frac{n+4}{n-4}}V\,dv_g \nonumber\\
    &\leq
    \|U^-\|_{L^{\frac{2n}{n-4}}(M,g)}^{\frac{n+4}{n-4}}
    \|V\|_{L^{\frac{2n}{n-4}}(M,g)} .
\label{eq111}
\end{align}

Assuming that \(U^-\not\equiv 0\), combining \eqref{eq110} and \eqref{eq111}, we deduce $1\leq C\|U^-\|_{L^{\frac{2n}{n-4}}(M,g)}^{\frac{8}{n-4}}$. This yields a contradiction with \eqref{eq112}, provided \(\alpha>0\) is sufficiently small. Therefore $U_{(\overline \xi,\overline \varepsilon,r)}\geq 0$.
\end{proof}

To guaranty the nonnegativity of $U_{(\overline{\xi},\overline{\varepsilon},r)}$, we used the positivity of the invariants $Y(M,g) > 0$ and $Y_{4}(M,g) > 0$. In the final section, in Proposition \ref{propo013}, we use these hypotheses to apply the maximum principle of \cite[Theorem~A]{MR3420504} and conclude that the function is actually positive.

\section{Some preliminary estimates in \texorpdfstring{$\mathbb{R}^n$}{Lg}}\label{sec004}

In this section we follow the ideas of~\cite[Section~2]{MR2425176}. Set
\[
    \mathcal{E}
    :=\left\{
        w\in L^{\frac{2n}{n-4}}(\mathbb{R}^n)\cap W^{2,2}_{\mathrm{loc}}(\mathbb{R}^n)
        :\ \int_{\mathbb{R}^n} (\Delta w)^2\, dx < \infty
    \right\}.
\]
By Sobolev's inequality, there exists a constant \(K>0\), depending only on \(n\), such that
\begin{equation}\label{eq077}
    \left( \int_{\mathbb{R}^n} |w|^{\frac{2n}{n-4}}\, dx \right)^{\!\frac{n-4}{n}}
    \leq
    K \int_{\mathbb{R}^n} (\Delta w)^2\, dx,
\end{equation}
for all \(w\in\mathcal{E}\). We endow \(\mathcal{E}\) with the norm $\|w\|_{\mathcal{E}}^2 := \displaystyle\int_{\mathbb{R}^n} (\Delta w)^2\, dx$. With this norm, \((\mathcal{E},\|\cdot\|_{\mathcal{E}})\) is a complete space. Given \((\xi,\varepsilon)\in\mathbb{R}^n\times(0,\infty)\), define the subspace
\begin{equation}\label{eq081}
    \mathcal{E}_{(\xi,\varepsilon)}
    :=
    \left\{
        w\in\mathcal{E}
        :\ 
        \int_{\mathbb{R}^n} \varphi_{(\xi,\varepsilon,k)}\, w\, dx = 0
        \ \text{for all } k=0,1,\ldots,n
    \right\},
\end{equation}
where the function \(\varphi_{(\xi,\varepsilon,k)}\) is given in \eqref{eq098}. By \eqref{eq005}, it holds $w_{(\xi,\varepsilon)}\in\mathcal{E}_{(\xi,\varepsilon)}$.

On \(\mathbb{R}^n\), consider a Riemannian metric of the form \(g(x)=\exp(h(x))\),
where \(h(x)\) is a trace-free symmetric two-tensor satisfying~\eqref{eq014} and
such that \(h(x)=0\) for \(|x|\geq R>0\). Using an argument similar to that of
Proposition~\ref{propo002}, we obtain the existence of a constant \(c(n)>0\)
such that
\begin{equation}\label{eq086}
    \left\|
        P_g w_{(\xi,\varepsilon)}
        - d(n)\, w_{(\xi,\varepsilon)}^{\frac{n+4}{n-4}}
    \right\|_{L^{\frac{2n}{n+4}}(\mathbb{R}^n)}
    \leq c(n)\,\alpha.
\end{equation}

Finally, using \eqref{eq076} and arguing as in \cite[Proposition~2]{MR2425176}
(see also \cite[Appendix~D]{MR1040954}), we obtain the following proposition.

\begin{proposition}\label{propo004}
   There exist positive constants $a_1(n)$ and $b_1(n)$, depending only on $n$, such that
for every $w \in \mathcal{E}_{(\xi,\varepsilon)}$ one has
\begin{equation}\label{eq078}
    \int_{\mathbb{R}^n}
    \left((\Delta w)^2-\frac{n+4}{n-4}d(n)\, w_{(\xi,\varepsilon)}^{\frac{8}{n-4}}\, w
    \right)
    \geq 
    a_1(n)\, \|w\|_{\mathcal{E}}^{2}
    \;-\;
    b_1(n)
    \left(
        \int_{\mathbb{R}^n}
        w_{(\xi,\varepsilon)}^{\frac{n+4}{n-4}}\, w
    \right)^{\!2}.
\end{equation}
\end{proposition}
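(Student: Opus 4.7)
\bigskip

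\noindent\textbf{Proof proposal.} The plan is to exploit the conformal invariance of the Paneitz operator to reduce the inequality to a spectral gap estimate on the round sphere, following the template of \cite[Proposition~2]{MR2425176} and \cite[Appendix~D]{MR1040954}. First I would reduce to $(\xi,\varepsilon)=(0,1)$: under the translation $x\mapsto x-\xi$ combined with the rescaling $w(x)\mapsto\varepsilon^{(n-4)/2}w(\varepsilon x)$, both sides of the claimed inequality and the constraints defining $\mathcal{E}_{(\xi,\varepsilon)}$ are invariant, so it suffices to work with the standard bubble $w_0$.

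Next I would pull back via stereographic projection, defining $v$ on $\mathbb{S}^n$ by $v\circ\rho:=w/w_0$. The identity \eqref{eq076} gives
$$\int_{\mathbb{R}^n}(\Delta w)^2\,dx=\int_{\mathbb{S}^n}v\,P_{g_{\mathrm{sph}}}v\,dv_{g_{\mathrm{sph}}},$$
while the pointwise relation $w_0^{2n/(n-4)}\,dx=dv_{g_{\mathrm{sph}}}$ yields
$$\int_{\mathbb{R}^n}w_0^{8/(n-4)}w^2\,dx=\int_{\mathbb{S}^n}v^2\,dv_{g_{\mathrm{sph}}},\qquad \int_{\mathbb{R}^n}w_0^{(n+4)/(n-4)}w\,dx=\int_{\mathbb{S}^n}v\,dv_{g_{\mathrm{sph}}}.$$
Inspecting \eqref{eq051}--\eqref{eq052}, the factors $(|x|^2-1)/(|x|^2+1)$ and $2x_k/(1+|x|^2)$ are precisely the coordinate functions of $\mathbb{R}^{n+1}$ restricted to $\mathbb{S}^n$ and pulled back by $\sigma$, so the orthogonality conditions $\int\varphi_{(0,1,k)}w\,dx=0$ for $k=0,\ldots,n$ amount to the vanishing of the projection of $v$ onto the first eigenspace of $\Delta_{g_{\mathrm{sph}}}$.

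Then I would decompose $v=\sum_{k\ge 0}v_k$ into Laplace eigenfunctions with eigenvalues $\lambda_k=k(k+n-1)$; the corresponding eigenvalues of $P_{g_{\mathrm{sph}}}$ are
$$\mu_k=\lambda_k^2+\frac{n^2-2n-4}{2}\lambda_k+\frac{n(n-4)(n^2-4)}{16}=\frac{\Gamma(k+n/2+2)}{\Gamma(k+n/2-2)}.$$
A direct computation gives $\mu_0=d(n)$, $\mu_1=\frac{n+4}{n-4}d(n)$, and $\{\mu_k\}$ is strictly increasing with $\mu_1/\mu_2=(n-2)/(n+6)$. Using $v_1\equiv 0$, Parseval yields
$$\int_{\mathbb{R}^n}(\Delta w)^2-\frac{n+4}{n-4}d(n)\int_{\mathbb{R}^n}w_0^{8/(n-4)}w^2=(\mu_0-\mu_1)\|v_0\|_{L^2(\mathbb{S}^n)}^2+\sum_{k\ge 2}(\mu_k-\mu_1)\|v_k\|_{L^2(\mathbb{S}^n)}^2.$$
Setting $a_1:=8/(n+6)$, the bound $(\mu_k-\mu_1)\|v_k\|^2\ge a_1\mu_k\|v_k\|^2$ holds for all $k\ge 2$, so summing controls $a_1\|w\|_{\mathcal{E}}^2$ modulo the $k=0$ residue $((1-a_1)\mu_0-\mu_1)\|v_0\|^2$. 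Since $\|v_0\|_{L^2(\mathbb{S}^n)}^2=\operatorname{Vol}(\mathbb{S}^n)^{-1}\bigl(\int_{\mathbb{R}^n}w_0^{(n+4)/(n-4)}w\,dx\bigr)^2$, this residue is bounded below by $-b_1\bigl(\int_{\mathbb{R}^n}w_0^{(n+4)/(n-4)}w\,dx\bigr)^2$ with $b_1:=(\mu_1-(1-a_1)\mu_0)/\operatorname{Vol}(\mathbb{S}^n)>0$, completing the estimate.

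The main obstacle is verifying the algebraic identity $\mu_1=\frac{n+4}{n-4}d(n)$: this is the nontrivial spectral input expressing that the linearized operator at $w_0$ has an $(n+1)$-dimensional kernel generated exactly by translations and dilations, so that the orthogonality conditions defining $\mathcal{E}_{(\xi,\varepsilon)}$ are precisely what is needed to achieve coercivity outside the one-dimensional zero mode. Once this identity and the spherical-harmonic identification of the constraints are in place, the rest is routine Parseval bookkeeping.
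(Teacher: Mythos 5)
Your proposal is correct and follows essentially the route the paper itself indicates: the paper does not write out a proof but derives the proposition from the conformal identity \eqref{eq076} together with the argument of \cite[Proposition~2]{MR2425176} and \cite[Appendix~D]{MR1040954}, which is exactly your stereographic transfer, identification of the constraints with the degree-one spherical harmonics, and spectral-gap bookkeeping for the Paneitz eigenvalues $\mu_k=(k+\tfrac n2+1)(k+\tfrac n2)(k+\tfrac n2-1)(k+\tfrac n2-2)$ (your values $\mu_0=d(n)$, $\mu_1=\tfrac{n+4}{n-4}d(n)$, $\mu_1/\mu_2=\tfrac{n-2}{n+6}$ all check out, and the constant-mode residue is precisely why the $b_1$-term appears, as the paper remarks). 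Your write-up simply supplies the details the paper leaves to the citations.
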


Note that the second term on the right-hand side appears because the condition  \(w \in \mathcal{E}_{(\xi,\varepsilon)}\) guarantees only that, after transporting  \(w\) to the sphere via stereographic projection, the resulting function is  orthogonal merely to the coordinate functions.

\begin{corollary}\label{propo005}
For all sufficiently small $\alpha>0$, depending only on $n$, there exist positive constants $c_1(n)$ and $d_1(n)$ such that, for every $w \in \mathcal{E}_{(\xi,\varepsilon)}$, one has
\begin{align*}
    &\left\langle P_g w,\, w \right\rangle_{L^2(\mathbb{R}^n)}
    - \frac{n+4}{n-4}d(n)
      \int_{\mathbb{R}^n}
      w_{(\xi,\varepsilon)}^{\frac{8}{n-4}}\, w^2\, dx \\
    &\qquad
    + d_1(n)\,
      \left(
          \int_{\mathbb{R}^n}
          \left(
              P_g w_{(\xi,\varepsilon)}
              - \frac{n+4}{n-4}d(n)\,
                w_{(\xi,\varepsilon)}^{\frac{n+4}{n-4}}
          \right)
          w\, dx
      \right)^{\!2}
    \;\;\geq\;\;
    c_1(n)\, \|w\|_{\mathcal{E}}^{2}.
\end{align*}
\end{corollary}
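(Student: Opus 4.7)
The plan is to transfer the flat-space coercivity estimate of Proposition \ref{propo004} to the perturbed metric $g = \exp(h)$ and then to eliminate the ``bad'' tail term $\bigl(\int w_{(\xi,\varepsilon)}^{(n+4)/(n-4)}w\bigr)^{2}$ on the right-hand side of \eqref{eq078} by exploiting the fact that $w_{(\xi,\varepsilon)}$ satisfies the Euclidean equation $\Delta^{2} w_{(\xi,\varepsilon)} = d(n)\, w_{(\xi,\varepsilon)}^{(n+4)/(n-4)}$, together with the smallness estimate \eqref{eq086}.

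First I would compare $\langle P_g w,w\rangle_{L^{2}(\mathbb{R}^n)}$ with $\int_{\mathbb{R}^n}(\Delta w)^{2}\,dx$. Since $h$ is trace-free, $dv_g = dx$, and by \eqref{eq028} the difference between the two quadratic forms is a sum of terms of the type $(\Delta_g w)^{2} - (\Delta w)^{2}$, $\operatorname{Ric}_g(\nabla w,\nabla w)$, $R_g|\nabla w|^{2}$ and $Q_g w^{2}$, whose integrands are supported in $B_R$ (where $g$ differs from the Euclidean metric) and are pointwise $O(\alpha)$ times a second-order expression in $w$. Combining standard interior $W^{2,2}$-estimates for the Laplacian on $B_{2R}$ with H\"older's inequality and the Sobolev estimate \eqref{eq077}, I would obtain $\|w\|_{W^{2,2}(B_R)}^{2} \leq C\|w\|_{\mathcal{E}}^{2}$, and hence
\begin{equation*}
    \bigl|\langle P_g w,w\rangle_{L^{2}(\mathbb{R}^n)} - \textstyle\int_{\mathbb{R}^n}(\Delta w)^{2}\,dx\bigr| \;\leq\; C(n)\,\alpha\,\|w\|_{\mathcal{E}}^{2}.
\end{equation*}

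Next I would rewrite the tail term. Setting $E := P_g w_{(\xi,\varepsilon)} - d(n)\, w_{(\xi,\varepsilon)}^{(n+4)/(n-4)}$, which satisfies $\|E\|_{L^{2n/(n+4)}(\mathbb{R}^n)} \leq c(n)\alpha$ by \eqref{eq086}, and using $\Delta^{2} w_{(\xi,\varepsilon)} = d(n)\, w_{(\xi,\varepsilon)}^{(n+4)/(n-4)}$, the algebraic identity
\begin{equation*}
    P_g w_{(\xi,\varepsilon)} - \tfrac{n+4}{n-4}\,d(n)\, w_{(\xi,\varepsilon)}^{(n+4)/(n-4)} \;=\; E \;-\; \tfrac{8\,d(n)}{n-4}\, w_{(\xi,\varepsilon)}^{(n+4)/(n-4)}
\end{equation*}
allows me to solve for $w_{(\xi,\varepsilon)}^{(n+4)/(n-4)}$. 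Pairing with $w$ and using H\"older's inequality together with \eqref{eq077} to estimate $|\int Ew| \leq c(n)\alpha\|w\|_{\mathcal{E}}$, I would deduce
\begin{equation*}
    \Bigl(\int_{\mathbb{R}^n} w_{(\xi,\varepsilon)}^{(n+4)/(n-4)}\, w\,dx\Bigr)^{\!2} \;\leq\; C\Bigl(\int_{\mathbb{R}^n} \bigl(P_g w_{(\xi,\varepsilon)} - \tfrac{n+4}{n-4}\,d(n)\, w_{(\xi,\varepsilon)}^{(n+4)/(n-4)}\bigr)\, w\,dx\Bigr)^{\!2} + C\,\alpha^{2}\,\|w\|_{\mathcal{E}}^{2}.
\end{equation*}

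Inserting these two estimates into \eqref{eq078} would then yield the corollary with $d_1(n) := C\,b_1(n)$ and $c_1(n) := a_1(n) - C\alpha - C\alpha^{2}$, which is positive for $\alpha$ sufficiently small depending only on $n$. The main conceptual step is the algebraic identity in the preceding paragraph: the specific coefficient $\frac{n+4}{n-4}\,d(n)$ that appears in the statement is engineered precisely so that, modulo the small error $E$, the quantity $\int(P_g w_{(\xi,\varepsilon)} - \frac{n+4}{n-4}\,d(n)\, w_{(\xi,\varepsilon)}^{(n+4)/(n-4)})w$ captures a nonzero multiple of the offending tail $\int w_{(\xi,\varepsilon)}^{(n+4)/(n-4)}w$. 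By contrast, the curvature comparison carried out in the second paragraph is essentially routine interior elliptic regularity.
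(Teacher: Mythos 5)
Your proposal is correct and follows essentially the same route as the paper: transfer the coercivity estimate \eqref{eq078} to the quadratic form of $P_g$ using the smallness and compact support of $h$, then use \eqref{eq086} and the identity $\tfrac{n+4}{n-4}-1=\tfrac{8}{n-4}$ to bound $\bigl(\int w_{(\xi,\varepsilon)}^{\frac{n+4}{n-4}}w\bigr)^{2}$ by the squared pairing appearing in the statement plus an $O(\alpha^{2})\|w\|_{\mathcal{E}}^{2}$ error, absorbing all errors for $\alpha$ small. This is exactly the paper's argument, so no further comparison is needed.
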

\begin{proof}
First, observe that under the assumption $w \in \mathcal{E}_{(\xi,\varepsilon)}$, 
and using Proposition~\ref{propo004} together with the Sobolev inequality 
\eqref{eq077}, as well as the fact that the support of $h$ is contained in the 
unit ball, we infer that, for all $\alpha>0$ sufficiently small (depending only 
on $n$), inequality~\eqref{eq078} continues to hold, possibly with different 
constants, when the left-hand side is replaced by the corresponding expression 
involving the curvature terms.

Using \eqref{eq077}, \eqref{eq086}, and H\"older's inequality, we obtain
\[
\left|
    \int_{\mathbb{R}^n}
    \left(
        P_g w_{(\xi,\varepsilon)}
        - \frac{n+4}{n-4}d(n)\,
          w_{(\xi,\varepsilon)}^{\frac{n+4}{n-4}}
    \right) w
\right|
\;\geq\;
\frac{8}{n-4}d(n)
\left|
    \int_{\mathbb{R}^n}
    w_{(\xi,\varepsilon)}^{\frac{n+4}{n-4}}\, w
\right|
\;-\;
C \alpha\, \|w\|_{\mathcal{E}}.
\]
For $\alpha>0$ sufficiently small, an application of Young's inequality yields
\begin{align*}
    \left(
        \int_{\mathbb{R}^n}
        \left(
            P_g w_{(\xi,\varepsilon)}
            - \frac{n+4}{n-4}d(n)\,
              w_{(\xi,\varepsilon)}^{\frac{n+4}{n-4}}
        \right) w
    \right)^{\!2}
    &\geq
    \frac{64}{(n-4)^2}d(n)^2
    \left(
        \int_{\mathbb{R}^n}
        w_{(\xi,\varepsilon)}^{\frac{n+4}{n-4}}\, w
    \right)^{\!2}
    \;-\;
    C \alpha^2 \|w\|_{\mathcal{E}}^{2}.
\end{align*}
By \eqref{eq078}, the desired estimate follows.
\end{proof}

\begin{proposition}\label{propo006}
    Given \((\xi,\varepsilon) \in \mathbb{R}^{n} \times (0,\infty)\) and any function  
\(f \in L^{\frac{2n}{n+4}}(\mathbb{R}^{n})\), for all sufficiently small parameters  
\(\alpha > 0\), depending only on \(n\), there exists a unique function  
\(w \in \mathcal{E}_{(\xi,\varepsilon)}\) such that
\begin{equation}\label{eq079}
    \left\langle P_{g} w ,\, \varphi \right\rangle_{L^{2}(\mathbb{R}^{n})}
    \;-\;
    \frac{n+4}{n-4}d(n)
    \int_{\mathbb{R}^{n}}
        w_{(\xi,\varepsilon)}^{\frac{8}{n-4}}\,
        w\, \varphi \, dx
    =
    \int_{\mathbb{R}^{n}}
        f\, \varphi\, dx,
\end{equation}
for every \(\varphi \in \mathcal{E}_{(\xi,\varepsilon)}\).  
Moreover, there exists a constant \(C>0\), depending only on \(n\), such that
\[
    \| w \|_{\mathcal{E}}
    \;\leq\;
    C\, \| f \|_{L^{\frac{2n}{n+4}}(\mathbb{R}^{n})}.
\]
\end{proposition}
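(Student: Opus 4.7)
The plan is a Lax--Milgram argument applied to an auxiliary coercive form, completed by a rank-one correction to handle the single direction along which coercivity fails. I would first set up the symmetric bilinear form
\[
B(w, \varphi) := \langle P_g w, \varphi \rangle_{L^2(\mathbb{R}^n)} - \frac{n+4}{n-4}\, d(n) \int_{\mathbb{R}^n} w_{(\xi,\varepsilon)}^{\frac{8}{n-4}}\, w\, \varphi \, dx,
\]
together with the continuous linear functional
\[
L(w) := \int_{\mathbb{R}^n} \left(P_g w_{(\xi,\varepsilon)} - \frac{n+4}{n-4}\, d(n)\, w_{(\xi,\varepsilon)}^{\frac{n+4}{n-4}}\right) w \, dx.
\]
Boundedness of $B$ on $\mathcal{E}_{(\xi,\varepsilon)} \times \mathcal{E}_{(\xi,\varepsilon)}$ follows from the expression of $P_g$, the smallness assumption \eqref{eq014}, H\"older's inequality (using $w_{(\xi,\varepsilon)}^{8/(n-4)} \in L^{n/4}(\mathbb{R}^n)$), and the Sobolev inequality \eqref{eq077}; boundedness of $L$ follows from \eqref{eq086} together with the fact that $\|w_{(\xi,\varepsilon)}^{(n+4)/(n-4)}\|_{L^{2n/(n+4)}(\mathbb{R}^n)}$ is a dimensional constant. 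Likewise, $F(\varphi) := \int_{\mathbb{R}^n} f\, \varphi\, dx$ is a bounded linear functional with $\|F\|_{\mathcal{E}^*} \le K^{1/2} \|f\|_{L^{2n/(n+4)}(\mathbb{R}^n)}$.

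Next, I would introduce the modified symmetric form $\tilde B(w, \varphi) := B(w, \varphi) + 2 d_1(n)\, L(w) L(\varphi)$. Corollary \ref{propo005} gives precisely $\tilde B(w,w) \geq c_1(n) \|w\|_\mathcal{E}^2$, so $\tilde B$ is coercive on $\mathcal{E}_{(\xi,\varepsilon)}$. The Lax--Milgram theorem then provides, for every bounded linear functional $\Phi$ on $\mathcal{E}_{(\xi,\varepsilon)}$, a unique $u_\Phi \in \mathcal{E}_{(\xi,\varepsilon)}$ with $\tilde B(u_\Phi, \cdot) = \Phi$ and $\|u_\Phi\|_\mathcal{E} \le c(n) \|\Phi\|_{\mathcal{E}^*}$. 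I would denote by $u_F$ and $u_L$ the solutions corresponding to $F$ and $L$, respectively.

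The crucial observation is that $w_{(\xi,\varepsilon)} \in \mathcal{E}_{(\xi,\varepsilon)}$ (as already noted after \eqref{eq081}) and that a direct inspection yields $L(w) = B(w_{(\xi,\varepsilon)}, w)$. This gives the injectivity of $B$: if $B(w, \cdot) \equiv 0$ on $\mathcal{E}_{(\xi,\varepsilon)}$, then testing with $\varphi = w_{(\xi,\varepsilon)}$ yields $L(w) = 0$, and Corollary \ref{propo005} then forces $w = 0$. To solve $B(w, \varphi) = F(\varphi)$, I would rewrite it as $\tilde B(w, \varphi) = F(\varphi) + 2 d_1(n) L(w) L(\varphi)$, which forces the ansatz $w = u_F + 2 d_1(n) L(w)\, u_L$. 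Applying $L$ reduces this to the scalar equation $L(w)\bigl(1 - 2 d_1(n) L(u_L)\bigr) = L(u_F)$, and the only genuinely delicate step is to rule out the possibility $2 d_1(n) L(u_L) = 1$: if it held, then $B(u_L, \varphi) = \tilde B(u_L, \varphi) - 2 d_1(n) L(u_L) L(\varphi) \equiv 0$ for every $\varphi$, so injectivity forces $u_L = 0$ and hence $L(u_L) = 0$, contradicting $L(u_L) = 1/(2 d_1(n)) > 0$. Thus $w$ is uniquely determined, and the desired bound $\|w\|_\mathcal{E} \le c(n) \|f\|_{L^{2n/(n+4)}(\mathbb{R}^n)}$ follows from the Lax--Milgram bounds applied to $u_F$ and $u_L$.
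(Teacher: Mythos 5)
Your existence-and-uniqueness argument is correct and takes a genuinely different route from the paper. The paper first proves the a priori bound (and hence uniqueness) by testing \eqref{eq079} with $\varphi=w$ and $\varphi=w_{(\xi,\varepsilon)}$ and inserting both identities into Corollary~\ref{propo005} together with Young's inequality; existence is then obtained by direct minimization of the coercive, lower semicontinuous functional $F(w)=B(w,w)-2\int_{\mathbb{R}^n} f w + d_1(n)A(w)^2$ on $\mathcal{E}_{(\xi,\varepsilon)}$, whose Euler--Lagrange equation shows that $w_0+d_1(n)A(w_0)\,w_{(\xi,\varepsilon)}$ (not the minimizer itself) solves the problem --- structurally the same rank-one correction along $w_{(\xi,\varepsilon)}$ that appears in your ansatz $w=u_F+2d_1(n)L(w)\,u_L$. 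Your Lax--Milgram reduction, the identity $L(w)=B(w_{(\xi,\varepsilon)},w)$ (via the symmetry of the Paneitz form), the injectivity of $B$ on $\mathcal{E}_{(\xi,\varepsilon)}$, and the exclusion of $2d_1(n)L(u_L)=1$ are all sound, so existence and uniqueness go through; the factor $2d_1(n)$ instead of $d_1(n)$ in $\tilde B$ is harmless for coercivity.

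The gap is in the final norm estimate. Your scheme yields $L(w)=L(u_F)/\bigl(1-2d_1(n)L(u_L)\bigr)$, and Lax--Milgram controls $\|u_F\|_{\mathcal{E}}\le C(n)\|f\|_{L^{2n/(n+4)}(\mathbb{R}^n)}$ and $\|u_L\|_{\mathcal{E}}\le C(n)$, but you have only shown that $1-2d_1(n)L(u_L)$ is nonzero. Without a lower bound on $\bigl|1-2d_1(n)L(u_L)\bigr|$ depending only on $n$, the constant you obtain a priori depends on $(\xi,\varepsilon)$ and on $h$, whereas the statement (and its later use uniformly in the parameters, e.g.\ in \eqref{eq093} and \eqref{eq027}) requires $C=C(n)$. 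Two fixes: (i) derive the bound a priori from the equation itself, as the paper does, by testing \eqref{eq079} with $w$ and with $w_{(\xi,\varepsilon)}$ and applying Corollary~\ref{propo005} and Young's inequality; or (ii) within your scheme, note that $B(u_L,\cdot)=\kappa\,L(\cdot)=\kappa\,B(w_{(\xi,\varepsilon)},\cdot)$ with $\kappa:=1-2d_1(n)L(u_L)$, so injectivity gives $u_L=\kappa\,w_{(\xi,\varepsilon)}$, hence $\kappa\bigl(1+2d_1(n)B(w_{(\xi,\varepsilon)},w_{(\xi,\varepsilon)})\bigr)=1$ and
\[
    \bigl|1-2d_1(n)L(u_L)\bigr|
    \;\ge\;
    \bigl(1+2d_1(n)\,|B(w_{(\xi,\varepsilon)},w_{(\xi,\varepsilon)})|\bigr)^{-1}
    \;\ge\; c(n)>0,
\]
since $|B(w_{(\xi,\varepsilon)},w_{(\xi,\varepsilon)})|\le C(n)$ by \eqref{eq086}, \eqref{eq005}, and the boundedness of $B$. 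With either repair your proof is complete.
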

\begin{proof}
Suppose that $w\in\mathcal{E}_{(\xi,\varepsilon)}$ satisfies \eqref{eq079} for 
all $\varphi\in\mathcal{E}_{(\xi,\varepsilon)}$.  
Recalling that $w_{(\xi,\varepsilon)}\in\mathcal{E}_{(\xi,\varepsilon)}$, we obtain
\[
    \langle P_g w, w\rangle_{L^2(\mathbb{R}^n)}
    - \frac{n+4}{n-4}d(n)
      \int_{\mathbb{R}^n}
      w_{(\xi,\varepsilon)}^{\frac{8}{n-4}} w^2\, dx
    =
    \int_{\mathbb{R}^n} f\, w\, dx,
\]
and similarly,
\[
    \langle P_g w_{(\xi,\varepsilon)}, w\rangle_{L^2(\mathbb{R}^n)}
    - \frac{n+4}{n-4}d(n)
      \int_{\mathbb{R}^n}
      w_{(\xi,\varepsilon)}^{\frac{n+4}{n-4}} w\, dx
    =
    \int_{\mathbb{R}^n} f\, w_{(\xi,\varepsilon)}\, dx.
\]

Using \eqref{eq005}, \eqref{eq077}, Proposition~\ref{propo005}, and the fact that 
$w_{(\xi,\varepsilon)}\in\mathcal{E}_{(\xi,\varepsilon)}$, we obtain
\begin{align*}
    c_1(n)\,\|w\|_{\mathcal{E}}^2
    &\leq
    \langle w, P_g w\rangle_{L^2(\mathbb{R}^n)}
    - \frac{n+4}{n-4}d(n)
      \int_{\mathbb{R}^n}
      w_{(\xi,\varepsilon)}^{\frac{8}{n-4}} w^2\, dx
\\
    &\quad
    + d_1(n)
      \left(
        \int_{\mathbb{R}^n}
        \left(
            P_g w_{(\xi,\varepsilon)}
            - \frac{n+4}{n-4}d(n)\,
              w_{(\xi,\varepsilon)}^{\frac{n+4}{n-4}}
        \right) w\, dx
      \right)^{\!2}
\\
    &\leq
    \int_{\mathbb{R}^n} f w\, dx
    + d_1(n)
      \left(
        \int_{\mathbb{R}^n} f\, w_{(\xi,\varepsilon)}
      \right)^{\!2}
\\
    &\leq
    \|f\|_{L^{\frac{2n}{n+4}}(\mathbb{R}^n)}
    \,\|w\|_{L^{\frac{2n}{n-4}}(\mathbb{R}^n)}
    +
    \|f\|_{L^{\frac{2n}{n+4}}(\mathbb{R}^n)}^{2}
    \,
    \|w_{(\xi,\varepsilon)}\|_{L^{\frac{2n}{n-4}}(\mathbb{R}^n)}^{2}
\\
    &\leq
    C\, \|f\|_{L^{\frac{2n}{n+4}}(\mathbb{R}^n)}\, \|w\|_{\mathcal{E}}
    + C\, \|f\|_{L^{\frac{2n}{n+4}}(\mathbb{R}^n)}^{2}
\leq
    \lambda\, \|w\|_{\mathcal{E}}^{2}
    + C \left( 1 + \frac{1}{\lambda} \right)
      \|f\|_{L^{\frac{2n}{n+4}}(\mathbb{R}^n)}^{2},
\end{align*}
where $\lambda>0$ is arbitrary.
Choosing $\lambda>0$ sufficiently small yields the estimate $\|w\|_{\mathcal{E}}
    \leq C\, \|f\|_{L^{\frac{2n}{n+4}}(\mathbb{R}^n)}$, which in turn implies the uniqueness of the solution.

To prove existence, consider the functional 
\(F : \mathcal{E}_{(\xi,\varepsilon)} \to \mathbb{R}\) defined by
\[
    F(w)
    =
    \langle w, P_g w\rangle_{L^2(\mathbb{R}^n)}
    - \frac{n+4}{n-4}d(n)
      \int_{\mathbb{R}^n}
      w_{(\xi,\varepsilon)}^{\frac{8}{n-4}} w^2\, dx
    - 2 \int_{\mathbb{R}^n} f w\, dx
    + d_1(n)\, A(w)^{2},
\]
where $    A(w)=\displaystyle\int_{\mathbb{R}^n}\left(P_g w_{(\xi,\varepsilon)}- \frac{n+4}{n-4}d(n)w_{(\xi,\varepsilon)}^{\frac{n+4}{n-4}}\right) w$. Since \(F\) is coercive and lower semicontinuous, it admits a minimizer 
\(w_0 \in \mathcal{E}_{(\xi,\varepsilon)}\).
By applying the Euler-Lagrange equation associated with \(F\), we conclude that $ w_0 + d_1(n)\, A(w_0)\, w_{(\xi,\varepsilon)}    \in \mathcal{E}_{(\xi,\varepsilon)}$ satisfies \eqref{eq079}, together with the corresponding norm estimate.
\end{proof}

\section{Estimate for the \texorpdfstring{$\ell$}{Lg}-bubble}\label{sec006}

A very useful strategy introduced in \cite{MR2425176} is to work with a reduced version of the energy functional in order to simplify the estimates. This approach is particularly effective in the $Q$-curvature setting. In this section, our goal is to construct such a reduced energy functional, building upon the analysis in \cite{MR3016505}, in order to obtain several useful estimates in our framework. The main goal is to construct an auxiliary functional that is sufficiently close to $\mathcal{F}_g$, allowing the transfer of information between them.

\subsection{The set up for the metric}\label{sec009}

In what follows, we fix a multilinear form  
\(W : \mathbb{R}^{n} \times \mathbb{R}^{n} \times \mathbb{R}^{n} \times \mathbb{R}^{n} \to \mathbb{R}\).  
We assume that the components \(W_{ijkl}\) satisfy all the algebraic symmetries of the Weyl tensor.  
Moreover, we assume that at least one component of \(W\) is non-zero, so that
\[
    \sum_{i,j,k,l=1}^{n}
        \bigl( W_{ijkl} + W_{ilkj} \bigr)^{2}
        > 0 .
\]

For brevity, we define $H_{ik}(x):=\displaystyle\sum_{p,q=1}^{n}W_{ipkq}\, x_{p} x_{q}$ for $x\in\mathbb R^n$ and
\begin{equation}\label{eq095}
    \overline{H}_{ik}(x)
    :=
    f(|x|^{2})\, H_{ik}(x),
\end{equation}
where the auxiliary fourth-order polynomial
$ f(x) = \tau - 1200x + 2411x^{2} - 135x^{3} + x^{4}$ was originally introduced in \cite{MR3016505} to handle dimensions $n \ge 25$. The constant $\tau$ is defined in \cite[Lemma 11.5]{MR3016505}. Note that \(H_{ik}\) is trace-free, that \(\sum_{i=1}^{n} x_{i} H_{ik}(x) = 0\), and that \(\sum_{i=1}^{n} \partial_{i} H_{ik}(x) = 0\) for all \(x \in \mathbb{R}^{n}\).

Fix a sufficiently small constant \(\alpha\in(0,1)\) such that the assumptions of Theorem~\ref{teo007} and Proposition~\ref{propo006} are satisfied. For each \(t \in \{1, \ldots, \ell\}\), we choose parameters $\lambda_{t}$, $\mu_{t}$ and $\rho_{t}$ such that  
\begin{equation}\label{eq087}
\mu_{t} \le 1, 
    \qquad
    2\lambda_{t} \le \rho_{t} \le R/2 \le 1,
    \qquad
    (3/2 - \alpha)\lambda_{t} < \alpha \rho_{t},
    \qquad
    2\rho_{t} + 3\lambda_{t} < R/2.    
\end{equation}
Let \(y_{1},\ldots,y_{\ell} \in B_{R/2}(0)\) be distinct points satisfying
\begin{equation}\label{eq088}
\|y_{i} - y_{j}\|
    \;\ge\;
    3(\lambda_{i}+\lambda_{j}) + 2(\rho_{i}+\rho_{j}),
    \qquad i \neq j.    
\end{equation}

We consider a Riemannian metric \(g\) of the form~\eqref{eq013},  
where \(h(x)\) is a trace-free symmetric two-tensor on \(\mathbb{R}^{n}\)  
satisfying~\eqref{eq014}, and such that
\begin{equation*}\label{eq100}
h_{ik}(x)
    =
    \mu_{t}\,\lambda_{t}^{8}\,
    f\bigl(\lambda_{t}^{-2} |x - y_{t}|^{2}\bigr)\;
    H_{ik}(x - y_{t}),\qquad \text{if } |x-y_{t}| \le \rho_{t},    
\end{equation*}
and \(h(x)=0\) for all \(|x| \ge R\). It is easy to see that $\sum_{i=1}^nx_ih_{ik}(x)=0$ and $\sum_{i=1}^n\partial_ih_{ik}(x)=0$ for $|x-y_t|\leq\rho_t$. For \(y=(y_{1},\ldots,y_{\ell})\), we define
\begin{equation}\label{eq102}
\Omega(\lambda, y)
    :=
    \Bigl\{
        (\xi,\varepsilon) \in \mathbb{R}^{n\ell} \times \mathbb{R}_{+}^{\ell}
        :\;
        |\xi_{t} - y_{t}| < \lambda_{t},\;
        \frac{\lambda_{t}}{2} < \varepsilon_{t} < \frac{3}{2}\lambda_{t},\;
        \frac{1}{2} < \frac{\varepsilon_{t}}{\varepsilon_{i}} < 2
    \Bigr\}.    
\end{equation}

By considering \(r_{t} = \rho_{t} + \lambda_{t}\), we have \(B_{\rho_{t}}(y_{t}) \subset B_{r_{t}}(\xi_{t})\) and $\Omega(\lambda,y) \subset \mathcal{D}_{(\alpha,r)}$. Indeed, let \((\xi,\varepsilon) \in \Omega(\lambda,y)\).  
Since \(\varepsilon_{t} < 3\lambda_{t}/2\) and \((3/2 - \alpha)\lambda_{t} < \alpha\rho_{t}\), we have $\varepsilon_{t}/r_{t} < \alpha$. Moreover, if \(i \neq j\), then $    |\xi_{i}-\xi_{j}|\geq |y_{i} - y_{j}| - |\xi_{i} - y_{i}| - |\xi_{j} - y_{j}|\geq 2(r_{i}+r_{j})$. Finally, $|\xi_{t}|    \le |\xi_{t}-y_{t}| + |y_{t}|    < \lambda_{t} + R/2    < R - 2r_{t}$, because \(2\rho_{t} + 3\lambda_{t} < R/2\).

Using the fact that the support of  
\(w_{(\xi_{t},\varepsilon_{t},r_{t})}\)  
is contained in \(B_{2r_{t}}(\xi_{t})\),  
and applying an argument analogous to the proof of  
Proposition~\ref{propo002},  
we obtain the following result for the $\ell$-bubbles.
\begin{lemma}\label{lem008}
    For every $(\xi,\varepsilon)\in\Omega(\lambda,y)$, for some $c(n,\ell)>0$, we have
    $$\left\|P_gW_{(\xi,\varepsilon,r)}-d(n)W_{(\xi,\varepsilon,r)}^{\frac{n+4}{n-4}}\right\|_{L^{\frac{2n}{n+4}}(M\backslash\cup_{t=1}^\ell B_{\rho_t}(y_t))}\leq c(n,\ell)\sum_{t=1}^\ell\left(\frac{\lambda_t}{\rho_t}\right)^{\frac{n-4}{2}}.$$
\end{lemma}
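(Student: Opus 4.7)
The plan is to localize the $L^{\frac{2n}{n+4}}$ norm to the annular regions around each $\xi_t$ where the approximate-solution construction produces a nontrivial error, and then perform an explicit pointwise and volume estimate. Because $W_{(\xi,\varepsilon,r)}$ is supported in $\bigcup_t B_{2r_t}(\xi_t)$, with these balls pairwise disjoint by the constraint $|\xi_i-\xi_j|>2(r_i+r_j)$, the error also vanishes off this union. The parameter constraint $|\xi_t-y_t|<\lambda_t$ together with $r_t=\rho_t+\lambda_t$ yields $B_{\rho_t}(y_t)\subset B_{r_t}(\xi_t)$, so the domain of integration becomes $\bigcup_t\bigl(B_{2r_t}(\xi_t)\setminus B_{\rho_t}(y_t)\bigr)$ and the norm decomposes as a sum over $t$.

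For each $t$ I would further split into the inner annulus $A_t^{\rm in}:=B_{r_t}(\xi_t)\setminus B_{\rho_t}(y_t)$, where $\eta_{(r_t,\xi_t)}\equiv 1$ and $\overline w_{(\xi_t,\varepsilon_t,r_t)}=w_{(\xi_t,\varepsilon_t)}$, and the outer cutoff annulus $A_t^{\rm out}:=B_{2r_t}(\xi_t)\setminus B_{r_t}(\xi_t)$. On $A_t^{\rm out}$ the perturbation $h$ vanishes (its support lies in $\bigcup_t B_{\rho_t}(y_t)\subset \bigcup_t B_{r_t}(\xi_t)$), so $P_g=\Delta^2$ and the error reduces to
\[
\Delta^2(\eta w)-d(n)(\eta w)^{\frac{n+4}{n-4}} = \bigl(\Delta^2(\eta w)-\eta\Delta^2 w\bigr) + d(n)\bigl(\eta-\eta^{\frac{n+4}{n-4}}\bigr)\, w^{\frac{n+4}{n-4}}.
\]
Using $|\partial^k\eta_{(r_t,\xi_t)}|\lesssim r_t^{-k}$ together with the bubble bound $|\partial^k w_{(\xi_t,\varepsilon_t)}(x)|\lesssim \lambda_t^{(n-4)/2}r_t^{-(n-4)-k}$ valid for $|x-\xi_t|\sim r_t$, one obtains the pointwise error $\lesssim\lambda_t^{(n-4)/2}r_t^{-n}$. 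Since $\operatorname{vol}(A_t^{\rm out})\lesssim r_t^n$, H\"older's inequality yields $\|\text{error}\|_{L^{2n/(n+4)}(A_t^{\rm out})}\lesssim (\lambda_t/r_t)^{(n-4)/2}$, which together with $r_t\le \tfrac{3}{2}\rho_t$ becomes $\lesssim (\lambda_t/\rho_t)^{(n-4)/2}$.

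On the inner annulus $A_t^{\rm in}$, since $w_{(\xi_t,\varepsilon_t)}$ is an exact solution of the flat equation $\Delta^2 w=d(n)w^{(n+4)/(n-4)}$, the whole error equals $(P_g-\Delta^2)w_{(\xi_t,\varepsilon_t)}$, which by the representation \eqref{eq028} is pointwise controlled by a sum $\sum_{j=0}^{4}|\partial^j h|\,|\partial^{4-j}w_{(\xi_t,\varepsilon_t)}|$ together with the lower-order Ricci, scalar and $Q$-curvature terms. Since $|\partial^j h|\le \alpha$ and $|x-\xi_t|\gtrsim\rho_t-\lambda_t\sim\rho_t$ on $A_t^{\rm in}$, and the annular width is comparable to $\lambda_t$, a direct H\"older computation produces a bound of order $(\lambda_t/\rho_t)^\beta$ with $\beta\ge(n-4)/2$, hence absorbable into the main term. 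Summing the contributions of $A_t^{\rm in}$ and $A_t^{\rm out}$ over $t=1,\ldots,\ell$ then yields the claim.

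The main technical point is the careful bookkeeping in the cutoff annulus: each of the many products arising in $\Delta^2(\eta w)-\eta\Delta^2 w$ must be checked to scale as $\lambda_t^{(n-4)/2}r_t^{-n}$, which relies on the exact cancellation $r_t^{-j}\cdot r_t^{-(n-4)-(4-j)}=r_t^{-n}$. The remaining piece $d(n)(\eta-\eta^{(n+4)/(n-4)})w^{(n+4)/(n-4)}$ is of lower order by a factor $(\lambda_t/r_t)^4$ since $w^{(n+4)/(n-4)}\lesssim \lambda_t^{(n+4)/2}r_t^{-(n+4)}$, and is therefore subsumed into the main estimate. The secondary subtlety is verifying on $A_t^{\rm out}$ that $h$ genuinely vanishes, which follows from the containment $\operatorname{supp}(h)\subset\bigcup_t B_{\rho_t}(y_t)$ built into the setup of Section~\ref{sec009}.
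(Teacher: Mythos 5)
Your decomposition and the way you estimate — localize to $\bigcup_t\bigl(B_{2r_t}(\xi_t)\setminus B_{\rho_t}(y_t)\bigr)$, then use pointwise derivative bounds for the bubble at distance $\gtrsim\rho_t$ from $\xi_t$ together with H\"older — is exactly the adaptation of Proposition~\ref{propo002} that the paper has in mind, and your inner-region and cutoff-commutator computations are correct (the exponent you get on $A_t^{\rm in}$, namely $\tfrac{n^2-3n+4}{2n}$, indeed exceeds $\tfrac{n-4}{2}$). However, there is one genuine gap: your treatment of the outer annulus $A_t^{\rm out}=B_{2r_t}(\xi_t)\setminus B_{r_t}(\xi_t)$ rests on the claim that $h$ vanishes there because ``$\operatorname{supp}(h)\subset\bigcup_t B_{\rho_t}(y_t)$ is built into the setup of Section~\ref{sec009}.'' That is not part of the setup: Section~\ref{sec009} prescribes $h$ only \emph{inside} the balls $B_{\rho_t}(y_t)$, imposes $h=0$ only for $|x|\ge R$, and elsewhere requires nothing beyond the $C^4$-smallness \eqref{eq014}. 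In the actual application (proof of the main theorem) $h$ is a sum of bumps supported in balls of radius $2\rho_N$ around infinitely many points $\overline y_N$, of which the $y_t$ are only finitely many; since $2\rho_t>r_t=\rho_t+\lambda_t$, even the bump centered at $y_t$ itself reaches into $A_t^{\rm out}$, so $P_g\neq\Delta^2$ there and your reduction of the error to the cutoff commutator plus the $(\eta-\eta^{\frac{n+4}{n-4}})w^{\frac{n+4}{n-4}}$ term is incomplete.

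The gap is fixable by the same computation you already perform on $A_t^{\rm in}$: on $A_t^{\rm out}$ one has $|x-\xi_t|\ge r_t\ge\rho_t$, so the additional terms $(P_g-\Delta^2)(\eta_{(r_t,\xi_t)}w_{(\xi_t,\varepsilon_t)})$ and the Ricci, scalar and $Q$-curvature contributions of $g=\exp(h)$ are pointwise bounded by $C\alpha\sum_{j\le 4}|\partial^j(\eta_{(r_t,\xi_t)}w_{(\xi_t,\varepsilon_t)})|\lesssim\alpha\,\lambda_t^{\frac{n-4}{2}}r_t^{4-n-j}\cdot r_t^{-(4-j)}$ up to constants, and H\"older over $A_t^{\rm out}$ gives at most $C\alpha(\lambda_t/\rho_t)^{\frac{n-4}{2}}$, which is absorbed into the claimed bound. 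Two minor points: the passage from $(\lambda_t/r_t)^{\frac{n-4}{2}}$ to $(\lambda_t/\rho_t)^{\frac{n-4}{2}}$ uses $r_t\ge\rho_t$ (your cited inequality $r_t\le\tfrac32\rho_t$ goes the wrong way, though the conclusion is unaffected), and you should also keep the factors $r_t^{4-j}\le 1$, which is where the hypothesis $\rho_t\le R/2\le 1$ enters.
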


\subsection{Energy Expansion}
Fix \((\xi, \varepsilon) \in \mathcal{D}_{(\alpha, r)}\).
To define the reduced energy functional, we apply Proposition~\ref{propo006}
with \(h \equiv 0\). Consequently, for each \(t \in \{1, \ldots, \ell\}\),
there exists a unique function  $z_{(\xi_t, \varepsilon_t)} \in \mathcal{E}_{(\xi_t, \varepsilon_t)}$ (see~\eqref{eq081}) satisfying
\begin{equation}\label{eq057}
    \int_{\mathbb R^n}
    \left( 
        \Delta z_{(\xi_t,\varepsilon_t)} \, \Delta \varphi
        - \frac{n+4}{n-4}d(n)\, w_{(\xi_t,\varepsilon_t)}^{\frac{8}{n-4}}
          z_{(\xi_t,\varepsilon_t)} \, \varphi 
    \right)
    = 
    -\int_{\mathbb R^n}
     \Gamma_{(y_t,\xi_t,\varepsilon_t)} \, \varphi,
\end{equation}
for all \(\varphi \in \mathcal{E}_{(\xi_t,\varepsilon_t)}\). Here
\begin{align}
    \Gamma_{(y_t,\xi_t,\varepsilon_t)}(x)
    &= 
    \mu_t \lambda_t^{8} \Big(
       2\, G_{kj}(x-y_t)
           \partial_k \partial_j \partial_s^2
           w_{(\xi_t,\varepsilon_t)}(x)
       + 2\, \partial_s G_{kj}(x-y_t)
           \partial_k \partial_j \partial_s
           w_{(\xi_t,\varepsilon_t)}(x) \nonumber \\
    &\quad 
       + \frac{n}{n-2}
           \partial_s^2 G_{kj}(x-y_t)
           \partial_k \partial_j
           w_{(\xi_t,\varepsilon_t)}(x)
       + \frac{2}{n-2}
           \partial_j \partial_s^2 G_{kj}(x-y_t)
           \partial_k w_{(\xi_t,\varepsilon_t)}(x)
       \Big),\label{eq099}
\end{align}
where the repeated indices \(k\), \(j\) and $s$ indicate summation from \(1\) to \(n\), and $ G_{ik}(x)
    = \lambda_t^2 \overline H_{ik}(\lambda_t^{-1} x)
    = f(\lambda_t^{-2} |x|^2) \, H_{ik}(x)$.  Additionally, since $(\lambda_t+|x-y_t|)^2/(\varepsilon_t^2+|x-\xi_t|^2)\leq C$ for all \(x \in \mathbb R^n\), we obtain
\begin{equation}\label{eq089}
    |\Gamma_{(y_t,\xi_t,\varepsilon_t)}(x)|
    \leq C\, \mu_t \lambda_t^{\frac{n-4}{2}}
          \bigl(\lambda_t + |x - y_t|\bigr)^{10 - n}.
\end{equation}

By preliminary estimates obtained in Section \ref{sec004}, for each $t$, we can find real numbers 
\(b_k(\xi_t,\varepsilon_t)\), \(k = 0,1,\ldots,n\), such that
\begin{equation}\label{eq093}
\int_{\mathbb{R}^n} 
\left(
    \Delta z_{(\xi_t,\varepsilon_t)} \, \Delta \varphi
    - c_n\, w_{(\xi_t,\varepsilon_t)}^{\frac{8}{n-4}}
      z_{(\xi_t,\varepsilon_t)} \, \varphi
\right)
=
-\int_{\mathbb{R}^n} \Gamma_{(y_t,\xi_t,\varepsilon_t)} \, \varphi
+ 
\sum_{k=0}^n b_k(\xi_t,\varepsilon_t)
    \int_{\mathbb{R}^n} \varphi_{(\xi_t,\varepsilon_t,k)} \, \varphi,
\end{equation}
for all \(\varphi \in \mathcal{E}\).  
By standard elliptic regularity, each function \(z_{(\xi_t,\varepsilon_t)}\) is smooth.  
Furthermore, using an argument analogous to Proposition 5.1 in \cite{MR3016505}  
(see also Section 8 therein), we obtain
\begin{equation}\label{eq027}
    |\partial^{i} z_{(\xi_t,\varepsilon_t)}(x)|
    \leq C\, \mu_t\, \lambda_t^{\frac{n-4}{2}}
    \bigl(\lambda_t + |x - y_t|\bigr)^{14 - n - i},
    \qquad \text{for all } i = 1,2,3,4.
\end{equation}

Using the results of the Section 4 in \cite{MR3016505} we prove the following estimates.
\begin{lemma}\label{lem009}
Consider a metric \(g = \exp(h)\) on \(\mathbb{R}^n\), where \(h\) is a trace-free symmetric two-tensor satisfying $\sum_{i=1}^nx_ih_{ik}(x)=0$ and $\sum_{i=1}^n\partial_ih_{ik}(x)=0$. Then
    \begin{align}
        (\Delta_g^2-\Delta^2)w_{(\xi_t,\varepsilon_t)}   = & ~  -\partial_s^2h_{ij}\partial_i\partial_jw_{(\xi_t,\varepsilon_t)}-2\partial_sh_{ij}\partial_s\partial_i\partial_jw_{(\xi_t,\varepsilon_t)}-2h_{ij}\partial_s^2\partial_i\partial_jw_{(\xi_t,\varepsilon_t)}\nonumber\\
    & ~ +O(|h||\partial^2h|)|\partial^2w_{(\xi_t,\varepsilon_t)}|+O(|h||\partial h|)|\partial^3w_{(\xi_t,\varepsilon_t)}|+O(|h|^2)|\partial^4w_{(\xi_t,\varepsilon_t)}|,\nonumber\\
     Q_g  = & ~ \frac{1}{4(n-1)}\left((\partial_i\partial_lh_{mk})^2+\partial_lh_{mk}\partial_i^2\partial_lh_{mk}\right)-\frac{1}{2(n-2)^2}\partial_m^2h_{ij}\partial_s^2h_{ij}\nonumber\\
        & ~+O(|\partial h|^2|\partial^2 h|+|h||\partial^2h|^2+|h||\partial h||\partial^3h|+|h|^2|\partial^4h|),\nonumber\\
        \operatorname{div}_g(\operatorname{Ric}_g(\nabla_gw_{(\xi_t,\varepsilon_t)}))  = & ~ -\frac{1}{2}\partial_i\partial_m^2h_{il}\partial_lw_{(\xi_t,\varepsilon_t)}-\frac{1}{2}\partial_m^2h_{il}\partial_i\partial_lw_{(\xi_t,\varepsilon_t)}+|\partial^2w_{(\xi_t,\varepsilon_t)}|O(|h||\partial^2h|+|\partial h|^2)\nonumber\\
        & ~ +|\partial w_{(\xi_t,\varepsilon_t)}|O(|\partial h||\partial^2h|+|\partial h|^3+|h||\partial^3h|)\nonumber
    \end{align}
    and
    $$\operatorname{div}_g(R_g\nabla_gw_{(\xi_t,\varepsilon_t)}) = |\partial w_{(\xi_t,\varepsilon_t)}|O(|\partial h||\partial^2h|+|h|^2|\partial^3h|+|\partial h|^3)+|\partial^2w_{(\xi_t,\varepsilon_t)}|O(|\partial h|^2+|h|^2|\partial h|).\\$$
\end{lemma}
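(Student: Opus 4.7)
The strategy is a direct Taylor expansion of each geometric quantity in powers of $h$, exploiting the two gauge conditions $\sum_i x_i h_{ik}(x)=0$ and $\sum_i \partial_i h_{ik}(x)=0$ together with $\operatorname{tr}(h)=0$ to kill the unwanted terms at each order. Since $h$ is trace-free, $\det g = \det(\exp h)=e^{\operatorname{tr} h}=1$, so $dv_g = dx$ and no volume-form corrections appear. Writing $g_{ij}=\delta_{ij}+h_{ij}+\tfrac{1}{2}(h^2)_{ij}+O(|h|^3)$ and $g^{ij}=\delta_{ij}-h_{ij}+\tfrac{1}{2}(h^2)_{ij}+O(|h|^3)$, one obtains $\Gamma^{k}_{ij}=\tfrac{1}{2}(\partial_i h_{jk}+\partial_j h_{ik}-\partial_k h_{ij})+O(|h||\partial h|)$. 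Contracting with $g^{ij}$ and using $\sum_i \partial_i h_{ij}=0$ shows that $g^{ij}\Gamma^k_{ij} = -\tfrac{1}{2}\partial_k h_{ii}+O(|h||\partial h|)=O(|h||\partial h|)$; this is the point at which the divergence-free condition on $h$ is used crucially.

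For the bilaplacian, I would first write $\Delta_g w = g^{ij}\partial_i\partial_j w - g^{ij}\Gamma^k_{ij}\partial_k w$, which gives
\[
\Delta_g w_{(\xi_t,\varepsilon_t)} = \Delta w_{(\xi_t,\varepsilon_t)} - h_{ij}\partial_i\partial_j w_{(\xi_t,\varepsilon_t)} + O(|h|^2)|\partial^2 w_{(\xi_t,\varepsilon_t)}| + O(|h||\partial h|)|\partial w_{(\xi_t,\varepsilon_t)}|.
\]
Applying $\Delta_g$ a second time and regrouping, the only new first-order terms produced by the outer Laplacian are $\Delta(-h_{ij}\partial_i\partial_j w) = -\partial_s^2 h_{ij}\,\partial_i\partial_j w - 2\partial_s h_{ij}\,\partial_s\partial_i\partial_j w - h_{ij}\,\partial_s^2\partial_i\partial_j w$, while the inner substitution $\Delta w \mapsto \Delta_g w$ contributes an additional $-h_{ij}\partial_i\partial_j(\Delta w) = -h_{ij}\partial_s^2\partial_i\partial_j w$. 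Summing these yields exactly the three displayed first-order contributions, while the remainders are controlled by the stated $|h||\partial^j h|\,|\partial^{4-j} w|$ bounds using Cauchy--Schwarz on the second-order metric expansion.

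For $Q_g$, I would expand each of the three terms $-\tfrac{1}{2(n-1)}\Delta_g R_g$, $\tfrac{n^3-4n^2+16n-16}{8(n-1)^2(n-2)^2}R_g^2$, and $-\tfrac{2}{(n-2)^2}|\operatorname{Ric}_g|^2$. A standard linearization gives $R_{ij}= -\tfrac{1}{2}\Delta h_{ij} + \tfrac{1}{2}(\partial_i\partial_k h_{jk}+\partial_j\partial_k h_{ik}) - \tfrac{1}{2}\partial_i\partial_j\operatorname{tr}(h)+\text{quadratic}$, which collapses to $R_{ij}=-\tfrac{1}{2}\Delta h_{ij}+Q_{ij}(h,\partial h,\partial^2 h)$ by the gauge conditions, and similarly $R_g = -\tfrac{1}{2(n-2)^2}\partial_m\partial_s h_{ij}\cdots + O(|h||\partial^2 h|+|\partial h|^2)$ once the leading linear term $-\partial_i\partial_j h_{ij}$ is annihilated by divergence-freeness and trace-freeness. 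The quadratic combinations $|\operatorname{Ric}_g|^2 = \tfrac{1}{4}(\Delta h_{ij})^2 + \ldots$ and $\Delta_g R_g = -\tfrac{1}{2}\Delta\partial_m\partial_s(\cdots)$ are then assembled; after integration by parts-free algebraic manipulations, the coefficients combine to produce the stated leading terms $\tfrac{1}{4(n-1)}((\partial_i\partial_l h_{mk})^2+\partial_l h_{mk}\partial_i^2\partial_l h_{mk})-\tfrac{1}{2(n-2)^2}\partial_m^2 h_{ij}\partial_s^2 h_{ij}$. For the two divergence expressions, one writes $\operatorname{div}_g(\operatorname{Ric}_g(\nabla_g w_{(\xi_t,\varepsilon_t)})) = g^{kl}(\partial_k(R_{lj}g^{jm}\partial_m w) - \Gamma^p_{kl}R_{pj}g^{jm}\partial_m w - \ldots)$, inserts the leading-order expression $R_{lj}\sim -\tfrac{1}{2}\Delta h_{lj}$, and uses $\partial_k h_{kj}=0$ to eliminate the $\partial_k$ derivative of $h$ in the outer divergence, leaving the displayed leading terms. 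The same recipe applied to $R_g \nabla_g w$ recovers the last line.

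The technical heart of the argument is essentially identical to the local-analysis performed in \cite[Section 4]{MR3016505}, so in practice I would set up the expansions as above and then appeal to that reference for the precise bookkeeping. The main obstacle is organizational rather than conceptual: one must keep track, at each order in $h$, of which contributions are killed by $\partial_i h_{ij}=0$ versus by $\operatorname{tr}(h)=0$, and ensure that all remainder terms can indeed be dominated by the specific combinations $|h|^a|\partial h|^b|\partial^c h|\cdot|\partial^d w|$ stated in the lemma (rather than by weaker, non-integrable envelopes). Once the gauge-driven cancellations are verified, the remaining estimates are purely pointwise and follow by grouping $h,\partial h,\partial^2 h,\partial^3 h,\partial^4 h$ factors according to the order of differentiation of $w_{(\xi_t,\varepsilon_t)}$ they multiply.
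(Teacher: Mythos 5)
Your proposal follows essentially the same route as the paper: the paper gives no independent proof of this lemma, deferring the expansions to Section~4 of Wei--Zhao \cite{MR3016505}, and your direct Taylor expansion of $g^{ij}$, $\Gamma^k_{ij}$, $\operatorname{Ric}_g$, $R_g$, and $Q_g$ in powers of $h$ — using trace-freeness and $\partial_i h_{ij}=0$ to annihilate the contracted Christoffel symbols and the linear part of $R_g$, then citing the same reference for the detailed bookkeeping of the remainder terms — is exactly that argument, and your accounting of the linear-in-$h$ terms of $\Delta_g^2-\Delta^2$ and of the leading quadratic terms of $Q_g$ matches the statement. The only blemish is the garbled intermediate formula $R_g=-\tfrac{1}{2(n-2)^2}\partial_m\partial_s h_{ij}\cdots$ (in this gauge the linear part of $R_g$ simply vanishes and what remains is quadratic in $h$), but this slip does not affect the conclusions you draw from it.
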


These expansions will be used in the ball $|x-y_t|\leq\rho_t$. Now we have the following estimates.

\begin{proposition}\label{propo007}
    For every $(\xi,\varepsilon)\in\Omega(\lambda,y)$, there exist a positive constant $c=c(n,\ell)$ such that
    \begin{equation}\label{eq090}
        \left\|P_gW_{(\xi,\varepsilon,r)}-d(n)W_{(\xi,\varepsilon,r)}^{\frac{n+4}{n-4}}\right\|_{L^{\frac{2n}{n+4}}(M)}\leq c\sum_{t=1}^\ell\left(\left(\frac{\lambda_t}{\rho_t}\right)^{\frac{n-4}{2}}+\mu_t\lambda_t^{10}\right)
    \end{equation}
    and
\begin{equation}\label{eq091}
\left\|P_gW_{(\xi,\varepsilon,r)}-d(n)W_{(\xi,\varepsilon,r)}^{\frac{n+4}{n-4}}+\sum_{t=1}^\ell\eta_{(r_t,\xi_t)}\Gamma_{(y_t,\xi_t,\varepsilon_t)}\right\|_{L^{\frac{2n}{n+4}}(M)}\leq c\sum_{t=1}^\ell\left(\left(\frac{\lambda_t}{\rho_t}\right)^{\frac{n-4}{2}}+\mu^2\lambda^{10\frac{n-3}{n-4}}\right).
\end{equation}
\end{proposition}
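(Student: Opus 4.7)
The plan is to decompose $M$ into the outer region $M\setminus\bigcup_t B_{\rho_t}(y_t)$ and the small balls $B_{\rho_t}(y_t)$, and estimate the $L^{2n/(n+4)}$ norm of $P_gW_{(\xi,\varepsilon,r)}-d(n)W_{(\xi,\varepsilon,r)}^{(n+4)/(n-4)}$ separately on each piece. Lemma \ref{lem008} already provides the bound $c\sum_t(\lambda_t/\rho_t)^{(n-4)/2}$ on the outer region, which produces the first term in both \eqref{eq090} and \eqref{eq091}, so the analysis concentrates inside the balls. Since $|\xi_t-y_t|<\lambda_t$ and $r_t=\rho_t+\lambda_t$, one has $B_{\rho_t}(y_t)\subset B_{r_t}(\xi_t)$, where $\eta_{(r_t,\xi_t)}\equiv 1$ and the supports of the other bubbles do not meet $B_{\rho_t}(y_t)$. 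Hence $W_{(\xi,\varepsilon,r)}\equiv w_{(\xi_t,\varepsilon_t)}$ there, and because $w_{(\xi_t,\varepsilon_t)}$ solves \eqref{eq012}, the integrand on this ball collapses to $(P_g-\Delta^2)w_{(\xi_t,\varepsilon_t)}$.

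Next, I would expand $(P_g-\Delta^2)w_{(\xi_t,\varepsilon_t)}$ to leading order in $h$. Using the decomposition \eqref{eq028}, the contracted Bianchi identity in the form $\langle\operatorname{Ric}_g,\nabla_g^2 w\rangle=\operatorname{div}_g(\operatorname{Ric}_g\nabla_g w)-\tfrac{1}{2}\langle\nabla_g R_g,\nabla_g w\rangle$, and the pointwise expansions of Lemma \ref{lem009}, together with the three algebraic properties of $h$ in $B_{\rho_t}(y_t)$ (trace-free, $\sum_i x_ih_{ik}=0$ and $\sum_i\partial_ih_{ik}=0$), every term linear in $h$ combines to exactly $-\Gamma_{(y_t,\xi_t,\varepsilon_t)}$. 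For example, the coefficient $n/(n-2)$ multiplying $\partial_s^2h_{kj}\partial_k\partial_j w$ in \eqref{eq099} decomposes as $1$ coming from $\Delta_g^2-\Delta^2$ plus $2/(n-2)$ coming from $a(n)\langle\operatorname{Ric}_g,\nabla^2 w\rangle$, and the other three coefficients of $\Gamma$ are verified analogously. Denoting by $\mathcal R_t$ the collection of all higher-order contributions (quadratic and above in $h,\partial h,\dots,\partial^4 h$), this yields
$$(P_g-\Delta^2)w_{(\xi_t,\varepsilon_t)}=-\Gamma_{(y_t,\xi_t,\varepsilon_t)}+\mathcal R_t \qquad\text{on } B_{\rho_t}(y_t).$$

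To obtain \eqref{eq090}, I would combine the outer estimate with $\|\Gamma_{(y_t,\xi_t,\varepsilon_t)}\|_{L^{2n/(n+4)}}$. The pointwise bound \eqref{eq089} and the substitution $x=y_t+\lambda_tz$ produce an integral $\int_{\mathbb R^n}(1+|z|)^{2n(10-n)/(n+4)}\,dz$, which converges precisely when $n>24$ and then yields $\|\Gamma_{(y_t,\xi_t,\varepsilon_t)}\|_{L^{2n/(n+4)}}\le C\mu_t\lambda_t^{10}$, accounting for the second term of \eqref{eq090}; the remainder $\mathcal R_t$ is absorbed into this bound. For \eqref{eq091}, inside $B_{\rho_t}(y_t)$ the added $\eta_{(r_t,\xi_t)}\Gamma$ cancels $-\Gamma$ pointwise, so only $\mathcal R_t$ survives; on the annulus $B_{2r_t}(\xi_t)\setminus B_{\rho_t}(y_t)$ one estimates $\eta_{(r_t,\xi_t)}\Gamma$ directly using \eqref{eq089}, the inequality $|x-y_t|\ge\rho_t/2$, and the relations \eqref{eq087}, producing a contribution absorbed in the $(\lambda_t/\rho_t)^{(n-4)/2}$ term.

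The remaining task is the quadratic bound $\|\mathcal R_t\|_{L^{2n/(n+4)}(B_{\rho_t}(y_t))}\le C\mu_t^2\lambda_t^{10(n-3)/(n-4)}$. Each summand of $\mathcal R_t$ is a product of at least two factors of $h$ or its derivatives times derivatives of $w_{(\xi_t,\varepsilon_t)}$ of total order at most four. Using the explicit structure $h_{ik}(x)=\mu_t\lambda_t^8 f(\lambda_t^{-2}|x-y_t|^2)H_{ik}(x-y_t)$, one controls each such factor pointwise by $\mu_t$ times an explicit polynomial in $|x-y_t|$ and $\lambda_t^{-1}$; combined with the standard decay bounds on $\partial^j w_{(\xi_t,\varepsilon_t)}$ and the rescaling $x\mapsto y_t+\lambda_tz$, this yields the claimed exponent. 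The main obstacle is the painstaking algebraic bookkeeping in the expansion step—confirming that the linear contributions in $h$ sum precisely to $-\Gamma$ with the exact coefficients of \eqref{eq099}—together with the delicate integrability at the critical dimension $n=25$; the multi-bubble structure itself contributes no essential difficulty beyond the disjointness of the supports, which has already been exploited.
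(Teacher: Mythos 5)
Your proposal is correct and follows essentially the same route as the paper: Lemma~\ref{lem008} on the outer region, the expansions of Lemma~\ref{lem009} inside each ball $B_{\rho_t}(y_t)$ where $W_{(\xi,\varepsilon,r)}=w_{(\xi_t,\varepsilon_t)}$ solves \eqref{eq012}, exact cancellation of the linear-in-$h$ terms by $\Gamma_{(y_t,\xi_t,\varepsilon_t)}$ for \eqref{eq091}, and the decay \eqref{eq089} with the $n>24$ integrability threshold for the $\mu_t\lambda_t^{10}$ term. The only cosmetic difference is that the paper bounds the local error pointwise directly for \eqref{eq090} rather than splitting it as $-\Gamma_{(y_t,\xi_t,\varepsilon_t)}+\mathcal R_t$; and the quadratic remainder bound is obtained, as in the paper, by weakening the pointwise exponent $20-n$ to $10\tfrac{n-3}{n-4}-n$ before integrating, which is precisely the critical integrability at $n=25$ that you flag.
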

\begin{proof} 
Recall that \(B_{\rho_t}(y_t) \subset B_{r_t}(\xi_t)\), that \(\varepsilon_t < 2 \lambda_t \leq \rho_t\), and that \(r_t = \rho_t + \lambda_t\). Moreover, the support of the function \(W_{(\xi,\varepsilon,r)}\) is contained in \(\bigcup_{t=1}^{\ell} B_{2r_t}(\xi_t)\).
Using \eqref{eq089}, we find
\begin{align*}
    \|\Gamma_{(y_t,\xi_t,\varepsilon_t)}\|_{L^{\frac{2n}{n+4}}(\mathbb R^n\backslash B_{\rho_t}(y_t))}  & \leq C\mu_t\lambda_t^{\frac{n-4}{2}}\left(\int_{\mathbb R^n\backslash B_{\rho_t}(y_t))}(\lambda_t+|x-y_t|)^{(10-n)\frac{2n}{n+4}}dx\right)^{\frac{2n}{n+4}}\\
    &
     \leq C\mu_t\rho_t^{10}\left(\frac{\lambda_t}{\rho_t}\right)^{\frac{n-4}{2}}.
\end{align*}
Using Lemma \ref{lem008}, we obtain
\begin{align*}
        \left\|P_gW_{(\xi,\varepsilon,r)}-d(n)W_{(\xi,\varepsilon,r)}^{\frac{n+4}{n-4}}+\sum_{t=1}^\ell\eta_{(r_t,\xi_t)}\Gamma_{(y_t,\xi_t,\varepsilon_t)}\right\|_{L^{\frac{2n}{n+4}}(M\backslash\cup_{t=1}^\ell B_{\rho_t}(y_t))}\leq c(n,\ell)\sum_{t=1}^\ell\left(\frac{\lambda_t}{\rho_t}\right)^{\frac{n-4}{2}}.
    \end{align*}

Now, let us estimate the integral inside each ball \( B_{\rho_t}(y_t) \), for \( t = 1, \ldots, \ell \), 
where the metric takes the form \( g(x) = \exp(h(x)) \). Since, for each \( t \in \{1, \ldots, \ell\} \), the function \( w_{(\xi_t, \varepsilon_t)} \) is a solution of \eqref{eq012}, we have
\begin{align*}
    A_t & :=P_gw_{(\xi_t,\varepsilon_t)}-d(n)w_{(\xi_t,\varepsilon_t)}^{\frac{n+4}{n-4}}\\
    & = (\Delta_g^2-\Delta^2)w_{(\xi_t,\varepsilon_t)}+\operatorname{div}_g(a(n)\operatorname{Ric}_g(\nabla w_{(\xi_t,\varepsilon_t)})-b(n)R_gdw_{(\xi_t,\varepsilon_t)})+Q_gw_{(\xi_t,\varepsilon_t)}.
\end{align*}

The Lemma \ref{lem009} implies that
\begin{align*}
    |Q_g w_{(\xi_t,\varepsilon_t)}| & \leq C\mu^2\lambda_t^{\frac{n-4}{2}}(\lambda_t+|x-y_t|)^{20-n},\\
        |(\Delta_g^2-\Delta^2)w_{(\xi_t,\varepsilon_t)}| & \leq C\mu\lambda_t^{\frac{n-4}{2}}(\lambda_t+|x-y_t|)^{10-n},\\
        \operatorname{div}_g(\operatorname{Ric}_g(\nabla_gw_{(\xi_t,\varepsilon_t)})) & \leq C\mu_t\lambda_t^{\frac{n-4}{2}}(\lambda_t+|x-y_t|)^{10-n},\\
        \operatorname{div}_g(R_g\nabla_gw_{(\xi_t,\varepsilon_t)}) & \leq C\mu^2\lambda_t^{\frac{n-4}{2}}(\lambda_t+|x-y_t|)^{20-n}.
\end{align*}
Therefore, for all $x\in B_{\rho_t}(\xi_t)$ it holds
$|A_t(x)|\leq C\mu_t\lambda_t^{\frac{n-4}{2}}(\lambda_t+|x-y_t|)^{10-n}$. Using the existence of a constant \( c_n > 0 \) such that, for all \( q < -n \), the following estimate holds, $\int_{\mathbb R^n}(\lambda+|x|)^qdx\leq c_n\lambda^{q+n}$, we obtain
\begin{align*}
    \|A_t\|_{L^{\frac{2n}{n+4}}(B_{\rho_t}(\xi_t))} & \leq C\mu_t\lambda_t^{\frac{n-4}{2}}\left(\int_{\mathbb R^n}(\lambda_t+|x|)^{(10-n)\frac{2n}{n+4}}dx\right)^{\frac{n+4}{2n}} \leq C\mu_t\lambda_t^{10}.
\end{align*}
By Lemma \ref{lem008} we obtain the estimate \eqref{eq090}.

For the estimate \eqref{eq091}, we observe that 
\(\Gamma_{(y_t,\xi_t,\varepsilon_t)}\) cancels the terms in the expansions of 
\(\operatorname{div}_g(\operatorname{Ric}_g(\nabla_g w_{(\xi_t,\varepsilon_t)}))\) 
and 
\((\Delta_g^2 - \Delta^2) w_{(\xi_t,\varepsilon_t)}\) 
that are linear in \(h\); see \eqref{eq099} and Lemma~\ref{lem009}.  
By Lemma~\ref{lem009}, we therefore obtain
\[
    \bigl|\, A_t + \Gamma_{(y_t,\xi_t,\varepsilon_t)} \,\bigr|
    \leq C \mu^2 \lambda_t^{\frac{n-4}{2}}
          \bigl( \lambda_t + |x - y_t| \bigr)^{20 - n}
    \leq 
    C \mu^2 \lambda_t^{\frac{n-4}{2}}
          \bigl( \lambda_t + |x - y_t| \bigr)^{10\frac{n-3}{n-4} - n},
\]
for all \( |x - y_t| \leq \rho_t \).  
As before, this yields $\bigl\| A_t + \Gamma_{(y_t,\xi_t,\varepsilon_t)} 
    \bigr\|_{L^{\frac{2n}{n+4}}(B_{\rho_t}(\xi_t))}
    \leq 
    C \mu^2 \lambda^{\,10\frac{n-3}{n-4}}$.
\end{proof}

A direct consequence of \eqref{eq073} and \eqref{eq090} is the following result.

\begin{corollary}\label{cor001}
   For $(\xi,\varepsilon)\in\Omega(\lambda,y)$, there exists a positive function $c=c(n,\ell)$ such that the function $U_{(\xi,\varepsilon,r)}$ given by Theorem \ref{teo006} satisfies the estimate
    $$\|U_{(\xi,\varepsilon,r)}-W_{(\xi,\varepsilon,r)}\|_{L^{\frac{2n}{n-4}}(M)}\leq c(n,\ell)\sum_{t=1}^\ell\left(\left(\frac{\lambda_t}{\rho_t}\right)^{\frac{n-4}{2}}+\mu_t\lambda_t^{10}\right).$$
\end{corollary}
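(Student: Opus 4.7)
The plan is to chain together the two estimates cited right before the corollary, namely the abstract inverse bound \eqref{eq073} from Theorem~\ref{teo006} and the source-term bound \eqref{eq090} from Proposition~\ref{propo007}, with the Sobolev embedding bridging the two norms.

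First, I would invoke Theorem~\ref{teo006}, which guarantees the existence of $U_{(\xi,\varepsilon,r)}$ for every $(\xi,\varepsilon)\in\Omega(\lambda,y)\subset\mathcal{D}_{(\alpha,r)}$ (this inclusion was checked after \eqref{eq102}), and provides the estimate
\[
    \|W_{(\xi,\varepsilon,r)}-U_{(\xi,\varepsilon,r)}\|_{W^{2,2}(M,g)}\;\le\; c\,\left\|P_gW_{(\xi,\varepsilon,r)}-d(n)\,W_{(\xi,\varepsilon,r)}^{\frac{n+4}{n-4}}\right\|_{L^{\frac{2n}{n+4}}(M,g)}.
\]
Next I would pass from the $W^{2,2}$-norm on the left to the $L^{\frac{2n}{n-4}}$-norm via the Sobolev embedding $W^{2,2}(M,g)\hookrightarrow L^{\frac{2n}{n-4}}(M,g)$, which holds with a constant depending only on $(M,g_0)$ since $g$ is a fixed perturbation of $g_0$ (cf.\ \eqref{eq013} and \eqref{eq063}). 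This step is harmless because $dv_g=dv_{g_s}$ and the Sobolev constants with respect to $g$ and $g_s$ differ only by a factor of $1+O(\alpha)$.

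Finally, I would apply estimate \eqref{eq090} of Proposition~\ref{propo007}, which yields
\[
    \left\|P_gW_{(\xi,\varepsilon,r)}-d(n)\,W_{(\xi,\varepsilon,r)}^{\frac{n+4}{n-4}}\right\|_{L^{\frac{2n}{n+4}}(M,g)}\;\le\; c(n,\ell)\sum_{t=1}^{\ell}\left(\left(\frac{\lambda_t}{\rho_t}\right)^{\frac{n-4}{2}}+\mu_t\lambda_t^{10}\right).
\]
Combining these three steps gives the claimed bound. Since the corollary is flagged as a direct consequence, there is no genuine obstacle; the only minor point to check is that the constants in the Sobolev embedding and in \eqref{eq063} can be absorbed into a single $c(n,\ell)$, which is routine because $\alpha$ has been fixed small enough in the setup of Section~\ref{sec009}.
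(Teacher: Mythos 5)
Your proof is correct and matches the paper's intended argument exactly: the paper explicitly states that the corollary is ``a direct consequence of \eqref{eq073} and \eqref{eq090},'' which you chain together via the Sobolev embedding $W^{2,2}(M,g)\hookrightarrow L^{\frac{2n}{n-4}}(M,g)$, precisely as the authors intend. Your remark that $\Omega(\lambda,y)\subset\mathcal{D}_{(\alpha,r)}$ justifies applying Theorem~\ref{teo006} is the right point to flag, and the absorption of the Sobolev and comparison constants is indeed routine.
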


We now require a more refined estimate for the difference 
\( U_{(\xi,\varepsilon,r)} - W_{(\xi,\varepsilon,r)} \).  
Applying Theorem~\ref{teo005} with \(h \equiv 0\), we conclude that there exists a
unique function $Z_{(\xi,\varepsilon)} \in\mathcal{F}_{(\xi,\varepsilon,\alpha,r)}^{\perp}(M,g_s)$ such that
\begin{equation}\label{eq083}
    \langle P_{g_s} Z_{(\xi,\varepsilon)}, \varphi \rangle_{L^2}
    - \frac{n+4}{n-4} \, d(n) 
      \int_{M} W_{(\xi,\varepsilon,r)}^{\frac{8}{n-4}}
               Z_{(\xi,\varepsilon)} \, \varphi \, dv_{g_s}
    = - \sum_{t=1}^{\ell} 
        \int_{M} \eta_{(r_t,\xi_t)} 
        \Gamma_{(y_t,\xi_t,\varepsilon_t)} \, \varphi \, dv_{g_s},
\end{equation}
for all \(\varphi \in \mathcal{F}_{(\xi,\varepsilon,\alpha,r)}^{\perp}(M,g_s)\). Moreover, $\|Z_{(\xi,\varepsilon)} \|_{W^{2,2}(M,g)}    \;\le\;    c \,     \bigl\| \eta_{(r_t,\xi_t)}            \Gamma_{(y_t,\xi_t,\varepsilon_t)}     \bigr\|_{L^{\frac{2n}{n+4}}(M,g)}$.
Using that the support of 
\(\eta_{(r_t,\xi_t)} \Gamma_{(y_t,\xi_t,\varepsilon_t)}\) is contained in
\(\bigcup_{t=1}^{\ell} B_{2r_t}(\xi_t)\), and arguing again as in \cite{MR3016505}, we obtain
\begin{equation}\label{eq069}
    |\partial^{i} Z_{(\xi,\varepsilon)}(x)|
    \;\le\;
    \sum_{t=1}^{\ell}
    \mu_t \, \lambda_t^{\frac{n-4}{2}}
    \bigl( \lambda_t + |x - y_t| \bigr)^{14 - n - i},
    \qquad \text{for all } i = 1,2,3,4.
\end{equation}

\begin{lemma}
    Fix $(\xi,\varepsilon)\in\Omega(\lambda,y)$. There exists $\mathcal Z_{(\xi,\varepsilon)}\in W^{2,2}(M,g)$ such that
    \begin{equation}\label{eq015}
        \overline Z_{(\xi,\varepsilon)}:=Z_{(\xi,\varepsilon)}-\sum_{t=1}^\ell\eta_{(r_t,\xi_t)}z_{(\xi_t,\varepsilon_t)}+\mathcal Z_{(\xi,\varepsilon)}\in \mathcal F_{(\xi,\varepsilon,\alpha,r)}^\perp(M,g),
    \end{equation}
    \begin{equation}\label{eq032}
        \|\mathcal Z_{(\xi,\varepsilon)}\|_{L^{\frac{2n}{n-4}}(M,g)}\leq c\sum_{t=1}^\ell \left(\frac{\varepsilon_t}{\rho_t}\right)^n
    \end{equation}
    and
\begin{equation}\label{eq055}
    \left\|P_g\mathcal Z_{(\xi,\varepsilon)}-d(n)W_{(\xi,\varepsilon,r)}^{\frac{8}{n-4}}\mathcal Z_{(\xi,\varepsilon)}\right\|_{L^{\frac{2n}{n+4}}(M,g)}\leq c\sum_{t=1}^\ell \left(\frac{\varepsilon_t}{\rho_t}\right)^n.
\end{equation}
\end{lemma}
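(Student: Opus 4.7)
The plan is to mimic the construction in Lemma~\ref{lem004}, with the role of $\overline w_{(\xi_i,\varepsilon_i,r_i)}$ there played here by $\sum_{t=1}^\ell \eta_{(r_t,\xi_t)} z_{(\xi_t,\varepsilon_t)}$. First I would write the ansatz
\[
\mathcal Z_{(\xi,\varepsilon)} = \sum_{j=1}^\ell \sum_{k=0}^n c_{jk}\, \eta_{(\xi_j,r_j)}\, w_{(\xi_j,\varepsilon_j)}^{-8/(n-4)}\, \overline\varphi_{(\xi_j,\varepsilon_j,r_j,k)},
\]
and determine the scalars $c_{jk}$ from the orthogonality requirement \eqref{eq015}. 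Since $Z_{(\xi,\varepsilon)}\in \mathcal F^\perp_{(\xi,\varepsilon,\alpha,r)}(M,g_s) = \mathcal F^\perp_{(\xi,\varepsilon,\alpha,r)}(M,g)$ (because $dv_g=dv_{g_s}$), and since the supports of $\overline\varphi_{(\xi_t,\varepsilon_t,r_t,l)}$ and $\eta_{(r_j,\xi_j)}z_{(\xi_j,\varepsilon_j)}$ are disjoint whenever $j\neq t$, the orthogonality system decouples exactly as in Claim~1 of Lemma~\ref{lem004}, giving the diagonal formula
\[
c_{jk} = \Bigl(\int_M \eta_{(\xi_j,r_j)}\, w_{(\xi_j,\varepsilon_j)}^{-8/(n-4)}\, \overline\varphi_{(\xi_j,\varepsilon_j,r_j,k)}^{\,2}\, dv_{g_s}\Bigr)^{-1}\!\int_M \eta_{(r_j,\xi_j)}\, z_{(\xi_j,\varepsilon_j)}\, \overline\varphi_{(\xi_j,\varepsilon_j,r_j,k)}\, dv_{g_s}.
\]

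The key estimate is then $|c_{jk}|\leq C(\varepsilon_j/\rho_j)^n$. Claim~2 of Lemma~\ref{lem004} already bounds the denominator below by a positive dimensional constant. For the numerator, the decisive fact is that $z_{(\xi_j,\varepsilon_j)}\in\mathcal E_{(\xi_j,\varepsilon_j)}$ is $L^2$-orthogonal to $\varphi_{(\xi_j,\varepsilon_j,k)}$ on $\mathbb R^n$ (see \eqref{eq081}). Since $g_s$ is Euclidean on $B_{2r_j}(\xi_j)$, I would rewrite the numerator as
\[
\int_{\mathbb R^n}\bigl(\eta_{(r_j,\xi_j)}^2 -1\bigr)\, z_{(\xi_j,\varepsilon_j)}\,\varphi_{(\xi_j,\varepsilon_j,k)}\, dx,
\]
whose integrand is supported on $\{|x-\xi_j|\ge r_j\}$. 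On this set, $|x-y_j|\ge r_j-\lambda_j=\rho_j$; combining the pointwise estimate \eqref{eq027} (together with the analogous $i=0$ bound for $z$ itself coming from the construction in \cite{MR3016505}) with the explicit tail decay of $\varphi_{(\xi_j,\varepsilon_j,k)}$ provided by \eqref{eq051}--\eqref{eq052}, and using $n\geq 25$ to guarantee convergence of the resulting radial integrals, a direct computation yields the claimed bound.

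Having controlled $c_{jk}$, the estimate \eqref{eq032} is immediate: a change of variables shows that $w_{(\xi_j,\varepsilon_j)}^{-8/(n-4)}\overline\varphi_{(\xi_j,\varepsilon_j,r_j,k)}=\tfrac{2}{n-4}\varepsilon_j\, \eta_{(\xi_j,r_j)}\, \partial_k w_{(\xi_j,\varepsilon_j)}$ has $L^{2n/(n-4)}$-norm bounded independently of the parameters. For \eqref{eq055}, the crucial observation is that on $B_{r_j}(\xi_j)$, where $\eta_{(\xi_j,r_j)}=1$, the exact Euclidean identity
\[
\Delta^2\bigl(\varepsilon_j\partial_k w_{(\xi_j,\varepsilon_j)}\bigr) = \tfrac{n+4}{n-4}d(n)\, w_{(\xi_j,\varepsilon_j)}^{8/(n-4)}\,\bigl(\varepsilon_j\partial_k w_{(\xi_j,\varepsilon_j)}\bigr)
\]
holds, so $(P_g - d(n)W_{(\xi,\varepsilon,r)}^{8/(n-4)})$ applied to each building block of $\mathcal Z$ reduces---modulo metric corrections of size $O(\alpha)$ (controlled via Lemma~\ref{lem009}) and cutoff-derivative terms supported on $B_{2r_j}(\xi_j)\setminus B_{r_j}(\xi_j)$---to a quantity whose $L^{2n/(n+4)}(M,g)$-norm is uniformly bounded by scale invariance. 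Multiplying by $|c_{jk}|\leq C(\varepsilon_j/\rho_j)^n$ and summing over $j$ and $k$ yields \eqref{eq055}.

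The main obstacle will be the refined numerator bound $|c_{jk}|\leq C(\varepsilon_j/\rho_j)^n$: once the $\mathbb R^n$-orthogonality $\int z_{(\xi_j,\varepsilon_j)}\varphi_{(\xi_j,\varepsilon_j,k)}=0$ has been used to reduce the problem to the tail region $\{|x-\xi_j|\ge r_j\}$, the dimensional exponents coming from the pointwise decay of $z$ and of $\varphi$ have to be tracked carefully---and it is precisely the assumption $n\geq 25$ that ensures the resulting integrals converge and give a decay rate better than $(\varepsilon_j/\rho_j)^n$. All subsequent estimates are comparatively routine consequences of this bound.
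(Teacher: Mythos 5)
Your proposal follows essentially the same route as the paper: the same ansatz for $\mathcal Z_{(\xi,\varepsilon)}$, the same decoupled determination of the coefficients via the two claims of Lemma~\ref{lem004}, the same use of $z_{(\xi_t,\varepsilon_t)}\in\mathcal E_{(\xi_t,\varepsilon_t)}$ together with \eqref{eq027} and the tail computation of Lemma~\ref{lem003} to get $|c_{jk}|\le C(\varepsilon_j/\rho_j)^n$, and the same annulus-plus-interior treatment of \eqref{eq055}. It is correct (your handling of \eqref{eq055}, keeping the scale-invariantly bounded leftover term rather than asserting exact cancellation, is if anything slightly more careful than the paper's), so nothing further is needed.
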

\begin{proof}
Given coefficients \( c_{tk} \in \mathbb{R} \), define
\[
\mathcal Z_{(\xi,\varepsilon)}
=\sum_{t=1}^{\ell}\sum_{k=0}^{n}
c_{tk}\,
\eta_{(r_t,\xi_t)}
w_{(\xi_t,\varepsilon_t)}^{-\frac{8}{n-4}}
\varphi_{(\xi_t,\varepsilon_t,k)}.
\]
For this choice, since \( Z_{(\xi,\varepsilon)} \in \mathcal{F}_{(\xi,\varepsilon,\alpha,r)}^{\perp}(M,g) \),
the condition \eqref{eq015} is equivalent to
\[
\sum_{t=1}^{\ell}
\int_M
\eta_{(r_t,\xi_t)}
z_{(\xi_t,\varepsilon_t)}
\overline{\varphi}_{(\xi_s,\varepsilon_s,r_s,l)}
=
\sum_{t=1}^{\ell}\sum_{k=0}^{n}
c_{tk}
\int_M
\eta_{(r_t,\xi_t)}
w_{(\xi_t,\varepsilon_t)}^{-\frac{8}{n-4}}
\varphi_{(\xi_t,\varepsilon_t,k)}
\overline{\varphi}_{(\xi_s,\varepsilon_s,r_s,l)},
\]
for all
\(
\overline{\varphi}_{(\xi_s,\varepsilon_s,r_s,l)} \in
\mathcal{F}_{(\xi,\varepsilon,\alpha,r)}.
\)
With computations similar to those in the proof of Lemma~\ref{lem004},
we can show that
$$\int_M\eta_{(r_t,\xi_t)}w_{(\xi_t,\varepsilon_t)}^{-\frac{8}{n-4}}\varphi_{(\xi_t,\varepsilon_t,k)}\overline\varphi_{(\xi_s,\varepsilon_s,r_s,l)}=0$$
for $(t,k)\not=(s,l)$, and 
$$\int_M\eta_{(r_t,\xi_t)}w_{(\xi_t,\varepsilon_t)}^{-\frac{8}{n-4}}\varphi_{(\xi_t,\varepsilon_t,k)}\overline\varphi_{(\xi_t,\varepsilon_t,r_t,k)}\geq c(n)>0.$$
This implies the existence of real numbers $c_{tk}$ such that $\mathcal Z_{(\xi,\varepsilon)}$ satifies \eqref{eq015}. Also, using that $z_{(\xi_t,\varepsilon_t)}\in\mathcal E_{(\xi_t,\varepsilon_t)}$, \eqref{eq027} and a similar computation as in Lemma \ref{lem003} give us
$$\left|\int_M
\eta_{(r_t,\xi_t)}
z_{(\xi_t,\varepsilon_t)}
\overline{\varphi}_{(\xi_s,\varepsilon_s,r_s,l)}\right|\leq c\left(\frac{\varepsilon_t}{\rho_t}\right)^n,$$
which implies \eqref{eq032}.

For \eqref{eq055}, it is easy to see that on 
\(M \setminus \bigcup_{t=1}^\ell \bigl( B_{2r_t}(\xi_t) \setminus B_{r_t}(\xi_t) \bigr)\) 
we have $P_g \, \mathcal Z_{(\xi,\varepsilon)}    - d(n)\, W_{(\xi,\varepsilon,r)}^{\frac{8}{\,n-4\,}}      \mathcal Z_{(\xi,\varepsilon)} = 0$. Finally, using \eqref{eq027} together with a computation analogous to that in 
Proposition~\ref{propo002}, we obtain an estimate for the 
\(L^{\frac{2n}{n+4}}\)-norm on the annular region 
\(B_{2r_t}(\xi_t)\setminus B_{r_t}(\xi_t)\), which yields \eqref{eq055}.
\end{proof}

\begin{proposition}
    For $(\xi,\varepsilon)\in\Omega(\lambda,y)$ it holds
    $$\left\|\overline Z_{(\xi,\varepsilon)}\right\|_{L^{\frac{2n}{n-4}}(M)}\leq c(n,\ell)\sum_{t=1}^\ell\left(\mu_t^2\lambda_t^{10\frac{n-3}{n-4}}+\left(\frac{\lambda_t}{\rho_t}\right)^n+\mu_t\left(\frac{\lambda_t}{\rho_t}\right)^{\frac{n-4}{2}}\right)$$
\end{proposition}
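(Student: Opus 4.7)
\medskip

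The plan is to apply Theorem~\ref{teo005} to $\overline Z_{(\xi,\varepsilon)}$ itself, together with Sobolev, in order to reduce the desired $L^{\frac{2n}{n-4}}$ bound to an $L^{\frac{2n}{n+4}}$ control of the residual $\mathcal L_g\overline Z_{(\xi,\varepsilon)}$, where I set $\mathcal L_g u:=P_g u-\frac{n+4}{n-4}d(n)\, W_{(\xi,\varepsilon,r)}^{\frac{8}{n-4}}u$. By \eqref{eq015} we already have $\overline Z_{(\xi,\varepsilon)}\in\mathcal F^\perp_{(\xi,\varepsilon,\alpha,r)}(M,g)$, so by Theorem~\ref{teo005} it suffices to exhibit $f\in L^{\frac{2n}{n+4}}(M,g)$ of the desired size with
\[
\int_M\mathcal L_g\overline Z_{(\xi,\varepsilon)}\cdot\varphi\, dv_g=\int_M f\, \varphi\, dv_g\qquad\text{for every }\varphi\in \mathcal F^\perp_{(\xi,\varepsilon,\alpha,r)}(M,g).
\]
Then I split, using linearity,
\[
\mathcal L_g\overline Z_{(\xi,\varepsilon)}=\mathcal L_g Z_{(\xi,\varepsilon)}-\sum_{t=1}^{\ell}\mathcal L_g\!\left(\eta_{(r_t,\xi_t)}z_{(\xi_t,\varepsilon_t)}\right)+\mathcal L_g\mathcal Z_{(\xi,\varepsilon)},
\]
and treat each piece separately. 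The contribution from $\mathcal L_g\mathcal Z_{(\xi,\varepsilon)}$ is handled directly by \eqref{eq055}, which already yields the $(\lambda_t/\rho_t)^n$ term.

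\medskip

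For the remaining two pieces, the key is a cancellation built into the very definition of $z_{(\xi_t,\varepsilon_t)}$. Using \eqref{eq083} and $dv_g=dv_{g_s}$, one writes
\[
\mathcal L_g Z_{(\xi,\varepsilon)}= \mathcal L_{g_s}Z_{(\xi,\varepsilon)}+(P_g-P_{g_s})Z_{(\xi,\varepsilon)}= -\sum_{t=1}^{\ell}\eta_{(r_t,\xi_t)}\Gamma_{(y_t,\xi_t,\varepsilon_t)}+(P_g-P_{g_s})Z_{(\xi,\varepsilon)}
\]
weakly against $\mathcal F^\perp$ test functions. On the other hand, inside each ball $B_{r_t}(\xi_t)$ we have $\eta_{(r_t,\xi_t)}\equiv 1$ and $W_{(\xi,\varepsilon,r)}=w_{(\xi_t,\varepsilon_t)}$, so by \eqref{eq093}
\[
\mathcal L_g(\eta_{(r_t,\xi_t)}z_{(\xi_t,\varepsilon_t)})=(P_g-\Delta^2)z_{(\xi_t,\varepsilon_t)}-\Gamma_{(y_t,\xi_t,\varepsilon_t)}+\sum_{k=0}^{n}b_k(\xi_t,\varepsilon_t)\varphi_{(\xi_t,\varepsilon_t,k)}.
\]
The leading forcing $-\Gamma_{(y_t,\xi_t,\varepsilon_t)}$ then exactly cancels the corresponding term in $\mathcal L_gZ_{(\xi,\varepsilon)}$ (noting that the supports $B_{2r_s}(\xi_s)$ are pairwise disjoint). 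The Lagrange multipliers $\sum_k b_k\varphi_{(\xi_t,\varepsilon_t,k)}$ are harmless when tested against $\varphi\in\mathcal F^\perp$, since $\eta_{(r_t,\xi_t)}\varphi_{(\xi_t,\varepsilon_t,k)}=\overline\varphi_{(\xi_t,\varepsilon_t,r_t,k)}$ is $L^2$-orthogonal to $\varphi$, and the remaining tail $(1-\eta_{(r_t,\xi_t)})\varphi_{(\xi_t,\varepsilon_t,k)}$ is supported outside $B_{r_t}(\xi_t)$, yielding an $(\lambda_t/\rho_t)^n$-type contribution by the pointwise decay of $\varphi_{(\xi_t,\varepsilon_t,k)}$.

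\medskip

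What remains is to estimate the curvature remainders $(P_g-P_{g_s})Z_{(\xi,\varepsilon)}$ and $(P_g-\Delta^2)z_{(\xi_t,\varepsilon_t)}$, together with the annular cutoff terms produced on $B_{2r_t}(\xi_t)\setminus B_{r_t}(\xi_t)$ when expanding $\mathcal L_g(\eta_{(r_t,\xi_t)}z_{(\xi_t,\varepsilon_t)})$ via the product rule. For the first two, Lemma~\ref{lem009} expresses each of these differences as a linear combination of $h$ and its derivatives contracted with derivatives of $Z_{(\xi,\varepsilon)}$ (resp.\ of $z_{(\xi_t,\varepsilon_t)}$); inserting the pointwise bounds \eqref{eq069} (resp.\ \eqref{eq027}) and noting that $h$ carries a factor $\mu_t$ produces quadratic-in-$\mu$ integrands of exactly the form appearing in \eqref{eq091}, which integrate in $L^{\frac{2n}{n+4}}(B_{\rho_t}(y_t))$ to $\mu_t^2\lambda_t^{10(n-3)/(n-4)}$. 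For the cutoff terms in the annulus, $|\partial^j\eta_{(r_t,\xi_t)}|\leq Cr_t^{-j}$ combined with \eqref{eq027} and the lower bound $|x-y_t|\gtrsim\rho_t$ on that region gives, after computing the $L^{\frac{2n}{n+4}}$-norm, a contribution of order $\mu_t(\lambda_t/\rho_t)^{(n-4)/2}$, in the spirit of Lemma~\ref{lem008}. Summing over $t$ produces the three terms on the right-hand side of the stated estimate, and Sobolev then converts the $W^{2,2}(M,g)$-bound supplied by Theorem~\ref{teo005} into the claimed $L^{\frac{2n}{n-4}}(M)$-bound.

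\medskip

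The main obstacle I expect is bookkeeping: carefully verifying the pointwise cancellation of the $\Gamma_{(y_t,\xi_t,\varepsilon_t)}$ terms inside $B_{r_t}(\xi_t)$, separating the quadratic-in-$\mu$ remainder from the leading linear piece in $h$ (which is precisely the piece removed by the construction \eqref{eq057} of $z_{(\xi_t,\varepsilon_t)}$), and showing that the $b_k(\xi_t,\varepsilon_t)$-tails and the annular cutoff contributions are absorbed into the two remaining terms $(\lambda_t/\rho_t)^n$ and $\mu_t(\lambda_t/\rho_t)^{(n-4)/2}$. Once these matchings are in place, the estimate follows cleanly from the three building blocks \eqref{eq055}, \eqref{eq091} and the annular tail estimate modeled on Lemma~\ref{lem008}.
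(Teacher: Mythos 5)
Your proposal follows essentially the same route as the paper's proof: decompose $\overline Z_{(\xi,\varepsilon)}=Z_{(\xi,\varepsilon)}-\sum_t\eta_{(r_t,\xi_t)}z_{(\xi_t,\varepsilon_t)}+\mathcal Z_{(\xi,\varepsilon)}$, use the weak equations \eqref{eq083} and \eqref{eq093} so that the $\Gamma_{(y_t,\xi_t,\varepsilon_t)}$ forcings cancel, estimate the remaining right-hand side pieces $(P_g-P_{g_s})Z_{(\xi,\varepsilon)}$, $P_g(\eta_{(r_t,\xi_t)}z_{(\xi_t,\varepsilon_t)})-\eta_{(r_t,\xi_t)}\Delta^2 z_{(\xi_t,\varepsilon_t)}$ and the $\mathcal Z$-term via Lemma~\ref{lem009}, \eqref{eq069}, \eqref{eq027} and \eqref{eq055}, and conclude with Theorem~\ref{teo005} plus the Sobolev inequality. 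The only cosmetic difference is your handling of the multipliers $b_k(\xi_t,\varepsilon_t)\varphi_{(\xi_t,\varepsilon_t,k)}$, whose tail would require a bound on $b_k$, whereas in the paper's bookkeeping these terms occur multiplied by $\eta_{(r_t,\xi_t)}$ and vanish exactly against $\mathcal F^{\perp}_{(\xi,\varepsilon,\alpha,r)}(M,g)$ test functions by orthogonality.
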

\begin{proof}
Using \eqref{eq057}, \eqref{eq093}, \eqref{eq083} and \eqref{eq015}, we obtain
\[
    \langle P_g \overline{Z}_{(\xi,\varepsilon)}, \varphi \rangle_{L^2(M,g)}
    - c_n \int_{M} 
        W_{(\xi,\varepsilon,r)}^{\frac{8}{n-4}} 
        \overline{Z}_{(\xi,\varepsilon)} \, \varphi \, dv_g
    = \int_{M} f \, \varphi \, dv_g,
\]
for all 
\(\varphi \in \mathcal{F}_{(\xi,\varepsilon,\alpha,r)}^{\perp}(M,g)\),
where
    \begin{align*}
        f & = (P_g-P_{g_s})Z_{(\xi,\varepsilon)}-\sum_{t=1}^\ell\left(P_g\left(\eta_{(r_t,\xi_t)}z_{(\xi,\varepsilon)}\right)-\eta_{(r_t,\xi_t)}\Delta^2z_{(\xi_t,\varepsilon_t)}\right) +P_g\mathcal Z_{(\xi,\varepsilon)}-c_nW_{(\xi,\varepsilon,r)}^{\frac{8}{n-4}} \mathcal Z_{(\xi,\varepsilon)}.
    \end{align*}
Since \(\overline Z_{(\xi,\varepsilon)} \in 
\mathcal{F}_{(\xi,\varepsilon,\alpha,r)}^\perp(M,g)\), 
Theorem~\ref{teo005} implies that $\|\overline Z_{(\xi,\varepsilon)}\|_{W^{2,2}(M,g)}    \le c\, \|f\|_{L^{\frac{2n}{n+4}}(M,g)}$.

By \eqref{eq013}, the metrics \(g\) and \(g_s\) coincide on 
\(M \setminus B_s(p)\). 
Inside \(B_s(p)\), we have \(g = \exp(h)\), while \(g_s\) is the Euclidean 
metric (see \eqref{eq035}).  
Consequently, the support of the operator \(P_g - P_{g_s}\) is contained 
in \(B_s(p)\), and we may write
\[
\begin{aligned}
(P_g - P_{g_s}) Z_{(\xi,\varepsilon)}
&= (\Delta_g^2 - \Delta^2) Z_{(\xi,\varepsilon)} \\
&\quad
   + \operatorname{div}\!\left( 
        a(n)\, \operatorname{Ric}_g(\nabla_g Z_{(\xi,\varepsilon)})
        - b(n)\, R_g\, dZ_{(\xi,\varepsilon)}
     \right)
   + c(n)\, Q_g\, Z_{(\xi,\varepsilon)}.
\end{aligned}
\]

Using \eqref{eq069} and Lemma~\ref{lem009}, for all \(|x| \le s\), we obtain
\begin{equation}\label{eq094}
    |(P_g - P_{g_s}) Z_{(\xi,\varepsilon)}|
    \;\le\;
    \sum_{t=1}^{\ell}
        \mu_t^{2} \lambda_t^{\frac{n-4}{2}}
        \bigl( \lambda_t + |x - y_t| \bigr)^{20 - n}
    \;\le\;
    \sum_{t=1}^{\ell}
        \mu_t^{2} \lambda_t^{\frac{n-4}{2}}
        \bigl( \lambda_t + |x - y_t| \bigr)^{10\frac{n-3}{n-4} - n}.
\end{equation}

Next, note that $P_g\bigl( \eta_{(r_t,\xi_t)} z_{(\xi,\varepsilon)} \bigr)    - \eta_{(r_t,\xi_t)} \Delta^2 z_{(\xi_t,\varepsilon_t)}    = 0$ in $M \setminus B_{2r_t}(\xi_t)$, while in \(B_{2r_t}(\xi_t)\) we have
\[
\begin{aligned}
\bigl|
P_g\bigl( \eta_{(r_t,\xi_t)} z_{(\xi,\varepsilon)} \bigr)
- \eta_{(r_t,\xi_t)} \Delta^2 z_{(\xi_t,\varepsilon_t)}
\bigr|
&\le
C \mu_t^{2} \lambda_t^{\frac{n-4}{2}}
  (\lambda_t + |x - y_t|)^{20 - n}
+ \mu_t \lambda_t^{\frac{n-4}{2}} \, r_t^{\,11-n},
\end{aligned}
\]
where the second term on the right-hand side appears only in the annular region  
\( B_{2r_t}(\xi_t) \setminus B_{r_t}(\xi_t)\). This implies that
\[
\begin{aligned}
&\Bigl\|
   (P_g - P_{g_s}) Z_{(\xi,\varepsilon)}
   - \sum_{t=1}^{\ell}
     \bigl( 
        P_g(\eta_{(r_t,\xi_t)} z_{(\xi,\varepsilon)})
        - \eta_{(r_t,\xi_t)} \Delta^2 z_{(\xi_t,\varepsilon_t)}
     \bigr)
 \Bigr\|_{L^{\frac{2n}{n+4}}(M)}
\\
&\qquad\le
C \sum_{t=1}^{\ell}
   \left(
      \mu_t^{2} \lambda_t^{\,10\frac{n-3}{n+4}}
      + \mu_t \left( \frac{\lambda_t}{\rho_t} \right)^{\frac{n-4}{2}}
   \right).
\end{aligned}
\]

Finally, by \eqref{eq055} we obtain the desired result.
\end{proof}

\begin{proposition}\label{propo008}
    If $(\xi,\varepsilon)\in\Omega(\lambda,y)$, then
    $$\|U_{(\xi,\varepsilon,r)}-W_{(\xi,\varepsilon,r)}-Z_{(\xi,\varepsilon,r)}\|_{L^{\frac{2n}{n-4}}(M)}\leq\sum_{t=1}^\ell\left(\left(\frac{\lambda_t}{\rho_t}\right)^{\frac{n-4}{2}}+\mu_t^{\frac{n+4}{n-4}}\lambda_t^{10\frac{n+4}{n-4}}\right)$$
\end{proposition}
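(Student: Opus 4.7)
The strategy would be to derive a PDE satisfied by the remainder $V-Z$, where $V := U_{(\xi,\varepsilon,r)} - W_{(\xi,\varepsilon,r)}$ and $Z := Z_{(\xi,\varepsilon)}$, and then apply the a priori estimate of Theorem~\ref{teo005}. The first step is to observe that $V-Z \in \mathcal{F}^\perp_{(\xi,\varepsilon,\alpha,r)}(M,g)$: by Theorem~\ref{teo006} we have $V \in \mathcal{F}^\perp_{(\xi,\varepsilon,\alpha,r)}(M,g)$, and since the orthogonal complement is the same for $g$ and $g_s$ (see the remark following \eqref{eq059}), also $Z\in\mathcal{F}^\perp_{(\xi,\varepsilon,\alpha,r)}(M,g)$.

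Next, I would use the Taylor expansion
\[
d(n)|U|^{\frac{8}{n-4}}U = d(n)W_{(\xi,\varepsilon,r)}^{\frac{n+4}{n-4}} + \frac{n+4}{n-4}d(n)\,W_{(\xi,\varepsilon,r)}^{\frac{8}{n-4}}V + d(n)\,Q(V),
\]
the defining equation \eqref{eq065} for $U$, and \eqref{eq083} for $Z$ rewritten via $P_{g_s} = P_g - (P_g - P_{g_s})$. Subtracting the two weak formulations then yields that for every $\varphi \in \mathcal{F}^\perp_{(\xi,\varepsilon,\alpha,r)}(M,g)$,
\[
\langle P_g(V-Z),\varphi\rangle_{L^2} - \frac{n+4}{n-4}d(n)\int_M W_{(\xi,\varepsilon,r)}^{\frac{8}{n-4}}(V-Z)\varphi\, dv_g = \int_M E\,\varphi\, dv_g,
\]
where
\[
E = -\Bigl(P_g W_{(\xi,\varepsilon,r)} - d(n)W_{(\xi,\varepsilon,r)}^{\frac{n+4}{n-4}} + \sum_{t=1}^{\ell} \eta_{(r_t,\xi_t)}\Gamma_{(y_t,\xi_t,\varepsilon_t)}\Bigr) - (P_g - P_{g_s})Z_{(\xi,\varepsilon)} + d(n)\,Q(V).
\]
Theorem~\ref{teo005} applied to $V-Z$, together with the Sobolev embedding, then reduces the claim to bounding $\|E\|_{L^{2n/(n+4)}(M)}$ by the desired right-hand side.

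The first summand of $E$ is controlled directly by \eqref{eq091}. The second is handled by combining the pointwise bound \eqref{eq094} for $(P_g-P_{g_s})Z$ with the scaling computation carried out in the proof of Proposition~\ref{propo007}, yielding a contribution of order $\sum_t \mu_t^2\lambda_t^{10(n-3)/(n-4)}$; in the admissible parameter range for $n\ge 25$ this is absorbed by the two terms appearing in the statement.

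The step I expect to be the main obstacle is the nonlinear remainder $Q(V)$. Since $n \ge 25$ forces $\frac{8}{n-4}<1$, the map $t \mapsto |t|^{\frac{8}{n-4}}t$ is merely H\"older, and the elementary sub-quadratic inequality $|Q(V)| \leq C\,|V|^{(n+4)/(n-4)}$ holds pointwise. This gives
\[
\|Q(V)\|_{L^{2n/(n+4)}(M)} \leq C\,\|V\|_{L^{2n/(n-4)}(M)}^{(n+4)/(n-4)}.
\]
Inserting the bound for $\|V\|_{L^{2n/(n-4)}(M)}$ provided by Corollary~\ref{cor001} and expanding via $(\sum_t a_t)^p \leq \ell^{p-1}\sum_t a_t^p$ with $p = (n+4)/(n-4) > 1$, together with the observation that $(\lambda_t/\rho_t)^{(n+4)/2} \leq (\lambda_t/\rho_t)^{(n-4)/2}$ since $\lambda_t/\rho_t < \alpha < 1$, produces the required contribution $\sum_t\bigl((\lambda_t/\rho_t)^{(n-4)/2} + \mu_t^{(n+4)/(n-4)}\lambda_t^{10(n+4)/(n-4)}\bigr)$, completing the proof.
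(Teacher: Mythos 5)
Your proposal is correct and follows essentially the same route as the paper: the paper writes $U_{(\xi,\varepsilon,r)}-W_{(\xi,\varepsilon,r)}-Z_{(\xi,\varepsilon)}=G_{(\xi,\varepsilon)}(B_1-B_2-B_3+d(n)B_4)$ using the solution operator of Theorem~\ref{teo005}, which is exactly your ``weak equation for $V-Z$ plus a priori estimate'' formulation, with $B_2-B_3$ your first error term, $B_1=(P_g-P_{g_s})Z_{(\xi,\varepsilon)}$ your second, and $B_4$ your nonlinear remainder $Q(V)$. The individual estimates coincide as well: \eqref{eq091} for the consistency error, \eqref{eq094} with the scaling integral for $(P_g-P_{g_s})Z_{(\xi,\varepsilon)}$, and the pointwise bound $|B_4|\le C|U_{(\xi,\varepsilon,r)}-W_{(\xi,\varepsilon,r)}|^{\frac{n+4}{n-4}}$ combined with Corollary~\ref{cor001}, so no further comment is needed.
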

\begin{proof}
Given \((\xi,\varepsilon)\in\Omega(\lambda,y)\), let $G_{(\xi,\varepsilon)}:L^{\frac{2n}{\,n+4\,}}(M,g)\to
\mathcal{F}_{(\xi,\varepsilon,\alpha,r)}^{\perp}(M,g)$ be the solution operator constructed in Theorem~\ref{teo005}.  
Set $B_1 := (P_g - P_{g_s})Z_{(\xi,\varepsilon,r)}$ and $B_2 := \sum_{t=1}^{\ell}\eta_{(r_t,\xi_t)}\,\Gamma_{(y_t,\xi_t,\varepsilon_t)}$. Then, by \eqref{eq083}, we obtain
\[
\left\langle P_g Z_{(\xi,\varepsilon)},\,\varphi \right\rangle_{L^2}
-\frac{n+4}{n-4}\,d(n)\int_M 
w_{(\xi,\varepsilon)}^{\frac{8}{n-4}} Z_{(\xi,\varepsilon)} \varphi\, dv_g
= \int_M (B_1 - B_2)\,\varphi\, dv_g,
\]
for every \(\varphi\in\mathcal{F}_{(\xi,\varepsilon,\alpha,r)}^{\perp}(M,g)\). Since \(Z_{(\xi,\varepsilon)} \in 
\mathcal{F}_{(\xi,\varepsilon,\alpha,r)}^{\perp}(M,g)\), 
it follows that $Z_{(\xi,\varepsilon)} = 
G_{(\xi,\varepsilon)}(B_1 - B_2)$. Furthermore, by Theorem~\ref{teo006} we obtain $U_{(\xi,\varepsilon,r)} - W_{(\xi,\varepsilon,r)}
= G_{(\xi,\varepsilon)}\bigl(-B_3 + d(n) B_4 \bigr)$, where
\begin{align*}
    B_3 & = P_gW_{(\xi,\varepsilon,r)}-d(n)W_{(\xi,\varepsilon,r)}^{\frac{n+4}{n-4}},\\
    B_4& = |U_{(\xi,\varepsilon,r)}|^{\frac{8}{n-4}}U_{(\xi,\varepsilon,r)}-W_{(\xi,\varepsilon,r)}^{\frac{n+4}{n-4}}-\frac{n+4}{n-4}W_{(\xi,\varepsilon,r)}^{\frac{8}{n-4}}(U_{(\xi,\varepsilon,r)}-W_{(\xi,\varepsilon,r)}).
\end{align*}
Thus, we conclude that $U_{(\xi,\varepsilon,r)}-W_{(\xi,\varepsilon,r)}-Z_{(\xi,\varepsilon,r)}=G_{(\xi,\varepsilon)}(B_1-B_2-B_3+d(n)B_4)$. The Theorem \ref{teo005} and Sobolev inequality imply that
$$\|U_{(\xi,\varepsilon,r)}-W_{(\xi,\varepsilon,r)}-Z_{(\xi,\varepsilon,r)}\|_{L^{\frac{2n}{n-4}}(M)}\leq C\|B_1-B_2-B_3+d(n)B_4\|_{L^{\frac{2n}{n+4}}(M)}.$$
We have the pointwise estimate $|B_4| \le C\, 
\bigl| U_{(\xi,\varepsilon,r)} - W_{(\xi,\varepsilon,r)} \bigr|^{\frac{n+4}{\,n-4\,}}$. Combining this with Corollary~\ref{cor001}, we obtain
\[
\|B_4\|_{L^{\frac{2n}{\,n+4\,}}(M)}
\le 
C\,\bigl\| U_{(\xi,\varepsilon,r)} - W_{(\xi,\varepsilon,r)} 
\bigr\|_{L^{\frac{2n}{\,n-4\,}}(M)}^{\frac{n+4}{\,n-4\,}}
\le 
C \sum_{t=1}^{\ell}
\left[
\left(\frac{\lambda_t}{\rho_t}\right)^{\frac{n+4}{2}}
+
\mu_t^{\frac{n+4}{\,n-4\,}}
\lambda_t^{\,10\frac{n-3}{\,n-4\,}}
\right].
\]
Together with \eqref{eq091} and \eqref{eq094}, this yields the desired result.
\end{proof}

\begin{proposition}\label{propo009}
    For all $(\xi,\varepsilon)\in\Omega(\lambda,y)$, we have
    \begin{align*}
        &\left| \langle P_gU_{(\xi,\varepsilon,r)},U_{(\xi,\varepsilon,r)}\rangle_{L^2}-\langle P_gW_{(\xi,\varepsilon,r)},W_{(\xi,\varepsilon,r)}\rangle_{L^2} -d(n)\int_M\left(|U_{(\xi,\varepsilon,r)}|^{\frac{2n}{n-4}}-W_{(\xi,\varepsilon,r)}^{\frac{2n}{n-4}}\right)\right.\\
        & \left.+d(n)\int_M\left(|U_{(\xi,\varepsilon,r)}|^{\frac{8}{n-4}}-W_{(\xi,\varepsilon,r)}^{\frac{8}{n-4}}\right)U_{(\xi,\varepsilon,r)}W_{(\xi,\varepsilon,r)} -\sum_{t=1}^\ell\int_{\mathbb R^n}\Gamma_{(y_t,\xi_t,\varepsilon_t)}z_{(\xi_t,\varepsilon_t)}\right|\leq \\
        & \le C\sum_{t=1}^\ell\left(\left(\frac{\lambda_t}{\rho_t}\right)^{n-4}+\mu_t\lambda_t^{10}\left(\frac{\lambda_t}{\rho_t}\right)^{\frac{n-4}{2}}+\mu_t^{\frac{2n}{n-4}}\lambda_t^{10\frac{2n}{n-4}}\right)
    \end{align*}
\end{proposition}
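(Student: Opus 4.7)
The core strategy is to reduce the full expression to an integral against $\phi := U_{(\xi,\varepsilon,r)} - W_{(\xi,\varepsilon,r)}$ and then linearize. Since $\phi \in \mathcal{F}^\perp_{(\xi,\varepsilon,\alpha,r)}(M,g)$ by Theorem~\ref{teo006}, testing the Euler--Lagrange equation \eqref{eq065} against $\phi$ gives $\langle P_g U_{(\xi,\varepsilon,r)},\phi\rangle_{L^2} = d(n)\int_M |U_{(\xi,\varepsilon,r)}|^{8/(n-4)}U_{(\xi,\varepsilon,r)}\phi\, dv_g$. Combining this with the expansion $\langle P_g U,U\rangle - \langle P_g W,W\rangle = \langle P_g W,\phi\rangle + \langle P_g U,\phi\rangle$ together with the pointwise identities $|U|^{8/(n-4)}U\phi = |U|^{2n/(n-4)} - |U|^{8/(n-4)}UW$ and $W^{8/(n-4)}UW = W^{2n/(n-4)} + W^{(n+4)/(n-4)}\phi$, the first four summands inside the absolute value should collapse (after cancellation) to
\[ \int_M \bigl(P_g W_{(\xi,\varepsilon,r)} - d(n)\, W_{(\xi,\varepsilon,r)}^{(n+4)/(n-4)}\bigr)\,\phi\, dv_g. \]

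The next step is a double replacement. I would use \eqref{eq091} to trade $P_g W - d(n)W^{(n+4)/(n-4)}$ for $-\sum_t \eta_{(r_t,\xi_t)}\Gamma_{(y_t,\xi_t,\varepsilon_t)}$, and Proposition~\ref{propo008} to trade $\phi$ for $Z_{(\xi,\varepsilon)}$, treating each replacement error by H\"older's inequality. The pointwise bound \eqref{eq089} produces $\|\eta_{(r_t,\xi_t)}\Gamma_{(y_t,\xi_t,\varepsilon_t)}\|_{L^{2n/(n+4)}} \le C\mu_t \lambda_t^{10}$ (the relevant integrability requires $n\ge 25$). Pairing this with Corollary~\ref{cor001} and with the two remainders — that of \eqref{eq091} and that of Proposition~\ref{propo008} — generates exactly the three error types $(\lambda_t/\rho_t)^{n-4}$, $\mu_t\lambda_t^{10}(\lambda_t/\rho_t)^{(n-4)/2}$ and $\mu_t^{2n/(n-4)}\lambda_t^{10\cdot 2n/(n-4)}$ listed in the statement, after cross terms with $s\neq t$ are absorbed via the comparability $1/2<\varepsilon_t/\varepsilon_s<2$ in $\Omega(\lambda,y)$.

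It remains to compare $\sum_t \int_M \eta_{(r_t,\xi_t)}\Gamma_{(y_t,\xi_t,\varepsilon_t)} Z_{(\xi,\varepsilon)}$ with $\sum_t \int_{\mathbb{R}^n}\Gamma_{(y_t,\xi_t,\varepsilon_t)}z_{(\xi_t,\varepsilon_t)}$. Using the decomposition \eqref{eq015}, $Z_{(\xi,\varepsilon)} = \sum_s \eta_{(r_s,\xi_s)}z_{(\xi_s,\varepsilon_s)} - \mathcal Z_{(\xi,\varepsilon)} + \overline Z_{(\xi,\varepsilon)}$, the cross terms with $s\neq t$ vanish by the disjoint-support condition \eqref{eq088}, reducing the matter to the diagonal integrals $\int_M \eta_{(r_t,\xi_t)}^2 \Gamma_{(y_t,\xi_t,\varepsilon_t)} z_{(\xi_t,\varepsilon_t)}$. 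The multiplicative pointwise bound $|\Gamma_t z_t| \le C\mu_t^2 \lambda_t^{n-4}(\lambda_t+|x-y_t|)^{24-2n}$, obtained by combining \eqref{eq089} and \eqref{eq027}, makes the discrepancy with $\int_{\mathbb{R}^n}\Gamma_t z_t$ a tail integral over $|x-y_t|\ge r_t-\lambda_t$ bounded — crucially because $n\ge 25$ — by $C\mu_t^2\lambda_t^{n-4}\rho_t^{24-n}$, which one rewrites as $C\mu_t^2\lambda_t^{10}(\lambda_t/\rho_t)^{n-14}\rho_t^{10}$ and absorbs into $\mu_t\lambda_t^{10}(\lambda_t/\rho_t)^{(n-4)/2}$. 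Residual contributions from $\mathcal Z$ and $\overline Z$ are controlled by pairing $\|\eta_t\Gamma_t\|_{L^{2n/(n+4)}}\le C\mu_t\lambda_t^{10}$ with \eqref{eq032} and the $L^{2n/(n-4)}$-estimate on $\overline Z$.

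The main obstacle will be the bookkeeping: verifying that every pairwise product of remainders — including the cross terms with $s\neq t$ and the boundary-type contribution $\mu_t^2 \lambda_t^{n-4}\rho_t^{24-n}$ — can be absorbed into the three listed error types. This uses the dimension restriction $n\ge 25$ to guarantee convergence of the tail integrals of $\Gamma_t z_t$, together with the parameter constraints \eqref{eq087}--\eqref{eq088} of $\Omega(\lambda,y)$ that keep all $\varepsilon_t,\lambda_t,\rho_t,\mu_t$ mutually controlled.
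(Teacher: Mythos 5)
Your proposal is correct and follows essentially the same route as the paper: the Euler--Lagrange identity \eqref{eq065} tested against $U_{(\xi,\varepsilon,r)}-W_{(\xi,\varepsilon,r)}\in\mathcal F^{\perp}_{(\xi,\varepsilon,\alpha,r)}(M,g)$ collapses the first four terms to $\int_M\bigl(P_gW_{(\xi,\varepsilon,r)}-d(n)W_{(\xi,\varepsilon,r)}^{\frac{n+4}{n-4}}\bigr)\bigl(U_{(\xi,\varepsilon,r)}-W_{(\xi,\varepsilon,r)}\bigr)\,dv_g$, after which \eqref{eq091} is paired with Corollary~\ref{cor001}, the bound $\|\eta_{(r_t,\xi_t)}\Gamma_{(y_t,\xi_t,\varepsilon_t)}\|_{L^{2n/(n+4)}}\le C\mu_t\lambda_t^{10}$ is paired with Proposition~\ref{propo008}, and the cut-off discrepancy with $\int_{\mathbb R^n}\Gamma_{(y_t,\xi_t,\varepsilon_t)}z_{(\xi_t,\varepsilon_t)}$ is a tail integral controlled via \eqref{eq089} and \eqref{eq027}, using $n\ge 25$, exactly as in the paper. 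The only inessential deviation is that you compare $\int_M\eta_{(r_t,\xi_t)}\Gamma_{(y_t,\xi_t,\varepsilon_t)}Z_{(\xi,\varepsilon)}$ with the Euclidean integral by expanding $Z_{(\xi,\varepsilon)}$ through \eqref{eq015}, whereas the paper bounds $\int_M\eta_{(r_t,\xi_t)}\Gamma_{(y_t,\xi_t,\varepsilon_t)}\bigl(Z_{(\xi,\varepsilon)}-z_{(\xi_t,\varepsilon_t)}\bigr)$ directly from the pointwise estimates \eqref{eq027} and \eqref{eq069} and handles $\int_{\mathbb R^n}\bigl(\eta_{(r_t,\xi_t)}-1\bigr)\Gamma_{(y_t,\xi_t,\varepsilon_t)}z_{(\xi_t,\varepsilon_t)}$ as the tail term; both variants use the same ingredients and yield the stated estimate.
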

\begin{proof}
Recall the definition of \(\Gamma_{(y_t,\xi_t,\varepsilon_t)}\) in \eqref{eq099}.  
Using Theorem~\ref{teo006} with  
\(\varphi = U_{(\xi,\varepsilon,r)} - W_{(\xi,\varepsilon,r)} 
\in \mathcal{F}_{(\xi,\varepsilon,\alpha,r)}^{\perp}(M,g)\), 
we obtain
\begin{align*}
&\bigl\langle P_g U_{(\xi,\varepsilon,r)}, U_{(\xi,\varepsilon,r)} \bigr\rangle_{L^2}
-
\bigl\langle P_g W_{(\xi,\varepsilon,r)}, W_{(\xi,\varepsilon,r)} \bigr\rangle_{L^2}
-
d(n)\!\int_M
\bigl( |U_{(\xi,\varepsilon,r)}|^{\frac{2n}{n-4}}
      - W_{(\xi,\varepsilon,r)}^{\frac{2n}{n-4}}
\bigr)\, dv_g \\[0.3em]
&\quad
+ d(n)\!\int_M
\bigl( |U_{(\xi,\varepsilon,r)}|^{\frac{8}{n-4}}
      - W_{(\xi,\varepsilon,r)}^{\frac{8}{n-4}}
\bigr)
U_{(\xi,\varepsilon,r)} W_{(\xi,\varepsilon,r)}\, dv_g
\\[0.3em]
&=
\int_M 
\bigl( P_g W_{(\xi,\varepsilon,r)} 
      - d(n)W_{(\xi,\varepsilon,r)}^{\frac{n+4}{n-4}} \bigr)
\bigl( U_{(\xi,\varepsilon,r)} - W_{(\xi,\varepsilon,r)} \bigr)\, dv_g .
\end{align*}

Hence,
\begin{align*}
&\Biggl|
\bigl\langle P_g U_{(\xi,\varepsilon,r)}, U_{(\xi,\varepsilon,r)} \bigr\rangle_{L^2}
-
\bigl\langle P_g W_{(\xi,\varepsilon,r)}, W_{(\xi,\varepsilon,r)} \bigr\rangle_{L^2}
-
d(n)\!\int_M
\bigl( |U_{(\xi,\varepsilon,r)}|^{\frac{2n}{n-4}}
     - W_{(\xi,\varepsilon,r)}^{\frac{2n}{n-4}}
\bigr)
\\
&\qquad\quad
+ d(n)\!\int_M
\bigl( |U_{(\xi,\varepsilon,r)}|^{\frac{8}{n-4}}
      - W_{(\xi,\varepsilon,r)}^{\frac{8}{n-4}}
\bigr)
U_{(\xi,\varepsilon,r)} W_{(\xi,\varepsilon,r)}\, dv_g
-
\sum_{t=1}^{\ell}
\int_{\mathbb{R}^n}
\Gamma_{(y_t,\xi_t,\varepsilon_t)}\, z_{(\xi_t, \varepsilon_t)}dx
\Biggr|
\\[0.4em]
&\le 
C\,\Bigl\|
P_g W_{(\xi,\varepsilon,r)}
- d(n)W_{(\xi,\varepsilon,r)}^{\frac{n+4}{n-4}}
+ \sum_{t=1}^{\ell}\eta_{(r_t,\xi_t)} \Gamma_{(y_t,\xi_t,\varepsilon_t)}
\Bigr\|_{L^{\frac{2n}{n+4}}(M)}
\,
\bigl\| U_{(\xi,\varepsilon,r)} - W_{(\xi,\varepsilon,r)} 
\bigr\|_{L^{\frac{2n}{n-4}}(M)}
\\[0.5em]
&\quad
+ \sum_{t=1}^{\ell}
\left|
\int_M
\eta_{(r_t,\xi_t)}\Gamma_{(y_t,\xi_t,\varepsilon_t)}
\bigl(
U_{(\xi,\varepsilon,r)} - W_{(\xi,\varepsilon,r)}
- z_{(\xi_t,\varepsilon_t)}
\bigr)\, dv_g
\right|
\\[0.4em]
&\quad
+ \sum_{t=1}^{\ell}
\left|
\int_{\mathbb{R}^n}
\bigl(\eta_{(r_t,\xi_t)} - 1\bigr)\Gamma_{(y_t,\xi_t,\varepsilon_t)}
\, z_{(\xi_t,\varepsilon_t)}\, dx
\right|.
\end{align*}

By \eqref{eq089}, \eqref{eq027}, \eqref{eq069}, and Proposition~\ref{propo008},
\begin{align*}
&\left|
\int_M
\eta_{(r_t,\xi_t)}\Gamma_{(y_t,\xi_t,\varepsilon_t)}
\bigl(
U_{(\xi,\varepsilon,r)} - W_{(\xi,\varepsilon,r)}
- z_{(\xi_t,\varepsilon_t)}
\bigr)\, dv_g
\right|
\\
&\le
\left|
\int_M
\eta_{(r_t,\xi_t)}\Gamma_{(y_t,\xi_t,\varepsilon_t)}
\bigl(
U_{(\xi,\varepsilon,r)} - W_{(\xi,\varepsilon,r)}
- Z_{(\xi,\varepsilon)}
\bigr)\, dv_g
\right|
+
\left|
\int_M
\eta_{(r_t,\xi_t)}\Gamma_{(y_t,\xi_t,\varepsilon_t)}
\bigl(
Z_{(\xi,\varepsilon)}
- z_{(\xi_t,\varepsilon_t)}
\bigr)\, dv_g
\right|
\\[0.4em]
&\le
C\,\|\Gamma_{(y_t,\xi_t,\varepsilon_t)}\|_{L^{\frac{2n}{n+4}}(\mathbb{R}^n)}
\,
\bigl\| U_{(\xi,\varepsilon,r)} - W_{(\xi,\varepsilon,r)} - Z_{(\xi,\varepsilon)} 
\bigr\|_{L^{\frac{2n}{n-4}}(M)}\\
& \;\;\;\; + \mu_t^2\lambda_t^{n-4}
\int_{B_{2r_t}(\xi_t)}
(\lambda_t + |x-y_t|)^{\,24-2n}\, dx\\
&\le
\sum_{t=1}^{\ell}
\left(
\mu_t\,\lambda_t^{10}\Bigl(\frac{\lambda_t}{\rho_t}\Bigr)^{\frac{n-4}{2}}
+
\mu_t^{\frac{2n}{n-4}}\lambda_t^{10\frac{2n}{n-4}}
\right)
\end{align*}
and
\begin{align*}
    \left|
\int_{\mathbb{R}^n}
\bigl(\eta_{(r_t,\xi_t)} - 1\bigr)\Gamma_{(y_t,\xi_t,\varepsilon_t)}\,
z_{(\xi_t,\varepsilon_t)}
\right|
\le
C\,\mu_t^2\lambda_t^{n-4}
\int_{\mathbb{R}^n\setminus B_{r_t}(\xi_t)}
(\lambda_t + |x-y_t|)^{24 - 2n}
\le
C\,\mu_t^2\Bigl(\tfrac{\lambda_t}{\rho_t}\Bigr)^{n-4}.
\end{align*}
The result now follows from \eqref{eq091} and Corollary~\ref{cor001}.
\end{proof}

\begin{proposition}\label{propo010}
We have
\begin{align*}
\int_M\left||U_{(\xi,\varepsilon,r)}|^{\frac{2n}{n-4}}-W_{(\xi,\varepsilon,r)}^{\frac{2n}{n-4}}-\frac{n}{4}\left(|U_{(\xi,\varepsilon,r)}|^{\frac{8}{n-4}}-W_{(\xi,\varepsilon,r)}^{\frac{8}{n-4}}\right) U_{(\xi,\varepsilon,r)} W_{(\xi,\varepsilon,r)}\right|\\
\leq c(n,\ell)\sum_{t=1}^{\ell}\left(\left(\frac{\lambda_t}{\rho_t}\right)^{n}+\mu_t^{\frac{2n}{n-4}}\lambda_t^{\frac{20n}{n-4}}\right). 
\end{align*}
\end{proposition}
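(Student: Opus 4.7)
The strategy is to establish a sharp pointwise inequality of the form
\[
    \left| |U|^{\frac{2n}{n-4}} - W^{\frac{2n}{n-4}} - \tfrac{n}{4}\bigl(|U|^{\frac{8}{n-4}} - W^{\frac{8}{n-4}}\bigr)\, U\, W \right| \;\le\; C(n)\, |U-W|^{\frac{2n}{n-4}}
\]
pointwise on $M$, where $U = U_{(\xi,\varepsilon,r)}$ and $W = W_{(\xi,\varepsilon,r)}$, and then integrate and invoke Corollary~\ref{cor001}. Once this pointwise bound is in hand, writing $p=\tfrac{2n}{n-4}$ the left-hand side of the claimed inequality is controlled by $C\|U-W\|_{L^p(M,g)}^p$, and Corollary~\ref{cor001} yields
\[
    \|U-W\|_{L^p}^p \;\le\; C\!\left[\sum_{t=1}^{\ell}\!\left(\!\left(\tfrac{\lambda_t}{\rho_t}\right)^{\!\frac{n-4}{2}}\!\!+\mu_t \lambda_t^{10}\right)\!\right]^{p}\!\le\; C\sum_{t=1}^{\ell}\!\left(\!\left(\tfrac{\lambda_t}{\rho_t}\right)^{\!n}\!\!+\mu_t^{\frac{2n}{n-4}}\lambda_t^{\frac{20n}{n-4}}\right),
\]
after using the exponent identities $p\cdot\tfrac{n-4}{2}=n$ and $10p=\tfrac{20n}{n-4}$, together with the inequality $(\sum_t a_t)^p \le \ell^{p-1}\sum_t a_t^p$.

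The bulk of the work is therefore the pointwise inequality. Set $q=\tfrac{8}{n-4}=p-2$, and note the crucial arithmetic identity $\tfrac{n}{4}q=p$. On the set $\{W>0\}$, writing $s := U/W\in\mathbb{R}$ and factoring out $W^p$, the expression inside the absolute value becomes $W^p\,\phi(s)$, where
\[
    \phi(s) \;:=\; |s|^p - 1 - \tfrac{n}{4}\, s\bigl(|s|^q - 1\bigr).
\]
A direct computation using $\tfrac{n}{4}q=p$ gives $\phi(1)=\phi'(1)=\phi''(1)=0$. This is the algebraic miracle: the quadratic term in the expansion of $|U|^p-W^p$ around $W$ is exactly cancelled by the quadratic part of $\tfrac{n}{4}(|U|^q-W^q)UW$. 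Since $\phi\in C^{2}(\mathbb{R})\cap C^{3}(\mathbb{R}\setminus\{0\})$ and $2<p<3$ for $n\geq 25$, Taylor's theorem near $s=1$ yields $|\phi(s)|\le C|s-1|^{3}\le C|s-1|^{p}$ for $|s-1|$ bounded; a separate check at $s=0$ and the asymptotic comparison $|\phi(s)|/|s|^p\to 1$, $|s-1|^p/|s|^p\to 1$ as $|s|\to\infty$ imply $|\phi(s)|\le C|s-1|^p$ globally on $\mathbb{R}$. Multiplying by $W^p$ gives $W^p|\phi(s)|\le C|U-W|^p$ on $\{W>0\}$. On $\{W=0\}$ the expression reduces identically to $|U|^p=|U-W|^p$, and for $W>0$ with $U<0$ the crude bound $\max\{|U|,W\}\le|U-W|$ handles every individual term. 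Combining these three cases gives the desired pointwise estimate.

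The main obstacle is precisely the cancellation that forces one to go beyond the naive second-order Taylor expansion: a straightforward estimate of the form $C(W^{p-2}(U-W)^2+|U-W|^p)$ combined with H\"older would only yield $\|U-W\|_{L^p}^2$, which is of order $\sum_t((\lambda_t/\rho_t)^{n-4}+\mu_t^2\lambda_t^{20})$ and is \emph{weaker} than the claimed bound (since $\lambda_t/\rho_t<1$). Identifying the correct combination $|U|^p-W^p-\tfrac{n}{4}(|U|^q-W^q)UW$ as annihilating both the linear and quadratic parts of the Taylor expansion, and then exploiting this to obtain a pointwise bound of the form $|U-W|^p$, is the key technical point.
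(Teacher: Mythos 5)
Your proposal is correct and follows essentially the same route as the paper: the paper's proof consists precisely of the pointwise bound $\bigl||U|^{\frac{2n}{n-4}}-W^{\frac{2n}{n-4}}-\frac{n}{4}(|U|^{\frac{8}{n-4}}-W^{\frac{8}{n-4}})UW\bigr|\le C(n)|U-W|^{\frac{2n}{n-4}}$ followed by integration and Corollary~\ref{cor001}, exactly as you argue. The only difference is that you supply the detailed verification of the pointwise inequality (the cancellation $\phi(1)=\phi'(1)=\phi''(1)=0$ and the global comparison with $|s-1|^{p}$), which the paper states without proof, and your verification is sound.
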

\begin{proof}
We have the pointwise estimate
\[
\left|
\,|U_{(\xi,\varepsilon,r)}|^{\frac{2n}{n-4}}
-
W_{(\xi,\varepsilon,r)}^{\frac{2n}{n-4}}
-
\frac{n}{4}\Bigl(
|U_{(\xi,\varepsilon,r)}|^{\frac{8}{n-4}}
-
W_{(\xi,\varepsilon,r)}^{\frac{8}{n-4}}
\Bigr)
U_{(\xi,\varepsilon,r)} W_{(\xi,\varepsilon,r)}
\right|
\le
C\,\bigl|U_{(\xi,\varepsilon,r)} - W_{(\xi,\varepsilon,r)}\bigr|^{\frac{2n}{n-4}},
\]
where \(C>0\) depends only on \(n\).  
Combining this with Corollary~\ref{cor001}, we obtain
\begin{align*}
\int_M\left|\,|U_{(\xi,\varepsilon,r)}|^{\frac{2n}{n-4}}-W_{(\xi,\varepsilon,r)}^{\frac{2n}{n-4}}-\frac{n}{4}\Bigl(|U_{(\xi,\varepsilon,r)}|^{\frac{8}{n-4}}-W_{(\xi,\varepsilon,r)}^{\frac{8}{n-4}}\Bigr)U_{(\xi,\varepsilon,r)} W_{(\xi,\varepsilon,r)}\right|\, dv_g\\[0.4em]\le\bigl\|U_{(\xi,\varepsilon,r)} - W_{(\xi,\varepsilon,r)}\bigr\|_{L^{\frac{2n}{n-4}}(M)}^{\frac{2n}{n-4}}\le c(n,\ell)\sum_{t=1}^{\ell}\left(\left(\frac{\lambda_t}{\rho_t}\right)^{n}+\mu_t^{\frac{2n}{n-4}}\lambda_t^{\frac{20n}{n-4}}\right).
\end{align*}
\end{proof}

\begin{proposition}\label{propo011}
We have
    \begin{align*}
        \left|\langle P_gW_{(\xi,\varepsilon,r)},W_{(\xi,\varepsilon,r)}\rangle_{L^2}-d(n)\int_MW_{(\xi,\varepsilon,r)}^{\frac{2n}{n-4}}-\sum_{t=1}^\ell\int_{B_{\rho_t}(y_t)}\Delta_t\right|\\
        \leq C \sum_{t=1}^\ell \left(\mu_t^3\lambda_t^{20}+\left(\frac{\lambda_t}{\rho_t}\right)^{n-4}\right),
    \end{align*}
    where
        \begin{align*}
        \Delta_t & = \frac{a(n)}{4}\partial_jh_{ms}\partial_ih_{sm}\partial_iw_{(\xi_t,\varepsilon_t)}\partial_jw_{(\xi_t,\varepsilon_t)}+\frac{n-4}{8(n-1)}\left((\partial_i\partial_lh_{mk})^2+\partial_lh_{mk}\partial_i^2\partial_lh_{mk}\right)w_{(\xi_t,\varepsilon_t)}^2\\
        & +\frac{a(n)}{2}\left(h_{ms}\partial_sh_{ij}-h_{si}\partial_sh_{mj}+h_{sj}\partial_ih_{ms}-h_{ms}\partial_ih_{sj}\right)\partial_m\left(\partial_iw_{(\xi_t,\varepsilon_t)}\partial_jw_{(\xi_t,\varepsilon_t)}\right)\\
        & -h_{il}h_{jl}\partial_i\partial_k^2w_{(\xi_t,\varepsilon_t)}\partial_jw_{(\xi_t,\varepsilon_t)}+\left(h_{ij}\partial_i\partial_jw_{(\xi_t,\varepsilon_t)}\right)^2-\frac{b(n)}{4}(\partial_lh_{mk})^2(\partial_iw_{(\xi_t,\varepsilon_t)})^2\\
        & -\frac{a(n)}{2}h_{is}\partial_m^2h_{js}\partial_iw_{(\xi_t,\varepsilon_t)}\partial_jw_{(\xi_t,\varepsilon_t)}-\frac{n-4}{4(n-2)^2}\partial_m^2h_{ij}\partial_s^2h_{ij}w_{(\xi_t,\varepsilon_t)}^2.
    \end{align*}
\end{proposition}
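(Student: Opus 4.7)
The plan starts by reducing to the single-bubble case $\ell=1$. The balls $B_{2r_i}(\xi_i)$ are pairwise disjoint by the conditions defining $\Omega(\lambda,y)$, so the disjoint supports of the $\overline{w}_{(\xi_i,\varepsilon_i,r_i)}$ together with the locality of the fourth-order operator $P_g$ force $\langle P_g \overline{w}_{(\xi_i,\varepsilon_i,r_i)}, \overline{w}_{(\xi_j,\varepsilon_j,r_j)} \rangle_{L^2}=0$ and $W_{(\xi,\varepsilon,r)}^{\frac{2n}{n-4}}=\sum_t \overline{w}_{(\xi_t,\varepsilon_t,r_t)}^{\frac{2n}{n-4}}$ (cf.\ \eqref{eq066}) whenever $i\neq j$. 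Hence it suffices to prove the estimate for each index $t$ separately; fix $t$ and drop the subscript, writing $W:=\overline{w}_{(\xi,\varepsilon,r)}$, $w:=w_{(\xi,\varepsilon)}$, and $r=\rho+\lambda$.

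I would then decompose the support of $W$ into three regions: the main ball $B_{\rho}(y)$ (on which $h$ has the explicit polynomial form and $W=w$), the intermediate annulus $B_{r}(\xi)\setminus B_{\rho}(y)$ (on which $h\equiv 0$ and $W=w$, so $P_g w - d(n)w^{(n+4)/(n-4)}\equiv 0$), and the cutoff annulus $A:=B_{2r}(\xi)\setminus B_{r}(\xi)$ (on which $W=\eta_{(r,\xi)} w$ differs from $w$). The intermediate region contributes zero to the difference $\langle P_g W,W\rangle_{L^2} - d(n)\int W^{\frac{2n}{n-4}}$. The cutoff annulus contributes an error of size $O((\lambda/\rho)^{n-4})$, estimated by the pointwise bounds $|\partial^j W|\le C(\varepsilon/r^2)^{(n-4)/2}r^{-j}$ for $|x-\xi|\sim r$ together with $|A|\le Cr^n$, as in the proof of Proposition~\ref{propo002}.

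The main work is on $B_\rho(y)$, where I would employ the self-adjoint integral form \eqref{eq023}, namely
\[
\langle P_g w,w\rangle_{L^2(B_\rho(y))}=\int_{B_\rho(y)}\Bigl((\Delta_g w)^2 - a(n)\Ric_g(\nabla_g w,\nabla_g w) + b(n)R_g|\nabla_g w|_g^2 + c(n)Q_g w^2\Bigr)dv_g,
\]
and Taylor-expand each integrand in $h$ up to second order, using Lemma~\ref{lem009} together with the standard expansion $g^{ij}=\delta^{ij}-h^{ij}+O(h^2)$. The zeroth-order contribution reproduces $d(n)\int_{B_\rho(y)}w^{2n/(n-4)}$ modulo tail terms of size $(\lambda/\rho)^{n-4}$. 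For the linear-in-$h$ terms the crucial observation is that the tensor $H_{ik}(z)=W_{ipkq}z_pz_q$ satisfies $\trace H=0$, $\sum_i z_i H_{ik}(z)=0$, and $\sum_i \partial_i H_{ik}(z)=0$ (all consequences of the symmetries of $W$); combined with the (near-)radial structure of $w$ about $\xi\approx y$, these force pointwise identities such as $\sum_{i,j} h_{ij}\partial_i\partial_j w\equiv 0$ and $\sum_{i,j} \Delta h_{ij}\partial_i\partial_j w\equiv 0$, which is the integrated counterpart of the identity---already exploited in Proposition~\ref{propo007}---that $\Gamma_{(y,\xi,\varepsilon)}$ equals exactly the linear-in-$h$ part of $P_g w-d(n)w^{(n+4)/(n-4)}$. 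The discrepancy caused by $|\xi-y|<\lambda$ contributes only to the $(\lambda/\rho)^{n-4}$ error.

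The quadratic-in-$h$ remainder is then matched with $\Delta_t$ term by term: $(h_{ij}\partial_i\partial_j w)^2$ arises from squaring the linear part of $\Delta_g w-\Delta w$; the terms $\partial_m^2 h_{ij}\partial_s^2 h_{ij}\,w^2$ and $((\partial_i\partial_l h_{mk})^2+\partial_l h_{mk}\partial_i^2\partial_l h_{mk})w^2$ come from the quadratic part of $Q_g w^2$ given by Lemma~\ref{lem009}; the cross terms with coefficients $a(n)$ and $b(n)$ come from the quadratic expansions of $\Ric_g(\nabla_g w,\nabla_g w)$ and $R_g|\nabla_g w|_g^2$, and the remaining curvature-derivative cross term from the interaction between the linear Ricci expansion and the linear Christoffel correction to $\Delta_g$. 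Cubic and higher terms in $h$ are bounded pointwise by $|h|^3\le C\mu^3\lambda^{30}$ on $B_\rho(y)$, which, after integration against $|\partial^j w|^2 \le C\lambda^{-(n-4)-2j}$ on a ball of radius $\sim\lambda$, produces an error of order $\mu^3\lambda^{20}$. The hardest step is the purely algebraic bookkeeping required to match the quadratic remainder exactly with the explicit $\Delta_t$; this is essentially the same computation performed by Wei and Zhao in the single-bubble setting in \cite{MR3016505}, which I would apply here bubble by bubble.
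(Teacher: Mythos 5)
Your overall plan is the same as the paper's: localize on the balls $B_{\rho_t}(y_t)$ (disjointness of the supports reduces everything to one bubble at a time), expand the four terms of \eqref{eq023} to second order in $h$ using Lemma~\ref{lem009} and the Wei--Zhao computations, identify the quadratic remainder with $\Delta_t$, bound cubic terms by $\mu_t^3\lambda_t^{20}$, control the exterior region by the bubble tail, and use the bubble equation \eqref{eq012} to absorb the zeroth-order term into $d(n)\int_M W^{2n/(n-4)}$. However, your treatment of the \emph{linear-in-$h$} terms has a genuine gap. The pointwise identities you invoke, such as $h_{ij}\partial_i\partial_j w_{(\xi_t,\varepsilon_t)}\equiv 0$, hold only when the bubble is centered exactly at $y_t$: for a radial $w$ centered at $\xi_t$ one gets $h_{ij}\partial_i\partial_j w = \mu_t\lambda_t^8 f\,\psi(|x-\xi_t|)\,H_{ij}(x-y_t)(y_t-\xi_t)_i(y_t-\xi_t)_j$, which is nonzero when $\xi_t\neq y_t$. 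Your claim that this discrepancy "contributes only to the $(\lambda/\rho)^{n-4}$ error" fails quantitatively: a direct pointwise bound ($|H_{ij}(z)a_ia_j|\le C|z|^2|a|^2$ with $|a|=|\xi_t-y_t|<\lambda_t$, $|f|\le C\lambda_t^{-8}(\lambda_t+|z|)^8$, $|\partial^2 w|\,|\Delta w|\le C\lambda_t^{n-4}(\lambda_t+|z|)^{2-2n}$) gives a contribution of size $C\mu_t\lambda_t^{10}$, which is far larger than the allowed error $\mu_t^3\lambda_t^{20}+(\lambda_t/\rho_t)^{n-4}$ and even dominates the main quadratic term $\sim\mu_t^2\lambda_t^{20}$ (e.g.\ with the choice $\mu_t=2^{-N/3}$, $\lambda_t=2^{-N}$ used in Section~\ref{sec008}). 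If the linear terms really contributed at that order, the proposition would be false.

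What actually kills the linear terms is an \emph{exact cancellation after integration over $B_{\rho_t}(y_t)$}, which is centered at $y_t$ (not at $\xi_t$): after the substitution $z=x-y_t$, the linear contribution takes the form $a_ia_k\,W_{ipkq}\int_{B_{\rho_t}(0)}F(|z|,|z-a|)\,z_pz_q\,dz$ with $a=\xi_t-y_t$; by rotational symmetry of the ball about $0$ (rotations fixing $a$), the integral is of the form $A\,\delta_{pq}+B\,a_pa_q$, and both contractions vanish, $W_{ipkp}a_ia_k=0$ by trace-freeness and $W_{ipkq}a_ia_ka_pa_q=0$ by antisymmetry in $(i,p)$. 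This symmetry-plus-algebra argument (used also for the Ricci, scalar-curvature and $Q$-curvature pieces, where divergence-freeness $\sum_i\partial_i h_{ik}=0$ on $B_{\rho_t}(y_t)$ enters) is precisely the content of Lemmas 4.11, 4.13 and 4.14 of \cite{MR3016505} that the paper cites, and it must replace your pointwise identity plus crude error bound. A second, more minor, inaccuracy: you assume $h\equiv 0$ on the intermediate annulus $B_{r_t}(\xi_t)\setminus B_{\rho_t}(y_t)$, but the hypotheses only give $|h|\le\alpha$ there (and in the final construction $h$ is nonzero up to radius $2\rho_t$); so that region does not contribute "exactly zero", but pairing $|h|\le\alpha<1$ with the bubble tail, as in Lemma~\ref{lem008}, still yields an admissible $O(\alpha(\lambda_t/\rho_t)^{n-4})$ error, so this part is easily repaired.
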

\begin{proof}
  Recall from \eqref{eq050} that $\operatorname{supp} W_{(\xi,\varepsilon,r)}
\subset \bigcup_{t=1}^{\ell} B_{2r_t}(\xi_t)
\subset B_{2R}(p)\subset B_{s}(p)$, where on this region the metric satisfies \(g=\exp(h)\).  
Moreover, we have \(B_{\rho_t}(y_t)\subset B_{r_t}(\xi_t)\).

Since \(h \equiv 0\) for all \(|x|\geq R\), using a computation analogous to 
\cite[Lemma 4.11]{MR3016505} (see also \cite[Lemma 4.11]{wei2011noncompactnessprescribedqcurvatureproblem}), we obtain
\begin{align*}
\int_M (\Delta_g W_{(\xi_t,\varepsilon_t,r)})^2\, dv_g 
&= \sum_{t=1}^\ell \int_{B_{\rho_t}(y_t)} (\Delta w_{(\xi_t,\varepsilon_t)})^2
 - \sum_{t=1}^\ell \int_{B_{\rho_t}(y_t)} 
   h_{il} h_{jl}\, (\partial_i\partial_k^2 w_{(\xi_t,\varepsilon_t)})\, \partial_j w_{(\xi_t,\varepsilon_t)}
\\
&\quad 
+ \sum_{t=1}^\ell \int_{B_{\rho_t}(y_t)}
   \big(h_{ij}\partial_i\partial_j w_{(\xi_t,\varepsilon_t)}\big)^{\!2}
   + O(\mu_t^3\lambda_t^{20})
   + O\!\left(\left(\frac{\lambda_t}{\rho_t}\right)^{n-4}\right).
\end{align*}

As in Lemmas 4.13 and 4.14 of \cite{MR3016505}, we also obtain
\[
\int_{M} R_g\, |\nabla_g W_{(\xi_t,\varepsilon_t,r)}|\, dv_g
= 
-\frac{1}{4}\sum_{t=1}^\ell 
\int_{B_{\rho_t}(y_t)}
 (\partial_l h_{mk})^2\, (\partial_i w_{(\xi_t,\varepsilon_t)})^2
+ O(\mu_t^3\lambda_t^{20})
+ O\!\left(\alpha^2\left(\frac{\lambda_t}{\rho_t}\right)^{n-4}\right),
\]
and
\begin{align*}
&\int_{M} \operatorname{Ric}_g(\nabla_g W_{(\xi,\varepsilon,r)}, \nabla_g W_{(\xi,\varepsilon,r)}) 
= 
-\frac{1}{4}\sum_{t=1}^\ell 
  \int_{B_{\rho_t}(y_t)} 
    \partial_j h_{ms}\,\partial_i h_{sm}\,
    \partial_i w_{(\xi_t,\varepsilon_t)}\, 
    \partial_j w_{(\xi_t,\varepsilon_t)}
\\
&\quad 
-\frac{1}{2}\sum_{t=1}^\ell 
\int_{B_{\rho_t}(y_t)}
 \big(h_{ms}\partial_s h_{ij}
     - h_{si}\partial_s h_{mj}
     + h_{sj}\partial_i h_{ms}
     - h_{ms}\partial_i h_{sj}\big)
\partial_m\big(\partial_i w_{(\xi_t,\varepsilon_t)}\,\partial_j w_{(\xi_t,\varepsilon_t)}\big)
\\
&\quad
+\frac{1}{2}\sum_{t=1}^\ell 
\int_{B_{\rho_t}(y_t)}
   h_{is}\,\partial_m^2 h_{js}\,
   \partial_i w_{(\xi_t,\varepsilon_t)}\,
   \partial_j w_{(\xi_t,\varepsilon_t)}
+ O(\mu_t^3\lambda_t^{20})
+ O\!\left(\alpha\left(\frac{\lambda_t}{\rho_t}\right)^{n-4}\right).
\end{align*}

By Lemma \ref{lem009} we further obtain
\begin{align*}
\int_M & Q_g\, W_{(\xi,\varepsilon,r)}^2\, dv_g 
= \frac{1}{4(n-1)}
\sum_{t=1}^\ell \int_{B_{\rho_t}(y_t)}
   \Big( (\partial_i\partial_l h_{mk})^2
       + \partial_l h_{mk}\,\partial_i^2\partial_l h_{mk} \Big)
   w_{(\xi,\varepsilon)}^2
\\
&\quad
- \frac{1}{2(n-2)^2}
\sum_{t=1}^\ell
\int_{B_{\rho_t}(y_t)}
   \partial_m^2 h_{ij}\, \partial_s^2 h_{ij}\,
   w_{(\xi_t,\varepsilon_t)}^2
+ O\!\left(\mu_t^2\left(\frac{\lambda_t}{\rho_t}\right)^{n-4}\right)
+ O(\mu_t^3\lambda_t^{20}).
\end{align*}

Finally, since \(w_{(\xi_t,\varepsilon_t)}\) solves \eqref{eq012} and \eqref{eq066} holds, we conclude that
\[
\sum_{t=1}^\ell 
\int_{B_{\rho_t}(y_t)} (\Delta w_{(\xi_t,\varepsilon_t)})^2
- d(n)\int_M W_{(\xi,\varepsilon,r)}^{\frac{2n}{n-4}}
=
O\!\left(\left(\frac{\lambda_t}{\rho_t}\right)^{n-4}\right).
\]

This completes the proof.
\end{proof}

Finally, we prove the main proposition of this subsection.

\begin{proposition}\label{propo012}
    If $(\xi,\varepsilon)\in\Omega(\lambda,y)$, then
    \begin{align*}
          \left|\mathcal F_g(\xi,\varepsilon)-\sum_{t=1}^\ell\int_{B_{\rho_t}(y_t)}(\Delta_t+\Gamma_{(y_t,\xi_t,\varepsilon_t)}z_{(\xi_t,\varepsilon_t)})\right|\\
          \leq C\sum_{t=1}^\ell\left(\left(\frac{\lambda_t}{\rho_t}\right)^{n-4}+\mu_t\lambda_t^{10}\left(\frac{\lambda_t}{\rho_t}\right)^{\frac{n-4}{2}}+\mu_t^{\frac{2n}{n-4}}\lambda_t^{10\frac{2n}{n-4}}\right),
    \end{align*}
    where $\Delta_t$ is defined in Proposition \ref{propo011} and $\Gamma_{(y_t,\xi_t,\varepsilon_t)}$ in \eqref{eq099}.
\end{proposition}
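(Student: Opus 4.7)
My plan is to massage $\mathcal{F}_g(\xi,\varepsilon)$ into the desired form by successively applying Propositions \ref{propo009}, \ref{propo010}, \ref{propo011}, and the bubble normalization \eqref{eq005}, with all residual error contributions ultimately fitting into the three terms on the right-hand side of the bound.

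I would first invoke Proposition \ref{propo009} to rewrite $\langle P_g U_{(\xi,\varepsilon,r)}, U_{(\xi,\varepsilon,r)}\rangle_{L^2}$ in terms of $\langle P_g W_{(\xi,\varepsilon,r)}, W_{(\xi,\varepsilon,r)}\rangle_{L^2}$, the differences $\int(|U|^{\frac{2n}{n-4}} - W^{\frac{2n}{n-4}})$ and $\int(|U|^{\frac{8}{n-4}} - W^{\frac{8}{n-4}})UW$, and the sum $\sum_t \int_{\mathbb{R}^n} \Gamma_{(y_t,\xi_t,\varepsilon_t)} z_{(\xi_t,\varepsilon_t)}$. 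Substituting into \eqref{eq082} and then applying Proposition \ref{propo010} to express $\int(|U|^{\frac{8}{n-4}} - W^{\frac{8}{n-4}})UW$ as $\frac{4}{n}\int(|U|^{\frac{2n}{n-4}} - W^{\frac{2n}{n-4}})$ modulo an admissible error, one obtains an algebraic identity in which all $|U|^{\frac{2n}{n-4}}$ contributions cancel. What remains is
\[
\mathcal{F}_g(\xi,\varepsilon) = \langle P_g W, W\rangle_{L^2} - \tfrac{n-4}{n}d(n)\!\int_M W^{\frac{2n}{n-4}} + \sum_{t=1}^{\ell} \int_{\mathbb{R}^n} \Gamma_{(y_t,\xi_t,\varepsilon_t)}\, z_{(\xi_t,\varepsilon_t)} - \tfrac{4}{n}d(n)\,\ell\, K^{n/4} + \mathrm{error},
\]
where $K := 8 Y_4^+(\mathbb{S}^n, g_{\mathrm{can}})/(n(n^2-4))$.

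Next I would split $\langle P_g W, W\rangle_{L^2} - \tfrac{n-4}{n}d(n)\int W^{\frac{2n}{n-4}}$ as $\bigl[\langle P_g W, W\rangle_{L^2} - d(n)\int W^{\frac{2n}{n-4}}\bigr] + \tfrac{4}{n}d(n)\int W^{\frac{2n}{n-4}}$. Proposition \ref{propo011} identifies the bracket with $\sum_t \int_{B_{\rho_t}(y_t)} \Delta_t$ modulo its stated error. For the remaining factor, the supports of the $\overline{w}_{(\xi_i,\varepsilon_i,r_i)}$ are pairwise disjoint and lie in $B_s(p)$, where $g$ and the Euclidean metric share the same volume form; combining \eqref{eq066}, \eqref{eq005}, and the standard tail bound for a bubble yields $\int_M W^{\frac{2n}{n-4}} = \ell K^{n/4} + O\!\bigl(\sum_t (\varepsilon_t/r_t)^n\bigr)$. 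The leading term $\tfrac{4}{n}d(n)\,\ell\, K^{n/4}$ cancels exactly with the normalization constant subtracted in \eqref{eq082}, while the $O((\varepsilon_t/r_t)^n)$ tail, smaller than $(\lambda_t/\rho_t)^{n-4}$, is admissible.

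Finally, I would use the pointwise decay bounds \eqref{eq089} and \eqref{eq027} to replace each $\int_{\mathbb{R}^n} \Gamma_{(y_t,\xi_t,\varepsilon_t)} z_{(\xi_t,\varepsilon_t)}$ by $\int_{B_{\rho_t}(y_t)} \Gamma_{(y_t,\xi_t,\varepsilon_t)} z_{(\xi_t,\varepsilon_t)}$. Since $|\Gamma_{(y_t,\xi_t,\varepsilon_t)} z_{(\xi_t,\varepsilon_t)}| \le C\mu_t^2\lambda_t^{n-4}(\lambda_t+|x-y_t|)^{24-2n}$, the tail contribution is bounded by $C\mu_t^2(\lambda_t/\rho_t)^{n-4}\rho_t^{20}$, absorbed into $(\lambda_t/\rho_t)^{n-4}$ since $\mu_t\le 1$ and $\rho_t\le 1$. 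The main technical challenge is then the bookkeeping of error terms: once all residuals from Propositions \ref{propo009}--\ref{propo011} and from the two truncations are collected, one has to verify each is dominated by one of $(\lambda_t/\rho_t)^{n-4}$, $\mu_t\lambda_t^{10}(\lambda_t/\rho_t)^{(n-4)/2}$, or $\mu_t^{\frac{2n}{n-4}}\lambda_t^{\frac{20n}{n-4}}$. This is where the restrictions on $(\xi,\varepsilon) \in \Omega(\lambda,y)$ and the hypothesis $n\ge 25$ play an essential role, in particular for absorbing the residual $\mu_t^3 \lambda_t^{20}$ coming from Proposition \ref{propo011}.
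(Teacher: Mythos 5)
Your proposal follows essentially the same route as the paper's proof: the paper likewise splits $\mathcal F_g(\xi,\varepsilon)$, via \eqref{eq082}, into the quantities controlled by Propositions \ref{propo009}, \ref{propo010} and \ref{propo011}, plus the bubble-mass discrepancy $\int_M W_{(\xi,\varepsilon,r)}^{\frac{2n}{n-4}}-\sum_t\int_{\mathbb R^n}w_{(\xi_t,\varepsilon_t)}^{\frac{2n}{n-4}}=O\bigl(\sum_t(\lambda_t/\rho_t)^n\bigr)$ which cancels the normalization constant, and your explicit truncation of $\int_{\mathbb R^n}\Gamma_{(y_t,\xi_t,\varepsilon_t)}z_{(\xi_t,\varepsilon_t)}$ to $B_{\rho_t}(y_t)$ is the same tail estimate already used in the proof of Proposition \ref{propo009}. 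The one step you assert rather than verify — that the residual $\mu_t^3\lambda_t^{20}$ from Proposition \ref{propo011} is absorbed into the three stated error terms — is treated no more explicitly in the paper's own proof, so it does not distinguish your argument from theirs.
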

\begin{proof}
    Recall the definition of \(\mathcal{F}_g\) given in \eqref{eq082}. Using \eqref{eq005} we obtain
    
    \begin{align*}
        &\left|\mathcal F_g(\xi,\varepsilon)-\sum_{t=1}^\ell\int_{B_{\rho_t}(y_t)}(\Delta_{t}+\Gamma_{(y_t,\xi_t,\varepsilon_t)}z_{(\xi_t,\varepsilon_t)})\right|  \\
        &\leq\left| \langle P_gU_{(\xi,\varepsilon,r)},U_{(\xi,\varepsilon,r)}\rangle_{L^2}-\langle P_gW_{(\xi,\varepsilon,r)},W_{(\xi,\varepsilon,r)}\rangle_{L^2}-d(n)\int_M\left(|U_{(\xi,\varepsilon,r)}|^{\frac{2n}{n-4}}-W_{(\xi,\varepsilon,r)}^{\frac{2n}{n-4}}\right) \right.\\
        & +d(n)\int_M\left(|U_{(\xi,\varepsilon,r)}|^{\frac{8}{n-4}}-W_{(\xi,\varepsilon,r)}^{\frac{8}{n-4}}\right)U_{(\xi,\varepsilon,r)}W_{(\xi,\varepsilon,r)} \left.-\sum_{t=1}^\ell\int_{B_{\rho_t}(y_t)}\Gamma_{(y_t,\xi_t,\varepsilon_t)}z_{(\xi_t,\varepsilon_t)}\right|\\
        & +\left|\langle P_gW_{(\xi,\varepsilon,r)},W_{(\xi,\varepsilon,r)}\rangle_{L^2}-d(n)\int_MW_{(\xi,\varepsilon,r)}^{\frac{2n}{n-4}}-\sum_{t=1}^\ell\int_{B_{\rho_t}(y_t)}\Delta_{t}\right|\\
        & +\frac{4}{n}d(n)\int_M\left||U_{(\xi,\varepsilon,r)}|^{\frac{2n}{n-4}}-W_{(\xi,\varepsilon,r)}^{\frac{2n}{n-4}}-\frac{n}{4}\left(|U_{(\xi,\varepsilon,r)}|^{\frac{8}{n-4}}-W_{(\xi,\varepsilon,r)}^{\frac{8}{n-4}}\right) U_{(\xi,\varepsilon,r)} W_{(\xi,\varepsilon,r)}\right|\\
        &+\frac{4}{n}d(n)\left(\int_MW_{(\xi,\varepsilon,r)}^{\frac{2n}{n-4}}-\sum_{t=1}^\ell\int_{\mathbb R^n}w_{(\xi_t,\varepsilon_t)}^{\frac{2n}{n-4}}\right).
    \end{align*}
    One readily verifies that
    $$\left|\int_MW_{(\xi,\varepsilon,r)}^{\frac{2n}{n-4}}-\sum_{t=1}^\ell\int_{\mathbb R^n}w_{(\xi_t,\varepsilon_t)}^{\frac{2n}{n-4}}\right|\leq C\sum_{t=1}^\ell\left(\frac{\lambda_t}{\rho_t}\right)^n.$$
    Therefore, the result follows by Propositions \ref{propo009}, \ref{propo010} and \ref{propo011}.
\end{proof}

\subsection{The reduced energy functional}

Consider the reduced energy functional $F: \mathbb{R}^N \times(0, \infty) \rightarrow \mathbb{R}$ defined in \cite[Section 9]{MR3016505}, and given as follows: for a given pair $\left(\xi, \lambda\right) \in \mathbb{R}^n \times(0, \infty)$,
the reduced energy is defined as
$$
\begin{aligned}
F\left(\xi, \lambda\right)= & -\int_{\mathbb{R}^n}\overline{H}_{il} \overline{H}_{jl}\partial_i \partial_k^2 w_{(\xi,\lambda)}\partial_j w_{(\xi,\lambda)}+\int_{\mathbb{R}^n}\left(\overline{H}_{i j}\partial_i \partial_j w_{(\xi,\lambda)}\right)^2  -\frac{b(n)}{4} \int_{\mathbb{R}^n} \left(\partial_{l} \overline{H}_{m k}\right)^2\left(\partial_i w_{(\xi,\lambda)}\right)^2 \\
& +\frac{a(n)}{2} \int_{\mathbb{R}^n} \left(\overline{H}_{m s}\partial_s \overline{H}_{i j}-\overline{H}_{s i}\partial_s \overline{H}_{m j}+\overline{H}_{s j}\partial_i \overline{H}_{m s}-\overline{H}_{m s}\partial_i \overline{H}_{s j}\right)\partial_m\left(\partial_i w_{(\xi,\lambda)} \partial_j w_{(\xi,\lambda)}\right) \\
& +\frac{a(n)}{4} \int_{\mathbb{R}^n} \partial_j \overline{H}_{m s}\partial_i \overline{H}_{s m}\partial_i w_{(\xi,\lambda)}\partial_j w_{(\xi,\lambda)}-\frac{n-4}{4(n-2)^2} \int_{\mathbb{R}^n} \partial_m^2\overline{H}_{i j}\partial_s^2 \overline{H}_{i j}w_{(\xi,\lambda)}^2 \\
& +\frac{n-4}{8(n-1)} \int_{\mathbb{R}^n} \left(\left(\partial_i \partial_l \overline{H}_{m k}\right)^2+\partial_{l} \overline{H}_{m k}\partial_i^2 \partial_l \overline{H}_{m k}\right) w_{(\xi,\lambda)}^2\\
& +\frac{b(n)}{2} \int_{\mathbb{R}^n} \overline{H}_{i s}\partial_m^2 \overline{H}_{j s}\partial_i w_{(\xi,\lambda)}\partial_j w_{(\xi,\lambda)}+\int_{\mathbb{R}^n} \overline\Gamma_{(\xi,\varepsilon)}\overline z_{(\xi,\varepsilon)},
\end{aligned}
$$
where the tensor $\overline H$ is defined in \eqref{eq095},
\begin{align}
    \overline \Gamma_{(\xi,\varepsilon)} & =2\overline H_{kj}\partial_k\partial_j\partial_s^2w_{(\xi,\varepsilon)}+2\partial_s\overline H_{kj}\partial_k\partial_j\partial_sw_{(\xi,\varepsilon)}+\frac{n}{n-2}\partial_s^2\overline H_{kj}\partial_k\partial_jw_{(\xi,\varepsilon)}\nonumber +\frac{2}{n-2}\partial_j\partial_s^2\overline H_{kj}\partial_kw_{(\xi,\varepsilon)}.\nonumber\label{eq092}
\end{align}
and  $\overline z_{(\xi,\varepsilon)}\in \mathcal{E}_{(\xi, \varepsilon)} $ satisfies the relation
$$
    \int_{\mathbb R^n}\left( \Delta \overline z_{(\xi,\varepsilon)}\Delta\varphi-\frac{n+4}{n-4}d(n)w_{(\xi,\varepsilon)}^{\frac{8}{n-4}} \overline z_{(\xi,\varepsilon)}\varphi\right)=-\int_{\mathbb R^n}\overline \Gamma_{(\xi,\varepsilon)}\varphi,
$$
for all $\varphi\in\mathcal E_{(\xi,\varepsilon)}$, see Section \ref{sec004}.

Considering the definition in \eqref{eq099}, we observe that $\Gamma_{(y,\lambda\xi+y,\lambda\varepsilon)}(\lambda x+y)=\mu\lambda^{\frac{16-n}{2}}\overline \Gamma_{(\xi,\varepsilon)}(x)$. Combining this identity with the uniqueness of the solution to the corresponding linearized problem, we obtain
\begin{equation}\label{eq103}
z_{(\lambda \xi +y,\, \lambda \varepsilon)}(\lambda x + y)= \mu \, \lambda^{\frac{24-n}{2}}\, \overline z_{(\xi,\varepsilon)}(x),    
\end{equation}
for all $(y,\xi,\varepsilon)\in \mathbb R^n\times\mathbb R^n\times(0,\infty)$.

\section{Proof of the main theorem}\label{sec008}

In this section we prove the main result of this work, Theorem~\ref{teo008}. Recall that \((M,g_0)\) is a closed Riemannian manifold of dimension 
\(n \ge 25\) satisfying \(Y(M,g_0) > 0\) and \(Y_4(M,g_0) > 0\).

\begin{proposition}\label{propo013}
Fix $\ell\in\mathbb N$ and $p\in M$. For each $t\in\{1,\ldots,\ell\}$, Choose parameters $\lambda_t$, $\mu_t$ and $\rho_t$, and points $y_1,\ldots,y_\ell\in B_{R/2}(p)$ satisfying \eqref{eq087} and \eqref{eq088}. Let $g$ be the perturbed metric $g$ defined as
$$g(x)=\begin{cases}
        \exp(h(x)), & x \in B_s(p), \\[0.3em]
        g_s(x),     & x \in M \setminus B_s(p),
    \end{cases}$$
    where $g_s$ is defined in \eqref{eq035}, with $s>0$ satisfying Theorem \ref{teo003}, $h(x)$ is a trace-free symmetric two-tensor on $\mathbb R^n$ such that
    $$|h(x)| + |\partial h(x)| + |\partial^2 h(x)|
    + |\partial^3 h(x)| + |\partial^4 h(x)|
    \;\leq\; \alpha < 1,$$
    for all $x\in \mathbb R^n$, $h(x)=0$ for $|x|\geq R$ and $h_{ik}(x)=\mu_{t}\,\lambda_{t}^{8}\,f\bigl(\lambda_{t}^{-2}| x-y_{t}|^{2}\bigr)\;H_{ik}(x - y_{t})$, if $|x-y_{t}| \le \rho_{t}$. If the parameter \(\alpha\) and $\mu_t^{-2}\rho_t^{4-n}\lambda_t^{n-24}$ are chosen sufficiently small, 
then there exists a positive function \(u\) on \(M\) such that
\begin{enumerate}
    \item[(a)] $P_{g} u 
    = d(n)
      u^{\frac{n+4}{n-4}}.$
    \item[(b)] the volume and the energy \eqref{eq101} satisfies the estimates
    $$\int_{M} u^{\frac{2n}{\,n-4\,}} 
    > C(n) \ell\qquad\mbox{ and }
    \qquad\mathcal{E}_g(u) \;\ge\; C(n)\, \ell^{\frac{4}{n}}.$$
    \item[(c)] for all $t = 1,\ldots,\ell$, it holds $\displaystyle\sup_{|x - y_t| \le \lambda_t} u(x)
    \;\ge\;
    C(n)\, \lambda_t^{\frac{4-n}{2}}$.
\end{enumerate}
\end{proposition}
\begin{proof}
Define
$$\Lambda=\left\{(\xi,\varepsilon)\in\mathbb R^n\times\mathbb R:|\xi|\leq 1\mbox{ and }\frac{1}{2}<\varepsilon<\frac{3}{2}\right\}.$$

By \cite[Proposition~11.6]{MR3016505}, the reduced energy functional \(F(\xi,\lambda)\) has a strict local minimum at \((0,1)\), with \(F(0,1)<0\), see \cite[Propositions~11.2 and~11.5]{MR3016505}. Consequently, there exists an open set \(\mathcal{P} \subset \Lambda\) such that \((0,1) \in \mathcal{P}\) and  
\begin{equation*}
    F(0,1) < \inf_{\partial\mathcal P}F(\xi,\lambda) <0.
\end{equation*}
Given $\lambda=(\lambda_1,\ldots,\lambda_\ell)\in(0,\infty)^n$ and $y=(y_1,\ldots,y_\ell)\in\mathbb R^{n\ell}$,  define
$$\mathcal P(\lambda,y)=\left\{(\xi,\varepsilon)\in(\lambda_1\mathcal P+(y_1,0))\times\cdots\times(\lambda_\ell\mathcal P+(y_\ell,0)):\frac{1}{2}<\frac{\varepsilon_i}{\varepsilon_j}<2\right\}\subset\Omega(\lambda,y).$$
Recall $\Omega(\lambda,y)$ in \eqref{eq102}. Consequently, it follows from Proposition~\ref{propo012}, together with \eqref{eq103}, that
\begin{equation*}
 \left|\mathcal{F}_g(\xi, \varepsilon)-\sum_{t=1}^{\ell} \mu_t^2\lambda_t^{20} F\left(\frac{\xi_t-y_t}{\lambda_t}, \frac{\varepsilon_t}{\lambda_t}\right)\right| \leq C\sum_{t=1}^\ell\left(\left(\frac{\lambda_t}{\rho_t}\right)^{n-4}+\mu_t\lambda_t^{10}\left(\frac{\lambda_t}{\rho_t}\right)^{\frac{n-4}{2}}+\mu_t^{\frac{2n}{n-4}}\lambda_t^{10\frac{2n}{n-4}}\right),
\end{equation*}
for all $(\xi,\varepsilon)\in\mathcal P(\lambda,y)$.
Define $J=\max\{\mu_t^2\lambda_t^{20}:t=1,\ldots,\ell\}$ and $J_t=\mu_t^2\lambda_t^{20}J^{-1}\in(0,1]$. Note that $J_i=1$, for some $i\in\{1,\ldots,\ell\}$. This implies that
\begin{align*}
    \left|J^{-1}\mathcal{F}_g(\lambda\xi+y, \lambda\varepsilon)-\sum_{t=1}^{\ell} J_t F\left(\xi_t, \varepsilon_t\right)\right| \leq C\sum_{t=1}^\ell\left(\mu_t^{-2}\rho_t^{4-n}\lambda_t^{n-24}+\mu_t^{-1}\rho_t^{\frac{4-n}{2}}\lambda_t^{\frac{n-24}{2}}+\mu_t^{\frac{8}{n-4}}\lambda_t^{\frac{80}{n-4}}\right),
\end{align*}
for all \((\lambda \xi + y, \lambda \varepsilon) \in \mathcal{P}(\lambda, y)\).  
Here $\lambda \xi = (\lambda_1 \xi_1, \ldots, \lambda_\ell \xi_\ell)$ and  $\lambda \varepsilon = (\lambda_1 \varepsilon_1, \ldots, \lambda_\ell \varepsilon_\ell)$. By \eqref{eq087}, we may choose \(\mu_t^{-2}\rho_t^{4-n}\lambda_t^{n-4}\) sufficiently small for every \(t \in \{1,\ldots,\ell\}\).  
With this choice we obtain
$$\mathcal F_g(y,\lambda)<\inf_{(\xi,\varepsilon)\in\partial\mathcal P(\lambda,y)}\mathcal F_g(\xi,\varepsilon)<0.$$
 This implies the existence of $(\xi_0,\varepsilon_0)\in\mathcal P$ such that 
$$\mathcal F_g(\lambda\xi_0+y,\lambda\varepsilon_0)=\inf_{(\xi,\varepsilon)\in\mathcal P(\lambda,y)}\mathcal F_g(\xi,\varepsilon)<0.$$
By Theorem \ref{teo007}, the function 
\(
U_{(\lambda\xi_0+y,\lambda\varepsilon_0,r)}
\)
is a nonnegative $C^4$ solution of the fourth-order equation on $(M,g)$,
\[
P_g U_{(\lambda\xi_0+y,\lambda\varepsilon_0,r)} = d(n)\, U_{(\lambda\xi_0+y,\lambda\varepsilon_0,r)}^{\frac{n+4}{\,n-4\,}}.
\]
By elliptic regularity theory (see, for instance, \cite{MR4228252,MR2667016}), the function $U_{(y+\lambda\xi_0,\lambda\varepsilon_0,r)}$ is in fact smooth.

By Proposition~\ref{propo002}, Theorem~\ref{teo006}, and the Sobolev inequality, we obtain
$$\|W_{(y+\lambda\xi_0,\lambda\varepsilon_0,r)}-U_{(y+\lambda\xi_0,\lambda\varepsilon_0,r)}\|_{L^{\frac{2n}{n-4}}(M,g)}\leq C\alpha.$$
Using \eqref{eq005} and \eqref{eq050}, we obtain $\| W_{(y + \lambda \xi_0,\, \lambda \varepsilon_0,\, r)}     \|_{L^{\frac{2n}{\,n-4\,}}(B_{r_t}(y_t + \lambda_t \xi_{0t}),g)}    \ge c(n)> 0,$ for all \(\alpha > 0\) sufficiently small and $t\in\{1,\ldots,\ell\}$.
Therefore, for \(\alpha > 0\) small enough, we obtain
\begin{align*}
    \|U_{(y+\lambda\xi_0,\lambda\varepsilon_0,r)}\|_{L^{\frac{2n}{n-4}}(M,g)} & \geq \|W_{(y+\lambda\xi_0,\lambda\varepsilon_0,r)}\|_{L^{\frac{2n}{n-4}}(M,g)}-\|W_{(y+\lambda\xi_0,\lambda\varepsilon_0,r)}-U_{(y+\lambda\xi_0,\lambda\varepsilon_0,r)}\|_{L^{\frac{2n}{n-4}}(M,g)}\\
    & \geq \|W_{(y+\lambda\xi_0,\lambda\varepsilon_0,r)}\|_{L^{\frac{2n}{n-4}}(M,g)}-C\alpha \geq\frac{1}{2}\|W_{(y+\lambda\xi_0,\lambda\varepsilon_0,r)}\|_{L^{\frac{2n}{n-4}}(M,g)}.
\end{align*}
This implies that 
\[
\int_M U_{(y+\lambda\xi_0,\lambda\varepsilon_0,r)}^{\frac{2n}{n-4}}\, dv_g
\;\geq\;
c(n)\int_M W_{(y+\lambda\xi_0,\lambda\varepsilon_0,r)}^{\frac{2n}{n-4}}\, dv_g
\;\geq\;
C(n)\,\ell^{}.
\]
Moreover, for each \(t\in\{1,\ldots,\ell\}\), we have
\begin{align*}
    \bigl(\operatorname{Vol}(B_{\lambda_t}(y_t))\bigr)^{\frac{n-4}{2n}}
    \sup_{B_{\lambda_t}(y_t)} U_{(y+\lambda\xi_0,\lambda\varepsilon_0,r)}
    &\;\geq\;
    \|U_{(y+\lambda\xi_0,\lambda\varepsilon_0,r)}\|_{L^{\frac{2n}{n-4}}(B_{\lambda_t}(y_t))} \\
    &\;\geq\;
    \|W_{(y+\lambda\xi_0,\lambda\varepsilon_0,r)}\|_{L^{\frac{2n}{n-4}}(B_{\lambda_t}(y_t))}
    - C(n)\,\alpha \;\geq\;
    C(n),
\end{align*}
for \(\alpha > 0\) sufficiently small.  
This implies, in particular, that 
\(U_{(y + \lambda \xi_0,\, \lambda \varepsilon_0,\, r)}\) is not identically zero.  
By \cite{MR3509928} and the maximum principle 
\cite[Theorem~A]{MR3420504}, we conclude that $U_{(y + \lambda \xi_0,\, \lambda \varepsilon_0,\, r)} > 0$. This finishes the proof.
\end{proof}

Finally, we can prove the main result of this work.

\begin{theorem}[Theorem \ref{teo008}]
Let \((M,g_0)\) be a closed Riemannian manifold of dimension \(n \ge 25\) with 
\(Y(M,g_0) > 0\) and \(Y_4(M,g_0) > 0\). Fix \(\varepsilon > 0\).  
For each pair \(k,\ell \in \mathbb{N}\), there exist a smooth Riemannian metric 
\(g\) on \(M\) and a smooth positive function \(U_{k,\ell}\) such that:
\begin{enumerate}
    \item[(a)] \(g\) is not conformally flat.

    \item[(b)] \(\|g - g_0\|_{C^1(M,g_0)} < \varepsilon\).

    \item[(c)] The $Q$-curvature of the metric $g_{k,\ell}=U_{k,\ell}^{\frac{4}{n-4}}g$ is constant equal to $n(n^2-4)/8$.

    \item[(d)] There exists a positive constant $c(n)$, depending only on $n$, such that the volume of the metric $g_{k,\ell}$ and its energy satisfy the estimates
\[
   \operatorname{Vol}\bigl(M, g_{k,\ell}\bigr) \ge c(n)\,\ell, \qquad \text{and} \qquad 
   \mathcal{E}\bigl(g_{k,\ell}\bigr) \ge c(n)\,\ell^{\frac{4}{n}}.
\]
\end{enumerate}
\end{theorem}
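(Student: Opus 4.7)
The plan is to assemble the ingredients developed in the previous sections and feed them into Proposition \ref{propo013}. First, by Proposition \ref{propo001} I replace $g_{0}$ by a metric $g_{s}$ which (i) satisfies $\|g_{s}-g_{0}\|_{C^{1}(M,g_{0})}<\varepsilon/2$ provided $s>0$ is small enough, (ii) is conformally flat on a geodesic ball $B_{s}(p)$ about a chosen point $p\in M$, and (iii) still satisfies $Y(M,g_{s})>0$ and $Y_{4}(M,g_{s})>0$; after a further conformal deformation via \cite{MR3509928}, I may moreover assume $R_{g_{s}}>0$ and $Q_{g_{s}}>0$. I also fix $s$ small enough that the hypotheses of Theorem \ref{teo003} and Proposition \ref{propo013} are met.

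Given $(k,\ell)\in\mathbb{N}^{2}$, the second step is to construct the perturbation $h$ inside $B_{s}(p)$. Fix once and for all a non-zero multilinear form $W_{ijkl}$ with the algebraic symmetries of the Weyl tensor, as in Section \ref{sec009}. Choose $\ell$ distinct points $y_{1},\ldots,y_{\ell}\in B_{R/2}(p)$ (in normal coordinates) together with parameters $\lambda_{t},\mu_{t},\rho_{t}$ obeying \eqref{eq087} and \eqref{eq088}, and let $h$ be the trace-free symmetric two-tensor vanishing outside $B_{R}(0)$ and equal to
\[
    h_{ik}(x)=\mu_{t}\,\lambda_{t}^{8}\,f\!\bigl(\lambda_{t}^{-2}|x-y_{t}|^{2}\bigr)\,H_{ik}(x-y_{t})
\]
on each $B_{\rho_{t}}(y_{t})$. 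Choosing $\mu_{t}\lambda_{t}^{8}$ small makes the bound \eqref{eq014} hold with $\alpha$ as small as we wish, so the metric $g$ defined by \eqref{eq013} satisfies $\|g-g_{0}\|_{C^{1}(M,g_{0})}<\varepsilon$. The extra index $k$ may be used to tighten this smallness (for instance to force $\|g-g_{0}\|_{C^{1}}<\varepsilon/k$) and, simultaneously, to make $\mu_{t}^{-2}\rho_{t}^{4-n}\lambda_{t}^{n-24}$ as small as required in Proposition \ref{propo013}.

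With $g$ in hand, Proposition \ref{propo013} produces a smooth positive function $U_{k,\ell}$ on $M$ satisfying $P_{g}U_{k,\ell}=d(n)\,U_{k,\ell}^{(n+4)/(n-4)}$, together with $\int_{M}U_{k,\ell}^{2n/(n-4)}\,dv_{g}>C(n)\ell$ and $\mathcal{E}_{g}(U_{k,\ell})\geq C(n)\ell^{4/n}$. By the conformal transformation law for the $Q$-curvature and the identity $\tfrac{2}{n-4}d(n)=\tfrac{n(n^{2}-4)}{8}$, the metric $g_{k,\ell}:=U_{k,\ell}^{4/(n-4)}g$ is smooth, has constant $Q$-curvature equal to $n(n^{2}-4)/8$, and its volume and energy inherit the required lower bounds. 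To verify that $g$ is not conformally flat, note that at $y_{t}$ the tensor $h$ vanishes together with its first derivatives, while $\partial_{p}\partial_{q}h_{ik}(y_{t})$ is proportional to $W_{ipkq}+W_{iqkp}$; by the assumption that at least one component of $W$ is non-zero, the Weyl tensor of $g$ does not vanish at $y_{t}$.

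The main obstacle at this final stage is only the simultaneous compatibility of all parameter constraints: \eqref{eq087}--\eqref{eq088}, the smallness of $\alpha$ in \eqref{eq014}, the $C^{1}$-closeness of $g$ to $g_{0}$, and the smallness of $\mu_{t}^{-2}\rho_{t}^{4-n}\lambda_{t}^{n-24}$. Because $n\geq 25$, these can be arranged as in \cite{MR2425176,MR3016505}, for instance by taking $\lambda_{t}\equiv\lambda$, $\mu_{t}\equiv\lambda^{\delta}$ for a small $\delta>0$, and $\rho_{t}\equiv\lambda^{\gamma}$ for a suitable $\gamma\in(0,1)$, and then letting $\lambda\to 0$. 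The genuinely delicate work has already been carried out in Proposition \ref{propo013}, whose proof rests on the Lyapunov--Schmidt reduction of Sections \ref{sec003}--\ref{sec006}, on the energy expansion of Proposition \ref{propo012}, and crucially on the Gursky--Malchiodi maximum principle \cite{MR3420504} to upgrade non-negativity of $U_{k,\ell}$ to strict positivity.
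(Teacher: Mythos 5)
Your overall pipeline (reduce to Proposition~\ref{propo013}, take $h$ of the prescribed local form near the bump points, get positivity from \cite{MR3509928} and the Gursky--Malchiodi maximum principle, and read off (c)--(d) from the conformal transformation law and the energy bounds) matches the paper. The genuine gap is in the final assembly: in your construction the tensor $h$, and hence the metric $g$ of \eqref{eq013}, depends on the pair $(k,\ell)$ --- you choose exactly $\ell$ bumps with parameters tailored to that $\ell$, and you use $k$ only to shrink the smallness (e.g.\ to force $\|g-g_0\|_{C^1}<\varepsilon/k$). But the theorem is meant to produce \emph{one} metric $g$, independent of $(k,\ell)$, carrying all the functions $U_{k,\ell}$ in its single conformal class; this is exactly what is needed to deduce Theorem~A, namely that $\{\overline g\in\mathfrak{M}_{g}:\mathcal E(\overline g)\geq \ell\}$ is infinite for every $\ell$ and that there is a sequence $(g_k)$ conformal to the \emph{same} $g$ with unbounded volume. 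With a metric that changes with $(k,\ell)$ you obtain only one solution per conformal class, the index $k$ plays no role in your statement, and neither conclusion of Theorem~A follows. (A secondary inaccuracy: smallness in \eqref{eq014} is not governed by $\mu_t\lambda_t^{8}$ alone, since $f(\lambda_t^{-2}|x-y_t|^2)H_{ik}(x-y_t)$ grows like $\lambda_t^{-8}|x-y_t|^{10}$ on $B_{\rho_t}(y_t)$; the relevant quantity is essentially $\mu_t\rho_t^{6}$, which your scaling $\mu_t=\lambda^\delta$, $\rho_t=\lambda^\gamma$ does control, but this should be said.)

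The missing idea is the paper's multiscale superposition: one fixes a single tensor
\[
h_{ik}(x)=\sum_{N= N_0}^{\infty}\eta_{\left(\frac{1}{4N^{2}},\,\overline y_{N}\right)}(x)\,2^{-\frac{25}{3}N}\,f\!\left(2^{2N}|x-\overline y_{N}|\right)H_{ik}\!\left(|x-\overline y_{N}|\right),
\qquad \overline y_{N}=\left(\tfrac1N,0,\dots,0\right),
\]
which is smooth (each point meets only finitely many balls), satisfies \eqref{eq014} with $\alpha$ as small as desired once $N_0$ is large, and packs into the \emph{one} metric $g$ infinitely many admissible bumps with $\lambda_N=2^{-N}$, $\mu_N=2^{-N/3}$, $\rho_N=\frac{1}{4N^{2}}$, for which $\mu_N^{-2}\rho_N^{4-n}\lambda_N^{\,n-24}=2^{(\frac{74}{3}-n)N}(4N^{2})^{n-4}\to 0$ precisely because $n\geq 25$. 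For each $\ell$ and each offset $q$ one then selects the $\ell$ bumps $y_t=\overline y_{N_{t+q}}$ along an inductively chosen subsequence satisfying the separation condition \eqref{eq088}, checks \eqref{eq087}, and applies Proposition~\ref{propo013} to the \emph{same} $g$; varying the offset $q$ (this is the role of $k$) yields infinitely many distinct solutions in the fixed conformal class, and letting $\ell\to\infty$ gives the unbounded volume. Your scaling argument with $\lambda\to 0$ resolves the parameter compatibility for one fixed $\ell$, but it cannot be carried out inside a single fixed metric, which is the crux of the theorem.
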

\begin{proof}
Consider the metric \(g\) defined in \eqref{eq013}, with \(s>0\) as in Theorem~\ref{teo005} and \(\alpha>0\) as in Theorem~\ref{teo007} and Proposition~\ref{propo013}. Let \(R \in (0, s/4)\). Define a trace-free symmetric two-tensor on \(\mathbb{R}^n\) by
\[
h_{ik}(x)= \sum_{N = N_0}^{\infty}\eta_{\left(\frac{1}{4N^2},\, \overline y_{N}\right)}(x)\,2^{-\frac{25}{3}N}\,f\!\left( 2^{2N} |x - \overline y_N| \right)\,H_{ik}\!\left(|x - \overline y_N|\right),
\]
where \(\overline y_N = \left(\frac{1}{N}, 0, \ldots, 0\right) \in \mathbb{R}^n\). Recall the definition of \(\eta_{(\frac{1}{4N^2}, \overline y_N)}\) from Section~\ref{sec005}. Since each point \(x \in \mathbb{R}^n\) belongs to only finitely many balls \(B_{1/(2 N^2)}(\overline y_N)\), it follows that the tensor \(h\) is smooth. If \(N_0\) is sufficiently large, then \(h\) satisfies \eqref{eq014} and \(h(x)=0\) for all \(|x| \ge R\).  Moreover, for every \(N \ge N_0\),
\[
    h_{ik}(x)
    =
    2^{-\frac{25}{3}N}\,
    f\!\left( 2^{2N} |x - \overline y_N| \right)
    H_{ik}(|x - \overline y_N|),
    \qquad 
    \text{for all } |x - \overline y_N| \le \frac{1}{4N^{2}}.
\]

For the choice $\lambda_N = 2^{-N}$, $\mu_N = 2^{-N/3}$ and $\rho_N = 1/4N^{2}$ the condition \eqref{eq087} is satisfied, provided \(N_0\) is sufficiently large.  
In addition,
\[
    \mu_N^{-2} \rho_N^{\,4-n} \lambda_N^{\,n-24}
    = 2^{\left(\frac{74}{3}-n\right)N}(4N^{2})^{\,n-4}
    \longrightarrow 0,
    \qquad\text{as } N \to \infty.
\]

Arguing by induction, we obtain a subsequence \((\overline y_{N_k})\) such that
\[
    \| \overline y_{N_i} - \overline y_{N_j} \|
    \ge 
    \frac{3}{2^{N_i}}
    + \frac{3}{2^{N_j}}
    + \frac{1}{2N_i^{2}}
    + \frac{1}{2N_j^{2}},
    \qquad \text{whenever } i \ne j.
\]

Finally, for every \(t,q \in \mathbb{N}\), set  $y_t := \overline y_{N_{t+q}}$, $\lambda_t := \lambda_{N_{t+q}}$, $\mu_t := \mu_{N_{t+q}}$ and $\rho_t := \rho_{N_{t+q}}$. Then this family of parameters satisfies conditions \eqref{eq087} and \eqref{eq088}. Moreover, for \(q\) sufficiently large, the quantity \(\mu_t^{-2} \rho_t^{\,4-n} \lambda_t^{\,n-24}\) is arbitrarily small.  Hence, Proposition~\ref{propo013} applies to the points \(\{y_1,\ldots,y_\ell\}\). This concludes the proof.
\end{proof}

\vspace{1cm}
\noindent\textbf{{Data Availability:}} Data sharing not applicable to this article as no datasets were generated or analyzed during the current manuscript.

\vspace{1cm}
\noindent\textbf{{Conflict of interest:}} The authors declare that there is no conflict of interest regarding the publication of this article.

\bibliography{References}
\bibliographystyle{abbrv}
    
\end{document}